\numberwithin{equation}{section}
\newtheorem*{rep@theorem}{\rep@title}
\newcommand{\newreptheorem}[2]{%
\newenvironment{rep#1}[1]{%
 \def\rep@title{#2 \ref{##1}}%
 \begin{rep@theorem}}%
 {\end{rep@theorem}}}
\newtheorem{theorem}{Theorem}[section]
\newtheorem{lemma}[theorem]{Lemma}
\newtheorem{prop}[theorem]{Proposition}
\newtheorem{assumption}[theorem]{Assumption}
\newtheorem{cor}[theorem]{Corollary}
\newtheorem{fac}[theorem]{Fact}
\newtheorem{problem}[theorem]{Problem}
\theoremstyle{definition}
\newtheorem{definition}[theorem]{Definition}
\newtheorem{remark}[theorem]{Remark}
\newtheorem{example}[theorem]{Example}
\newtheorem{claim}[theorem]{Claim}
\newtheorem*{claim*}{Claim}
 \newenvironment{claimproof}{\begin{proof}}{\end{proof}}
\newcommand{\norm}[1]{\left\lVert#1\right\rVert}
  \def \B{\operatorname{\mathcal{B}}}
    \def \E{\operatorname{\mathbb{E}}}
  \def \alg{\operatorname{alg}}
  \def \VC{\operatorname{VC}}
    \def \dist{\operatorname{dist}}
    \def \cU{\operatorname{\mathcal{U}}}
 \def \ttimes{\times \ldots \times}
   \newcommand{\opg}{\operatorname{opg}}
\newcommand{\pg}{\operatorname{pg}}
\newcommand{\ord}{\operatorname{ord}}
\newcommand{\range}{\operatorname{rng}}
\newcommand{\Uniform}{\operatorname{Uniform}[0,1]}
\newcommand{\qftp}{\operatorname{qftp}}
\newcommand{\tp}{\operatorname{tp}}
\newcommand{\monus}{\dot{-}}
\begin{document}

\title{Hypergraph regularity and higher arity VC-dimension}
\author{Artem Chernikov and Henry Towsner}
\date{\today}

\begin{abstract}
  We generalize the fact that graphs with small VC-dimension can be approximated by rectangles, showing that hypergraphs with small VC$_k$-dimension (equivalently, omitting a fixed finite $(k+1)$-partite $(k+1)$-uniform hypergraph) can be approximated by $k$-ary cylinder sets.

  In the language of hypergraph regularity, this shows that when $H$ is a $k'$-uniform hypergraph with small VC$_k$-dimension for some $k<k'$, the decomposition of $H$ given by hypergraph regularity only needs the first $k$ levels---one can approximate $H$ using sets of vertices, sets of pairs, and so on up to sets of $k$-tuples---and that on most of the resulting $k$-ary cylinder sets, the density of $H$ is either close to $0$ or close to $1$.

  We also show a suitable converse: $k'$-uniform hypergraphs with large VC$_k$-dimension cannot have such approximations uniformly under all measures on the vertices.
\end{abstract}

\maketitle

\tableofcontents

\section{Introduction}

We generalize the fact that graphs with small VC-dimension can be approximated by rectangles  \cite{alon2007efficient, MR2815610}, showing that hypergraphs with small VC$_k$-dimension\footnote{See Definition \ref{def: VCk dim}.} (equivalently, hypergraphs omitting a fixed finite $(k+1)$-partite $(k+1)$-uniform hypergraph\footnote{See Remark \ref{rem: VC_k iff omits H}}) can be approximated by $k$-ary cylinder sets\footnote{See Definition \ref{def: smaller argebras in GPS}.}.

Our main result is:
\begin{theorem}\label{thm: intro main thm}
  For every $k<k'$, every $d$ and every $\varepsilon>0$, there is an $N$ such that whenever $H\subseteq{V\choose k'}$ has VC$_k$-dimension less than $d$, $H$ differs from a union of at most $N$ $k$-ary cylinder sets by at most $\varepsilon|V|^{k'}$ points.
\end{theorem}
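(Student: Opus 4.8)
The plan is to reduce the problem, by a compactness argument, to a qualitative statement about a limiting object carrying a graded measure structure, to prove that statement by combining the hypergraph regularity lemma with the combinatorics of bounded $\VC_k$-dimension, and then to finitize. Concretely, if the theorem fails then there are $k<k'$, $d$ and $\varepsilon>0$ together with finite vertex sets $V_i$ and hypergraphs $H_i\subseteq\binom{V_i}{k'}$ of $\VC_k$-dimension $<d$ such that no $H_i$ lies within $\varepsilon|V_i|^{k'}$ of a union of at most $i$ $k$-ary cylinder sets. Passing to an ultraproduct over a nonprincipal ultrafilter produces a graded probability space $(\Omega,\mu)$ --- the ultraproduct of the $V_i$ with their normalized counting measures --- and a hypergraph $H\subseteq\Omega^{k'}$ which, by {\L}o\'s's theorem, again has $\VC_k$-dimension $<d$, hence finite. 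It thus suffices to prove the following qualitative statement: \emph{in any graded probability space, if $H$ has finite $\VC_k$-dimension then $\mathbf{1}_H$ agrees, off a $\mu^{k'}$-null set, with a set measurable with respect to the $\sigma$-algebra $\mathcal B_{\le k}$ generated by the $\le k$-ary cylinder sets} (Definition~\ref{def: smaller argebras in GPS}); equivalently, $\E[\mathbf{1}_H\mid\mathcal B_{\le k}]$ is $\{0,1\}$-valued almost everywhere. Granting this, one approximates the witnessing $\mathcal B_{\le k}$-set within $\varepsilon/2$ in $L^1(\mu^{k'})$ by a union of $N$ cylinder sets, pulls this back along the ultraproduct, and applies {\L}o\'s once more to contradict the choice of the $H_i$ for large $i$; since $N$ is fixed before finitizing, it depends only on $k,k',d,\varepsilon$, which is the theorem.

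Translated back through hypergraph regularity, the qualitative statement asserts exactly what the theorem claims structurally --- that the regularity decomposition of $H$ needs only its first $k$ levels, and that on almost every resulting cell the density of $H$ is close to $0$ or to $1$. I would prove it by contraposition. Suppose $q:=\E[\mathbf{1}_H\mid\mathcal B_{\le k}]$ lies in $(\eta,1-\eta)$ on a set of positive $\mu^{k'}$-measure for some $\eta>0$, and fix a partition of $[k']$ into $k$ parameter blocks and one object block; I claim $H$ then shatters configurations of every finite size $d$, contradicting finiteness of $\VC_k$-dimension. Two ingredients are needed. First, because $\mathcal B_{\le k}$ in a graded probability space is generated by cylinder sets over $\le k$-ary data, the set $\{\,q\in(\eta,1-\eta)\,\}$, which is $\mathcal B_{\le k}$-measurable and of positive measure, can be shown to contain a positive-measure block-rectangle $S_1\times\cdots\times S_k\times S_0$. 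Second --- and this is the hypergraph-regularity content, packaged measure-theoretically --- conditionally on $\mathcal B_{\le k}$, the random variables $\mathbf{1}_H(a_1,\dots,a_k,b)$ attached to a grid $a_j\in A_j$, $b\in S_0$ (the $A_j\subseteq S_j$ finite) are mutually independent and Bernoulli with conditional means $q(a_1,\dots,a_k,b)$: this is the higher-arity form of ``a regular pair looks like a random bipartite graph'', and it holds because conditioning on $\mathcal B_{\le k}$ fixes every $\le k$-ary datum of every $k'$-subtuple occurring, while the level-$k'$ randomness is fresh and independent across distinct $k'$-tuples. Granting both, fix $d$, choose a suitable $A_j\subseteq S_j$ of size $d$ for each $j$, and use $S_0$ as a reservoir of candidate witnesses: for a random $b\in S_0$ and a pattern $f$ on $\prod_j A_j$, the conditional independence (interpreted along the fibers of the block-product) and the bound $q\in(\eta,1-\eta)$ on $S_1\times\cdots\times S_k\times S_0$ give $\Pr_b[\mathbf{1}_H(\bar a,b)=f(\bar a)\text{ for all }\bar a]\ge\eta^{d^k}>0$, so each of the finitely many patterns has a positive-measure set of good witnesses and we may pick the $b_f$ distinct. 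Hence $(A_1,\dots,A_k)$ is shattered.

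The main obstacle is the second of the two ingredients just described: organizing the graded probability space so that the relative independence of level $k'$ over $\mathcal B_{\le k}$ is available \emph{with the correct conditional means} --- this is precisely where the hypergraph regularity lemma is genuinely used and where the bulk of the technical work sits --- together with the related task of extracting the positive-measure block-rectangle inside $\{q\in(\eta,1-\eta)\}$. By comparison the compactness reduction and the final union-bound are routine, as is the bookkeeping that reassembles the pieces into the quantitative bound. (A more combinatorial alternative, closer to the graph case of \cite{alon2007efficient, MR2815610}, would instead sample vertices and tuples, invoke a higher-arity Sauer--Shelah bound to obtain the level-$\le k$ partitions with boundedly many classes, and a higher-arity uniform-convergence estimate to see that the resulting majority approximation is within $\varepsilon$; there the obstacle becomes proving those higher-arity Sauer--Shelah and uniform-convergence statements.)
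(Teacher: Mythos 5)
Your high-level frame (ultraproduct reduction to a qualitative measurability statement, then finitize by \L o\'s) matches the paper's architecture, but the proposed proof of the qualitative core has genuine gaps in both of its ``two ingredients''. First, the rectangle-extraction step is false: a positive-measure (even full-measure) $\mathcal B_{\le k}$-measurable set need not contain any positive-measure block-rectangle $S_1\times\cdots\times S_k\times S_0$. Already for $k=1$, $k'=2$ take $E=\{(x,y):x-y\notin D\}$ with $D$ a dense $G_\delta$ null set: $E$ is Borel (hence in the product $\sigma$-algebra generated by unary sets) and has measure $1$, but if $A\times B\subseteq E$ with $\mu(A),\mu(B)>0$ then $A-B$ contains an interval by Steinhaus, which must meet $D$ --- contradiction. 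Second, the ``conditional independence over $\mathcal B_{\le k}$ with conditional means $q$'' is not a fact you can invoke; it is essentially the assertion that $\chi_H$ is relatively Bernoulli over the first $k$ levels, i.e.\ exactly the conclusion to be proved. For a general $H$ it fails: the fluctuation of $\chi_H$ around $q$ contains the intermediate levels $f_{k+1},\dots,f_{k'-1}$ of the regularity decomposition and correlations of the top level across overlapping tuples, and conditioning on $\mathcal B_{\le k}$ does nothing to decouple these. Since your argument uses the finite $\VC_k$ hypothesis only as the thing being contradicted, nothing in the sketch supplies this independence. The paper has to manufacture a weak surrogate for it: it passes to an indiscernible (exchangeable) sequence of fibers in an ultrapower (Section \ref{sec:indiscernibles}) so that a de Finetti-type identity $\E(f_{x_l}\mid\mathcal B\cup\{f_{x_i}:i<l\})=\E(f_{x_l}\mid\mathcal B)$ holds, works with positive-measure ``ambiguity'' sets rather than rectangles, and proves positivity of the relevant intersection integrals by an iterated conditioning argument (Proposition \ref{prop: finite VCk-dim implies approx}); even then this only treats $k'=k+1$, and the general case is an induction on $k'-k$ using Gowers uniformity norms and the nontrivial fact that integrating out a variable preserves finite $\VC_k$-dimension (Theorem \ref{prop: gen fib fin VCk dim}).

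There is also a mismatch of notions in your contradiction target. You group the $k'$ coordinates into $k$ blocks plus an object block and shatter a box of \emph{blocks}; that contradicts finiteness of the $\VC_k$-dimension of the blocked relation on $\prod_j V^{(\mathrm{block}_j)}$, which is a strictly stronger hypothesis than the slice-wise $\VC_k$-dimension of Definition \ref{def: VCd for higher arity} used in the theorem (the paper notes the slice-wise notion is strictly weaker). To contradict the actual hypothesis you must produce a shattered box of single vertices in $k$ distinguished coordinates with the remaining $k'-k-1$ coordinates frozen to constants, which your construction does not do. So as written the argument would at best prove the theorem under the stronger ``blocked'' assumption, and even that is blocked by the two issues above.
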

Stated in a more general way, this is Corollary \ref{cor: main thm for hypergraphs}.  We also prove an appropriate converse: that if $H$ has this approximation property with a bound on $N$ which is uniform over all measures on $V$ then $H$ has small VC$_k$-dimension; this is Theorem \ref{thm:converse}.

To see why we should expect such a result, first recall the situation for graphs.  It is convenient to interpret the Szemer\'edi regularity lemma as saying that when $G=(V,E)$ is a large finite graph, we can  present the characteristic function $\chi_E$ of  the edge relation $E$ in the form
\[\chi_E=f^\top+f^\bot\]
where $f^\top$ is the ``structured'' portion of the form $$f^\top(x,y)=\sum_{i,j\leq n}\alpha_{i,j}\chi_{V_i}(x)\chi_{V_i}(y)$$
 where $V=\bigcup_{i\leq n}V_i$ is a partition and the $\alpha_{i,j}$ are real numbers, and $f^\bot$ is quasirandom.  That is, we can view $E$ as a finite partition with weights $\alpha_{i,j}$ indicating the density of edges between $V_i$ and $V_j$, with $f^\bot$ representing the random determination of which which edges are actually present.

When $G$ has small VC-dimension\footnote{That is, the family $\{E_x \mid x \in V \}$ of subsets of $V$ has small VC-dimension, where $E_x$ is the fiber $\{y \in V \mid (x,y) \in E \}$. Equivalently, there is a small bipartite graph which $G$ contains no induced copies of.}, the $f^\bot$ part is small \cite{alon2007efficient, MR2815610,2016arXiv160707701C}.  More precisely, for each $d$ and each $\varepsilon>0$, there is a bound $N$ so that whenever $G$ is a graph with VC-dimension at most $d$, there is a regularity partition into $N$ pieces so that the quasirandom part satisfies $\sum_{x,y\in V}|f^\bot(x,y)|^2<\varepsilon|V|^2$.  (Indeed, $N$ is polynomial in $\varepsilon$, with the degree of the polynomial depending on $d$.)

This means that the weights $\alpha_{i,j}$ are each either close to $1$ or close to $0$, so this is equivalent to saying that $G$ is approximately the union of those rectangles where $\alpha_{i,j}$ is close to $1$.

We cannot quite get a reverse implication, that $f^\bot$ being small implies small VC-dimension.  It cannot be exactly an equivalence because having small VC-dimension is a combinatorial property, while $f^\bot$ has a measure-theoretic character.  (For instance, if we take a very large graph of small VC-dimension, and then graft on it a small graph of large VC-dimension, say with size $o(|V|)$, the small graph cannot meaningfully change $f^\bot$.)  Instead, having small VC-dimension is equivalent to having $f^\bot$ be small \emph{uniformly} for all possible measures on $V$.

Now, consider what happens when we generalize to hypergraphs---that is, $H=(V,E)$ with $E\subseteq{V\choose k}$ for some $k\geq 2$.  Something similar holds if all slices of $E$ have small VC-dimension---that is, for every fixed $z_1,\ldots,z_{k-2}$ in $V$, the binary relation
\[E_{z_1,\ldots,z_{k-2}}=\left\{(x,y)\mid (x,y,z_1,\ldots,z_{k-1})\in E\right\}\subseteq{V\choose 2}\]
has small VC-dimension\footnote{It is more common to consider a stronger assumption, that all ways of viewing $E$ as a graph on $V\times V^{k-1}$ have small VC-dimension.  However the weaker slice-wise assumption here suffices, and is the notion for which we get a converse.  There are examples showing that the slice-wise assumption is strictly weaker.}.  When this holds, we have
\[\chi_E=f^\top+f^\bot\]
where the $f^\top$ portion has the form 
$$f^\top(x_1,\ldots,x_k)=\sum_{i_1,\ldots,i_k}\alpha_{i_1,\ldots,i_k}\prod_j\chi_{V_{i_j}}(x_j)$$
 and $\sum_{\bar x\in V^k}|f^\bot(\bar x)|^2<\varepsilon|V|^k$.  That is, $H$ is approximated by boxes (\cite{2016arXiv160707701C}, which corresponds to the case $k=1$ and $k'$ arbitrary of Theorem \ref{thm: intro main thm}).

This is a very strong conclusion, suggesting that small VC-dimension is a very restrictive condition for a hypergraph.  For a general regular hypergraph $H = (V,E)$, the characterization given by hypergraph regularity \cite{nagle:MR2198495,rodl:MR2069663,MR2373376} involves a more complicated decomposition
\[\chi_E=f_{k-1}+\cdots+f_1+f^\bot\]
where $f_1$ has the form $\sum_{i_1,\ldots,i_k}\alpha_{i_1,\ldots,i_k}\prod_j\chi_{V_{i_j}}(x_j)$ as above, but the $f_j$ in general are sums of $j$-ary cylinder sets.  (For instance, $f_2$ is, roughly speaking, the portion of $\chi_H$ which can be described using directed graphs.)

Small VC-dimension collapses not only the random part $f^\bot$, but also all the more complex parts $f_{k-1}+\cdots+f_2$.  There ought to be a weaker notion than small VC-dimension which corresponds to just $f^\bot$ being small; more generally, there ought to be notions which correspond to collapsing part of this sequence, so that $f^\bot+f_{k-1}+\cdots+f_{j+1}$ is small.

The natural candidate is the notion of VC$_k$-dimension\footnote{See Definitions \ref{def: VCk dim} and \ref{def: VCd for higher arity}.  VC$_1$ is ordinary VC-dimension.  A $(k+1)$-graph has small VC$_k$-dimension if it omits a small $(k+1)$-partite hypergraph.} implicit in Shelah's work in model theory \cite{MR3273451, MR3666349} and studied further in \cite{chernikov2014n}.

The proof of the aforementioned result for graphs of finite VC-dimension---which corresponds to the $k=1, k'=2$ case of Theorem \ref{thm: intro main thm}---is fairly short.  The key point is that if a graph $E$ has finite VC-dimension, so does the graph $E^*=\{(x,x',y)\mid y\in E_x\bigtriangleup E_{x'}\}$ on $V^2\times V$.  ($E_x$ is the fiber $\{y \in V \mid (x,y)\in E\}$.)  A graph with finite VC-dimension has small $\varepsilon$-nets \cite{haussler1987}: that is, there is a list of $y_1,\ldots,y_n \in V$ such that, for all pairs $(x,x')$, either the fiber $E^*_{x,x'}$ has density less than $\varepsilon$, or $E^*_{x,x'}\cap\{y_1,\ldots,y_n\}\neq\emptyset$.  That is, for any two points $E_x,E_{x'}$, either $E_x\bigtriangleup E_{x'}$ is small, or $E_x\bigtriangleup E_{x'}$ includes one of the points $y_1,\ldots,y_n$.  We call $\{y_1,\ldots,y_n\}$ an ``$\varepsilon$-net for differences'': the points $y_1,\ldots,y_n$ are a universal test for whether two fibers can be far apart.
There are only finitely many subsets of $\{y_1,\ldots,y_n\}$, so we can then approximate the graph as a union of rectangles of the form
\[\{x\mid (E_x\bigtriangleup E_{x_i})\cap\{y_1,\ldots,y_n\}=\emptyset\}\times E_{x_i}\]
for a short\footnote{In fact, using the bounds given by the VC theorem and Sauer-Shelah, of size polynomial in $\varepsilon$.} list of points $x_1,\ldots,x_m$.

A quick glance at this paper suggests that the proof of the generalization to hypergraphs will be slightly more complicated.

We carry out our argument in the setting of a Keisler graded probability space.  This is the natural infinitary setting for such arguments; in particular, it is the setting one obtains by considering a hypergraph $H\subseteq\prod_{i\leq k}V_i$ with $N\leq\min_i |V_i|$ and letting $N\rightarrow\infty$.  Many statements which would be approximate, or ``up to $o(N^k)$'', or something similar when considering large $N$ become exact in the infinitary setting.  Most importantly, in a probability space we can identify the ``lower dimensional information'' mentioned above with the projection onto a $\sigma$-algebra.  Additionally, this lets us speak of the distinction between finite and infinite $\VC_k$-dimension, rather than having to speak precisely of quantitative bounds for what it means to have a ``small'' $\VC_k$-dimension.

We further work in a compound multipartite setting, where we consider subsets of $\prod_{i\in[k]}V_i^{m_i}$---that is, we not only allow separate sets $V_i$ for each coordinate, we keep track of the possibility that we may have multiple coordinates coming from the same set.  (The graph $E^*$ above, which is naturally viewed as a subset of $V_1^2\times V_2$, suggests why this setting shows up in the course of the proof.)  For completeness, since it does not seem to have appeared in the literature, we write down the extension of the Keisler graded probability space to this setting in detail in Section \ref{sec:graded probability spaces}.  We need some results about the Gowers uniformity norms and their relationship to conditional expectation in this setting; these results are standard, but have also not been developed in the multipartite setting.  We include them for completeness as well, but postpone this discussion to Section \ref{sec:gowers}.

In Section \ref{sec:vc_dem_def} we define VC$_k$-dimension and recall some standard examples and facts.  However we will want to consider not just hypergraphs---that is, sets---but functions with range $[0,1]$.  We may think of these functions as weighted hypergraphs, with ordinary hypergraphs as the case where the functions are $\{0,1\}$-valued.  Such functions show up at intermediate steps anyway---for instance, in the decompositions above, the components $f^\top,f^\bot$ are naturally functions, not sets.  The extension of VC-dimension to functions has appeared in various places (e.g. \cite{MR893902,MR1385403,yaacov2009continuous}), and we give the analogous definition of VC$_k$-dimension in Definition \ref{def: VCk dimension of functions}.

We include some results showing that various operations preserve VC$_k$-dimension of functions; to avoid interrupting the main thread of the argument, we postpone this to Section \ref{sec:operations}.  The last and most difficult of these is Theorem \ref{prop: gen fib fin VCk dim}, showing that given a family of functions of low $\VC_k$-dimension, the ``average'' function still has low $\VC_k$-dimension (more precisely, the $\VC_k$-dimension of the function $f'(x_1,\ldots,x_{k+1}):=\int  f(x_1, \ldots, x_{k+2}) d \mu(x_{k+2})$ can be bounded in terms of the maximum of the $\VC_k$-dimensions of the functions $f(x_1,\ldots,x_{k+2})$ over all $x_{k+2}$). Our proof combines structural Ramsey theory with a variant of the Aldous-Hoover-Kallenberg theorem on exchangeable arrays of random variables. It provides a higher arity generalization of the main result of \cite{yaacov2009continuous} for $k=1$ using different methods.


Section \ref{sec:fibers} is devoted to proving the existence of ``$\varepsilon$-nets for differences'' for  hypergraphs of low VC$_k$-dimension.  It is a bit surprising that this is possible, because we do not have any analog of the existence of $\varepsilon$-nets; it is not even clear what the higher arity generalization of an $\varepsilon$-net would be.  Nonetheless, we \emph{do} have an analog of the $\varepsilon$-net for differences, in the following sense.

When $H\subseteq\prod_{i\leq k+1}V_i$ has small VC$_k$-dimension, it is no longer reasonable to expect that there is a short list $x_1,\ldots,x_n \in V$ so that every $k$-ary fiber $H_x$ with $x \in V$ is close (i.e.~has small symmetric difference) to one of the $H_{x_i}$.  Rather, we have to expect that each fiber $H_x$ is described by the $H_{x_i}$ \emph{together with lower dimensional information}.  This is the content of Proposition \ref{prop: finite VCk-dim implies approx}.  The remainder of Section \ref{sec:fibers} is devoted to further refinement of this result.

To prove Proposition \ref{prop: finite VCk-dim implies approx}, we suppose it fails and work with an infinite sequence of fibers which are all far from each other.  We then homogenize this sequence using many applications of Ramsey's Theorem and construct a counterexample to small VC$_k$-dimension from the resulting subsequence.  To manage the homogenization of the sequence, we pass to a sequence of indiscernibles in an ultrapower of the original graded probability space; this requires some model theoretic machinery.  We treat this machinery as a black box as much as possible, and isolate the model theoretic arguments to Section \ref{sec:indiscernibles}.

Having shown that there are finitely many $k$-ary fibers of $H$ which, up to lower dimensional information, approximate all the fibers, we are able to write down an approximation of $H$ using these fibers in Proposition \ref{prop: large proj k k plus 1}.  We then generalize this to the case where $H\subseteq\prod_{i\leq k'}V_i$ for any $k'>k$, concluding the main result of the paper, in Theorem \ref{thm: the very main thm soft} and then prove the quantitative Corollary \ref{thm: the very main thm} using one more detour through the model theoretic techniques of Section \ref{sec:indiscernibles}.

In Section \ref{sec:converse}, we prove the converse of the main theorem: if a function on $\prod_{i\in[k]}V_i$ has infinite VC$_k$-dimension, then there is some way to put a probability measure on the $V_i$ so that the function has no simple approximation using $(\leq k)$-ary sets.

Finally, in Section \ref{sec: final remarks} we discuss some questions and directions for future work that naturally arise given the results of the paper, along with some applications of our results in model theory.
\subsection{Acknowledgements}

Artem Chernikov was partially supported by the NSF CAREER grant DMS-1651321. He is grateful to Kota Takeuchi and Ita\"i Ben Yaacov for helpful discussions.  Henry Towsner was partially supported by NSF Grant DMS-1600263.  The authors thank the American Institute of Mathematics and the Institut Henri Poincar\'e for additional support.

\section{Preliminaries}\label{sec:prelim}

\subsection{Notation}\label{sec: notation}

We summarize the notation used throughout the article for a reference.

\begin{enumerate}
\item $\mathbb{N} = \{0,1, \ldots \}$. We write $\mathbb{R}_{>0}$ to denote the set of positive reals, $\mathbb{R}_{\neq 0}$ for the set of non-zero reals, $\mathbb{N}_{>0}$ for the set of positive integers, and $\mathbb{Q}^{[0,1]}$ for the set of rational numbers in the interval $[0,1]$.
\item For $i \in \mathbb{N}$, by a \emph{dyadic rational number of height $i$} we mean a rational number of the form $\frac{m}{n}$ with $m \in \mathbb{Z}$ and $n = 2^i$. We let $\mathbb{Q}_i$ be the set of all dyadic rationals of height $i$, and let $\mathbb{Q}_{\infty} = \bigcup_{i \in \mathbb{N}} \mathbb{Q}_i$ be the set of all dyadic rationals. We let $\mathbb{Q}_i^{[0,1]} := \mathbb{Q}_i \cap [0,1]$, note that it is a finite set of cardinality $2^i + 1$ for every $i \in \mathbb{N}$.
\item  For $k\in \mathbb{N}_{>0}$ we will denote by $[k]$ the set
  $\{1,\dotsc,k\}$, and $[0] := \emptyset$.
  \item For a set $V$ and $k \in \mathbb{N}$, $\binom{V}{k} = \left\{ W \subseteq V : |W| = k \right\} $ and $\binom{V}{\leq k} = \left\{ W \subseteq V : |W| \leq k \right\} $.
  \item For any $i,j \in \mathbb{N}$, $\delta_{i,j}$ is equal to $1$ if $i=j$ and $0$ otherwise. Given $k \in \mathbb{N}_{>0}$ and $i \in [k]$, $\bar{\delta}^k_i := (\delta_{i,j}: j \in [k])$. We let 
  $$\bar{0}^k := (\underbrace{0, \ldots, 0}_{k \textrm{ times}}), \bar{1}^k := (\underbrace{1, \ldots, 1}_{k \textrm{ times}}).$$
   We might omit $k$ if it is clear from the context, and simply write $\bar{\delta}_i, \bar{0}, \bar{1}$.
  \item Given $\bar{n} = (n_1, \ldots, n_k ), \bar{m} = (m_1, \ldots, m_k)$ in $\mathbb{N}^k$, we write $\bar{n} \leq \bar{m}$ if $n_i \leq m_i$ for every $i \in [k]$, and $\bar{n} < \bar{m}$ if $\bar{n} \leq \bar{m}$ and $n_i < m_i$ for at least one $i \in [k]$.
  \item Algebraic operations on tuples of numbers are always performed coordinate-wise. Given $\bar{n}, \bar{m} \in \mathbb{N}^k$ and $d \in \mathbb{N}$, we write $d \cdot \bar{n}$ to denote the tuple $(d n_1, \ldots, d n_k)$, $\bar{n} + \bar{m}$ to denote the tuple $(n_1 + m_1, \ldots, n_k + m_k)$, $\bar{n} \cdot \bar{m}$ for the tuple $(n_1 \cdot m_1, \ldots, n_k \cdot m_k)$, etc.
  \item Given two tuples $\bar{a} = (a_1, \ldots, a_m), \bar{b} = (b_1, \ldots, b_n)$, we write $\bar{a}^{\frown} \bar{b}$ for the concatenated tuple $(a_1, \ldots, a_m, b_1, \ldots, b_n)$.
\item Given a set $V$, $\mathcal{P}(V)$ denotes the set of its subsets.
\item For sets $V_1,\dotsc, V_k$ and $I\subseteq [k]$ we denote by $V_I$ the product 
$V_I=\prod_{i\in I} V_i$.  
\item Given $I \subseteq [k]$, a tuple $\bar{a} = (a_i : i \in I) \in V_I$ and a set $s \subseteq I$, we denote by $\bar{a}_s$ the subtuple $(a_i : i \in s) \in V_s$.
\item  Let  $R \subseteq V_1\ttimes V_k$ be a $k$-ary relation and $I\subseteq [k]$. Viewing
  $R$ as a binary relation on  $V_I\times V_{[k]\setminus I}$,
for  $b\in V_{[k]\setminus I}$
we denote by $R_{b}$ the fiber 
\[ R_{b} =\{ a \in V_I \colon (a,b) \in R\}. \]
\item If $\bar{a} \in V_1 \ttimes V_k$ and $\sigma: [k] \to [k]$ is a permutation, then $\sigma(\bar{a}) := \left( a_{\sigma(1)}, \ldots, a_{\sigma(k)} \right) \in V_{\sigma(1)} \ttimes V_{\sigma(k)}$.
\item Given a tuple $\bar{a} = (a_i : i \in I)$, $i \in I$ and $b \in V_i$, we let $\bar{a}_{b \to i}$ denote the tuple obtained from $\bar{a}$ by replacing $a_i$ by $b$.
\item Given a relation $R \subseteq V_1 \times \ldots \times V_k$, $I \subseteq [k]$ and $\bar{a} \in V_1 \times \ldots \times V_k$, we let 
$$R_{\bar{a} \to I} := \left\{ \bar{x} \in V_1 \times \ldots \times V_k  \mid \bar{x}_{a_i \to i, i \in I} \in R \right\}.$$
\item If $R \subseteq V_1 \ttimes V_k$ and $\sigma: [k] \to [k]$ is a permutation, then $R^{\sigma} := \{ \bar{a}_{\sigma} : \bar{a} \in R \} \subseteq V_{\sigma(1)} \ttimes V_{\sigma(k)}$.
\item For $R \subseteq V$, we write $\chi_R: V \to \{0,1\}$ to denote the characteristic function of $R$.
\item For $R \subseteq V$, we will use the notation $R^1 := R$ and $R^{-1} = \neg R := V \setminus R$.
\item If $X,Y$ are sets, then $X \triangle Y = (X\setminus Y) \cup (Y \setminus X)$ denotes their symmetric difference, and if $\varphi,\psi$ are first-order formulas, then $\phi \triangle \psi$ denotes the formula $\left( \varphi \land \neg \psi\right) \lor \left( \neg \varphi \land \psi \right)$.
\end{enumerate}
As usual, given a $\sigma$-algebra $\B$, a $\sigma$-subalgebra $\B_0 \subseteq \B$, and a $\B$-measurable function $f$, $\E(f \mid \B_0)$ denotes the conditional expectation. 
We will use freely that the conditional expectation corresponds to orthogonal projection in the corresponding Hilbert space of measurable functions---that is, for any $\B$-measurable functions $f,g$, $\int \E(f\mid\B_0)g d\mu=\int f\E(g\mid\B_0) d\mu$. As usual, the equality for functions in $L^2(\mu)$ is understood up to a measure $0$ set.
Given a set of $\B$-measurable functions $G$, for brevity $\E(f \mid \B_0 \cup G)$ will denote $\E \left(f \mid \sigma \left( \B_0 \cup G \right) \right)$. If $E \in \B$, we might write $\E(E \mid \B_0)$ to denote $\E \left( \chi_E \mid \B_0 \right)$.

\subsection{Graded probability spaces and cylinder sets}\label{sec:graded probability spaces}
We review and generalize to the partite setting the notion of graded probability spaces, which were introduced by Keisler in \cite{keisler1985chapter} and provide a natural setting for the analytic approach to the study of various hypergraph regularity phenomena.

%

We fix $k \in \mathbb{N}_{>0}$ and sets $(V_i)_{i \in [k]}$, and we are going to be considering the products $V^{n_1, \ldots, n_k} := \prod_{i \in [k]}V_i^{n_i}$ for arbitrary $n_1, \ldots, n_k \in \mathbb{N}$. An element of $\prod_{i \in [k]}V_i^{n_i}$ is a tuple
\[(v_{1,1},v_{1,2},\ldots,v_{1,n_1},v_{2,1},\ldots,v_{2,n_2},\ldots,v_{k,1},\ldots,v_{k,n_k}),\]
which we will usually abbreviate
\[(\bar v_1,\ldots,\bar v_k)\]
or just $\bar v$.
It is going to be convenient to define \emph{ordered concatenation}: if $\bar v=(\bar v_1,\ldots,\bar v_k)\in \prod_{i \in [k]}V_i^{n_i}$ and $\bar w=(\bar w_1,\ldots,\bar w_k)\in \prod_{i \in [k]}V_i^{m_i}$, we define
\[\bar v \oplus \bar w=(\bar v_1,\bar w_1,\bar v_2,\bar w_2,\ldots,\bar v_k,\bar w_k)\in \prod_{i\in [k]}V_i^{n_i+m_i}.\]

\begin{definition}\label{def: kGPS}
A \emph{$k$-partite graded probability space} $\mathfrak{P} = \left( V_{[k]}, \mathcal{B}_{\bar{n}}, \mu_{\bar{n}} \right)_{\bar{n} \in \mathbb{N}^k}$ consists of sets $(V_i)_{i \in [k]}$ and, for every $n_1,\ldots,n_k \in \mathbb{N}$, a $\sigma$-algebra $\mathcal{B}_{n_1,\ldots,n_k}\subseteq \mathcal{P} \left( \prod_{i \in  [k]}V_i^{n_i} \right)$ and a probability measure $\mu_{n_1,\ldots,n_k}$ on  $\mathcal{B}_{n_1,\ldots,n_k}$ satisfying the following axioms.

\begin{enumerate}
	\item (Symmetry) For every $n_1,\ldots,n_k \in \mathbb{N}$, $i \leq k$, permutation $\pi:[n_i]\rightarrow[n_i]$ and  $B\in\mathcal{B}_{n_1,\ldots,n_k}$, we let 
	\[B^\pi :=\{(\bar v_1, \ldots,\pi(\bar v_i),\ldots,\bar v_k)\mid (\bar v_1,\ldots,\bar v_i,\ldots,\bar v_k)\in B\}.\]
	Then 
	\begin{enumerate}
	\item $B^\pi  \in \mathcal{B}_{n_1,\ldots,n_k}$, and
	\item $\mu_{n_1,\ldots,n_k}(B^\pi)=\mu_{n_1,\ldots,n_k}(B)$.
	\end{enumerate}
	\item (Closure under products)
	If $B\in\mathcal{B}_{n_1,\ldots,n_k}$ and $C\in\mathcal{B}_{m_1,\ldots,m_k}$, then the reordered product
\[B\times C :=\{\bar v \oplus \bar w \mid \bar v\in B\text{ and }\bar w\in C\}\]
 belongs to $\mathcal{B}_{n_1+m_1,\ldots,n_k+m_k}$.
\item (Fubini property) 
Given $B\in\mathcal{B}_{n_1+m_1,\ldots,n_k+m_k}$ and $\bar w\in \prod_{i\in [k]}V_i^{m_i}$, write
\[B_{\bar w}=\{\bar v\mid \bar v \oplus \bar w\in B\}.\]
Then the \emph{Fubini property} holds for the algebras $\left(\B_{\bar{n}}, \B_{\bar{m}}, \B_{\bar{n} + \bar{m}} \right)$:
\begin{enumerate}
\item $B_{\bar w} \in \mathcal{B}_{n_1,\ldots,n_k}$ for all ${\bar w} \in \prod_{i \in [k]}V_i^{m_i}$;
\item the function ${\bar w} \mapsto \mu_{n_1,\ldots,n_k}(B_{\bar w})$ from $\prod_{i \in [k]}V_i^{m_i}$ to $[0,1]$ is $\mu_{m_1,\ldots,m_k}$-measurable; and 
\item $\mu_{n_1+m_1,\ldots,n_k+m_k}(B)=\int \mu_{n_1,\ldots,n_k}(B_{\bar w}) d\mu_{m_1,\ldots,m_k}(\bar w)$.
\end{enumerate}

\end{enumerate}

A \emph{graded probability space} $\mathfrak{P} = \left( V, \mathcal{B}_{n}, \mu_{n} \right)_{n \in \mathbb{N}}$ is just a $1$-partite graded probability space (equivalently, for any $k \in \mathbb{N}_{\geq 1}$, it can be identified with a $k$-partite graded probability space $V_1 = \ldots = V_k = V$, $\mathcal{B}_{n_1, \ldots, n_k} = \mathcal{B}_n$ and $\mu_{n_1, \ldots, n_k} = \mu_n$ for all $n_1, \ldots, n_{k}$ with $|n_1 + \ldots + n_k| = n$).
\end{definition}

\begin{remark}\label{rem: basic props of GPS}
	\begin{enumerate}
	\item A partite graded probability space canonically induces a $\sigma$-algebra and measure on any product $\prod_{j \in [d]}V_{i_j}$ with $i_j \in [k]$, by identifying elements of $\prod_{j \in [d]}V_{i_j}$ with elements of $\prod_{i \in [k]}V_i^{n_i}$ for any appropriate choice of $n_i$ and a  permutation of the coordinates (by symmetry, the choice of permutation does not matter).
	\item Let $\mathfrak{P} = \left( V_{[k]}, \mathcal{B}_{\bar{n}}, \mu_{\bar{n}} \right)_{\bar{n} \in \mathbb{N}^k}$ be a partite graded probability space. Recall that for any set $V$, $V^{0} = \{\emptyset\}$.
Then given $i \in [k]$, the set $\prod_{i \in [k]}V^{\delta_{i,j}}$ is naturally identified with the set $V_i$, the algebra $\mathcal{B}_{\bar{\delta}_i}$ is naturally identified with an algebra $\mathcal{B}^{\mathfrak{P}}_i$ of subsets of $V_i$, and the measure $\mu_{\bar{\delta}_i}$ with a measure $\mu^{\mathfrak{P}}_i$ on $\mathcal{B}^{\mathfrak{P}}_i$ (recall that $\bar{\delta}_i$ is the tuple with $1$ in the $i$th position and $0$ in the other positions, see Section \ref{sec: notation}).	
Then all of the measures $\mu_{\bar{n}}, \bar{n} \in \mathbb{N}^k$ are determined by the measures $(\mu^{\mathfrak{P}}_i : i \in [k])$ (by a straightforward induction on $|n_1 + \ldots + n_2|$ using symmetry and Fubini).
	
		\item For any $n_1, \ldots, n_k, m_1, \ldots, m_k$, we let
\[\mathcal{B}_{n_1,\ldots,n_k} \times \mathcal{B}_{m_1,\ldots,m_k} := \sigma \left( \{B \times C : B \in \mathcal{B}_{n_1,\ldots,n_k}, C \in \mathcal{B}_{m_1,\ldots,m_k}\} \right)\] be the product $\sigma$-algebra. Then 
$$\mathcal{B}_{n_1,\ldots,n_k} \times \mathcal{B}_{m_1,\ldots,m_k} \subseteq \mathcal{B}_{n_1+m_1,\ldots,n_k+m_k}$$
 (by the closure under products) and $\mu_{n_1+m_1,\ldots,n_k+m_k}$ extends the product measure $\mu_{n_1, \ldots, n_k} \times \mu_{m_1, \ldots, m_k}$ (by Fubini property).
 Note however that in a typical case of interest for us this inclusion of algebras is strict.
	\end{enumerate}
\end{remark}

\begin{remark}\label{rem: gen Fub}

Assume that the Fubini property as in Definition \ref{def: kGPS}(3) holds for $\left( \B_{\bar{n}}, \B_{\bar{m}}, \B_{\bar{n} + \bar{m}} \right)$. Then, via a straightforward approximation by simple functions argument, it also lifts from measures to general integrals. That is, for any $\B_{\bar{n} + \bar{m}}$-measurable function $f: V^{\bar{n} + \bar{m}} \to \mathbb{R}$ we have:
\begin{enumerate}
	\item the fiber $f_{\bar{w}}: V^{\bar{n}} \to [0,1], \bar{v} \mapsto f(\bar{v} \oplus \bar{w})$ is $\mu_{\bar{n}}$-measurable for all $\bar{w} \in V^{\bar{m}}$;
	\item the function $\bar{w} \mapsto \int f\left(\bar{v} \oplus \bar{w} \right) d\mu_{\bar{n}}\left( \bar{v} \right)$ is $\B_{\bar{m}}$-measurable;
	\item the function $\bar{v} \mapsto \int f\left(\bar{v} \oplus \bar{w} \right) d\mu_{\bar{m}}\left( \bar{w} \right)$ is $\B_{\bar{n}}$-measurable (using symmetry in Definition \ref{def: kGPS}(1));
	\item \begin{gather*}
		\int f \left( \bar{v} \oplus \bar{w} \right) d\mu_{\bar{n} + \bar{m}} \left( \bar{v} \oplus \bar{w} \right) = \int \left( \int f\left(\bar{v} \oplus \bar{w} \right) d\mu_{\bar{n}}\left( \bar{v} \right) \right) d\mu_{\bar{m}}\left( \bar{w} \right) = \\
		\int \left( \int f\left(\bar{v} \oplus \bar{w} \right) d\mu_{\bar{m}}\left( \bar{w} \right) \right) d\mu_{\bar{n}}\left( \bar{v} \right).
	\end{gather*}
\end{enumerate}

\end{remark}

\noindent We have the following natural way to form a new partite graded probability space from a given one.
\begin{remark}\label{rem: power graded prob space}
(``Gluing coordinates'') Assume $\left( V_{[k]}, \mathcal{B}_{\bar{n}}, \mu_{\bar{n}} \right)_{\bar{n} \in \mathbb{N}^k}$ is a $k$-partite graded probability space. Let $t \in \mathbb{N}$ and $\bar{n}_i = (n_{i,1}, \ldots, n_{i,k}) \in \mathbb{N}^k$ for $i \in [t]$ be arbitrary. We define $V'_i := V^{\bar{n}_i}$ for $i \in [t]$, and for $\bar{m} = (m_1, \ldots, m_t) \in \mathbb{N}^t$ we let $\bar{m}' := m_1 \bar{n}_1 + \ldots + m_t \bar{n}_t \in \mathbb{N}^{k}$, $\B'_{\bar{m}} := \B_{\bar{m}'}, \mu'_{\bar{m}} := \mu_{\bar{m}'}$.

Then $\mathcal{B}'_{\bar{m}}$ can be viewed as an algebra of subsets of $\prod_{i \in [t]}\left( V'_i \right)^{m_i}$ (identifying the product $\prod_{i \in [t]}\left( \prod_{j \in [k]} V_j^{n_{i,j}} \right)^{m_i}$ with $\prod_{j \in [k]} V_j^{\sum_{i \in [t]} m_i n_{i,j}}$ by Remark \ref{rem: basic props of GPS}(1)), and it is easy to see that $\left( V'_{[t]}, \mathcal{B}'_{\bar{n}}, \mu'_{\bar{n}} \right)_{\bar{n} \in \mathbb{N}^{t}}$ is a $t$-partite graded probability space.

\end{remark}

\begin{definition}\label{def: smaller argebras in GPS}
	Let $\left( V_{[k]}, \mathcal{B}_{\bar{n}}, \mu_{\bar{n}} \right)_{\bar{n} \in \mathbb{N}^k}$ be a partite graded probability space, and fix $\bar{n} = (n_1, \ldots, n_k) \in \mathbb{N}^k$. Let $n := \sum_{i \in [k]} n_i$.
	\begin{enumerate}
		\item For each $i \in [k]$, let $I_i \subseteq [n_i]$, and $\bar{I} := \left(I_1, \ldots, I_k \right)$. Then $\mathcal{B}_{\bar{n}, \bar{I}}$ is the $\sigma$-subalgebra of $\mathcal{B}_{\bar{n}}$ generated by all sets of the form
		$$\left\{ \bar{x} = (\bar{x}_1, \ldots, \bar{x}_k) \in \prod_{i \in [k]} V_i^{n_i} : \left( \left(\bar{x}_1\right)_{I_1}, \ldots, \left(\bar{x}_k\right)_{I_k} \right) \in X \right\},$$
		for $X \in \mathcal{B}_{|I_1|, \ldots, |I_k|}$.
		\item For $m < n$, we let $\B_{\bar{n},m}$ be the $\sigma$-subalgebra of $\mathcal{B}_{\bar{n}}$ generated by $\bigcup \left \{\mathcal{B}_{\bar{n},\bar{I}} : \sum_{i \in [k]} |I_i| \leq m  \right \}$.
		\item \label{def: smaller argebras in GPS 3} If $\bar{n} \in \mathbb{N}^r$ for some $r < k$, then $\B_{\bar{n}} := \B_{\bar{n}^{\frown} \bar{0}^{k-r}}$ --- a $\sigma$-algebra of subsets of $V^{\bar{n}} = \prod_{i \in [r]} V_i ^{n_i}$, and $\mu_{\bar{n}} := \mu_{\bar{n}^{\frown} \bar{0}^{k-r}}$ a measure on it.
		
		And if $m < \sum_{i \in [r]} n_i$, then $\B_{\bar{n}, m} := \B_{\bar{n}^{\frown}\bar{0}^{k-r},m}$.
              \end{enumerate}

            We refer to the sets in $\mathcal{B}_{\bar n,m}$ as the \emph{${\bar n\choose m}$-cylinder sets}, and to the sets in $\B_{\bar{1}^k,t}$ with $t \leq k$ as the \emph{$t$-ary cylinder sets}.
\end{definition}
\noindent In other words, $\mathcal{B}_{\bar{n}, m}$ is generated by those sets in $\mathcal{B}_{\bar{n}}$ that can be defined by measurable conditions each of which can involve at most $m$ out of $n$ variables. The inclusion $\B_{\bar{n},m} \subseteq \B_{\bar{n}}$ is strict in general.

\section{$\VC_k$-dimension}\label{sec:vc_dim}

\subsection{VC$_{k}$-dimension for relations}\label{sec:vc_dem_def}
We review the notion of VC$_k$-dimension, for $k \in \mathbb{N}$, generalizing the usual Vapnik-Chervonenkis dimension in the case $k=1$. It is implicit in Shelah's work on $k$-dependent theories in model theory \cite{MR3666349, MR3273451} and is studied in \cite{chernikov2014n}; and further in \cite{MR3509704, chernikov2017mekler, chernikov2019n} for model theory of  groups and fields, and in \cite{MR3787371} in connection to hypergraph growth rates.
\begin{definition}\label{def: VCk dim}
For $k \in \mathbb{N}$, let $V_1, \ldots, V_{k+1}$ be sets. We say that a $(k+1)$-ary relation $E\subseteq V_1 \ttimes V_{k+1}$ has \emph{$\VC_k$-dimension $\geq d$}, or $\VC_k(E) \geq d$, if there is a \emph{$k$-dimensional $d$-box} $A = A_1 \ttimes A_k$ with $A_i \subseteq V_i$ and $|A_i|=d$ for $i=1, \ldots, k$ \emph{shattered} by $E$. That is, for every $S \subseteq A$, there is some $b_S \in V_{k+1}$ such that $S = A \cap E_{b_S}$.
We say that $\VC_k(E) = d$ if $d$ is maximal such that there is a $d$-box shattered by $E$, and $\VC_k(E) = \infty$ if there are $d$-boxes shattered by $E$ for arbitrarily large $d$. 
\end{definition}

In the case $k=1$ and $E \subseteq V_1 \times V_2$, $\VC_1(E) = d$ simply means that the family $\mathcal{F} := \{ E_a : a \in V_2 \}$ of all subsets of $V_1$ given by the fibers of $E$ has $\VC$-dimension $d$. 

The following equivalence is straightforward (see \cite[Proposition 5.2]{chernikov2014n} for the details).
\begin{remark}\label{rem: VC_k iff omits H}
For $E\subseteq V_1 \times \ldots \times V_{k+1}$, $\VC_{k}(E) \leq d$ implies that $E$ omits some finite $(k+1)$-partite hypergraph as an induced partite hypergraph, with parts of size at most $d' := 2^{d^k}$. And if $E$ omits some finite $(k+1)$-partite hypergraph with all parts of size at most $d'$, then $\VC_k(E) \leq d'$.

In particular, $\VC_k(E) < \infty$ if and only if $E$ omits some finite $(k+1)$-partite hypergraph as an induced partite hypergraph.
\end{remark}

\begin{fac}\label{fac: basic prop of k-dep}
For every $d \in \mathbb{N}$ there exists some $D = D(d) \in \mathbb{N}$ such that: if $E,F \subseteq V_1 \ttimes V_{k+1}$ are two relations with $\VC_{k}(E), \VC_{k}(F) \leq d$, then:
\begin{itemize}
\item \cite[Corollary 3.15]{chernikov2014n} $\VC(\neg E), \VC(E \cap F), \VC(E \cup  F) \leq D$;
\item \cite[Corollary 5.3]{chernikov2014n} If $\sigma \in S_{k+1}$ is any permutation of the set \{1, \ldots, k+1\}, then $\VC_{k}(E^\sigma) \leq D$.
\end{itemize}
\end{fac}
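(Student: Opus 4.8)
The plan is to prove the four assertions separately and take $D(d)$ to be the maximum of the four bounds. The negation clause costs nothing: if $A=A_1\ttimes A_k$ is a $k$-box and $b\in V_{k+1}$, then $(\neg E)_b\cap A=A\setminus(E_b\cap A)$, and since $S\mapsto A\setminus S$ is a bijection of $\mathcal{P}(A)$, the box $A$ is shattered by $\neg E$ exactly when it is shattered by $E$; hence $\VC_k(\neg E)=\VC_k(E)\le d$. For the union I would not argue directly but write $E\cup F=\neg(\neg E\cap\neg F)$ and reduce to the negation and intersection clauses, so the real work is the permutation clause and the intersection clause.

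For permutations I would go through the ``omits a finite partite hypergraph'' reformulation of Remark \ref{rem: VC_k iff omits H}. The point is that $\VC_k(E)\le d$ says precisely that $E$ omits, as an induced partite sub-hypergraph, the canonical $(d{+}1)$-shattering configuration $\mathcal{S}_{d+1}$: the $(k+1)$-partite $(k+1)$-uniform hypergraph on parts $W_1,\dots,W_k$ of size $d+1$ and a part $W_{k+1}$ of size $2^{(d+1)^k}$ indexed by the subsets $S\subseteq W_1\ttimes W_k$, with $(w_1,\dots,w_k,S)$ an edge iff $(w_1,\dots,w_k)\in S$ --- an induced copy of $\mathcal{S}_{d+1}$ in $E$ is literally a shattered $(d{+}1)$-box. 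Now a copy of $\mathcal{S}_{d+1}^{\sigma}$ inside $E^{\sigma}$ would, after applying $\sigma^{-1}$, be a copy of $\mathcal{S}_{d+1}$ inside $E$; so $E^{\sigma}$ omits $\mathcal{S}_{d+1}^{\sigma}$, which is again a finite $(k+1)$-partite hypergraph with all parts of size at most $2^{(d+1)^k}$, and the converse direction of Remark \ref{rem: VC_k iff omits H} yields $\VC_k(E^{\sigma})\le 2^{(d+1)^k}$.

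For the intersection I would use a shatter-function count. For a relation $E$ and $m\in\mathbb{N}$ let $\pi_E(m)$ be the largest value of $\lvert\{E_b\cap A:b\in V_{k+1}\}\rvert$ over all $k$-boxes $A=A_1\ttimes A_k$ with $\lvert A_i\rvert=m$. The key input is the higher-arity Sauer--Shelah lemma for $\VC_k$-dimension (part of the basic theory of $\VC_k$, cf. \cite{chernikov2014n}): $\VC_k(E)\le d$ implies $\pi_E(m)\le 2^{o(m^k)}$, hence $\pi_E(m)<2^{m^k/3}$ once $m$ exceeds some $D_0(d)$. Since $(E\cap F)_b\cap A=(E_b\cap A)\cap(F_b\cap A)$, the trace of $E\cap F$ on $A$ is determined by the pair of traces of $E$ and of $F$ on $A$, so $\pi_{E\cap F}(m)\le\pi_E(m)\,\pi_F(m)$; but a box of side $m$ shattered by $E\cap F$ forces $\pi_{E\cap F}(m)=2^{m^k}$, and with $\VC_k(E),\VC_k(F)\le d$ this gives $2^{m^k}\le 2^{2m^k/3}$, impossible for $m\ge D_0(d)$. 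Thus $\VC_k(E\cap F)<D_0(d)$, and then $\VC_k(E\cup F)<D_0(d)$ by the reduction above.

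The main obstacle is precisely the higher-arity Sauer--Shelah bound $\pi_E(m)\le 2^{o(m^k)}$. For $k=1$ this is the classical Sauer--Shelah lemma, but for $k\ge 2$ it is genuinely harder; the natural route is an induction on $k$ that peels off one coordinate of the box at a time and controls the growth of the number of traces at each step, using the $k=1$ case as the base. Everything else --- the negation, the union, the permutation clause, and the passage from the shatter-function bound to a dimension bound --- is bookkeeping on top of this lemma and of Remark \ref{rem: VC_k iff omits H}.
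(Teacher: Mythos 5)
There is no in-paper proof to compare against: the paper records this statement as a Fact imported verbatim from \cite{chernikov2014n} (Corollaries 3.15 and 5.3 there), so your argument can only be judged on its own terms and against the other tools the paper quotes from the same source. On those terms it is correct. The negation and De Morgan reductions are immediate, as you say. The permutation clause via the omission characterization works: a shattered $(d+1)$-box is literally an induced copy of the canonical configuration $\mathcal{S}_{d+1}$, so $E$ omits $\mathcal{S}_{d+1}$, hence $E^{\sigma}$ omits $\mathcal{S}_{d+1}^{\sigma}$, whose parts all have size at most $2^{(d+1)^k}$, and the converse direction of Remark \ref{rem: VC_k iff omits H} gives $\VC_k(E^{\sigma})\leq 2^{(d+1)^k}$ --- a bound depending only on $d$ (and $k$), as required. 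The intersection clause via the trace count is also sound: for a fixed box the trace of $E\cap F$ is determined by the pair of traces of $E$ and $F$, so $\pi_{E\cap F}(m)\leq\pi_E(m)\pi_F(m)$, and the higher-arity Sauer--Shelah bound (which this paper also quotes, as Fact \ref{fac: n-dep Sauer-Shelah}, with the explicit form $2^{m^{k-\varepsilon(d)}}$) makes a shattered box of side $m\geq D_0(d)$ impossible because $2^{m^k}\leq 2^{2m^{k-\varepsilon}}$ fails once $m^{\varepsilon}>2$.

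The one soft spot is that the entire weight of the Boolean-combination clause rests on the higher-arity Sauer--Shelah lemma, which you invoke rather than prove and only gesture at ("peel off one coordinate"); since the paper itself treats that bound as a citable fact from the same reference, and that fact does not depend on the statement you are proving, this is not circular and is acceptable here --- but be aware that in a fully self-contained account that lemma is where essentially all the work lives (and it is in fact how \cite{chernikov2014n} derives its Corollary 3.15). For the permutation clause your route through hypergraph omission is arguably more elementary than the original treatment, at the cost of the exponential loss $d\mapsto 2^{(d+1)^k}$, which is harmless for this qualitative statement.
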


We also extend the definition of $\VC_k$-dimension to relations of arity higher than $k+1$ as follows:
\begin{definition}\label{def: VCd for higher arity}
  Let $k<k'\in\mathbb{N}$ be arbtirary. We say that a $k'$-ary relation $E\subseteq V_1\ttimes V_{k'}$ has \emph{$\VC_k$-dimension $\leq d$} if for any $I\subseteq[k']$ with $|I|=k'-(k+1)$ and any $b\in V_I$, the relation $E_b$ (i.e.~the fiber of $E$ with the coordinates in $I$ fixed by the elements of the tuple $b$, viewed as a $(k+1)$-ary relation on $V_{[k']\setminus I}$) has $\VC_k$-dimension $\leq d$ (in the sense of Definition \ref{def: VCk dim}).

  We write $\VC_k(E)$ for the least $d$ such that $VC_k$-dimension of $E$ is $\leq d$, or $\infty$ if there is no such $d$.
\end{definition}
That is, when $E$ is a $k'$-ary relation with $k'>k+1$, the $\VC_k$-dimension of $E$ is the supremum of the $\VC_k$-dimension over all $(k+1)$-ary fibers $E_b$.

\begin{remark}\label{rem: props of VCk higher arity}
	It is easy to see that any fiber of a relation with finite $\VC_k$-dimension also has finite $\VC_k$-dimension; that finite $\VC_k$-dimension is preserved under Boolean combinations and permutations of variables (using Fact \ref{fac: basic prop of k-dep}); and that if $k' > k_2 \geq k_1 $ and $E$ is a $k'$-ary relation with $\VC_{k_1}(E) < \infty$, then also $\VC_{k_2}(E) < \infty$. 
\end{remark}


%

The natural examples of relations with finite $\VC_k$-dimension are those which are ``essentially $k$-ary''---that is, relations which are built from $k$-ary relations.

\begin{example}
Let $E \subseteq V_1 \ttimes V_{k+1}$ be a relation 	given by a finite Boolean combination of arbitrary relations $E_1, \ldots, E_m, m \in \mathbb{N}$, such that each $E_i$ is of the form $E'_i \times V_{[k+1] \setminus I_i}$ for some $I_i \subseteq [k+1]$ with $|I_i| \leq k$ and some $E'_i \subseteq V_{I_i}$. 
Then $\VC_k(E) < \infty$ by Fact \ref{fac: basic prop of k-dep}(1), since every relation of arity $\leq k$ trivially has finite $\VC_k$-dimension.

\end{example}
The main result of the paper essentially shows that, up to an error of arbitrarily small measure, every $k$-dependent relation is of this form.
\begin{example}
Assume $V_1 = V_2 = V_3 = V$, $ F, G, H \subseteq V^2$ are arbitrary (e.g.~quasi-random), and let $E \subseteq V^3$ consist of those triples $(x,y,z) \in V^3$ for which an odd number of the pairs $(x,y), (x,z), (y,z)$ belongs to $F,G,H$, respectively. We claim that VC$_2(E) \leq 65$. Consider any $\{y_1, \ldots, y_5 \} \subseteq V$ and $\{z_1, \ldots, z_{65} \} \subseteq V$. By Ramsey's theorem, possibly reordering the elements, we may assume that either $\{y_1, y_2, y_3 \} \times \{z_1, z_2, z_3 \} \subseteq H$ or $\{ y_1, y_2, y_3 \} \times \{ z_1, z_2, z_3 \} \cap H = \emptyset$. But then no $x \in V$ can satisfy $E_x \cap \{y_1, y_2, y_3 \} \times \{ z_1, z_2, z_3 \} = \{ (y_1,z_1), (y_2, z_2), (y_3, z_3) \}$, as this would imply that no two of the values $\chi_F(x,y_1), \chi_F(x,y_2), \chi_F(x,y_3)$ can be equal, which is impossible.
\end{example}

\begin{example}
Let $V$ be a $K$-vector space, where $K$ is one of the following fields: $\mathbb{F}_p$, $\mathbb{C}$, $\mathbb{F}_p^{\alg}$ or $\mathbb{R}$, where $p$ is a prime number. Let $f: V\times V \to K$ be a non-degenerate bilinear form. Then every relation definable in the structure $(V,K,f)$ (on tuples of any arity), in the sense of first order logic, has finite $\VC_2$-dimension. See \cite{chernikov2019n} for the details.
\end{example}

The following is a generalization of the Sauer-Shelah lemma from VC$_1$ to VC$_k$-dimension.

\begin{fac}\label{fac: n-dep Sauer-Shelah}\cite[Proposition 3.9]{chernikov2014n}
If $E \subseteq V_1 \ttimes V_{k+1}$ satisfies $\VC_{k}(E) < d$, then there is some $\varepsilon = \varepsilon(d) \in \mathbb{R}_{>0}$ such that: for any $A = A_1 \ttimes A_{k} \subseteq V_1 \ttimes V_k$ with $|A_1| = \ldots = |A_{k}| = m$, there are at most $2^{m^{k - \varepsilon}}$ different sets $S \subseteq A$ such that $S = A \cap E_b$ for some $b \in V_{k+1}$.
\end{fac}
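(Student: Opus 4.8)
The plan is to deduce this from the ordinary Sauer--Shelah lemma by way of a Zarankiewicz-type (K\H{o}v\'ari--S\'os--Tur\'an) counting bound for $k$-uniform $k$-partite hypergraphs. Fix the box $A = A_1 \ttimes A_k$ with $|A_i| = m$ and set $\mathcal{S} := \{ A \cap E_b : b \in V_{k+1}\} \subseteq \mathcal{P}(A)$; the quantity to bound is $|\mathcal{S}|$. The first step is to note that the hypothesis translates cleanly into the language of ordinary set systems on the $m^k$-element ground set $A$: for $A'_i \subseteq A_i$ with $|A'_i| = d$, the $d$-box $A' := A'_1 \ttimes A'_k$ is shattered by $E$ in the sense of Definition \ref{def: VCk dim} if and only if $\mathcal{S}$ shatters $A'$ as a family of subsets of $A$ (because $A' \subseteq A$ forces $A' \cap (A \cap E_b) = A' \cap E_b$). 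Since $\VC_k(E) < d$, no such $A'$ is shattered by $\mathcal{S}$. Shattering is moreover downward monotone: if $\mathcal{S}$ shatters $B \subseteq A$ it shatters every subset of $B$ (given $S \subseteq A' \subseteq B$, pick $b$ with $S = B \cap E_b$; then $A' \cap E_b = A' \cap S = S$). Hence $\mathcal{S}$ shatters no $B \subseteq A$ that contains a sub-box of sides $d,\dots,d$.

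The combinatorial heart is then: there is a constant $c = c(d,k)$ so that every $B \subseteq A_1 \ttimes A_k$ with $|B| > c \cdot m^{k - d^{-(k-1)}}$ contains a sub-box $A'_1 \ttimes A'_k$ with $|A'_i| = d$ for all $i$. This is Erd\H{o}s's extension of the K\H{o}v\'ari--S\'os--Tur\'an theorem to $k$-uniform $k$-partite hypergraphs (a $k$-partite $k$-uniform hypergraph on parts of size $m$ with no copy of the complete $k$-partite $K^{(k)}_{d,\dots,d}$ has $O(m^{k - d^{-(k-1)}})$ edges). Combined with the previous step, the ordinary VC-dimension of $\mathcal{S}$, viewed as a family of subsets of the $m^k$-element set $A$, is at most $s_0 := \lceil c\, m^{k - d^{-(k-1)}}\rceil$. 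By the usual Sauer--Shelah lemma, $|\mathcal{S}| \le \sum_{i \le s_0}\binom{m^k}{i} \le (m^k + 1)^{s_0}$, so
\[ \log_2 |\mathcal{S}| \;\le\; s_0 \log_2(m^k + 1) \;\le\; 2k c \,(\log_2 m)\, m^{k - d^{-(k-1)}}. \]
For all $m$ above a threshold $m_0 = m_0(d,k)$ one has $\log_2 m \le m^{\frac12 d^{-(k-1)}}$, so the right-hand side is at most $m^{k - \varepsilon}$ with $\varepsilon := \tfrac12 d^{-(k-1)}$, giving $|\mathcal{S}| \le 2^{m^{k-\varepsilon}}$. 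For the finitely many remaining $m \le m_0$ one adjusts by a multiplicative constant depending only on $d,k$ (for each such $m$, $|\mathcal{S}| \le 2^{m^k}$ is already bounded in terms of $d$ and $k$), which is the sense in which the stated estimate should be read.

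I expect the main obstacle to be the hypergraph K\H{o}v\'ari--S\'os--Tur\'an step with the correct exponent, together with the bookkeeping needed to absorb the logarithmic factor produced by Sauer--Shelah into a genuine polynomial saving in the exponent of $m$; this is exactly why the gain $\varepsilon$ one gets out is of the crude shape $\varepsilon \sim d^{-(k-1)}$ rather than something cleaner, and why the bound is really an asymptotic-in-$m$ statement. (The case $k=1$ of the argument recovers, in weakened form, the classical Sauer--Shelah bound.)
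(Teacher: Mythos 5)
Your argument is correct and is essentially the route behind the cited result: the statement is quoted from \cite[Proposition 3.9]{chernikov2014n}, and the Remark following it in the paper indicates exactly your scheme --- the trace count is bounded by $\sum_{i<z}\binom{m^k}{i}$ with $z$ the Zarankiewicz number (Sauer--Shelah applied after observing that no set containing a $d$-box, hence no sufficiently large set by the hypergraph K\H{o}v\'ari--S\'os--Tur\'an/Erd\H{o}s bound, can be shattered), which is then absorbed into $2^{m^{k-\varepsilon}}$. Your caveat about small $m$ matches the paper's own qualification that the final inequality is an asymptotic-in-$m$ statement.
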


\begin{remark}
More precisely, if $\VC_k \leq d$, then the upper bound above is actually given by $\sum_{i<z} {m^k \choose i} \leq 2^{m^{k-\varepsilon}}$ for $m \geq k$, where $z = z_k(m, d+1)$ is the Zarankiewicz number, i.e.~the minimal natural number $z$ satisfying: every $k$-partite $k$-hypergraph with parts of size $m$ and $\geq z$ edges contains the complete $k$-partite hypergraph with each part of size $d+1$. If $k=1$, then $z_1(m,d+1) = d+1$, hence the bound in Fact \ref{fac: n-dep Sauer-Shelah}  coincides with the Sauer-Shelah bound, and for a general $k$ the bound in Fact \ref{fac: n-dep Sauer-Shelah} appears close to optimal (see \cite[Proposition 3.9]{chernikov2014n} for the details).
\end{remark}

\subsection{$\VC_k$-dimension for real-valued functions}\label{sec:vc real}

We generalize the notion of $\VC_{k}$-dimension and some of its basic properties from relations to functions, generalizing \cite{MR893902,MR1385403} in the case $k=1$.

	\begin{definition}\label{def: VCk dimension of functions}
	Let $f: \prod_{i \in [k+1]} V_i \to [0, 1]$ be a function.
	\begin{enumerate}
		\item Given $r < s \in [0,1]$, we say that a box $A = A_1 \ttimes A_{k}$ with $A_i \subseteq V_i$ is \emph{$(r,s)$-shattered by $f$} if for every $S \subseteq A$ there exists some $c_S \in V_{k+1}$ so that $f(\bar{a},c_S) \leq r$ for every $\bar{a} \in S$ and $f(\bar{a},c_S) \geq s$ for every $\bar{a} \in A \setminus S$.
		\item Given $\bar{d} = (d_{r,s})_{r<s \in [0,1]}$ with each $d_{r,s} \in \mathbb{N}$, we will write $\VC_k(f) \leq  \bar{d}$ if for every $r<s \in [0,1]$, there is no box $A = \prod_{i \in [k]} A_i$ with $A_i \subseteq V_i$ and $|A_i| = d_{r,s}$ for each $i \in [k]$ which is $(r,s)$-shattered by $f$.
		\item We say that $f$ has \emph{finite $\VC_{k}$-dimension}, or $\VC_{k}(f) < \infty$, if there exists some sequence $\bar{d}$ with $d_{r,s} \in \mathbb{N}$ so that $\VC_{k}(f) \leq \bar{d}$; and that $f$ has infinite $\VC_{k}$-dimension or $\VC_k(f)=\infty$ otherwise.

		\item Given an arbitrary $k' \in \mathbb{N}$, we say that a function $f: \prod_{i \in [k']} V_i \to [0, 1]$ satisfies $\VC_k(f) \leq \bar{d}$ if either $k' \leq k$, or $k' > k$ and for any $I\subseteq[k']$ with $|I|=k'-(k+1)$ and any $\bar{b} \in V_I$, the function $f_{\bar{b}}: V_{[k'] \setminus I} \to [0,1], f_{\bar{b}}(\bar{x}) = f(\bar{x} \oplus b)$  has $\VC_k$-dimension $\leq \bar{d}$.
                  \end{enumerate}
	\end{definition}

	\begin{remark}
		Note that if $E \subseteq \prod_{i \in [k+1]} V_i$, then $\VC_{k}(E) \leq d$ if and only if $\VC_k(\chi_{E}) \leq \bar{d}$ with $d_{r,s} = d$ for all $r<s \in [0,1]$. 
	\end{remark}
	
	It is sometimes convenient to speak of the \emph{$\VC_k$-dimension of $f$ ``at $(r,s)$''}:
\begin{definition}
  Let $f: \prod_{i \in [k+1]} V_i \to [0, 1]$ be a function.  We will write $\VC_k^{r,s}(f)\leq d$ if there is no box $A = \prod_{i \in [k]} A_i$ with $A_i \subseteq V_i$ and $|A_i| = d$ for each $i \in [k]$ which is $(r,s)$-shattered by $f$.
\end{definition}
That is, $\VC_k(f)\leq\bar d$ is the same as $\VC_k^{r,s}(f)\leq d_{r,s}$ for all $r<s$.
	
	Finally, the following is a straightforward analog of Remark \ref{rem: VC_k iff omits H} for real-valued functions.
	\begin{remark}\label{rem: VCk for f iff omits}
For $f: \prod_{i \in [k+1]} V_i \to [0,1]$ and $r < s$ in $[0,1]$, $\VC^{r,s}_{k}(f) \leq d$ implies that $f$ omits some finite $(k+1)$-partite $k$-uniform hypergraph $H$ as an ``induced'' partite hypergraph with parts of size at most $d' := 2^{d^k}$, in the sense that there is no way to identify the $i$th part of $H$ to a subset of $V_i$ so that, restricting to these sets, $f$ takes values  $\leq r$ on the edges of $H$ and $\geq s$ on the non-edges of $H$).

And if $f$ omits some finite $(k+1)$-partite hypergraph with all parts of size at most $d'$ in this sense, then $\VC^{r,s}_k(E) \leq d'$.
	\end{remark}

        \section{Level sets and some lemmas about $L^2$-norm}\label{sec:level sets}
        Throughout this section, we fix $k \in \mathbb{N}_{\geq 1}$ and let $\mathfrak{P} = \left(V_{[k]},\B_{\bar{n}}, \mu_{\bar{n}} \right)_{\bar{n} \in \mathbb{N}^{k}}$ be a $k$-partite graded probability space. We fix $\bar{n} = (n_1, \ldots, n_k) \in \mathbb{N}^k$, $n = \sum_{i \in [k]} n_i$ and  $f: \prod_{i \in [k]} V_i^{n_i} \to [0,1]$ a $\B_{\bar{n}}$-measurable function.

        \subsection{Level Sets of functions}

        We will frequently need to consider the level sets of functions.

        \begin{definition}\label{def: level set}
        For $r,q \in \mathbb{R}$, we let 
        \begin{gather*}
        	f^{<r} := \left\{ \bar{x} \in V^{\bar{1}^{k}} : f(\bar{x}) < r \right\},\\
        	f^{\geq r} := V^{\bar{1}^{k} } \setminus f^{<r}, f^{[r,q)} := f^{<q} \cap f^{\geq r}.
        \end{gather*}
\end{definition}

        The next lemma captures the following idea: if $f$ is not $\mathcal{B}$-measurable then there should be points which are ``fuzzy'' with respect to $\mathcal{B}$, in the sense that there are an $r<s$ so that if we made a random choice of $x$ with respect to $\mathcal{B}$, there should be positive probability that $f(x)<r$ and positive probability that $f(x)>s$.  In the language of $\sigma$-subalgebras, this becomes the statement that both $\mathbb{E}(\chi_{f^{<r}}\mid\mathcal{B})(x)\geq\delta$ and $\mathbb{E}(\chi_{f^{\geq s}}\mid\mathcal{B})(x)\geq\delta$ for some $\delta>0$.

\begin{lemma} \label{lem: non-triv cond exp}
	Assume that  $f: V^{\bar{n}} \to [0,1]$ is a $\B_{\bar{n}}$-measurable  function, $\varepsilon \in \mathbb{R}_{>0}$ and $\B \subseteq \B_{\bar{n}}$ is a  $\sigma$-algebra such that $\norm{f - \E(f \mid \B)}_{L^2} \geq \varepsilon$.
Then there exist some $t = t(\varepsilon) \in \mathbb{N}$, $r<s \in \mathbb{Q}_t^{[0,1]}$ and $\delta = \delta(\varepsilon) \in \mathbb{R}_{>0}$ so that
$$\mu_{\bar{n}} \left(\left\{x \in V^{\bar{n}} : \E\left(\chi_{f^{<r}}\mid \B \right)(x) \geq \delta \land \E \left( \chi_{f^{\geq s}} \mid \B \right)(x) \geq  \delta \right\} \right) \geq \delta.$$
\end{lemma}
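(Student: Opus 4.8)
The idea is to contrapositively control the $L^2$-defect $\|f-\E(f\mid\B)\|_{L^2}$ by a finite collection of "fuzziness" events, one for each pair $r<s$ of dyadic rationals of a fixed height. First I would fix a height parameter $t=t(\varepsilon)$ to be chosen at the end (morally $2^{-t}\ll\varepsilon$), and partition $[0,1]$ into the dyadic intervals $\big[\tfrac{j}{2^t},\tfrac{j+1}{2^t}\big)$. Write $g:=\E(f\mid\B)$. The plan is to show that if for \emph{every} pair $r<s$ in $\mathbb{Q}_t^{[0,1]}$ the event in the statement has measure $<\delta$, then $\|f-g\|_{L^2}<\varepsilon$, for a suitable $\delta=\delta(\varepsilon)$; taking the contrapositive gives the lemma (with the pair $r<s$ obtained being one of the finitely many dyadic pairs).

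The mechanism: for a point $x$, say $x$ is \emph{$\B$-fuzzy} if there exist dyadic $r<s$ of height $t$ with $\E(\chi_{f^{<r}}\mid\B)(x)\geq\delta$ and $\E(\chi_{f^{\geq s}}\mid\B)(x)\geq\delta$. By a union bound over the $O(4^t)$ pairs, the set $Z$ of $\B$-fuzzy points has $\mu_{\bar n}(Z)\le 4^t\delta$. On the complement $V^{\bar n}\setminus Z$: for every consecutive pair $r=\tfrac{j}{2^t}$, $s=\tfrac{j+1}{2^t}$ one of $\E(\chi_{f^{<r}}\mid\B)(x)<\delta$ or $\E(\chi_{f^{\ge s}}\mid\B)(x)<\delta$ holds, and one checks that consequently $\E(\chi_{f^{<r}}\mid\B)(x)$, as a function of $j$, jumps from $<\delta$ to $>1-2^t\delta$ within a single step at some index $j(x)$; equivalently, $f(x)$ lies in the interval $\big[\tfrac{j(x)}{2^t},\tfrac{j(x)+1}{2^t}\big)$ up to a conditional-probability error $\le 2^t\delta$, and the same is true of $g(x)=\E(f\mid\B)(x)$ since $g$ is an average of $f$ against a probability kernel concentrated (up to mass $2^t\delta$) on that same interval. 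Hence $|f(x)-g(x)|\le 2^{-t}+O(2^t\delta)$ off $Z$. Integrating, $\|f-g\|_{L^2}^2\le \big(2^{-t}+C2^t\delta\big)^2 + \mu(Z)\le \big(2^{-t}+C2^t\delta\big)^2 + 4^t\delta$. Choosing first $t$ with $2^{-t}<\varepsilon/2$ and then $\delta=\delta(\varepsilon,t)$ small enough makes the right side $<\varepsilon^2$, contradiction. Reading off constants: $t=t(\varepsilon)$ and $\delta=\delta(\varepsilon)$ depend only on $\varepsilon$, and the conclusion holds for at least one dyadic pair $r<s$ of height $t$.

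The one genuinely delicate point is making the quantitative bookkeeping in the "off $Z$" case clean, in particular arguing that the conditional law of $f(x)$ given $\B$ is, outside $Z$, essentially concentrated in one dyadic interval of length $2^{-t}$. I would do this by the following telescoping observation: the functions $x\mapsto\E(\chi_{f^{<j/2^t}}\mid\B)(x)$ are monotone nondecreasing in $j$, start at $0$ and end at $1$; for $x\notin Z$, whenever one of them is $\geq\delta$ and $<1-\delta$, the "next gap" $\E(\chi_{f^{[j/2^t,(j+1)/2^t)}}\mid\B)(x)=\E(\chi_{f^{<(j+1)/2^t}}\mid\B)(x)-\E(\chi_{f^{<j/2^t}}\mid\B)(x)$ must be $\ge 1-2\delta$ (else both $\chi_{f^{<(j+1)/2^t}}$ has conditional mass $\ge\delta$ for the "$<r$" side with $r=(j+1)/2^t$ large and $\chi_{f^{\ge j/2^t}}$ has conditional mass $\ge\delta$ — choosing the pair $(j/2^t,(j+1)/2^t)$ — contradicting $x\notin Z$). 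So there is a unique such jump index $j(x)$, the conditional mass outside the interval $[j(x)/2^t,(j(x)+1)/2^t)$ is $\le 2\delta$, hence $f(x)$ itself lies in that interval except on a further $\B$-null refinement, and both $f(x)$ and $g(x)$ are within $2^{-t}+2\delta$ of its left endpoint. Everything else is a routine application of the tower property / the orthogonal-projection characterization of conditional expectation recalled in Section~\ref{sec: notation} and Remark~\ref{rem: gen Fub}, together with the finite union bound.
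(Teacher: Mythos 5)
Your plan is essentially the paper's argument run in the contrapositive: discretize $[0,1]$ at dyadic scale $2^{-t}$ with $t=t(\varepsilon)$, observe that at a non-fuzzy point the conditional law of $f$ given $\B$ is concentrated in a short dyadic window, and conclude that if every pair $r<s$ of height $t$ has fuzzy set of measure $<\delta$ then $f$ is $L^2$-close to a $\B$-measurable function, contradicting the hypothesis. The paper argues directly: it builds the step function $g=\sum_i q_i\chi_{V_i}$ from the sets where some overlapping level set $U_i$ has $\E(\chi_{U_i}\mid\B)\geq 1-\gamma$, deduces $\mu_{\bar n}(Z)\geq\varepsilon/2$ for the fuzzy set $Z$, and then pigeonholes over quadruples of indices to fix one pair $r<s$ on a positive-measure subset of $Z$; your union bound over the $O(4^t)$ dyadic pairs replaces that pigeonhole, at the cost of a worse $\delta(\varepsilon)$, which is immaterial for the statement.

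Two of your intermediate claims need repair, though neither damages the plan. First, the ``single step'' claim is false as written: off $Z$, writing $F_j:=\E(\chi_{f^{<j/2^t}}\mid\B)(x)$ and letting $j^*$ be minimal with $F_{j^*}\geq\delta$, you only get $F_{j^*-1}<\delta$ and $F_{j^*+1}>1-\delta$; the value $F_{j^*}$ itself can be anywhere in $[\delta,1]$, so the one-step gap $F_{j^*+1}-F_{j^*}$ need not be $\geq 1-2\delta$ and the conditional mass outside the single interval $[j^*/2^t,(j^*+1)/2^t)$ can be large. The correct statement is concentration, up to conditional mass $2\delta$, on the doubled interval $[(j^*-1)/2^t,(j^*+1)/2^t)$, which only changes $2^{-t}$ to $2^{1-t}$ in your final estimate; this off-by-one is exactly what the paper's overlapping sets $U_i=f^{\geq q_i}\cap f^{<q_{i+2}}$ (two consecutive intervals) are designed to absorb. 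Second, the set of $x\notin Z$ with $f(x)$ outside its window is not $\B$-null: since $j^*(\cdot)$ is $\B$-measurable, integrating the bound $\E(\chi_{\{f\notin I_{j}\}}\mid\B)<2\delta$ over the sets $\{j^*=j\}\setminus Z$ only shows this exceptional set has measure at most $2\delta$, and that extra $2\delta$ must be added to your bound on $\norm{f-\E(f\mid\B)}_{L^2}^2$ — again harmless once $\delta$ is chosen small relative to $4^{-t}\varepsilon^2$.
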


\begin{proof}

 
Let $\alpha \in \mathbb{R}_{>0}$ be arbitrary, and we fix a sufficiently large $t = t(\alpha) \in \mathbb{N}$ and an even $\ell = \ell(\alpha) \in \mathbb{N}$ and a partition $0 = q_0 < \ldots < q_{\ell} = 1$ of $[0,1]$ with $q_i \in \mathbb{Q}_t^{[0,1]}, q_{i} - q_{i-1} < \alpha$ for all $i \in [\ell]$. We let $U_{-1} := f^{ < q_1}$, $U_i := f^{\geq q_i} \cap f^{<q_{i+2}}$ for $i \in \{0, \ldots, \ell -2 \}$, $U_{i} := f^{\geq q_{i}}$ for $i \in \{\ell-1, \ell -2\}$. 

 Fix $\gamma \in \mathbb{R}_{>0}$, and let 
 \begin{gather*}
  Z := \left\{ x \in V^{\bar{n}} : \bigwedge_{i \in \{-1, \ldots, \ell\}} \E \left( \chi_{U_i} \mid \B \right) < 1 - \gamma \right\}, \textrm{ and for } i \in \{-1, \ldots, \ell \},\\
  V_i := \left\{ x \in V^{\bar{n}} \setminus Z : i = \min \left\{ j \in  \{-1, \ldots, \ell\} : \E(\chi_{U_j} \mid \B)(x) \geq 1 - \gamma \right\} \right\}.
  \end{gather*}  
Note that $\{ Z, V_{-1}, \ldots,  V_{\ell} \}$ is a partition of $V^{\bar{n}}$, and each of these sets is in $\B$. And for each $i \in \{-1, \ldots, \ell \}$ we have
 \begin{gather}\label{eq: exp of int2}
 	\mu_{\bar{n}} \left(V_i \cap U_i \right) = \int_{V_i} \chi_{U_i} d\mu_{\bar{n}} = \int_{V_i} \E \left( \chi_{U_i} \mid \B \right) d\mu_{\bar{n}}  \geq (1 - \gamma ) \mu_{\bar{n}} \left( V_i \right).
 \end{gather}
Consider the $\B$-measurable function $g := \sum_{i \in \{-1, \ldots, \ell\}} q_i \chi_{V_i}$. 

Fix  $i \in \{-1, \ldots, \ell\}$ and $x \in V_i \cap U_i$. Then $g(x) = q_i$, and by definition of the $U_i$'s: $f(x) \in [q_i, q_{i+2}]$ if $i \in \{0, \ldots, \ell-2 \}$,  $f(x) \in [q_i, 1]$ if $i \in \{\ell-1, \ell-2\}$, and $f(x) \in [0,q_1]$ if $i = -1$. In either case, we get $|f-g|(x) \leq 2 \alpha$.

Then, using the assumption on $f$, \eqref{eq: exp of int2} and that $f,g$ are $[0,1]$-valued, we have
\begin{gather*}
	\varepsilon \leq \norm{f - g}^2_{L^2} = \int (f-g)^2 d \mu_{\bar{n}} = \\
	\int_{Z}(f-g)^2 d \mu_{\bar{n}} + \sum_{i \in \{-1, \ldots, \ell\}} \int_{V_i \setminus U_i} (f-g)^2 d \mu_{\bar{n}} + \sum_{i \in \{-1, \ldots, \ell\}} \int_{V_i \cap U_i} (f-g)^2 d \mu_{\bar{n}}\\
	\leq \mu_{\bar{n}}(Z) + \sum_{i \in \{-1, \ldots, \ell\}} \mu_{\bar{n}} \left( V_i \setminus U_i \right) + \sum_{i \in \{-1, \ldots, \ell\}} (2\alpha)^2 \mu_{\bar{n}} (V_i \cap U_i)\\
	\leq \mu_{\bar{n}}(Z) + \gamma \sum_{i \in \{-1, \ldots, \ell\}} \mu_{\bar{n}} \left( V_i \right) + (2\alpha)^2 \sum_{i \in \{-1, \ldots, \ell\}} \mu_{\bar{n}} (V_i)\\
	\leq \mu_{\bar{n}}(Z)  + \gamma + (2\alpha)^2.
\end{gather*}
Assuming $\gamma + (2\alpha)^2 < \frac{\varepsilon}{2}$, we get $\mu_{\bar{n}}(Z) \geq \frac{\varepsilon}{2}$.

As $\left( U_{2i} : {i \in \{0, \ldots, \frac{\ell}{2}\}} \right)$ and $\left( U_{2i-1} : {i \in \{0, \ldots, \frac{\ell}{2}\}} \right)$ are both partitions of $V^{\bar{n}}$, we also have
  \begin{gather}\label{eq: exp of int1}
  	\textrm{for } \mu_{\bar{n}}\textrm{-almost every } x \in V^{\bar{n}}, \\ \sum_{i \in \{0, \ldots, \frac{\ell}{2}\}} \E(\chi_{U_{2i}} \mid \B)(x)=1 \textrm{ and } \sum_{i \in \{0, \ldots, \frac{\ell}{2}\}} \E(\chi_{U_{2i-1}} \mid \B)(x)=1. \nonumber
  \end{gather}

By definition, $x \in Z \implies \bigwedge_{i \in \{-1, \ldots, \ell\}} \E \left(\chi_{U_i} \mid \B \right)(x) \leq 1-\gamma$. In particular, 
taking $\delta_0 := \frac{ \gamma}{\ell} > 0$ and using \eqref{eq: exp of int1}, for each  $x \in Z$ there must exist some $i_0,i_1,j_0,j_1 \in \{-1, \ldots, \ell\}$ such that $i_0<i_1$ are both even, $j_0<j_1$ are both odd, and $ \E(\chi_{U_i} \mid \B)(x) \geq \delta$ for each $i\in\{i_0,i_1,j_0,j_1\}$. As there are at most $\ell^{4}$ possible choices for the quadruple $(i_0,i_1,j_0,j_1)$, by additivity of $\mu$ there is a set $Z' \subseteq Z, Z' \in \B$ with $\mu_{\bar{n}}(Z') \geq \delta_1 := \frac{\mu_{\bar{n}}(Z)}{\ell^4} \geq \frac{\varepsilon}{2 \ell^4} > 0$ and so that all $x \in Z'$ share the same values of $i_0,i_1,j_0,j_1$.  Then either $i_0+2<j_1$ (and so ) or $j_0+2<i_1$; we let $r := q_{i_0+2}, s := q_{j_1}$ in the former case, and $r := q_{j_0+2}, s := q_{j_1}$ in the latter case. Then $r<s$ and 
the conclusion of the lemma holds by monotonicity of conditional expectation, with $\delta := \min\{\delta_0, \delta_1\} > 0$ (note that the choice of $\delta$ and $t$ in the proof only depends on $\varepsilon$).
\end{proof}

\begin{lemma}\label{lem: comparing integrals}
	Let $(V, \B, \mu)$ be a probability space, and assume that $f_0, f_1: V \to [0,1]$ are $\B$-measurable functions so that $\int f_1 d\mu > \int f_0 d\mu$.
	Then there exist some $r < s \in \mathbb{Q}^{[0,1]}$ so that $\mu( f^{<r}_0 ) > \mu( f^{<s}_1 )$.
\end{lemma}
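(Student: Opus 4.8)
The plan is to prove the contrapositive: assuming that $\mu(f_0^{<r}) \leq \mu(f_1^{<s})$ for all rationals $r < s$ in $[0,1]$, I will show $\int f_0\, d\mu \geq \int f_1\, d\mu$. To organize this I would introduce the distribution functions $H_i \colon \mathbb{R} \to [0,1]$ defined by $H_i(t) := \mu(f_i^{<t})$ for $i \in \{0,1\}$. These are non-decreasing, vanish for $t \leq 0$, equal $1$ for $t > 1$, and are \emph{left-continuous}: since $f_i^{<t} = \bigcup_{t' < t} f_i^{<t'}$ and $\mu$ is continuous from below, $H_i(t) = \lim_{t' \uparrow t} H_i(t')$. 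The standing assumption says precisely that $H_0(r) \leq H_1(s)$ whenever $r < s$ are rational.

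The next step is to upgrade this to the pointwise bound $H_0 \leq H_1$ on all of $\mathbb{R}$. Fixing a rational $s \in (0,1]$ and letting $r \uparrow s$ through rationals $r < s$, left-continuity of $H_0$ gives $H_0(s) \leq H_1(s)$; together with the trivial cases $s \leq 0$ and $s > 1$ this yields $H_0(q) \leq H_1(q)$ for every $q \in \mathbb{Q}$. Then for an arbitrary $t \in \mathbb{R}$ I would choose rationals $q_n \uparrow t$ and pass to the limit, using left-continuity of both $H_0$ and $H_1$ simultaneously, to conclude $H_0(t) = \lim_n H_0(q_n) \leq \lim_n H_1(q_n) = H_1(t)$.

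Finally, since each $f_i$ takes values in $[0,1]$, the layer-cake formula (an instance of Tonelli's theorem) gives $\int f_i\, d\mu = \int_0^1 \mu(f_i \geq t)\, dt = \int_0^1 \bigl(1 - H_i(t)\bigr)\, dt$; integrating the inequality $H_0 \leq H_1$ over $[0,1]$ then yields $\int f_0\, d\mu \geq \int f_1\, d\mu$, which contradicts the hypothesis $\int f_1\, d\mu > \int f_0\, d\mu$ and hence proves the lemma. I do not expect a genuine obstacle here: the only points needing care are the two one-sided limiting arguments in the middle step --- it is exactly left-continuity of the distribution functions that converts the hypothesis about pairs $r < s$ into a pointwise comparison --- and the layer-cake identity, which, if one prefers to keep the argument self-contained, can be replaced by approximating $f_i$ from below by the simple functions $\tfrac{1}{n}\lfloor n f_i \rfloor = \sum_{j=1}^{n} \tfrac{1}{n}\,\chi_{\{f_i \geq j/n\}}$ and invoking dominated convergence.
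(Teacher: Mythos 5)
Your argument is correct, and it proves the lemma by a genuinely different route than the paper. You take the contrapositive and compare the distribution functions $H_i(t)=\mu(f_i^{<t})$: the hypothesis ``$\mu(f_0^{<r})\leq\mu(f_1^{<s})$ for all rationals $r<s$'' is upgraded to the pointwise bound $H_0\leq H_1$ via left-continuity (your two one-sided limit steps are exactly right, and monotonicity of the $H_i$ makes the limits along rational sequences legitimate), and then the layer-cake identity $\int f_i\,d\mu=\int_0^1\bigl(1-H_i(t)\bigr)\,dt$ converts this into $\int f_0\,d\mu\geq\int f_1\,d\mu$, contradicting the assumption. The paper instead argues directly: it replaces each $f_b$ by its monotone rearrangement on $[0,1]$ via a measure-preserving map (the quantile transform, with ``countably many tweaks'' for atoms), shifts $f_0$ by a small $\varepsilon$, finds a point where the shifted $f_0$ lies strictly below $f_1$, and reads off $r<s$ from the values at that crossing. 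The two proofs buy slightly different things: the paper's rearrangement picture is geometrically transparent and produces the pair $(r,s)$ rather explicitly, but it requires setting up the rearrangement maps carefully; your version avoids that construction entirely, stays within elementary facts about CDFs and Tonelli (or, as you note, a simple-function approximation $\sum_{j=1}^n \tfrac1n\chi_{\{f_i\geq j/n\}}$, which even keeps all evaluation points rational), and is arguably the cleaner write-up. No gaps.
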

\begin{proof}
Without loss of generality we may replace $f_b$ by $f_b \circ \pi^{-1}_b : [0,1] \to [0,1]$, where $\pi_b: V \to [0,1] $ is a measure-preserving function (with respect to the Lebesgue measure on $[0,1]$)  so that $f_b \circ \pi_b^{-1}$ is monotone, for $b \in \{0,1\}$.  (We can take $\pi_b(x) := \mu \left(f^{<f_b(x)}_b \right)$ and make countably many tweaks for those $r \in [0,1]$ for which $\mu \left(\{x\mid f_b(x)=r\} \right)$ has positive measure.)

Now we almost have $\mu \left(f^{<f_b(x)}_b \right)=x$; the exception is if the left-handed derivative of $f_b$ at $x$ is equal to $0$ --- that is, if the set of $y$ such that $f_b(y)=f_b(x)$ has positive measure, and $x$ is in the middle or is the right endpoint of this constant interval.  But we at least have $\mu \left(f^{<f_b(x)}_b \right) \leq x$, and for all $r>f_b(x)$, we have $\mu(f^{<r}_b) \geq x$.

Let $\varepsilon>0$ be small enough and define $f'_0(x)=f_0(x+\varepsilon)$.  Then we have $\int_0^{1-\varepsilon}f'_0< \int_0^{1- \varepsilon}f_1$.  Then, since $f'_0,f_1$ are monotone, there must be an $x$ with $f'_0(x)<f_1(x)$.  Let $r\in (f'_0(x),f_1(x))$ and $s=f_1(x)$.  Then $\mu(f^{<r}_0) \geq x+\varepsilon>x \geq \mu(f^{<s}_1)$.
\end{proof}

\subsection{Lemmas about measure and $L^2$-norm}

In this section we collect some miscellaneous lemmas about measurability and the $L^2$-norm that will be needed later in the article.
\begin{remark} \label{rem : approx by indicator functions}
Let a $\sigma$-algebra $\B \subseteq \B_{\bar{n}}$, $\varepsilon \in \mathbb{R}_{>0}$ and a set $X \in \B_{\bar{n}}$ be given. If $\norm{X - \E \left( \chi_{X} \mid \B \right)}_{L^2} \leq \frac{\varepsilon^2}{2}$, then there exists some $Y \in \B$ such that 
	$\norm{\chi_X - \chi_Y}_{L^2} \leq 3 \varepsilon$ (and the converse implication obviously holds, with the same $\varepsilon$).
	
\end{remark}
\begin{proof}
	As $\E(\chi_X \mid \B)$ is $\B$-measurable, there must exist a $\B$-simple function $h=\sum_{i \in [m]} \alpha_i \chi_{C_i}$ for some $[m] \in \mathbb{N}$, $\alpha_i \in \mathbb{R}$ and pairwise disjoint sets $C_i \in \B$, such that $\norm{\E(\chi_X \mid \B) - h}_{L^2} < \frac{\varepsilon^2}{2}$, so $\norm{\chi_X-h}_{L^2}< \varepsilon^2$.  But then the measure of the union of those $C_i$ for which $\alpha_i \notin [0,\varepsilon)\cup(\varepsilon,1]$ must be at most $\varepsilon$ (as in Lemma \ref{lem: non-triv cond exp}).  So we may replace $\sum_i \alpha_i \chi_{C_i}$ by the union $C$ of those $C_i$ with $\alpha_i>\varepsilon$.  Then the $L^2$-distance of $\chi_C$ from $\sum_i \alpha_i\chi_{C_i}$ is at most $2\varepsilon$, so $||\chi_X-\chi_C||_{L^2}< \varepsilon^2+2\varepsilon  < 3 \varepsilon$.
\end{proof}

The following lemma is well known (see e.g.~\cite[Theorem 1.1]{bergelson1985sets}).
\begin{fac}\label{fac: Bergelson}
	For any $\varepsilon \in \mathbb{R}_{>0}$ and $m \in \mathbb{N}$ there exists some $N = N(\varepsilon, m) \in \mathbb{N}$ and $\xi = \xi(\varepsilon, m) \in \mathbb{R}_{>0}$ satisfying the following.
	Given any probability space $(V,\B,\mu)$ and any sequence $(X_i : i \in [N])$ of sets in $\B$ with $\mu(X_i) \geq \varepsilon$ for all $i \in [N]$, there exists some subsequence $(X_i : i \in I)$ with $I \subseteq [N]$, $|I| \geq m$ and such that $\mu (\bigcap_{i\in I} X_i) > \xi$.
\end{fac}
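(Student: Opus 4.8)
The plan is to deduce this from the single-space finitary Ramsey-type statement by a standard averaging/pigeonhole argument. First I would reduce to the case where all the $X_i$ are measurable with respect to a finite sub-$\sigma$-algebra: since we only need finitely many sets and a lower bound on the measure of their intersection, I may approximate each $X_i$ by a set in the algebra generated by finitely many rectangles without changing measures by more than a negligible amount, and in fact for the purposes of the bound it suffices to work with the finite Boolean algebra generated by $X_1,\dots,X_N$ itself. Equivalently, partition $V$ into the $2^N$ atoms determined by membership in each $X_i$; the measure $\mu$ becomes a probability vector on these atoms and each $X_i$ is a union of atoms.

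The heart of the argument is then a double-counting estimate. Consider the quantity $S=\int \bigl(\sum_{i\in[N]}\chi_{X_i}\bigr)^m\, d\mu=\sum_{(i_1,\dots,i_m)\in[N]^m}\mu(X_{i_1}\cap\cdots\cap X_{i_m})$. On one hand, by the power-mean (or Jensen's) inequality applied to the function $\sum_i\chi_{X_i}$, whose integral is $\sum_i\mu(X_i)\ge \varepsilon N$, we get $S\ge \bigl(\varepsilon N\bigr)^m$. On the other hand, the tuples $(i_1,\dots,i_m)$ with a repeated coordinate contribute at most $\binom{m}{2}N^{m-1}\le m^2 N^{m-1}$, which is negligible compared to $(\varepsilon N)^m$ once $N$ is large enough (say $N\ge 2m^2/\varepsilon^m$). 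Hence the sum over $m$-tuples of \emph{distinct} indices of $\mu(X_{i_1}\cap\cdots\cap X_{i_m})$ is at least $\tfrac12(\varepsilon N)^m$. Dividing by the number of such tuples, which is at most $N^m$, shows that the \emph{average} value of $\mu(X_{i_1}\cap\cdots\cap X_{i_m})$ over distinct $m$-tuples is at least $\tfrac12\varepsilon^m$; so there is at least one set $I=\{i_1,\dots,i_m\}$ of $m$ distinct indices with $\mu(\bigcap_{i\in I}X_i)\ge \tfrac12\varepsilon^m =: \xi$.

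This already gives $|I|\ge m$ with the desired positive lower bound, with explicit constants $N(\varepsilon,m)=\lceil 2m^2/\varepsilon^m\rceil$ and $\xi(\varepsilon,m)=\tfrac12\varepsilon^m$; no passage to a finite algebra is even strictly necessary since all the manipulations above are with honest integrals on $(V,\B,\mu)$. The only mild subtlety — and the step I expect to require the most care in writing up cleanly — is bounding the contribution of the non-injective tuples and checking that the resulting bound on $N$ is the one we want; everything else is a routine application of convexity and pigeonhole. (If one wants the quantitative dependence to match a specific reference such as \cite[Theorem 1.1]{bergelson1985sets}, one can instead quote that statement directly, but the self-contained argument above suffices.)
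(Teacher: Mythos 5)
Your argument is correct, but it is worth noting that the paper does not prove Fact \ref{fac: Bergelson} at all: it is stated as a known result with a citation to \cite{bergelson1985sets}, so there is no ``paper proof'' to match. What you have written is essentially the standard self-contained proof of that cited intersectivity lemma: set $f=\sum_{i\in[N]}\chi_{X_i}$, use convexity of $t\mapsto t^m$ (Jensen) to get $\sum_{\bar\imath\in[N]^m}\mu(X_{i_1}\cap\cdots\cap X_{i_m})=\int f^m\,d\mu\ge(\varepsilon N)^m$, discard the at most $\binom{m}{2}N^{m-1}$ non-injective tuples (each contributing at most $1$), and conclude by averaging that some injective tuple has intersection of measure at least $\tfrac12\varepsilon^m$ once $N\gtrsim m^2/\varepsilon^m$. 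The computation checks out; the advantage of your route is that it gives explicit polynomial-type constants $N(\varepsilon,m)$ and $\xi(\varepsilon,m)=\varepsilon^m/2$ rather than a black-box citation, which is more than the paper needs (it only uses the qualitative statement). Two cosmetic points: the statement asks for strict inequality $\mu(\bigcap_{i\in I}X_i)>\xi$, so take $\xi$ slightly below $\tfrac12\varepsilon^m$ (e.g.\ $\xi=\varepsilon^m/4$); and your opening reduction to a finite subalgebra is, as you yourself observe, unnecessary, since every step is an honest integral estimate on $(V,\B,\mu)$ — I would simply delete that paragraph.
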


\begin{lemma}
\label{lem: Erdos-Stone} Let $R \in \mathcal{B}_{\bar{n}}$ be such that $\mu_{\bar{n}}\left(R\right)\geq\alpha>0$.
For $\bar{d} = (d_1, \ldots, d_k) \in \mathbb{N}^k_{\geq 1}$, let $\Sigma$ be the set 
\begin{gather*}
\bigg\{ \left(\bar{x}_{1},\ldots,\bar{x}_{k}\right)\in \prod_{i \in [k]} (V_i^{n_i})^{d_i} :\left(\bar{x}_{1,i_{1}},\ldots, \bar{x}_{k,i_{k}}\right)\in R \\
\text{ for all }i_{1} \in [d_1],\ldots,i_{k}\in\left[d_k\right]\bigg\} \text{.}
\end{gather*}
Then $\Sigma \in \mathcal{B}_{\bar{d} \cdot \bar{n}}$ and $\mu_{\bar{d} \cdot \bar{n}}\left(\Sigma \right)\geq\alpha^{d_1 \cdot \ldots \cdot d_k}>0$.
\end{lemma}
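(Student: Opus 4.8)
The plan is to treat measurability and the measure bound separately; the bound will follow from a tensor‑power argument combined with Jensen's inequality.

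For measurability I would write $\Sigma = \bigcap_{(i_1,\dots,i_k)} S_{i_1,\dots,i_k}$, where $S_{i_1,\dots,i_k}$ is the set of $(\bar{x}_1,\dots,\bar{x}_k)$ with $(\bar{x}_{1,i_1},\dots,\bar{x}_{k,i_k})\in R$. Using the symmetry axiom (Definition \ref{def: kGPS}(1)) to move, within each block $i$, the $i_i$-th coordinate into first position, $S_{i_1,\dots,i_k}$ is a coordinate‑permuted copy of $R\times W$, where $W$ is the full ambient space of the remaining coordinates; since the full space lies in its $\sigma$-algebra, closure under products (Definition \ref{def: kGPS}(2)) puts $R\times W$ in $\mathcal{B}_{\bar{d}\cdot\bar{n}}$, hence so is $S_{i_1,\dots,i_k}$ and the finite intersection $\Sigma$.

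For the lower bound I would first apply the ``gluing coordinates'' construction (Remark \ref{rem: power graded prob space}) with vertex sets $V_i':=V_i^{n_i}$ to reduce to the case $\bar{n}=\bar{1}^k$; so assume $R\in\mathcal{B}_{\bar{1}^k}$ with $\mu_{\bar{1}^k}(R)\geq\alpha$, and for $\bar{e}\in\mathbb{N}^k_{\geq 1}$ let $\Sigma_{\bar{e}}\in\mathcal{B}_{\bar{e}}$ be the analogous ``complete box'' set and $F(\bar{e}):=\mu_{\bar{e}}(\Sigma_{\bar{e}})$, so that $F(\bar{1}^k)=\mu_{\bar{1}^k}(R)\geq\alpha$ and the goal is $F(\bar{d})\geq\alpha^{d_1\cdots d_k}$. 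The engine is the inequality $F(\bar{e})\geq F(\bar{e}')^{e_i}$ for each $i\in[k]$, where $\bar{e}'$ replaces $e_i$ by $1$; I would prove it for $i=k$ (the other cases are identical, splitting off block $i$ and using symmetry). Applying the Fubini property (Definition \ref{def: kGPS}(3), extended to integrals in Remark \ref{rem: gen Fub}) to peel off the block of $V_k$-coordinates, and setting $\phi_{\bar{v}}(y):=\prod_{i_1,\dots,i_{k-1}}\chi_R(x_{1,i_1},\dots,x_{k-1,i_{k-1}},y)$ --- a finite product of fibers of $R$, hence $\mathcal{B}_k$-measurable in $y$ and depending on $\bar{v}=(\bar{x}_1,\dots,\bar{x}_{k-1})$ --- one obtains $F(\bar{e})=\int\bigl(\int_{V_k^{e_k}}\prod_{j\in[e_k]}\phi_{\bar{v}}(x_{k,j})\,d\mu_{e_k\bar{\delta}_k}\bigr)\,d\mu_{(e_1,\dots,e_{k-1})}(\bar{v})$. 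The inner integrand is measurable for the product $\sigma$-algebra, on which $\mu_{e_k\bar{\delta}_k}$ agrees with the $e_k$-fold product measure (Remark \ref{rem: basic props of GPS}(3)), so the inner integral equals $\psi(\bar{v})^{e_k}$ with $\psi(\bar{v}):=\int_{V_k}\phi_{\bar{v}}\,d\mu_k$; since $0\leq\psi\leq 1$, Jensen's inequality (convexity of $t\mapsto t^{e_k}$) gives $F(\bar{e})\geq\bigl(\int\psi\bigr)^{e_k}$, and $\int\psi\,d\mu_{(e_1,\dots,e_{k-1})}=F(e_1,\dots,e_{k-1},1)=F(\bar{e}')$ by one more use of Fubini. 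Iterating over the coordinates yields $F(\bar{d})\geq F(1,d_2,\dots,d_k)^{d_1}\geq\cdots\geq F(\bar{1}^k)^{d_1\cdots d_k}\geq\alpha^{d_1\cdots d_k}$.

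The only delicate point I anticipate is the identity ``inner integral $=\psi(\bar{v})^{e_k}$'': the graded‑probability‑space measures $\mu_{\bar{m}}$ are in general strictly larger than product measures, so this relies on the observation that the integrand, being built from fibers of $R$, is measurable with respect to the product $\sigma$-algebra, where the two measures coincide --- equivalently, one peels off the $e_k$ copies one coordinate at a time using only the Fubini axiom. Everything else is bookkeeping.
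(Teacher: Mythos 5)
Your proposal is correct and follows essentially the same route as the paper: duplicate one coordinate block at a time, use Fubini to identify the fiber of the duplicated set as a Cartesian power (where the graded measure agrees with the product measure), and then apply the power-mean inequality (your Jensen step is the paper's H\"older step with $p=d_i$, $q=\frac{d_i}{d_i-1}$) before iterating over the blocks via the gluing remark. The extra details you supply (explicit measurability of $\Sigma$ via symmetry and closure under products, and the justification that the inner integral factorizes as $\psi(\bar v)^{e_k}$) are exactly the points the paper treats implicitly, and they are handled correctly.
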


\begin{proof}
Let $R, \alpha$ and $d$ as above be fixed.
Let $i \in [k]$ be arbitrary, and let $R'$ be the set of all tuples
$\bar{x} = (\bar{x}_1, \ldots, \bar{x}_{i-1},(\bar{x}_{i,1}, \ldots, \bar{x}_{i,d_i}),\bar{x}_{i+1}, \ldots, \bar{x}_k )$ in $V_{1}^{n_1} \times \ldots \times V_{i-1}^{n_{i-1}} \times \left( V_i^{n_i} \right)^{d_i} \times V^{n_{i+1}}_{i+1} \times \ldots \times V^{n_k}_k$ so that 
$$(\bar{x}_1, \ldots, \bar{x}_{i-1}, \bar{x}_{i,j}, \bar{x}_{i+1}, \ldots,\bar{x}_k) \in R$$
 for every $i \in [d_i]$. Note that $R' \in \B_{\bar{n}_{d_i n_i \to i}}$ by closure under products.

Then, by Fubini property and H\"older inequality with $p=d_i,q=\frac{d_i}{d_i-1}$, we have
\begin{gather}
	\mu_{\bar{n}_{d_i n_i \to i}}\left(R'\right) = \int \mu_{\bar{0}_{d_i n_i \to i}} \left( R'_{\bar{x}_{[k] \setminus \{ i\}}} \right) d\mu_{\bar{n}_{0 \to i}}\left( \bar{x}_{[k] \setminus \{ i\}} \right) = \nonumber\\
	\int \mu_{\bar{0}_{n_i \to i} } \left( R_{\bar{x}_{[k] \setminus \{ i\}}} \right)^{d_i} d\mu_{\bar{n}_{0 \to i}}\left( \bar{x}_{[k] \setminus \{ i\}} \right)  = \nonumber \\
	\int \mu_{\bar{0}_{n_i \to i} } \left( R_{\bar{x}_{[k] \setminus \{ i\}}} \right)^{d_i} d\mu_{\bar{n}_{0 \to i}} \left( \bar{x}_{[k] \setminus \{ i\}} \right) \cdot \int 1^{\frac{d_i}{d_i - 1}} d\mu_{\bar{n}_{0 \to i}} \left( \bar{x}_{[k] \setminus \{ i\}} \right)  \nonumber \\
	\geq \left(\int \mu_{\bar{0}_{n_i \to i} } \left( R_{\bar{x}_{[k] \setminus \{ i\}}} \right) d\mu_{\bar{n}_{0 \to i}} \left( \bar{x}_{[k] \setminus \{ i\}} \right)\right)^{d_i} = \nonumber \\
	\left( \mu_{\bar{n}}(R) \right)^{d_i} \geq \alpha^{d_i} > 0. \nonumber
\end{gather}

Repeating the same argument for every coordinate $i \in [k]$ (using Remark \ref{rem: power graded prob space}(1)), we conclude 
$$\mu_{d_1 n_1, \ldots, d_k n_k} \left( \Sigma \right) \geq \alpha^{d_1 \cdot \ldots \cdot d_k} > 0.$$
\end{proof}

\begin{lemma}\label{lem: operations for k+1,k proof}
Assume that $\bar{n} = \bar{n}_1 + \bar{n}_2 \in \mathbb{N}^{k}$ and $f,g: V^{\bar{n}}\to [0,1]$ are $\B_{\bar{n}}$-measurable functions, and $\varepsilon \in (0,1)$.
	\begin{enumerate}
		
		\item \label{lemeq: small norm impl small norm for almost all fibers} The following implications hold:
\begin{gather*}
\norm{f-g}_{L^2} < \varepsilon \implies \\	
\mu_{\bar{n}_2} \left( \left\{ \bar{x}_2 \in V^{\bar{n}_2} : \norm{f(- \oplus \bar{x}_2) - g(- \oplus \bar{x}_2)}_{L^2 \left( \mu_{\bar{n}_1} \right)} > \varepsilon^{\frac{1}{2}} \right\} \right) \leq \varepsilon \\
\implies \norm{f-g}_{L^2} < \varepsilon^{\frac{3}{4}}.
\end{gather*}

		\item \label{lem: fibers approx impl av approx} More precisely, if $Y \in \B_{\bar{n}_2}$ with $\mu_{\bar{n}_2}(Y) > 0$ and 
 $$\norm{f(- \oplus \bar{x}_2)) - g(- \oplus \bar{x}_2)}_{L^2 \left(\mu_{\bar{n}_1} \right)} < \varepsilon$$
  for every $\bar{x}_2 \in Y$, then 
\begin{gather*}
	\Bigg\lVert \frac{1}{\mu_{\bar{n}_2}(Y)} \int f(\bar{x}_1 \oplus \bar{x}_2) \chi_{Y}(\bar{x}_2) d \mu_{\bar{n}_2}(\bar{x}_2) -\\
	-  \frac{1}{\mu_{\bar{n}_2}(Y)}   \int g(\bar{x}_1 \oplus \bar{x}_2) \chi_{Y}(\bar{x}_2) d \mu_{\bar{n}_2}(\bar{x}_2) \Bigg \rVert _{L^2\left(\mu_{\bar{n}_1}(\bar{x}_1) \right)} 
	 <  \varepsilon.
\end{gather*}

		\item \label{lem: sum prod of approx} If $f_i, g_i : V^{\bar{n}} \to [0,1]$ are $\B_{\bar{n}}$-measurable and $\norm{f_i - g_i}_{L^2} < \varepsilon$ for $i \in [\ell]$, then 
		\begin{gather*}
			\norm{\prod_{i \in [\ell]} f_i - \prod_{i \in [\ell]} g_i}_{L^2} < (2 \ell + 1) \varepsilon \textrm{ and } 
			\norm{\sum_{i \in [\ell]} f_i - \sum_{i \in [\ell]} g_i}_{L^2} < \ell \varepsilon.
		\end{gather*}
		
	
	\end{enumerate}
\end{lemma}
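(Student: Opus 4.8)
The plan is to deduce all three parts from the generalized Fubini property (Remark~\ref{rem: gen Fub}) for the triple $\left(\B_{\bar{n}_1},\B_{\bar{n}_2},\B_{\bar{n}}\right)$, applied throughout to the $\B_{\bar{n}}$-measurable function $h^2$, where $h:=f-g$, together with entirely standard $L^2$ inequalities. I do not expect a real obstacle: the one point needing attention is the measurability of the various fiberwise $L^2$-norm functions, which is exactly what Remark~\ref{rem: gen Fub}(2)--(3) supplies, and the other is to use systematically that $f,g$ (hence the relevant product factors) are $[0,1]$-valued, which is what keeps every truncation and product estimate bounded. The whole lemma is thus elementary measure theory transcribed into the graded setting.

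For the first claim, \ref{lemeq: small norm impl small norm for almost all fibers}: I would first record, via Remark~\ref{rem: gen Fub}, that the function $\bar{x}_2\mapsto\norm{h(-\oplus\bar{x}_2)}_{L^2(\mu_{\bar{n}_1})}^2=\int h(\bar{x}_1\oplus\bar{x}_2)^2\,d\mu_{\bar{n}_1}(\bar{x}_1)$ is $\B_{\bar{n}_2}$-measurable and
\[\norm{h}_{L^2}^2=\int \norm{h(-\oplus\bar{x}_2)}_{L^2(\mu_{\bar{n}_1})}^2\,d\mu_{\bar{n}_2}(\bar{x}_2).\]
The first implication is then Markov's inequality applied to this non-negative function at level $\varepsilon$: a fiber lies in the bad set $B$ precisely when its $L^2(\mu_{\bar n_1})$-norm squared exceeds $\varepsilon$, so $\mu_{\bar{n}_2}(B)\le\norm{h}_{L^2}^2/\varepsilon<\varepsilon$. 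For the reverse implication I would split the displayed integral over $B$ and over its complement: on $B$ the integrand is everywhere $\le 1$ while $\mu_{\bar n_2}(B)\le\varepsilon$, and off $B$ the integrand is $\le\varepsilon$; hence $\norm{h}_{L^2}^2\le\mu_{\bar n_2}(B)+\varepsilon\le 2\varepsilon$, which gives the claimed bound on $\norm{h}_{L^2}$.

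For the second claim, \ref{lem: fibers approx impl av approx}: let $F(\bar x_1)$ denote the difference of the two normalized averages over $Y$ appearing in the statement. Applying the Cauchy--Schwarz inequality (equivalently, Jensen's inequality) in the variable $\bar x_2$ with respect to the probability measure $\frac{1}{\mu_{\bar n_2}(Y)}\chi_Y\,d\mu_{\bar n_2}$ gives, for each $\bar x_1$, $F(\bar x_1)^2\le\frac{1}{\mu_{\bar n_2}(Y)}\int h(\bar x_1\oplus\bar x_2)^2\chi_Y(\bar x_2)\,d\mu_{\bar n_2}(\bar x_2)$; integrating over $\bar x_1$ and using Remark~\ref{rem: gen Fub} converts the right-hand side into $\frac{1}{\mu_{\bar n_2}(Y)}\int_Y\norm{h(-\oplus\bar x_2)}_{L^2(\mu_{\bar n_1})}^2\,d\mu_{\bar n_2}(\bar x_2)$, which is $<\varepsilon^2$ because the integrand is $<\varepsilon^2$ at every point of $Y$ and $\mu_{\bar n_2}(Y)>0$. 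Hence $\norm{F}_{L^2(\mu_{\bar n_1})}<\varepsilon$, as asserted.

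For the third claim, \ref{lem: sum prod of approx}: the sum bound is immediate from the triangle inequality, $\norm{\sum_{i\in[\ell]}f_i-\sum_{i\in[\ell]}g_i}_{L^2}=\norm{\sum_{i\in[\ell]}(f_i-g_i)}_{L^2}\le\sum_{i\in[\ell]}\norm{f_i-g_i}_{L^2}<\ell\varepsilon$. For the product I would use the telescoping identity
\[\prod_{i\in[\ell]}f_i-\prod_{i\in[\ell]}g_i=\sum_{j\in[\ell]}\Bigl(\prod_{i<j}g_i\Bigr)(f_j-g_j)\Bigl(\prod_{i>j}f_i\Bigr);\]
since every $f_i,g_i$ is $[0,1]$-valued, each product of ``side factors'' is pointwise $\le 1$, so the $j$-th summand has $L^2$-norm at most $\norm{f_j-g_j}_{L^2}<\varepsilon$, and the triangle inequality yields the bound $\ell\varepsilon$, in particular $<(2\ell+1)\varepsilon$. (A one-step induction on $\ell$, writing $f_\ell\prod_{i<\ell}f_i-g_\ell\prod_{i<\ell}g_i=f_\ell\bigl(\prod_{i<\ell}f_i-\prod_{i<\ell}g_i\bigr)+(f_\ell-g_\ell)\prod_{i<\ell}g_i$ and again using the $[0,1]$-bounds, works equally well.)
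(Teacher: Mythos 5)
Your overall route is the paper's own: the paper proves this lemma precisely by the combination you describe, namely the Fubini property of Remark~\ref{rem: gen Fub} applied to $h^2$ with $h:=f-g$, Jensen/Cauchy--Schwarz for part~\eqref{lem: fibers approx impl av approx} (the only part the paper writes out in detail, and your argument for it is the same), and routine triangle-inequality estimates for part~\eqref{lem: sum prod of approx}, where your telescoping identity even yields the sharper constant $\ell\varepsilon$ in place of the stated $(2\ell+1)\varepsilon$. Part~\eqref{lem: fibers approx impl av approx}, part~\eqref{lem: sum prod of approx}, and the Markov-inequality half of part~\eqref{lemeq: small norm impl small norm for almost all fibers} are all fine.

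The one step that does not go through is the last sentence of your argument for the second implication in~\eqref{lemeq: small norm impl small norm for almost all fibers}. From $\mu_{\bar n_2}(B)\le\varepsilon$ and the pointwise bounds you correctly obtain $\norm{h}_{L^2}^2\le 2\varepsilon$, hence $\norm{h}_{L^2}\le(2\varepsilon)^{1/2}$; but $(2\varepsilon)^{1/2}>\varepsilon^{3/4}$ for every $\varepsilon\in(0,1)$, so this does \emph{not} ``give the claimed bound'' $\norm{f-g}_{L^2}<\varepsilon^{3/4}$. In fact no argument can give that exponent from these hypotheses: take Lebesgue measure on $V_1=V_2=[0,1]$, $g=0$, and $f(x_1,x_2)=1$ for $x_2\le\varepsilon$ and $f(x_1,x_2)=\varepsilon^{1/2}$ otherwise; the middle condition of the statement holds, yet $\norm{f-g}_{L^2}^2=2\varepsilon-\varepsilon^2>\varepsilon^{3/2}$ (e.g.\ at $\varepsilon=1/4$). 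So the exponent $3/4$ in the lemma as printed is not attainable, and the correct conclusion of your (and the only available) computation is $\norm{f-g}_{L^2}\le(2\varepsilon)^{1/2}$. This discrepancy is harmless downstream --- in both places the second implication is invoked (the proofs of Proposition~\ref{prop: large proj k k plus 1} and Theorem~\ref{thm: the very main thm soft}) only a bound tending to $0$ with $\varepsilon$ is needed, so $(2\varepsilon)^{1/2}$ serves equally well --- but your write-up should either record the weaker, correct bound or explicitly flag the exponent in the statement, rather than assert that $\norm{h}_{L^2}^2\le 2\varepsilon$ yields $\varepsilon^{3/4}$.
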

\begin{proof}
By the Fubini property in graded probability spaces (Remark \ref{rem: gen Fub}) and standard calculations. E.g., for \eqref{lem: fibers approx impl av approx}, taking $h := f - g$ we have $\norm{h(- \oplus \bar{x}_2)}_{L^2(\mu_{\bar{n}_1})} < \varepsilon$ for every $\bar{x}_2 \in Y$. By Jensen's inequality, for every fixed $\bar{x}_1$,
\begin{gather*}
\left( \frac{1}{\mu_{\bar{n}_2}(Y)} \int_{Y} h(\bar{x}_1 \oplus \bar{x}_2) d \mu_{\bar{n}_2}(\bar{x}_2)\right)^2 \leq \frac{1}{\mu_{\bar{n}_2}(Y)} \int_{Y} h(\bar{x}_1 \oplus \bar{x}_2)^2 d \mu_{\bar{n}_2}(\bar{x}_2).
\end{gather*}
Using this and Fubini, we have
\begin{gather*}
	\norm{\frac{1}{\mu_{\bar{n}_2}(Y)}\int h(\bar{x}_1 \oplus \bar{x}_2) \chi_{Y}(\bar{x}_2) d \mu_{\bar{n}_2}(\bar{x}_2)}_{L^2(\mu_{\bar{n}_1}(\bar{x}_1))}^2 =\\
	\int \left( \frac{1}{\mu_{\bar{n}_2}(Y)} \int_{Y} h(\bar{x}_1 \oplus \bar{x}_2) d \mu_{\bar{n}_2}(\bar{x}_2)\right)^2 d\mu_{\bar{n}_1}(\bar{x}_1) \leq \\
	\int \left( \frac{1}{\mu_{\bar{n}_2}(Y)} \int_{Y} h(\bar{x}_1 \oplus \bar{x}_2)^2 d \mu_{\bar{n}_2}(\bar{x}_2)\right) d\mu_{\bar{n}_1}(\bar{x}_1) = \\
	 \int_{Y} \left( \frac{1}{\mu_{\bar{n}_2}(Y)} \int h(\bar{x}_1 \oplus \bar{x}_2)^2 d \mu_{\bar{n}_1}(\bar{x}_1)\right) d\mu_{\bar{n}_2}(\bar{x}_2) \leq \\
	  \frac{1}{\mu_{\bar{n}_2}(Y)} \int_{Y} \varepsilon^2 d\mu_{\bar{n}_2}(\bar{x}_2) \leq \varepsilon^2.
\end{gather*}
\end{proof}

\begin{lemma}\label{lem: meas of av fib}
	Let $\B \subseteq \B_{\bar{n}}$ be an arbitrary $\sigma$-algebra. Let $\bar{m} \in \mathbb{N}^k$, and assume that $g: V^{\bar{n} + \bar{m}} \to \mathbb{R}$ is a $\B_{\bar{n} + \bar{m}}$-measurable function such that
	the set of  $\bar{y} \in V^{\bar{m}}$ for which the function $g(-,\bar{y}) : \bar{x} \mapsto  g \left( \bar{x} \oplus \bar{y} \right)$ is $\B$-measurable has $\mu_{\bar{m}}$-measure $1$.	 Then the ``average fiber'' function $g'(\bar{x}) := \int g\left(\bar{x} \oplus \bar{y} \right) d\mu_{\bar{m}}\left( \bar{y} \right)$ is also $\B$-measurable.
\end{lemma}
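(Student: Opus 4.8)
The plan is to work in the Hilbert space $\mathcal{H} := L^2\left( V^{\bar{n}}, \B_{\bar{n}}, \mu_{\bar{n}} \right)$ and its closed subspace $\mathcal{H}_0 := L^2\left( V^{\bar{n}}, \B, \mu_{\bar{n}} \right)$ of $\B$-measurable functions, and to show directly that $g'$ lies in $\mathcal{H}_0$. Since $g'$ is already $\B_{\bar{n}}$-measurable by Remark \ref{rem: gen Fub}(3), this gives the conclusion of the lemma. Recall that $f \in \mathcal{H}$ belongs to $\mathcal{H}_0$ if and only if $f \perp \mathcal{H}_0^{\perp}$, and that $\psi \in \mathcal{H}_0^{\perp}$ precisely when $\E(\psi \mid \B) = 0$; so it suffices to prove that $\langle g', \psi \rangle = 0$ for every $\psi \in \mathcal{H}_0^{\perp}$.

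First I would reduce to the case of bounded $g$. The truncations $g_N := \left( g \vee (-N) \right) \wedge N$ are again $\B_{\bar{n} + \bar{m}}$-measurable, and for $\mu_{\bar{m}}$-a.e.\ $\bar{y}$ the fiber $g_N(- \oplus \bar{y})$ is the truncation of $g(- \oplus \bar{y})$, hence still $\B$-measurable; thus $g_N$ satisfies the hypotheses. Moreover $g'_N \to g'$ pointwise $\mu_{\bar{n}}$-a.e.\ by dominated convergence inside each fiber, and a pointwise a.e.\ limit of $\B$-measurable functions is $\B$-measurable; so it is enough to treat bounded $g$. From now on assume $g$ is bounded; then $g \in L^2\left( \mu_{\bar{n} + \bar{m}} \right)$, each fiber $g_{\bar{y}} := g(- \oplus \bar{y})$, being bounded and measurable by Remark \ref{rem: gen Fub}(1), lies in $\mathcal{H}$, and $g' \in \mathcal{H}$.

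Now fix $\psi \in \mathcal{H}_0^{\perp}$ and regard it also as a function on $V^{\bar{n} + \bar{m}}$ via $(\bar{x}, \bar{y}) \mapsto \psi(\bar{x})$, which is $\B_{\bar{n} + \bar{m}}$-measurable since $A \times V^{\bar{m}} \in \B_{\bar{n} + \bar{m}}$ for every $A \in \B_{\bar{n}}$ by closure under products. Then $(\bar{x}, \bar{y}) \mapsto g(\bar{x} \oplus \bar{y}) \psi(\bar{x})$ is $\B_{\bar{n} + \bar{m}}$-measurable and integrable, so I would apply the graded Fubini property to get
\begin{align*}
\langle g', \psi \rangle &= \int_{V^{\bar{n}}} \left( \int_{V^{\bar{m}}} g(\bar{x} \oplus \bar{y}) \, d\mu_{\bar{m}}(\bar{y}) \right) \psi(\bar{x}) \, d\mu_{\bar{n}}(\bar{x}) \\
&= \int_{V^{\bar{m}}} \left( \int_{V^{\bar{n}}} g(\bar{x} \oplus \bar{y}) \psi(\bar{x}) \, d\mu_{\bar{n}}(\bar{x}) \right) d\mu_{\bar{m}}(\bar{y}) = \int_{V^{\bar{m}}} \langle g_{\bar{y}}, \psi \rangle \, d\mu_{\bar{m}}(\bar{y}),
\end{align*}
where the first equality merely unwinds the definition of $g'$ (the factor $\psi(\bar{x})$ being independent of $\bar{y}$) and the second is Remark \ref{rem: gen Fub}(4). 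For $\mu_{\bar{m}}$-a.e.\ $\bar{y}$ the fiber $g_{\bar{y}}$ is $\B$-measurable, so $\langle g_{\bar{y}}, \psi \rangle = \int g_{\bar{y}} \, \E(\psi \mid \B) \, d\mu_{\bar{n}} = 0$; hence $\langle g', \psi \rangle = 0$, and since $\psi \in \mathcal{H}_0^{\perp}$ was arbitrary, $g' \in \mathcal{H}_0$, i.e.\ $g'$ is $\B$-measurable.

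This is essentially the standard fact that a weak average of vectors lying in a closed subspace again lies in that subspace, so I expect nothing deep to be needed. The one point requiring care is the interchange of the order of integration in the display, which is licensed only because the graded Fubini property of Remark \ref{rem: gen Fub} already guarantees the measurability of the relevant fiber functions and the integrability of the integrand; this is precisely why reducing to bounded $g$ at the outset is a convenient opening move. (A more hands-on alternative would be to show that $g$ agrees $\mu_{\bar{n} + \bar{m}}$-a.e.\ with a $\B \otimes \B_{\bar{m}}$-measurable function and then integrate out $\bar{y}$ by ordinary Fubini on a genuine product space; but establishing that ``joint-from-fiberwise measurability'' statement appears more delicate than the Hilbert space argument above, so I would avoid it.)
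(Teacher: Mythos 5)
Your proof is correct and follows essentially the same route as the paper's: both show that $g'$ is orthogonal to every $h\in L^2(\B_{\bar n})$ with $h\perp L^2(\B)$ by using Fubini together with the fact that almost every fiber $g(-\oplus\bar y)$ is $\B$-measurable, and then conclude $g'\in L^2(\B)$ (the paper via the explicit decomposition $g'=\E(g'\mid\B)+g^\perp$ and a norm computation, you via the equivalent closed-subspace characterization). Your preliminary truncation step is a harmless extra precaution handling integrability that the paper's proof leaves implicit.
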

\begin{proof}

Let $h: V^{\bar{n}} \to \mathbb{R}$ be an arbitrary $\B_{\bar{n}}$-measurable function orthogonal to $L^2(\B)$ (in the space $L^2 \left( \B_{\bar{n}}\right)$). Then, for every fixed $\bar{y} \in V^{\bar{m}}$ outside of a $\mu_{\bar{m}}$-measure $0$ set, we have
\begin{gather*}	
\left\langle g(- \oplus \bar{y}),h \right\rangle_{L^2}	 = \int g \left(\bar{x} \oplus \bar{y} \right) \cdot h(\bar{x}) d \mu_{\bar{n}} \left( \bar{x} \right) = 0.
\end{gather*}

Hence, by Fubini,
\begin{gather*}
	\int g'(\bar{x}) \cdot h(\bar{x}) d\mu_{\bar{n}} (\bar{x}) = \int \left( \int g \left( \bar{x} \oplus \bar{y} \right) d\mu_{\bar{m}} (\bar{y})  \right) \cdot h(\bar{x}) d\mu_{\bar{n}} (\bar{x}) =\\
	\int \left( \int g(\bar{x} \oplus \bar{y}) \cdot h(\bar{x}) d\mu_{\bar{n}}(\bar{x}) \right) d\mu_{\bar{n}}(\bar{y}) = 0
\end{gather*}
(so $g'$ has no correlation with any function orthogonal to $L^2(\B)$). Now we can write
\begin{gather*}
	g' = \E \left( g' \mid \B \right) + g^{\perp},
\end{gather*}
where $\E \left( g' \mid \B \right)$ is the projection of $g'$ onto the closed subspace $L^2 \left( \B \right)$, and $g^{\perp}$ is orthogonal to it. Then 
 \begin{gather*}
 	 \norm{g'}_{L^2}^2 = \int g' \cdot \left( \E \left( g' \mid \B \right) + g^{\perp} \right) d\mu_{\bar{n}} = \\
 	\int g' \cdot \E \left( g' \mid \B \right)d\mu_{\bar{n}} + \int g' \cdot g^{\perp} d\mu_{\bar{n}} = \int g' \cdot \E \left( g' \mid \B \right)d\mu_{\bar{n}},
 \end{gather*}
which implies $\norm{g'}^2_{L^2} =  \norm{ \E \left( g' \mid \B \right) }^2_{L^2}$, and so $g' = \E \left( g' \mid \B \right)$  is $\B$-measurable.
\end{proof}

For $x,y \in [0,1], x \monus y = \max \{0, x-y\} \in [0,1]$, $x \dot{+} y = \min \{1, x + y\} \in [0,1]$, and for $p \in \mathbb{N}$, $p \dot{\times}x = \underbrace{x \dot{+} \ldots \dot{+} x}_{p \textrm{ times }}$.

\begin{lemma}\label{lem: smoothening char func}
Assume that $f,g: V^{\bar{n}}\to [0,1]$ are $\B_{\bar{n}}$-measurable functions and $\varepsilon \in (0,1)$.
We consider the $\B_{\bar{n}}$-measurable set 
		$$[f<g] := \left\{\bar{x} \in V^{\bar{n}} : f(\bar{x}) < g(\bar{x}) \right\}$$
		and, for $p \in \mathbb{N}$, the $\B_{\bar{n}}$-measurable function $[f<g]^{p}: V^{\bar{n}} \to [0,1]$ defined by 
		$$[f<g]^{p} := p \dot{\times} \left( g \monus  f \right).$$
		Then there exists some $p = p(f,g, \varepsilon) \in \mathbb{N}$ such that $\norm{\chi_{[f<g]} - [f<g]^{p}}_{L^2} < \varepsilon$.
\end{lemma}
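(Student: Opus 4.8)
The plan is to exploit the two extreme regions on which the truncated function $[f<g]^{p} = p\dot{\times}(g\monus f)$ coincides \emph{exactly} with $\chi_{[f<g]}$, and then to control the measure of the small set where they can disagree, choosing $p$ large enough in terms of $f$, $g$ and $\varepsilon$. First I would record the elementary identity $p\dot{\times}x = \min\{1,px\}$ for $x\in[0,1]$ and $p\in\mathbb{N}$ (a one-line induction on $p$, using that all the partial sums are nonnegative), so that $[f<g]^{p}(\bar x) = \min\{1,\,p\cdot(g(\bar x)\monus f(\bar x))\}$. Two consequences: on $\{\bar x : f(\bar x)\geq g(\bar x)\}$ we have $g\monus f = 0$, hence $[f<g]^{p} = 0 = \chi_{[f<g]}$ there; and on $\{\bar x : p\cdot(g(\bar x)-f(\bar x))\geq 1\}$, which is contained in $[f<g]$, we have $[f<g]^{p} = 1 = \chi_{[f<g]}$. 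Therefore the two functions differ only on the set $A_{p} := \{\bar x\in V^{\bar n} : 0 < g(\bar x)-f(\bar x) < 1/p\}$, which is $\B_{\bar n}$-measurable since $f,g$ are, and on which their difference has absolute value at most $1$.

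It then remains to choose $p$ so that $\mu_{\bar n}(A_{p})$ is small. The sets $A_{p}$ are nested decreasing in $p$, and $\bigcap_{p\in\mathbb{N}}A_{p} = \{\bar x : 0 < g(\bar x)-f(\bar x)\leq 0\} = \emptyset$; since $\mu_{\bar n}$ is a probability measure, continuity from above gives $\mu_{\bar n}(A_{p})\to 0$ as $p\to\infty$. Fix $p = p(f,g,\varepsilon)$ large enough that $\mu_{\bar n}(A_{p}) < \varepsilon^{2}$. Then
\[
\norm{\chi_{[f<g]} - [f<g]^{p}}_{L^2}^{2} = \int_{A_{p}}\left(\chi_{[f<g]} - [f<g]^{p}\right)^{2}\,d\mu_{\bar n} \leq \mu_{\bar n}(A_{p}) < \varepsilon^{2},
\]
which is the desired conclusion. (Equivalently one could argue by dominated convergence: $\chi_{[f<g]} - [f<g]^{p}\to 0$ pointwise as $p\to\infty$ and is dominated by the constant function $1$, which is integrable on the probability space.)

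There is no substantive obstacle here; the only points requiring a little care are the combinatorial identity $p\dot{\times}x = \min\{1,px\}$ and the observation that $p$ is permitted to depend on $f$ and $g$, not just on $\varepsilon$ --- this is exactly what licenses the application of continuity of measure to the specific family $A_{p}$ built from $f$ and $g$, and no uniform rate in $p$ is claimed or needed for the later applications.
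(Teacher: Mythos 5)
Your proof is correct and follows essentially the same route as the paper: both identify that $\chi_{[f<g]}$ and $[f<g]^p$ agree except on the thin set where $0<g-f$ is below the truncation threshold, and then use countable additivity (your continuity from above with threshold $1/p$ is the paper's choice of $\gamma$ with $p\gamma\geq 1$) to make that set have measure $<\varepsilon^2$, bounding the pointwise difference by $1$. No issues.
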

\begin{proof}	
	As $[f<g] = \bigcup_{\gamma \in \mathbb{Q}_{>0}} [f < (g-\gamma)]$, by countable additivity of $\mu_{\bar{n}}$ we can choose $\gamma >0$ small enough so that $\mu_{\bar{n}} \left([f<g] \setminus [f < (g - \gamma)] \right) < \varepsilon^2$. Let $p \in \mathbb{N}$ satisfy $p\gamma \geq 1$. Then
\begin{gather*}
	\norm{\chi_{[f<g]} - [f<g]^{p}}_{L^2}^2 = \int_{ V^{\bar{n}} \setminus  [f<g] } 0 d\mu_{\bar{n}} + \int_{ [f<(g-\gamma)]} 0 d\mu_{\bar{n}} + \\ 
	 \int_{[f<g] \setminus [f < (g - \gamma)]} \left( \chi_{[f<g]} - [f<g]^{p} \right)^2 d\mu_{\bar{n}} \\
	\leq \mu_{\bar{n}} \left( [f<g] \setminus [f < (g - \gamma)] \right) \leq \varepsilon^2.
\end{gather*}
\end{proof}

\section{Approximation by finitely many fibers for functions of bounded $\VC_k$-dimension}\label{sec:fibers}
\subsection{Statement and some corollaries of the approximation result}
The aim of this section is to prove the following.

\begin{prop}\label{prop: finite VCk-dim implies approx}
Let $\left(V_{[k+1]}, \B_{\bar{n}}, \mu_{\bar{n}} \right)_{n \in \mathbb{N}^{k+1}}$ be a $(k+1)$-partite graded probability space. Suppose that $f: V^{\bar{1}^{k+1}} \to [0,1]$ is $\B_{\bar{1}^{k+1}}$-measurable and $\VC_k(f)$ is finite. Then for every $\varepsilon>0$, there exist some $x_1, \ldots, x_{N} \in V_{k+1}$ such that: for every $x \in V_{k+1}$ we have
$$\norm{f_{x} - \E\left(f_x \mid \mathcal{B}_{\bar{1}^k, k-1} \cup \{f_{x_1}, \ldots, f_{x_N} \} \right)}_{L^2} < \varepsilon.$$
\end{prop}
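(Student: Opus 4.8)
The plan is to argue by contradiction. Suppose the conclusion fails for some fixed $\varepsilon>0$. Choosing points greedily, one then obtains $x_1,x_2,\ldots\in V_{k+1}$ such that for every $n$
\[\norm{f_{x_{n+1}} - \E\left(f_{x_{n+1}}\mid \mathcal{B}_{\bar{1}^k,k-1}\cup\{f_{x_1},\ldots,f_{x_n}\}\right)}_{L^2}\geq\varepsilon,\]
since once $x_1,\ldots,x_n$ have been chosen, the failure of the statement for this list (and this $\varepsilon$) supplies an admissible $x_{n+1}$. Applying Lemma~\ref{lem: non-triv cond exp} to each $f_{x_{n+1}}$ with $\mathcal{B}=\mathcal{B}_n:=\sigma\left(\mathcal{B}_{\bar{1}^k,k-1}\cup\{f_{x_1},\ldots,f_{x_n}\}\right)$ yields constants $t=t(\varepsilon)\in\mathbb{N}$ and $\delta=\delta(\varepsilon)>0$, and for each $n$ a pair $r_n<s_n$ in $\mathbb{Q}_t^{[0,1]}$, so that the set of $\bar{y}\in V^{\bar{1}^k}$ with $\E\left(\chi_{f_{x_{n+1}}^{<r_n}}\mid\mathcal{B}_n\right)(\bar{y})\geq\delta$ and $\E\left(\chi_{f_{x_{n+1}}^{\geq s_n}}\mid\mathcal{B}_n\right)(\bar{y})\geq\delta$ has measure at least $\delta$. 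Since $\mathbb{Q}_t^{[0,1]}$ is finite, after passing to an infinite subsequence and reindexing we may assume $(r_n,s_n)=(r,s)$ is independent of $n$. It then suffices to produce, for arbitrarily large $d$, a $k$-box $A=A_1\ttimes A_k$ with $|A_i|=d$ which is $(r,s)$-shattered by $f$: this contradicts $\VC_k^{r,s}(f)<\infty$, which is a consequence of the hypothesis $\VC_k(f)<\infty$.

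Conceptually, conditioning on $\mathcal{B}_{\bar{1}^k,k-1}$ is exactly what makes this work: the $(k-1)$-ary cylinder algebra already records every ``lower-than-$k$-ary'' feature of $\bar{y}$, so the displayed fuzziness says precisely that the part of $f_{x_{n+1}}$ not accounted for by the earlier fibers is genuinely $k$-dimensional --- which is the information needed to shatter a full $k$-box, rather than a box of smaller effective dimension.

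The next step is to homogenize the sequence $(x_n)$. I would do this by passing to an indiscernible sequence in an ultrapower of $\mathfrak{P}$: one fixes a logic for graded probability spaces in which the measures of definable sets, the values of $f$, and conditional expectations over $\mathcal{B}_{\bar{1}^k,k-1}$ and over finitely many fibers are all recorded, takes an ultrapower, and extracts --- via the usual iterated application of Ramsey's theorem --- an indiscernible sequence $(x'_n)_{n<\omega}$ extracted from $(x_n)$. The uniform fuzziness, with its constants $(r,s,\delta,t)$, transfers to $(x'_n)$: for every $n$ there is a $\sigma\left(\mathcal{B}_{\bar{1}^k,k-1}\cup\{f_{x'_1},\ldots,f_{x'_n}\}\right)$-measurable set of measure $\geq\delta$ on which the conditional probabilities of $f_{x'_{n+1}}^{<r}$ and of $f_{x'_{n+1}}^{\geq s}$ are both $\geq\delta$. (Setting up this model-theoretic framework rigorously is a substantial task; it is the point of Section~\ref{sec:indiscernibles}.)

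Finally, from the indiscernible sequence $(x'_n)$ together with the uniform fuzziness I would construct, for each $d$, an $(r,s)$-shattered $k$-box of size $d$, completing the contradiction. The mechanism is an iterated construction: the fuzziness of each new fiber over the previous ones produces, inside a positive-measure cell, points $\bar{y}$ at which $f$ with that fiber is $<r$ and points at which it is $\geq s$; indiscernibility guarantees that the same configuration recurs arbitrarily far along the sequence, so the positive density gained at one stage is not exhausted but can be re-used at the next, which lets one enlarge the box by a new point in each coordinate while drawing fresh colour fibers for all the new $\{0,1\}$-patterns on it. I expect this inductive construction to be the main obstacle: one must organize the bookkeeping so that the newly shattered directions are genuinely $k$-ary at each stage --- this is where the conditioning on $\mathcal{B}_{\bar{1}^k,k-1}$ is used, to prevent the ``new'' content from being explained by a lower-arity cylinder set --- and so that all the measure-positivity estimates survive the passage through indiscernibility. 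Carrying this out in detail, and afterwards extracting the quantitative refinements, occupies the remainder of this section together with Section~\ref{sec:indiscernibles}.
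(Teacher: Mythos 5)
Your front end (contradiction, greedy choice of $x_1,x_2,\ldots$, Lemma \ref{lem: non-triv cond exp}, pigeonholing a single pair $(r,s)$, passage to an indiscernible sequence in an ultrapower) coincides with the paper's route, but the step you defer as ``the main obstacle'' is the actual content of the proof, and your sketch of it does not contain the mechanism that makes it work. Two ingredients are missing. First, homogenization must deliver more than recurrence of the fuzziness along the sequence: the paper extracts a \emph{two-sided} indiscernible sequence $(x_l)_{l\in\mathbb{Z}}$ and uses the negative half to form the tail algebra $\B=\sigma\left(\{f_{x_i}:i<0\}\cup\B_{\bar{1}^k,k-1}\right)$, for which a de Finetti-type identity (Proposition \ref{prop: de Finetti}, recorded as Assumption \ref{ass: ineapproximable}(5)) gives $\E\left(f_{x_l}\mid\B_{\bar{1}^k,k-1}\cup\{f_{x_i}:i<l\}\right)=\E\left(f_{x_l}\mid\B\right)$, while an exchangeability argument (Lemma \ref{lem: indisc Bergelson}, Assumption \ref{ass: ineapproximable}(2)) makes the simultaneous fuzzy set $\tilde F=\bigcap_{i\in[l]}F_\delta(x_i)$ have positive measure for every $l$ at once. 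Without the first identity your plan of ``re-using the positive density at the next stage'' has no justification: conditioning on the fibers already used could in principle destroy the fuzziness of the next one, and it is exactly the equality of these conditional expectations that rules this out. (Your variant of applying Lemma \ref{lem: non-triv cond exp} before extracting indiscernibles, with the algebras $\B_n$ varying, is workable but then the transfer of the fuzziness needs a type-definability argument of the kind the paper carries out for the non-approximability condition; this is a secondary issue.)

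Second, the shattering is not obtained by enlarging a box one point at a time; it is a single integral-positivity computation. One fixes $m$, takes $l=2^{m^k}$ fibers, assigns to each cell $\bar i\in[m]^k$ a pattern $u_{\bar i}:[l]\to\{-1,0,1\}$, and proves $\int_F\prod_{\bar i\in[m]^k}\chi_{S_{u_{\bar i}}}(y_{1,i_1},\ldots,y_{k,i_k})\,d\mu_{m\cdot\bar 1^k}>0$, where $F$ is the box version of $\tilde F$ supplied by Lemma \ref{lem: Erdos-Stone}. The key observation --- which is where the conditioning on $\B_{\bar{1}^k,k-1}$ really enters, and which your sketch does not capture --- is that once a single column traversal $\bar i^0$ is isolated, every other traversal's constraint, viewed as a condition on the tuple $(y_{1,i^0_1},\ldots,y_{k,i^0_k})$, fixes at least one coordinate and is therefore a $(\le k-1)$-ary fiber, hence $\B$-measurable; so the indicator of $E^{u_{\bar i^0}(l)}_{x_l}$ can be replaced by $\E\left(\cdot\mid\B\cup\{E_{x_r}:r<l\}\right)=\E\left(\cdot\mid\B\right)\ge\delta$ on the fuzzy set, and peeling off fibers and then cells iteratively yields the lower bound $\delta^{lm^k}\mu_{m\cdot\bar 1^k}(F)>0$. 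As written, your proposal stops exactly before this argument, so it is a correct outline of the paper's strategy with the decisive step left open.
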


Recall that for $x \in V_{k+1}$, $f_x : V^{\bar{1}^k} \to [0,1]$ is the function $\bar{y}  \mapsto  f(\bar{y}^{\frown} (x))$ corresponding to the fiber of $f$ at $x$. By  Remark \ref{rem: gen Fub}, $f_x$ is  $\B_{\bar{1}^k} = \B_{\bar{1}^{k \frown} (0) }$-measurable (see Definition \ref{def: smaller argebras in GPS}\eqref{def: smaller argebras in GPS 3}) for every $x\in V_{k+1}$.

For relations (i.e.~$\{0,1\}$-valued functions) of finite $\VC_k$-dimension this immediately implies the following (using Remark \ref{rem : approx by indicator functions}).
\begin{cor}\label{cor: approx by fib for rels}

  

Let $(V_{[k+1]}, \B_{\bar{n}}, \mu_{\bar{n}})_{n \in \mathbb{N}^{k+1}}$ be a $(k+1)$-partite graded probability space. Suppose that $E \in \B_{\bar{1}^{k+1}}$ and $\VC_k(E)$ is finite. Then there exist some $x_1, \ldots, x_{N} \in V_{k+1}$ such that, for every $x\in V_{k+1}$, there is a set $D_x$ which is a Boolean combination of $E_{x_1},\ldots,E_{x_N}$ and sets from $\mathcal{B}_{\bar 1^k,k-1}$ such that
  $$\mu_{\bar{1}^k} \left( E_x \triangle D_x \right) < \varepsilon.$$
\end{cor}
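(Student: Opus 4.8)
The plan is to prove Proposition~\ref{prop: finite VCk-dim implies approx} by contradiction: suppose the conclusion fails for some $\varepsilon>0$. Then for \emph{every} finite list $x_1,\dots,x_N\in V_{k+1}$ there is a ``bad'' witness $x\in V_{k+1}$ with $\norm{f_x-\E(f_x\mid \B_{\bar 1^k,k-1}\cup\{f_{x_1},\dots,f_{x_N}\})}_{L^2}\ge\varepsilon$. Building this greedily produces an infinite sequence $(x_i)_{i\in\mathbb{N}}$ in $V_{k+1}$ such that for each $n$, the fiber $f_{x_{n+1}}$ is at $L^2$-distance at least $\varepsilon$ from the $\sigma$-algebra generated by $\B_{\bar 1^k,k-1}$ together with the earlier fibers $f_{x_1},\dots,f_{x_n}$. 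The idea is that such a sequence of fibers, being ``$\varepsilon$-far from each other modulo lower-dimensional information'', must shatter large boxes, contradicting finiteness of $\VC_k(f)$.

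To extract shattering from this sequence, the first step is to replace the $\sigma$-algebra condition by a pointwise condition using Lemma~\ref{lem: non-triv cond exp}: since $f_{x_{n+1}}$ is not approximable by $\B_n:=\sigma(\B_{\bar 1^k,k-1}\cup\{f_{x_1},\dots,f_{x_n}\})$, there are rationals $r_n<s_n$ (from a bounded denominator, since $t$ depends only on $\varepsilon$) and $\delta>0$ such that on a set of $\mu_{\bar 1^k}$-measure $\ge\delta$ both $\E(\chi_{f_{x_{n+1}}^{<r_n}}\mid\B_n)\ge\delta$ and $\E(\chi_{f_{x_{n+1}}^{\ge s_n}}\mid\B_n)\ge\delta$. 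Passing to a subsequence we may assume $r_n=r$, $s_n=s$ are fixed. The next step is homogenization: I would pass to the ultrapower / to an indiscernible sequence (the machinery flagged for Section~\ref{sec:indiscernibles}) so that the sequence $(f_{x_i})$ becomes sufficiently homogeneous that the ``fuzziness'' of $f_{x_{n+1}}$ relative to the previous fibers and the lower-dimensional algebra is witnessed uniformly. Concretely, on a positive-measure set of $(k-1)$-tuples, flipping one more coordinate to $x_i$ versus $x_j$ toggles the truth value of ``$f<r$'' versus ``$f\ge s$'' in a way that one can Ramsey/indiscernibility-reduce to a genuine $(r,s)$-shattered box of size $d$ in the first $k$ coordinates for arbitrarily large $d$ --- exactly the configuration forbidden by $\VC_k(f)\le\bar d$.

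The main obstacle I anticipate is this last extraction step: going from ``each new fiber is $L^2$-far from $\B_{\bar 1^k,k-1}\cup\{\text{previous fibers}\}$'' to ``there is an $(r,s)$-shattered $d$-box'' for every $d$. The subtlety is that the lower-dimensional algebra $\B_{\bar 1^k,k-1}$ is not just a product of coordinate algebras --- it contains all $(k-1)$-ary cylinder sets --- so naive Fubini/averaging over coordinates does not immediately isolate a clean combinatorial shattering pattern; one really needs the extra regularity afforded by indiscernibility to make the $(k-1)$-dimensional information ``transparent'' and to ensure the toggling behavior is coherent across all $2^{d^k}$ required subsets of the box. Everything else --- the greedy construction, the reduction via Lemma~\ref{lem: non-triv cond exp}, bounding denominators, and passing to subsequences via pigeonhole --- is routine.

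Once Proposition~\ref{prop: finite VCk-dim implies approx} is established, Corollary~\ref{cor: approx by fib for rels} follows quickly. For a relation $E$ with $\VC_k(E)<\infty$, apply the proposition to $f=\chi_E$ to get $x_1,\dots,x_N$ with $\norm{\chi_{E_x}-\E(\chi_{E_x}\mid\B)}_{L^2}<\varepsilon'$ where $\B=\sigma(\B_{\bar 1^k,k-1}\cup\{\chi_{E_{x_1}},\dots,\chi_{E_{x_N}}\})$ and $\varepsilon'=\varepsilon^2/2$ (or whatever the bound in Remark~\ref{rem : approx by indicator functions} requires after rescaling). Since each $\chi_{E_{x_i}}$ is the characteristic function of a set, the $\sigma$-algebra $\B$ is generated by $\B_{\bar 1^k,k-1}$ together with the finitely many sets $E_{x_1},\dots,E_{x_N}$, so its atoms over $\B_{\bar 1^k,k-1}$ are Boolean combinations of the $E_{x_i}$ cut by $\B_{\bar 1^k,k-1}$-sets. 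Remark~\ref{rem : approx by indicator functions} then produces $D_x\in\B$ with $\mu_{\bar 1^k}(E_x\triangle D_x)<\varepsilon$, and approximating $D_x$ by a finite Boolean combination of generators (losing another arbitrarily small amount of measure, absorbed into $\varepsilon$) gives a $D_x$ of the required form.
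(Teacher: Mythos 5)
Your derivation of the Corollary \emph{from} Proposition \ref{prop: finite VCk-dim implies approx} is correct and is the same as the paper's: apply the proposition to $\chi_E$, then use Remark \ref{rem : approx by indicator functions} to trade the conditional expectation for a set $D_x$ in $\sigma\left(\B_{\bar 1^k,k-1}\cup\{E_{x_1},\dots,E_{x_N}\}\right)$. In fact your final approximation step is unnecessary (though harmless): every set in this $\sigma$-algebra is already of the form $\bigcup_{a}\left(a\cap A_a\right)$ with $a$ ranging over the atoms of the finite algebra generated by $E_{x_1},\dots,E_{x_N}$ and $A_a\in\B_{\bar 1^k,k-1}$, hence is literally a Boolean combination of the required kind, with no extra loss of measure.

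The gap is in your treatment of Proposition \ref{prop: finite VCk-dim implies approx} itself, which you build into the proof rather than quote: the step you flag as ``the main obstacle'' is exactly the non-routine core, and invoking indiscernibility is not a substitute for it. What the paper extracts from the ultrapower/indiscernibility step (Theorem \ref{thm:pass to indiscernible}) is two specific properties beyond mere homogeneity: a de Finetti-type identity (Proposition \ref{prop: de Finetti}) giving $\E\left(f_{x_l}\mid\B_{\bar 1^k,k-1}\cup\{f_{x_i}:i<l\}\right)=\E\left(f_{x_l}\mid\B\right)$ for the fixed algebra $\B=\sigma\left(\{f_{x_i}:i<0\}\cup\B_{\bar 1^k,k-1}\right)$, and a Bergelson-type positivity statement (Lemma \ref{lem: indisc Bergelson}, via Fact \ref{fac: Bergelson}) that the intersections $\bigcap_{i\in[l]}F_\delta(x_i)$ of the ``fuzzy'' sets have positive measure for every $l$ --- not merely measure $\geq\delta$ individually, which is all your subsequence argument provides. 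Given these, the shattering is produced by an explicit iterated-integration argument: for box size $m$ take $l=2^{m^k}$ and one configuration $u_{\bar i}$ per subset of $[m]^k$, and show $\int_F\prod_{\bar i}\chi_{S_{u_{\bar i}}}\,d\mu_{m\cdot\bar 1^k}>0$ by peeling off one column traversal $\bar i^0$ at a time; the decisive point is that the constraints imposed by all other traversals involve at most $k-1$ of the $k$ active coordinates, hence are $\B$-measurable, so the de Finetti identity lets one replace $\chi_{E^{u}_{x_l}}$ by $\E\left(\chi_{E^{u}_{x_l}}\mid\B\right)\geq\delta$ on $F_\delta(x_l)$, giving the lower bound $\delta^{l m^k}\mu_{m\cdot\bar 1^k}(F)>0$. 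None of this appears in your sketch (and your ``toggling one coordinate between $x_i$ and $x_j$'' misdescribes the target configuration: the shattered box sits in the first $k$ coordinates and is shattered by the fibers at the $x_i$ in the $(k+1)$-st). So as written, the proposal proves the Corollary only modulo Proposition \ref{prop: finite VCk-dim implies approx}, whose proof it does not actually contain.
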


\begin{remark}
When $k = 1$, Corollary \ref{cor: approx by fib for rels} corresponds to the familiar result for relations of finite VC-dimension discussed in the introduction.


Indeed, in this case the algebra $\B_{(1),0} = \left\{\emptyset, V_1  \right\}$ is trivial. Assume $E \in \B_{1,1}$. Then by Corollary \ref{cor: approx by fib for rels}, there exist finitely many fibers $E_{x_1}, \ldots, E_{x_N}$ of $E$ with $x_i \in V_2$ so that for every $x \in V_2$, $\mu_{1,0} \left(E_x \triangle D_x \right) < \frac{\varepsilon}{2}$ for some $D_x$ a Boolean combination of $E_{x_1}, \ldots, E_{x_N}$.

Let $D_{1}, \ldots, D_{N'}$ list all Boolean combinations of $E_{x_1}, \ldots, E_{x_N}$ that appear as $D_x$ for some $x \in V_{2}$.  Then, for each $D_i$, we may choose some $x'_i\in V_2$ with $\mu_{1,0}\left(D_i\triangle E_{x'_i}\right)<\frac{\varepsilon}{2}$.

Now for every $x \in V_2$ there exists some $i \in [N']$ so that $\mu_{1,0} \left( E_x \triangle E_{x'_i}\right) < \varepsilon$.
That is, up to symmetric difference $\varepsilon$, $E$ has at most $N'$ different fibers.

And using Sauer-Shelah, $N'$ can be bounded by a polynomial of degree $d$.
\end{remark}

%
%

\subsection{A quantitative statement of the approximation result}

In this section we restate Proposition \ref{prop: finite VCk-dim implies approx} in a more quantitative form.  This takes some work to state, because there should be quantitative bounds not only on the length of the sequence of fibers, but also on the complexity of the sets from $\mathcal{B}_{\bar 1^k,k-1}$ used in the approximations.

In fact, most of the extra work is formulating the statement: the quantitative strengthening follows from the qualitative form by a compactness argument.  We do not need this stronger form in what follows, so the reader can safely skip this subsection.  Nonetheless, we include this stronger version both because the potential for bounds is of independent interest, and because the quantitative form is the form that can be applied directly to large finite hypergraphs.

The main additional definition we need to state the quantitative version will be $\mathcal{F}^{f,n,\bar b}$, which will be the collection of sets formed by certain fibers of level sets (recall Definition \ref{def: level set}) of $f$.

\begin{definition} \label{def: fiber algebra} Let $k \in \mathbb{N}_{\geq 1}$, $\mathfrak{P} = \left(V_{[k+1]},\B_{\bar{n}}, \mu_{\bar{n}} \right)_{\bar{n} \in \mathbb{N}^{k+1}}$ be a $(k+1)$-partite graded probability space, and assume that $f: V^{\bar{1}^{k+1}} \to [0,1]$ is a $\B_{\bar{1}^{k+1}}$-measurable $(k+1)$-ary function. 
\begin{enumerate}

\item Let $\bar{b}$ be a tuple from $V_{k+1}$ (finite or infinite), $\bar{w} = (\bar{w}_1, \ldots, \bar{w}_k) \in V^{\bar{m}}$ for some $\bar{m} = (m_1, \ldots, m_k) \in \mathbb{N}^k$ and $m \in \mathbb{N}$. We let $\mathcal{F}^{f, n, \bar{b}}_{\bar{w}}$ be the family of all sets in $\B_{\bar{1}^k, k-1}$ of the form
	\begin{gather*}
		 \left( f^{<q}_b\right)_{a_i \to i, i \in I} = \left\{\bar{x} = (x_1, \ldots, x_k) \in V^{\bar{1}^k} : \bar{x}_{a_i \to i, i \in I} \ ^{\frown}(b) \in f^{<q}\right\}
	\end{gather*}
	for some $q \in \mathbb{Q}_{n}^{[0,1]}$, $\emptyset \neq I \in \binom{[k]}{\leq k-1} $, $a_i \in \bar{w}_i$ for $i\in I$ and $b \in \bar{b}$.
	
	\item Given a tuple $\bar{b}$ from $V_{k+1}$ (finite or infinite), let 
	$$\mathcal{F}^{f,\bar{b}} := \bigcup \left\{ \mathcal{F}^{f, n, \bar{b}}_{\bar{w}} : n \in \mathbb{N}, \bar{w} \in V^{\bar{m}}, \bar{m} \in \mathbb{N}^k \right\}.$$
	\item We let $\mathcal{F}^{f} := \bigcup \left\{ \mathcal{F}^{f,(b)} : b \in V_{k+1} \right\}$.
	\item We let $\mathcal{B}^{f,n,\bar{b}}_{\bar{w}}, \B^{f,\bar{b}}, \B^f_{\bar{1}^k, k-1}$ be the $\sigma$-subalgebras (and $\mathcal{B}^{f,n,\bar{b},0}_{\bar{w}}, \B^{f,\bar{b},0}, \B^{f,0}$ the Boolean subalgebras) of $\B_{\bar{1}^k,k-1}$ generated by $\mathcal{F}^{f,n,\bar{b}}_{\bar{w}}, \mathcal{F}^{f,\bar{b}}, \mathcal{F}^{f}$ respectively. Note that when $\bar{b}$ is finite, we have $\mathcal{B}^{f,n,\bar{b}}_{\bar{w}} = \mathcal{B}^{f,n,\bar{b},0}_{\bar{w}}$ are both finite.
\end{enumerate}

\end{definition}

Now we can state a quantitative refinement of Proposition \ref{prop: finite VCk-dim implies approx} (which says, among other things, that the only sets from $\mathcal{B}_{\bar{1}^k, k-1}$ needed to approximate $f_x$ are the fibers of the level sets of $f$).

\begin{prop}\label{prop: finite VCk-dim implies approx bounded}
 For every $k \in \mathbb{N}$, $\bar{d} = (d_{r,s})_{r<s \in \mathbb{Q} \cap [0,1]}$ with $d_{r,s} \in \mathbb{N}$ and $\varepsilon \in \mathbb{R}_{>0}$ there exist some $N,N_0\in \mathbb{N}$ satisfying the following.

Let $\left(V_{[k+1]}, \B_{\bar{n}}, \mu_{\bar{n}} \right)_{n \in \mathbb{N}^{k+1}}$ be a $(k+1)$-partite graded probability space. Suppose that $f: V^{\bar{1}^{k+1}} \to [0,1]$ is $\B_{\bar{1}^{k+1}}$-measurable and $\VC_k(f) \leq \bar{d}$. Then there exist some $x_1, \ldots, x_{N} \in V_{k+1}$ such that: for every $x \in V_{k+1}$, there exist some sets $D_1, \ldots, D_{N_{0}} \in \mathcal{F}^{f, N_0,  (x_1, \ldots, x_N,x)}$ and a $\left(\{ D_i \}_{i \in [N_0]} \cup \{f^{<q}_{x_i}\}_{i \in [N], q \in \mathbb{Q}_{N_0}^{[0,1]}} \right)$-simple\footnote{Recall that this means that $g_x$ is a finite linear combination of the characteristic functions of these sets.} function $g_x$ with coefficients in $\mathbb{Q}_{N_0}^{[0,1]}$
such that 
  $$\norm{f_x - g_x}_{L^2} < \varepsilon.$$
\end{prop}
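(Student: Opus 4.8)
The plan is to deduce Proposition~\ref{prop: finite VCk-dim implies approx bounded} from the qualitative Proposition~\ref{prop: finite VCk-dim implies approx} by a compactness argument, passing to an ultraproduct of hypothetical counterexamples, applying Proposition~\ref{prop: finite VCk-dim implies approx} there, and transferring the resulting approximation back to the finite stages. Crucially, one uses Proposition~\ref{prop: finite VCk-dim implies approx} in the form that its proof actually delivers: the sets from $\B_{\bar 1^k,k-1}$ used to approximate $f_x$ may be taken in the Boolean algebra generated by $\mathcal F^{f,(x_1,\dots,x_N,x)}$ --- i.e.\ fibers of level sets $f^{<q}$ of $f$ at dyadic cut-offs $q$ (as produced by Lemma~\ref{lem: non-triv cond exp} and the $L^2$-estimates of Section~\ref{sec:level sets}), with the last coordinate specialized to one of $x_1,\dots,x_N,x$ and at most $k-1$ of the remaining ones specialized --- and the fibers $f_{x_j}$ may be replaced by their level sets $f^{<q}_{x_j}$. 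Granting this, the only genuinely new content is the \emph{uniformity} of the bounds $N,N_0$ over all graded probability spaces, and this is precisely the kind of statement compactness reduces to the non-uniform one.

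So suppose the conclusion fails for some fixed $k,\bar d,\varepsilon$. Since "there are $x_1,\dots,x_N$ with every $f_x$ being $\varepsilon$-approximable of complexity $\le N_0$" is monotone in $(N,N_0)$, its failure for all $(N,N_0)$ gives, for each $i\in\mathbb N$, a $(k+1)$-partite graded probability space $\mathfrak P_i$ and a $\B_{\bar 1^{k+1}}$-measurable $f_i\colon (V^i)^{\bar 1^{k+1}}\to[0,1]$ with $\VC_k(f_i)\le\bar d$ for which budget $(i,i)$ is insufficient: for all $x_1,\dots,x_i\in V^i_{k+1}$ there is $x\in V^i_{k+1}$ with $\norm{(f_i)_x-g}_{L^2}\ge\varepsilon$ for every simple function $g$ with coefficients in $\mathbb Q^{[0,1]}_i$ built from $\le i$ sets of $\mathcal F^{f_i,i,(x_1,\dots,x_i,x)}$ and the $(f_i)^{<q}_{x_j}$ ($j\le i$, $q\in\mathbb Q^{[0,1]}_i$). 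Fix a non-principal ultrafilter $\mathcal U$ on $\mathbb N$ and form $(\mathfrak P^*,f^*)=\prod_{\mathcal U}(\mathfrak P_i,f_i)$, where $\mathfrak P^*$ arises by passing to the Loeb $\sigma$-algebras and Loeb measures over the internal algebras (a standard construction; the axioms of Definition~\ref{def: kGPS}, Fubini included, transfer with the usual care) and $f^*:=\mathrm{st}[f_i]$ is the $[0,1]$-valued, $\B^*_{\bar 1^{k+1}}$-measurable function with $f^*([\bar a])=\lim_{\mathcal U}f_i(\bar a^i)$. Then $\VC_k(f^*)<\infty$: an $(r,s)$-shattered $m$-box of $f^*$ ($r<s$ rational, $m$ large), after shrinking to rationals $r<r'<s'<s$ and unwinding shattering through the ultraproduct (a value $\le r'$ of $f^*$ is $<r'$ at $\mathcal U$-almost every coordinate, symmetrically for $\ge s'$, and the finitely many box elements are distinct $\mathcal U$-almost everywhere) persists as an $(r',s')$-shattered $m$-box in $f_i$ for $\mathcal U$-many $i$, so taking $m>d_{r',s'}$ contradicts $\VC_k(f_i)\le\bar d$.

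Next comes the diagonal step: for \emph{every} $\bar x=(x_1,\dots,x_N)\in(V^*_{k+1})^N$ there is $x\in V^*_{k+1}$ such that $f^*_x$ admits no $\varepsilon$-approximation, by a simple function of the allowed form on level sets of $f^*$ indexed by $x_1,\dots,x_N,x$, of \emph{any} finite complexity. Writing $x_j=[x_j^i]$, apply insufficiency of budget $(i,i)$ in $\mathfrak P_i$ to $(x_1^i,\dots,x_N^i$, padded arbitrarily to length $i)$ to obtain a witness $x^i\in V^i_{k+1}$, and set $x:=[x^i]$. Were there, for some $N_0$, a $g$ of complexity $\le N_0$ in $\mathfrak P^*$ with $\norm{f^*_x-g}_{L^2}<\varepsilon$, then (perturbing the dyadic cut-offs of $g$ downward by a small real avoiding the at most countably many atom-values of $f^*$, and rounding to dyadics of height $i$) one would obtain, for $\mathcal U$-many $i\ge\max(N,N_0)$, a simple function $g^{(i)}$ in $\mathfrak P_i$ of complexity $\le i$, built from level sets indexed by $x_1^i,\dots,x_N^i,x^i$ (which lie among $x_1^i,\dots,x_i^i,x^i$), with $\norm{(f_i)_{x^i}-g^{(i)}}_{L^2}<\varepsilon$, contradicting the choice of $x^i$. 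Since $N_0$ was arbitrary, the claim follows.

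Finally, apply Proposition~\ref{prop: finite VCk-dim implies approx} (in the refined form of the first paragraph) to $f^*$ with error $\tfrac\varepsilon2$: there are $x_1,\dots,x_N\in V^*_{k+1}$ with every $f^*_x$ approximated within $\tfrac\varepsilon2$ by a simple function on fibers of level sets of $f^*$ indexed by $x_1,\dots,x_N,x$; rounding its finitely many coefficients to dyadic rationals in $[0,1]$ of sufficiently large height absorbs another $\tfrac\varepsilon2$, so each $f^*_x$ is approximated within $\varepsilon$ by some $g$ of the allowed form and of some finite complexity. Applying the diagonal claim to this very tuple $\bar x=(x_1,\dots,x_N)$ produces an $x$ for which no such $g$ of any complexity works --- a contradiction, which proves the proposition. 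The main obstacle here is not the compactness bookkeeping but verifying that the refined form of Proposition~\ref{prop: finite VCk-dim implies approx} is genuinely available: that the $\B_{\bar 1^k,k-1}$-part of the approximation can be taken generated by fibers of \emph{level sets of $f$} at dyadic cut-offs, rather than by arbitrary $(\le k-1)$-ary measurable sets. This is a property of the \emph{proof} of that proposition --- which works throughout with dyadic level sets of $f$ and their fibers (cf.\ the role of Lemma~\ref{lem: non-triv cond exp}) --- and does not appear to follow softly from its bare statement; a secondary point, handled by the cut-off perturbation above, is the routine but real care needed to push $L^2$-estimates involving level sets through the Loeb construction.
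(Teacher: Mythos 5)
Your proposal is essentially correct, but it organizes the argument differently from the paper. The paper does \emph{not} deduce Proposition~\ref{prop: finite VCk-dim implies approx bounded} from the statement of Proposition~\ref{prop: finite VCk-dim implies approx}; instead it reruns the same contradiction proof: from the failure of the quantitative statement it greedily extracts, inside each finite-stage counterexample $\mathfrak{P}_j$, a sequence $x_1^j,\ldots,x_j^j$ in which each $f_{x_t^j}$ is $\varepsilon$-far from every allowed approximant built over $\mathcal{F}^{f^j,j,(x_1^j,\ldots,x_t^j)}$ and the $f^{<q}_{x_i^j}$, feeds these directly into Theorem~\ref{thm:pass to indiscernible} to reach Assumption~\ref{ass: ineapproximable}, and then the shattering argument contradicts the bounded $\VC_k$-dimension. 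Your route instead treats Proposition~\ref{prop: finite VCk-dim implies approx} as a black box, takes an ultraproduct of counterexamples (your verification that $\VC_k(f^*)\leq\bar d'$ is Lemma~\ref{lem: shattering is definable}), and adds a diagonalization plus a transfer of the approximant back to the finite stages; this is very much in the spirit of how the paper later derives Corollary~\ref{thm: the very main thm} from Theorem~\ref{thm: the very main thm soft}, and its payoff is modularity, at the cost of a second compactness layer and of needing a strengthened form of the qualitative conclusion. The transfer details you sketch (monotonicity in $(N,N_0)$, perturbing the cut-offs $q$ downward so that the non-internal level sets $f^{*<q}$ are replaced by internal ones up to small measure, rounding coefficients and heights to $\mathbb{Q}_i^{[0,1]}$, and noting that the parameters of the ultraproduct approximant lie among the allowed ones at $\mathcal{U}$-many stages) are the right ones and do go through.

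The one step you should justify differently is the ``refined form'' of Proposition~\ref{prop: finite VCk-dim implies approx}, which you attribute to an inspection of its proof. That proof is by contradiction and produces no approximant, so it does not by itself deliver the claim that the $\B_{\bar 1^k,k-1}$-part can be taken in the algebra generated by $\mathcal{F}^{f,(x_1,\ldots,x_N,x)}$. The correct (and available) justification is Lemma~\ref{lem: min alg with add functions} together with Remark~\ref{rem: algebra of fibers}(1),(2): taking $\mathcal{D}=\B^{f,(x)^{\frown}(x_1,\ldots,x_N)}$ and $G=\{f_{x_1},\ldots,f_{x_N}\}$ one gets
\begin{equation*}
\E\left(f_x \mid \B_{\bar 1^k,k-1}\cup G\right)=\E\left(f_x \mid \mathcal{D}\cup G\right),
\end{equation*}
and then $L^2$-density of simple functions over finitely many generators (with dyadic cut-offs and coefficients) yields exactly the approximants your reduction needs; this is the same mechanism the paper uses inside Lemma~\ref{lem: meaning of formulas} and Lemma~\ref{lem: pos meas approx}. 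Without some such argument your reduction would have a genuine gap, since a general set in $\B_{\bar 1^k,k-1}$ of the ultraproduct space need not be approximable by sets from the restricted finite-stage families; with it, your proof is complete.
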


This version of the proposition is non-trivial if we take the $V_i$ to be very large finite sets---$N,N_0$ depend only on $k,\bar d,\varepsilon$, so we can choose the $V_i$ to be much larger than $N,N_0$.  In that case the $\sigma$-algebras are trivialized---every set is $\mathcal{B}_{\bar 1^{k+1},1}$-measurable, since it can be written as a very large finite union of singleton sets.  But the collection $\mathcal{F}^{f,N_0,(x_1,\ldots,x_N,x)}$ is \emph{not} all sets, so the conclusion of the proposition is still useful.     

\subsection{Approximability by conditional expectations}
\

\begin{proof}[Proof of \ref{prop: finite VCk-dim implies approx} and \ref{prop: finite VCk-dim implies approx bounded}]
  To prove Proposition \ref{prop: finite VCk-dim implies approx}, assume towards a contradiction we are given a $(k+1)$-partite graded probability space $\mathfrak{P}_0=\left(V_{[k+1]}, \B_{\bar{n}}, \mu_{\bar{n}} \right)_{n \in \mathbb{N}^{k+1}}$ so that the conclusion of Proposition \ref{prop: finite VCk-dim implies approx} fails for some $\varepsilon \in \mathbb{R}_{>0}$.

  Then we may select an infinite sequence $x_1,x_2,\ldots$ of elements of $V_{k+1}$ by successively choosing $x_{i+1}$ so that
  $$\norm{f_{x_{i+1}} - \E\left(f_{x_{i+1}} \mid \mathcal{B}_{\bar{1}^k, k-1} \cup \{f_{x_1}, \ldots, f_{x_i} \} \right)}_{L^2} \geq \varepsilon.$$

  We would like to ``homogenize'' this sequence.   For instance, we would like to ensure that the measures of sets like $f^{<r}_{x_i}\cap f^{<r}_{x_j}$ do not depend on the particular elements $x_i,x_j$ in this sequence (as long as $x_i\neq x_j$).  Using Ramsey's Theorem, we can get part way there: we can find an infinite subsequence so that for any $i<j$, $\mu_{\bar{1}^k}(f^{<r}_{x_i}\cap f^{<r}_{x_j})$ belongs to some interval $(a,a+\delta)$ for some $a \in [0,1]$ and a small $\delta > 0$.  (We do this by partitioning $[0,1]$ into finitely many intervals $[0,1]=\bigcup_r I_r$ and coloring pairs $i,j$ by the $r$ such that $\mu_{\bar{1}^k}(f^{<r}_{x_i}\cap f^{<r}_{x_j})\in I_r$.)  However, it will be convenient to pin down $\mu_{\bar{1}^k}(f^{<r}_{x_i}\cap f^{<r}_{x_j})$ exactly so that we do not need to keep track of the extra bounds like $\delta$.  Furthermore (for instance, by Fact \ref{fac: Bergelson}), if the measure of this intersection is constant, it must be strictly positive, and similarly for intersections of any number of the sets $f^{<r}_{x_i}$.

  We will need to arrange that a sequence of intersections of this kind always has positive measure, not for the sets $f^{<r}_{x_i}$, but with a more complicated set we define below.

  Furthermore, we want to take into account an additional property.  Each $f_{x_i}$ is measurable with respect to some $\sigma$-algebra $\mathcal{B}_i\subseteq\mathcal{B}_{\bar 1^k}$, and we can consider the ``tail $\sigma$-algebra'' $\mathcal{B}=\bigcap_N\bigcup_{i\geq N}\mathcal{B}_i$.  It is convenient to take the fibers $f_{x_i}$ to be mutually independent over this tail $\sigma$-algebra, because then we can have $\mathbb{E}(f_{x_l}\mid\mathcal{B}\cup\{f_{x_i} : i<l\})=\mathbb{E}(f_{x_l}\mid\mathcal{B})$ for all $l$.  This, too, is essentially a kind of homogenization implied by a de Finetti-style argument.

  In order to fully homogenize, we may need to leave the original space $\mathfrak{P}_0$ for a different space $\mathfrak{P}$ (the \emph{ultrapower} of $\mathfrak{P}$) in which we can find a sequence similar to the one we began with, but which is fully homogeneous.  The details of this construction are given in Section \ref{sec:indiscernibles}.  For now we treat this as a black box and focus on the combinatorial portion of the proof.  We therefore have, by Theorem \ref{thm:pass to indiscernible}\footnote{The proof of the quantitative version, Proposition \ref{prop: finite VCk-dim implies approx bounded}, proceeds nearly identically: we assume the conclusion of Proposition \ref{prop: finite VCk-dim implies approx} fails for some fixed $k, \bar{d}$ and $\varepsilon \in \mathbb{R}_{>0}$. Without loss of generality $\varepsilon \in \mathbb{Q}_{>0}$. This means that for every $j \in \mathbb{N}$, there exists some $(k+1)$-partite graded probability space $\mathfrak{P}_j = (V^j_{[k+1]}, \B^j_{\bar{n}}, \mu^j_{\bar{n}})_{\bar{n} \in \mathbb{N}^{k+1}}$, some $\B^j_{\bar{1}^{k+1}}$-measurable function $f^j: \prod_{i \in [k+1]} V^j_i \to [0,1]$ with $\VC_{k}(f^j) \leq \bar{d}$ and some $x_1^j, \ldots, x_{j}^j \in V^j_{k+1}$ such that for every $t \leq j$ we have:
for any sets $D_1, \ldots, D_{j} \in \mathcal{F}^{f^j, j,  (x^j_1, \ldots, x^j_t)}$ and any $\left(\{ D_i \}_{i \in [j]} \cup \left\{f^{<q}_{x^j_i} : i \in [t-1], q \in \mathbb{Q}_{j}^{[0,1]}\right\} \right)$-simple function $g$ with coefficients in $\mathbb{Q}_{j}^{[0,1]}$, $\norm{f_{x_t^j} - g}_{L^2} \geq \varepsilon$.  Assumption \ref{ass: ineapproximable} still follows from Theorem \ref{thm:pass to indiscernible}, so the remainder of the proof is unchanged.}:
  \begin{assumption}\label{ass: ineapproximable}
There exists a $(k+1)$-partite graded probability space $\mathfrak{P} = (V_{[k+1]}, \B_{\bar{n}}, \mu_{\bar{n}})_{n \in \mathbb{N}^{k+1}}$, $\varepsilon \in \mathbb{R}_{>0}$, a $ \B_{\bar{1}^k}$-measurable function $f: V^{\bar{1}^{k+1}} \to [0,1]$
	and a sequence $(x_l)_{l \in \mathbb{Z}}$ in $V_{k+1}$ satisfying the following:
	\begin{enumerate}
	\item  $\VC_k(f) \leq \bar{d}$;
        \item whenever $0\leq r<r'<s'<s\leq s$ are in $\mathbb{Q}$, $\delta\in\mathbb{R}_{>0}$, and
          $$\mu_{\bar 1^k} \left(\left\{\bar{x} \in V^{\bar{1}^k} \mid \E\left(\chi_{f^{<r}_{x_0}}\mid \B \right)(x) \geq \delta \land \E \left( \chi_{f^{\geq s}_{x_0}} \mid \B \right)(x) \geq  \delta  \right\} \right)>0,$$
          then for any $l\in\mathbb{N}$,
          $$\mu_{\bar 1^k} \left(\bigcap_{i\in[l]}\left\{\bar{x} \in V^{\bar{1}^k} \mid \E\left(\chi_{f^{<r'}_{x_i}}\mid \B \right)(x) \geq \delta \land \E \left( \chi_{f^{\geq s'}_{x_i}} \mid \B \right)(x) \geq  \delta  \right\} \right)>0;$$     
	\item $ \norm{f_{x_l} - \mathbb{E} \left(f_{x_l}\mid\mathcal{B}_{\bar{1}^k,k-1}\cup\{f_{x_i} : i < l\} \right)}_{L^2} \geq\varepsilon$ for all $l \in \mathbb{Z}$;
		\item $\B_{\bar{1}^k,k-1} \subseteq \B \subseteq \B_{\bar{1}^k}$;
		\item for all $l \in \mathbb{N}$ we have
	\begin{gather*}
		\mathbb{E}\left(f_{x_l}\mid\mathcal{B}_{\bar{1}^k,k-1}\cup\{f_{x_i} : i < l\} \right)\\
		=\mathbb{E} \left(f_{x_l}\mid\mathcal{B}\cup\{f_{x_i} : i < l\} \right)\\
		 = \mathbb{E} \left(f_{x_l}\mid\mathcal{B} \right),
	\end{gather*}

	\end{enumerate}
where $\B := \sigma \left(\left\{f_{x_i} : i < 0\right\} \cup \mathcal{B}_{\bar{1}^k,k-1} \right)$.
\end{assumption}

We will now show that this leads to a contradiction.  The idea is that Assumption \ref{ass: ineapproximable}(3) implies that the fibers have some ``random behavior'' relative to each other, and with the help of Assumption \ref{ass: ineapproximable}(2), (4), and (5), this random behavior is consistent enough that we can find a large box $Y \subseteq \prod_{i \in [k]} V_i$ and an $r'<s'$ in $[0,1]$ so that $Y$ is $(r',s')$-shattered by $f$, contradicting Assumption \ref{ass: ineapproximable}(1).

By Assumption \ref{ass: ineapproximable}(3) and (5), we have 
 $$\norm{f_{x_0} - \E \left( f_{x_0} \mid \B \right)}_{L^2} \geq \varepsilon.$$

By Lemma \ref{lem: non-triv cond exp} there exist some $r < s \in \mathbb{Q}^{[0,1]}$ and $\delta \in \mathbb{R}_{>0}$ so that 
\begin{gather}\label{eq: pos meas Fdelta0}
\mu_{\bar 1^k} \left(\left\{\bar{y} \in V^{\bar{1}^{k}} : \E\left(\chi_{f^{<r}_{x_0}}\mid \B \right)(\bar{y}) \geq \delta \land \E \left( \chi_{f_{x_0}^{\geq s}} \mid \B \right)(\bar{y}) \geq  \delta \right\} \right) \geq \delta.
\end{gather}

Fix arbitrary $r',s' \in \mathbb{Q}^{[0,1]}$ so that $r<r'<s'<s$, and let $E^1 := f^{<r'}, E^{-1} := f^{\geq s'}$, and $E^0=V^{\bar 1^k}$.
 For $i \in \mathbb{N}$ let 
 $$F_{\delta}(x_i):=\left\{\bar{y} \in V^{\bar{1}^k} \mid \E\left(\chi_{E^{1}_{x_i}}\mid \B \right)(\bar{y}) \geq \delta \land \E \left( \chi_{E^{-1}_{x_i}} \mid \B \right)(\bar{y}) \geq  \delta \right\} \in \B.$$

 This is precisely the set to which Assumption \ref{ass: ineapproximable}(2) applies. We should think of $\bar y\in F_\delta(x_i)$ as the points where $f_{x_i}$ is ``ambiguous'' to $\mathcal{B}$ in the sense that---as far as $\mathcal{B}$ can tell---both $f_{x_i}(\bar y)<r'$ and $f_{x_i}(\bar y)\geq s'$ seem plausible.

\begin{definition}\label{def: Su}
  Given $l \in \mathbb{N}$, $u: [l] \rightarrow\{-1,0,1\}$ and $x_1, \ldots, x_l \in V_{k+1}$, let $S_u \in \B_{\bar{1}^k}$ be the subset of $V^{\bar{1}^k}$ given by 
\[ \bigcap_{i \in  [l]}E_{x_i}^{u(i)}.\]
\end{definition}
That is, $u$ specifies a configuration of the $x_i$---whether we want our points to be in $E^1_{x_i}=f^{<r'}_{x_i}$, in $E^{-1}_{x_i}=f^{\geq s'}_{x_i}$, or to ignore $x_i$.  $S_u$ is then all the points which satisfy this configuration.

Now we want to show that for any $m,l \in \mathbb{N}$ and any sequence of functions $\left( u_{\bar{i}} :  \bar{i} = (i_1, \ldots, i_k) \in [m]^k \right)$ with $u_{\bar{i}}: [l] \to \{-1, 0, 1 \}$, we have
\[\int \prod_{\bar{i} \in [m]^k}\chi_{S_{u_{\bar{i}}}}(y_{1,i_1}, \ldots, y_{k, i_k})d\mu_{m \cdot \bar{1}^k}(\bar{y}_1,\ldots,\bar{y}_k)>0,\]
where $\bar{y}_s = (y_{s,1}, \ldots, y_{s,m})$ for all $s \in [k]$.
In particular, suppose that we take $l=2^{m^k}$, let $\pi:[l]\rightarrow \mathcal{P}([m]^k)$ be a bijection, and for each $\bar{i} \in [m]^k$ we define 
\[u_{\bar{i}}(t)=\left\{\begin{array}{ll}
1&\text{if }\bar{i} \in \pi(t)\\
-1&\text{otherwise}\end{array}\right.\]
for all $t \in [l]$.
Since the integral is positive, we have $\mu_{m \cdot \bar{1}^k} \left( Z \right) >0$ for the set $Z \in \B_{m \cdot \bar{1}^k}$ defined by
$$\chi_{Z}\left((y_{s,t})_{s \in [k], t \in [m]} \right) := \prod_{\bar{i} \in [m]^k}\chi_{S_{u_{\bar{i}}}}\left(y_{1,i_1}, \ldots, y_{k, i_k} \right).$$
Then, taking any tuple $\left(y_{s,t} : s \in [k], t \in [m] \right) \in Z$, we have that for any $A \subseteq Y := \prod_{s \in [k]}\{y_{s,1}, \ldots, y_{s,m}\}$ there is some $i \in [l]$ so that 
\begin{gather*}
	A = Y \cap E^1 = Y \cap f^{<r'}_{x_i} \textrm{ and } Y \setminus A = Y \cap E^{-1} = Y \cap f^{\geq s'}_{x_i},
\end{gather*}
hence the box $Y$ is $(r',s')$-shattered by $f$. This would give a contradiction to Assumption \ref{ass: ineapproximable}(1) starting with some $m > d_{r',s'} $.

\medskip

We turn to showing that, for any choice of $m,l \in \mathbb{N}$ and functions $(u_{\bar{i}} : \bar{i} \in [m]^k)$,
\[\int \prod_{\bar{i} \in [m]^k}\chi_{S_{u_{\bar{i}}}}(y_{1,i_1}, \ldots, y_{k, i_k})d\mu_{m \cdot \bar{1}^k}(\bar{y}_1,\ldots,\bar{y}_k)>0.\]
Since the inside of this integral is always non-negative, it suffices to find some subset of positive measure on which it is strictly positive.

Let 
$$\tilde{F} := \bigcap_{i \in  [l]}F_{\delta}(x_i) \in \B.$$

By \eqref{eq: pos meas Fdelta0} and Assumption \ref{ass: ineapproximable}(2), $\beta := \mu_{\bar{1}^k} \left(\tilde{F} \right) >0$.  An element of $\tilde{F}$ is  ``ambiguous'' to $\mathcal{B}$ for all the $f_{x_i}$ at once; we should expect (and it follows from the work below) that for any $u$ and any positive measure $\mathcal{B}$-measurable set $D$, $\tilde F\cap D\cap S_u$ has positive measure.

By Lemma \ref{lem: Erdos-Stone}, the set
\[F :=\left\{(y_{s,t} : s \in [k], t \in [m]) \in V^{m \cdot \bar{1}^k} \mid \forall \bar{i} \in [m]^k \ (y_{1, i_1}, \ldots, y_{k,i_k}) \in \tilde{F} \right\}\]
is in $\B_{m \cdot \bar{1}^k}$ and $\mu_{m \cdot \bar{1}^k}(F) \geq \beta^{m^k} > 0 $.


We will show that
\[\int_F \prod_{\bar{i} \in [m]^k}\chi_{S_{u_{\bar{i}}}}(y_{1, i_1}, \ldots, y_{k, i_k})d\mu_{m \cdot \bar{1}^k}(\bar{y}_1,\ldots,\bar{y}_k)>0.\]
That is, we will show that we can find a positive measure set of matrices $\bar y=(\bar y_1,\ldots,\bar y_k)\in F$ so that each column traversal---that is, each sequence $(y_{1,i_1},\ldots,y_{k,i_k})$ consisting of one element from each column---belongs to $S_{u_{\bar i}}$.  If we select $\bar y$ randomly then, for each $\bar i\in[m]^k$, there is a positive probability that $(y_{1,i_1},\ldots,y_{k,i_k})$ belongs to $S_{u_{\bar i}}$.  The claim will then follow by showing that the behavior of each column traversal is sufficiently independent.  This is what we now show: that if we focus on one row $\bar{i}^0\in[m]^k$, the behavior of all the other column traversals is $\mathcal{B}$-measurable.

Pick any $\bar{i}^0 = (i_1^0, \ldots, i_k^0) \in [m]^k$. Let 
\begin{align*}
&W_{\bar{i}^0}' := \left\{ \bar{i} \in [m]^k : i_1 \in [m] \setminus \{ i_1^0 \}, \ldots, i_k \in [m] \setminus \{i^0_k \} \right\},\\
&W^*_{\bar{i}^0} := \left\{ \bar{i} \in [m]^k :  \bar{i} \neq \bar{i}^0 \land \left( \bigvee_{s \in [k]} i_s = i_s^0 \right)\right\}.	
\end{align*}
Note that $[m]^k$ is the disjoint union of $W'_{\bar{i}^0}, W^*_{\bar{i}^0}$ and $\{ \bar{i}^0\}$. Using the Fubini property we have 
\begin{gather}
\int_F \prod_{\bar{i} \in [m]^k}\chi_{S_{u_{\bar{i}}}}(y_{1, i_1}, \ldots, y_{k, i_k})d\mu_{m \cdot \bar{1}^k}(\bar{y}_1,\ldots,\bar{y}_k) = \label{eq5} \\
\int_{F'} \prod_{\bar{i} \in W'_{\bar{i}^0}}\chi_{S_{u_{\bar{i}}}}(y_{1, i_1}, \ldots, y_{k, i_k})\left(\int_{F^*(\bar{y}_1', \ldots, \bar{y}'_k)} \chi_{S_{u_{\bar{i}^0}}}(y_{1, i^0_1}, \ldots, y_{k, i^0_k}) \right. \label{eq1}\\
\left.\prod_{\bar{i} \in W^*_{\bar{i}^0}}\chi_{S_{u_{\bar{i}}}}(y_{1, i_1}, \ldots, y_{k, i_k}) d\mu_{\bar{1}^k}(y_{1, i^0_1}, \ldots, y_{k,i^0_k}) \right) d \mu_{(m-1) \cdot \bar{1}^k} (\bar{y}'_1, \ldots, \bar{y}'_k), \nonumber
\end{gather}
where $\bar{y}'_s := (y_{s,t} : t \in [m] \setminus \{ i^0_s \})$, and $F' \subseteq V^{(m-1) \cdot \bar{1}^k}$ and $F^*(\bar{y}'_1, \ldots, \bar{y}'_k) \subseteq V^{\bar{1}^k}$ are the analogs to $F$ on suitable coordinates, i.e.
\begin{align*}
&F' := \left\{(\bar{y}'_1, \ldots, \bar{y}'_k) \in V^{(m-1) \cdot \bar{1}^k} \mid \bigwedge_{\bar{i} \in W'_{\bar{i}^0}} (y_{1,i_1}, \ldots, y_{k, i_k}) \in \tilde{F} \right\},\\
&F^*(\bar{y}'_1, \ldots, \bar{y}'_k) := \Bigg\{ (y_{1,i^0_1}, \ldots, y_{k,i^0_k}) \in V^{\bar{1}^k} \mid  (y_{1, i^0_1}, \ldots, y_{k, i^0_k}) \in \tilde{F} \land \\
& \bigwedge_{\bar{i} \in W^*_{\bar{i}^0}}  (y_{1, i_1}, \ldots, y_{k, i_k}) \in \tilde{F} \Bigg\}.
\end{align*}

Obviously $F' \in \B_{(m-1) \cdot \bar{1}^k}$ since $\tilde{F} \in \B \subseteq \B_{\bar{1}^k}$. Note also that if $\bar{i} \in W^*_{\bar{i}^0}$, then by definition we must have $i_s \neq i^0_s$ for at least one $s \in [k]$, so $\bar{i} \land \bar{i}^0$ has length $\leq k-1$ and 
``$(y_{1, i_1}, \ldots, y_{k, i_k}) \in \tilde{F}$'' viewed as a condition on the tuple $(y_{1,i^0_1}, \ldots, y_{k,i^0_k})$ can involve at most $k-1$ coordinates (with all the other coordinates appearing in $\bar{y}'_1, \ldots, \bar{y}'_k$ fixed). Hence  
$$F^*(\bar{y}'_1, \ldots, \bar{y}'_k) \in \sigma \left(\left\{ \tilde{F} \right\} \cup \B_{\bar{1}^k,k-1} \right) \subseteq \B$$
 for any $(\bar{y}'_1, \ldots, \bar{y}'_k) \in V^{(m-1) \cdot \bar{1}^k}$.
The integral in (\ref{eq1}) can be rewritten as
\begin{gather*}
\int_{F'} \prod_{\bar{i} \in W'_{\bar{i}^0}}\chi_{S_{u_{\bar{i}}}}(y_{1, i_1}, \ldots, y_{k, i_k}) \left(\int_{V(\bar{y}_1', \ldots, \bar{y}'_k)} \chi_{S_{u_{\bar{i}^0}}} d\mu_{\bar{1}^k}(y_{1, i^0_1}, \ldots, y_{k,i^0_k}) \right)\\
 d \mu_{(m-1) \cdot \bar{1}^k} \left(\bar{y}'_1, \ldots, \bar{y}'_k \right),
\end{gather*}
where the set $V(\bar{y}_1', \ldots, \bar{y}'_k)$ is given by the intersection of $F^*(\bar{y}'_1, \ldots, \bar{y}'_k)$ with the set
\begin{gather*}
\Bigg\{ (y_{1,i^0_1}, \ldots, y_{k,i^0_k}) \in V^{\bar{1}^k} \mid  \bigwedge_{\bar{i} \in W^*_{\bar{i}^0}}  (y_{1, i_1}, \ldots, y_{k, i_k}) \in S_{u_{\bar{i}}} \Bigg\}.
\end{gather*}
As in the previous paragraph, each condition ``$(y_{1, i_1}, \ldots, y_{k, i_k}) \in S_{u_{\bar{i}}}$'' here, viewed as a condition on the tuple $(y_{1,i^0_1}, \ldots, y_{k,i^0_k})$,  can involve at most $k-1$ coordinates and is given by some fiber of $S_{u_{\bar{i}}} \in \B_{\bar{1}^k}$, hence we have 
$$V(\bar{y}_1', \ldots, \bar{y}'_k) \in \sigma \left(\left\{ \tilde{F} \right\}\cup \B_{\bar{1}^k,k-1} \right) \subseteq \B$$
 for any $(\bar{y}_1', \ldots, \bar{y}'_k) \in V^{(m-1) \cdot \bar{1}^k}$.  Thus, for any fixed $\bar{y}'_1, \ldots, \bar{y}'_k \in V^{(m-1) \cdot \bar{1}^k}$, we have:
\begin{align}
 & \int_{V(\bar{y}_1', \ldots, \bar{y}'_k)} \chi_{S_{u_{\bar{i}^0}}}\left(y_{1, i^0_1}, \ldots, y_{k,i^0_k} \right) d\mu_{\bar{1}^k}\left(y_{1, i^0_1}, \ldots, y_{k,i^0_k} \right) \nonumber\\
&=\int_{V(\bar{y}_1', \ldots, \bar{y}'_k)}\prod_{r \in [l]}\chi_{E_{x_r}^{u_{\bar{i}^0}(r)}} \left( y_{1,i^0_1}, \ldots, y_{k, i^0_k} \right) d\mu_{\bar{1}^k} \label{eq2}\\
&=\int_{\left(V(\bar{y}_1', \ldots, \bar{y}'_k) \cap \bigcap_{r \in  [l-1]}E_{x_r}^{u_{\bar{i}^0}(r)} \right)} \chi_{E_{x_l}^{u_{\bar{i}^0}(l)}} \left( y_{1,i^0_1}, \ldots, y_{k, i^0_k} \right) d\mu_{\bar{1}^k} \\
&=\int_{\left(V(\bar{y}_1', \ldots, \bar{y}'_k) \cap \bigcap_{r \in [l-1]}E_{x_r}^{u_{\bar{i}^0}(r)} \right)}  \E\left(\chi_{ E^{u_{\bar{i}^0}(l)}_{x_l} }\mid\mathcal{B} \cup \{E_{x_r} : r < l \} \right) d\mu_{\bar{1}^k} \\
&\textrm{(as the set over which we integrate is in } \sigma \left(\B \cup \{E_{x_r} : r < l \} \right) \textrm{)} \nonumber\\
&=\int_{ \left( V(\bar{y}_1', \ldots, \bar{y}'_k) \cap \bigcap_{r \in [l-1]}E_{x_r}^{u_{\bar{i}^0}(r)} \right)}  
\E \left(\chi_{ E^{u_{\bar{i}^0}(l)}_{x_l} }\mid\mathcal{B} \right)d\mu_{\bar{1}^k}\\
& \textrm{(by Assumption \ref{ass: ineapproximable}(5)}) \nonumber\\
&\geq \int_{\left( V(\bar{y}_1', \ldots, \bar{y}'_k) \cap \bigcap_{r \in [l-1]}E_{x_r}^{u_{\bar{i}^0}(r)} \right)}  \delta d\mu_{\bar{1}^k}\\
& \textrm{(by the definition of  } F_{\delta}(x_l) \textrm{, as } V(\bar{y}_1', \ldots, \bar{y}'_k) \subseteq F_{\delta}(x_l) \textrm{)} \nonumber\\
&= \delta \int_{V(\bar{y}_1', \ldots, \bar{y}'_k)} \prod_{r \in [l-1]}\chi_{E_{x_r}^{u_{\bar{i}^0}(r)}} \left(y_{1,i^0_1}, \ldots, y_{k, i^0_k} \right) d\mu_{\bar{1}^k} \label{eq3}\\
&\geq\cdots \textrm{(repeating steps (\ref{eq2}) through (\ref{eq3}) for }l-1, l-2, \ldots \textrm{)} \nonumber\\
&\geq \delta^l \int_{V(\bar{y}_1', \ldots, \bar{y}'_k)} 1 d\mu_{\bar{1}^k} \nonumber\\
& = \delta^l \int_{F^*(\bar{y}'_1, \ldots, \bar{y}'_k)} \prod_{\bar{i} \in W^*_{\bar{i}^0}}\chi_{S_{u_{\bar{i}}}} \left( y_{1, i_1}, \ldots, y_{k, i_k} \right) d\mu_{\bar{1}^k}.\nonumber
\end{align}
Hence for the original integral (\ref{eq5}) we have
\begin{align*}
&\int_F \prod_{\bar{i} \in [m]^k}\chi_{S_{u_{\bar{i}}}}(y_{1, i_1}, \ldots, y_{k, i_k})d\mu_{m \cdot \bar{1}^k}(\bar{y}_1,\ldots,\bar{y}_k) \\
&\geq \delta^l \int_{F} \prod_{\bar{i} \in [m]^k \setminus \{ \bar{i}^0\}} \chi_{S_{u_{\bar{i}}}}(y_{1, i_1}, \ldots, y_{k, i_k})d\mu_{m \cdot \bar{1}^k}(\bar{y}_1,\ldots,\bar{y}_k),
\end{align*}
and the tuple $\bar{i}^0$ no longer appears in the product. Iterating this process once for each tuple $\bar{i}^0 \in [m]^k$, we see that 
\[\int_F \prod_{\bar{i} \in [m]^k}\chi_{S_{u_{\bar{i}}}}d\mu_{m \cdot \bar{1}^k}(\bar{y}_1,\ldots,\bar{y}_k)\geq \delta^{lm^k}\mu_{m \cdot \bar{1}^k}(F)>0.\]

This concludes the proof of Propositions \ref{prop: finite VCk-dim implies approx} and \ref{prop: finite VCk-dim implies approx bounded}.
\end{proof}

\subsection{A positive measure set of approximations} 
Next we will strengthen the conclusion of Proposition \ref{prop: finite VCk-dim implies approx} from ``there exists an approximation'' to ``there exists a positive measure set of approximations'', in the following sense.

\begin{definition} \label{def: best approx of fiber}
Fix some $x_1, \ldots, x_l \in V_{k+1}$. 
\begin{enumerate}
 
\item Given $t \in \mathbb{N}$, $\bar{w} \in V^{\bar{m}}$ for some $\bar{m} \in \mathbb{N}^{k}$ and $x \in V_{k+1}$, let us denote by $f^t_{\bar{w},x}$ the best $\norm{\cdot}_{L^2}$-approximation to $f_x$ using a simple function relative to the Boolean algebra $\B^{t,x}_{\bar{w}}$  generated by 
$$\mathcal{F}^{f,t, (x_1, \ldots, x_l, x)}_{\bar{w}} \cup \left\{f^{<q}_{x_i} : i \in [l], q \in \mathbb{Q}^{[0,1]}_t \right\}$$
(see Definition \ref{def: fiber algebra}).

\item For $s \in \mathbb{N}$, we also denote by $f^{t,s}_{\bar{w},x}$  the best  $\norm{\cdot}_{L^2}$-approximation to $f_x$ by a simple function with respect to $\B^{t,x}_{\bar{w}}$ and with all coefficients in $\mathbb{Q}^{[0,1]}_s$.

	\item For $\varepsilon \in \mathbb{R}_{\geq 0}$, we say that $f_x$ is \emph{$\varepsilon$-nicely approximated} (with respect to $x_1, \ldots, x_l$) if there exist some $t \in \mathbb{N}, \bar{m} \in \mathbb{N}^k$ such that the set of tuples $\bar{w} \in V^{\bar{m}}$ with $||f_x-f^t_{\bar{ w},x}||_{L^2}\leq\varepsilon$ has positive $\mu_{\bar{m}}$-measure (this set is measurable by Fubini property in graded probability spaces, see the proof of Proposition \ref{prop: large proj k k plus 1} for the details).
\end{enumerate}
\end{definition}

\begin{lemma}\label{lem: pos meas approx}
  Suppose that $\mathfrak{P} = (V_{[k+1]}, \B_{\bar{n}}, \mu_{\bar{n}})_{n \in \mathbb{N}^{k+1}}$ is a $(k+1)$-partite graded probability space, $f: V^{\bar{1}^{k+1}} \to [0,1]$ is $\B_{\bar{1}^{k+1}}$-measurable, $\VC_k(f) = \bar{d} < \infty$ and $\varepsilon \in \mathbb{R}_{>0}$.
 Then there exist some $l = l(k, \bar{d},\varepsilon) \in \mathbb{N}$ and $x_1, \ldots, x_l \in V_{k+1}$ such that: for any $x \in V_{k
 +1}$, $f_x$ is $\varepsilon$-nicely approximated with respect to $x_1, \ldots, x_l$.
	
\end{lemma}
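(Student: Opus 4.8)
The plan is to bootstrap from the quantitative approximation result, Proposition~\ref{prop: finite VCk-dim implies approx bounded}. First I would apply it to the given $\mathfrak{P}$ and $f$, obtaining numbers $l := N$ and $N_0$ that depend only on $k,\bar{d},\varepsilon$, together with points $x_1,\ldots,x_l \in V_{k+1}$ such that for every $x$ there are sets $D_1,\ldots,D_{N_0}\in\mathcal{F}^{f,N_0,(x_1,\ldots,x_l,x)}$ and a simple function $g_x$ on the Boolean algebra generated by the $D_j$ together with the $f^{<q}_{x_i}$, with $\norm{f_x-g_x}_{L^2}<\varepsilon$. Collecting the boundedly many parameters $a_i\in V_i$ occurring in the $D_j$ into a single tuple $\bar{w}(x)\in V^{\bar{m}_0}$ with $\bar{m}_0$ bounded in terms of $k,N_0$, this says exactly that $\norm{f_x-f^{N_0}_{\bar{w}(x),x}}_{L^2}<\varepsilon$; and since enlarging $\bar{w}$ (or $t$) only enlarges $\B^{t,x}_{\bar{w}}$ and hence can only improve the best approximation, the same holds for every $\bar{w}$ extending $\bar{w}(x)$. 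Thus for each $x$ the set of ``good'' $\bar{w}$ is nonempty, and the entire content of the lemma is to upgrade ``nonempty'' to ``of positive $\mu_{\bar{m}}$-measure''.

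This upgrade is immediate when the spaces $V_i$ are purely atomic --- in particular when they are large finite sets with counting measure, which is the case relevant to the quantitative corollaries: the set of $\bar{w}\in V^{\bar{m}}$ whose blocks contain the $\leq k-1$ parameters used in each $D_j$ is then a finite intersection of complements of null sets of the form $\{w_{i,j}\neq a\}$ with $\mu_i(\{a\})>0$, hence has positive measure. For a general (e.g.\ atomless) graded probability space this can fail, and there I would argue by contradiction through the machinery of Section~\ref{sec:indiscernibles}. Assuming the lemma fails, greedily build an infinite sequence $x_1,x_2,\ldots$ in $V_{k+1}$ with each $x_{i+1}$ not $\varepsilon$-nicely approximated with respect to $x_1,\ldots,x_i$, and then pass to an ultrapower and extract a homogeneous sequence exactly as in the proof of Proposition~\ref{prop: finite VCk-dim implies approx}, but now also extracting indiscernible (Morley-type) sequences inside each $V_i$ that are generic for $\mu_i$. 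The analogue of Assumption~\ref{ass: ineapproximable} one wants to record is that $f_{x_0}$ is not $\varepsilon$-nicely approximated over $\{x_i : i<0\}$, together with enough homogeneity --- both on the $V_{k+1}$-side and on the $V_i$-side --- that any ``small'' configuration-type over the sequence which is realized at all is realized on a set of positive measure.

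In the homogenized space one then obtains a contradiction: Proposition~\ref{prop: finite VCk-dim implies approx bounded} still applies and produces, for $x_0$, a good approximation of $f_{x_0}$ using only $\leq N_0$ fiber-level-sets at parameters $a_i\in V_i$; by saturation the joint configuration-type of those parameters is realized inside the extracted generic $V_i$-sequences, and by genericity the set of $\bar{w}$ realizing it has positive $\mu_{\bar{m}}$-measure, so $f_{x_0}$ \emph{is} $\varepsilon$-nicely approximated, contradicting the construction of the sequence. (Alternatively, one can avoid re-invoking Proposition~\ref{prop: finite VCk-dim implies approx bounded} and instead mimic its proof directly: from the failure of $\varepsilon$-nice approximation apply Lemma~\ref{lem: non-triv cond exp} to the relevant conditional expectations and run the shattered-box construction of Section~\ref{sec:fibers} to contradict $\VC_k(f)\leq\bar{d}$.) I expect the main obstacle to be precisely this genericity transfer: isolating and verifying the strengthened homogeneity conditions which guarantee that the fiber-parameters of a Proposition~\ref{prop: finite VCk-dim implies approx bounded}-approximation can be chosen so that the resulting set of good $\bar{w}$ has positive measure, and checking that ``not $\varepsilon$-nicely approximated'' is preserved on passing to the ultrapower.
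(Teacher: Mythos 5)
There is a genuine gap, and it sits exactly where you locate "the entire content of the lemma": upgrading a nonempty set of good parameter tuples to a positive-measure one. Your mechanism for this upgrade --- pass to an ultrapower, extract sequences in the $V_i$ that are ``generic for $\mu_i$'', and argue that any configuration-type of witnesses produced by Proposition \ref{prop: finite VCk-dim implies approx bounded} is then realized on a positive-measure set --- does not work as stated. Saturation gives you realizations of a type somewhere in the model, not inside your prescribed generic sequences; and even when a condition such as $\norm{f_x-f^t_{\bar w,x}}_{L^2}\leq\varepsilon$ is type-definable (Lemma \ref{lem: type-def of norm}), a nonempty type-definable set of tuples can perfectly well have $\mu_{\bar m}$-measure $0$. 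Genericity of a Morley-type sequence runs in the wrong direction: it guarantees the sequence meets definable sets of positive measure (or avoids fixed null sets), not that a set containing some sequence element has positive measure. So nothing in your construction rules out the bad scenario in which, for some $x$, every good parameter tuple lies in a null set. Your fallback --- apply Lemma \ref{lem: non-triv cond exp} to the failure of $\varepsilon$-nice approximation and rerun the shattering construction --- also fails at the first step, because ``$\varepsilon$-nicely approximated'' is not measurability with respect to (or small distance from a conditional expectation onto) a $\sigma$-algebra; it is a statement about simple functions over finitely generated fiber algebras with a positive-measure choice of parameters, and the paper explicitly notes that the standard projection argument is unavailable here.

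What the paper's proof supplies, and what is missing from yours, is a variational argument replacing the projection: for each $x$ set $\delta=\inf\{\delta': f_x \text{ is }\delta'\text{-nicely approximated}\}$, use the parallelogram law (Claim \ref{cla: pairs approx}) to show any two near-optimal positive-measure approximations agree up to a null set of parameter pairs, build from this a Cauchy sequence converging to a limit function $h$ with $\norm{f_x-h}_{L^2}=\delta$ that is approximable by $f^t_{\bar w,x}$ on positive-measure sets of $\bar w$ (Claim \ref{cla: choice of h}), and then show $\delta\leq\varepsilon$: if not, $f_x-h$ correlates with $\B_{\bar 1^k,k-1}\cup\{f_{x_i}\}$, so by Proposition \ref{prop: Gowers pos iff proj pos} the Gowers norm of $g\cdot(f_x-h)$ is positive, and writing that norm out produces an explicit correction $u$ of the admissible form which improves the approximation uniformly over a positive-measure set $S\times T$ of parameters, contradicting the minimality of $\delta$. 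It is this Gowers-norm step that converts ``there exists a better approximation'' into ``there is a positive-measure set of parameters giving a better approximation'', and no amount of homogenization of the $x_i$-sequence substitutes for it. (Your observation about the purely atomic case is correct but does not bear on the general statement.)
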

\begin{proof}
	Fix $\varepsilon > 0$. By Proposition \ref{prop: finite VCk-dim implies approx}, there exist some $l \in \mathbb{N}$ (we may assume $l = l(k, \bar{d}, \varepsilon)$ by Proposition \ref{prop: finite VCk-dim implies approx bounded}) and $x_1, \ldots, x_l \in V_{k+1}$ such that, for every $x \in V_{k+1}$,
\[\norm{f_x-\mathbb{E} \left( f_x \mid\mathcal{B}_{\bar{1}^k, k-1}\cup\{f_{x_i} : i \in [l]\} \right)}_{L^2} \leq  \varepsilon.\]

 Fix some $x \in V_{k+1}$. Note that $f_x$ is trivially $\norm{f_x}_{L^2 \left( \mu_{\bar{1}^k}\right)}$-nicely approximated.  Let 
 $$\delta := \inf \left\{ \delta' \in \mathbb{R}_{\geq 0} : f_x \textrm{ is } \delta'\textrm{-nicely approximated}\right\}.$$

 The function $\mathbb{E}\left( f_x \mid\mathcal{B}_{\bar{1}^k, k-1}\cup\{f_{x_i} : i \in [l]\} \right)$ is the best approximation to $f_x$ from all functions measurable with respect to the given $\sigma$-algebra.   We need to find an analogous function, which we will call $h$, which is the best approximation to $f_x$ with respect to the same $\sigma$-algebra among those approximations which can be obtained for positive measure of parameters $\bar w$.  This is not actually a projection on a $\sigma$-algebra, so we cannot use the standard result to show that $h$ exists, but the proof is essentially the same: first we show that any two near optimal approximations of positive measure must be close to each other, and then we use this to construct a Cauchy sequence converging to $h$.  This is the content of the two claims that follow.

\begin{claim}\label{cla: pairs approx}
	For every $\gamma>0$, there is a $0 < \theta = \theta(\gamma) < \gamma$ so that:  whenever $t_0,t_1 \in \mathbb{N}, \bar{m}_0, \bar{m}_1 \in \mathbb{N}^k$ and $\delta_0,\delta_1<\delta+\theta$, the set
	$$\left\{\bar{w}_0 \oplus \bar{w}_1 \in V^{\bar{m}_0 + \bar{m}_1} : \norm{f^{t_0}_{\bar{w}_0,x}-f^{t_1}_{\bar{w}_1,x}}_{L^2} > \gamma \land \bigwedge_{i=0,1} \norm{f_x-f^{t_i}_{ \bar{w}_i,x}}_{L^2} \leq \delta_i  \right\}$$
	
	is in $\B_{\bar{m}_0 + \bar{m}_1}$ and has $\mu_{\bar{m}_0 + \bar{m}_1}$-measure $0$.
      \end{claim}

\begin{claimproof}
Assume that this set has positive measure. For any $\bar{w}_0 \oplus \bar{w}_1$ in it, by the parallelogram rule for the $L^2$-norm we have
\begin{gather*}
	\norm{2 f_x - \left(f^{t_0}_{\bar{w}_0,x} + f^{t_1}_{\bar{w}_1,x} \right)}_{L^2}^2 + \norm{f^{t_1}_{\bar{w}_1,x} - f^{t_0}_{\bar{w}_0,x}}^2_{L^2} =\\
	2 \norm{f_x - f^{t_0}_{\bar{w}_0,x}}^2_{L^2} + 2 \norm{f_x - f^{t_1}_{\bar{w}_1,x} }^2_{L^2}, \textrm{ hence}\\
	 \norm{2 f_x - \left(f^{t_0}_{\bar{w}_0,x} + f^{t_1}_{\bar{w}_1,x} \right)}_{L^2}^2 \leq 2 (\delta_0^2 + \delta_1^2) - \gamma^2 < 4 (\delta + \theta)^2 - \gamma^2 \leq  4(\delta')^2
	\end{gather*}
	for some $\delta' < \delta$, assuming that $\theta$ is small enough with respect to $\gamma$ and $\delta$. Hence 
	\begin{align*}
	\norm{f_x - \frac{f^{t_0}_{\bar{w}_0,x} + f^{t_1}_{\bar{w}_1,x}}{2}}_{L^2} \leq \delta'.
\end{align*}

As $\frac{f^{t_0}_{\bar{w}_0,x} + f^{t_1}_{\bar{w}_1,x}}{2}$ is a $\B^{\max\{t_0, t_1\}, x}_{\bar{w}_0 \oplus \bar{w}_1}$-simple function, we have 
$$\norm{f_x-f^{\max\{t_0, t_1\}}_{\bar w_0 \oplus \bar w_1,x}}_{L^2} \leq \delta' < \delta$$
 for a positive $\mu_{\bar{m}_0 + \bar{m}_1}$-measure set of $\bar{w}_0 \oplus \bar{w}_1$, contradicting the choice of $\delta$. 
\end{claimproof}

This allows us to choose the ``best positive measure approximation'' of $f_x$, in the following sense.

\begin{claim}\label{cla: choice of h}
There exists a $\sigma \left(\mathcal{F}^{f, (x_1, \ldots, x_l, x)} \cup \{f^{<q}_{x_i} \}_{i \in [l], q \in \mathbb{Q}^{[0,1]}_{\infty}} \right)$-measurable function $h$ such that  $\norm{f_x - h}_{L^2} = \delta$ and for any $\sigma > 0$ there is some $t \in \mathbb{N}, \bar{m} \in \mathbb{N}^k$ so that the set $\left\{ \bar{w} \in V^{\bar{m}} : \norm{h - f^t_{\bar{w},x}}_{L^2} \leq \sigma \right\} \in \B_{\bar{m}}$ has positive $\mu_{\bar{m}}$-measure.
\end{claim}

\begin{claimproof}
	
Given $n \in \mathbb{N}$, let $\gamma_n := \frac{1}{n}$, and let $\theta_n > 0$ be given by Claim \ref{cla: pairs approx} for $\gamma_n$. By the choice of $\delta$, there exists some $t_n \in \mathbb{N}, \bar{m}_n \in \mathbb{N}^k$ such that the set 
\begin{align}
S_{n} := \{ \bar{w} \in V^{\bar{m}_n} : \norm{f_x - f^{t_n}_{\bar{w},x}}_{L^2} \leq \delta + \theta_n\} \in \B_{\bar{m}_n}\label{eq: Sn def}
\end{align}
has positive $\mu_{\bar{m}_n}$-measure.

By induction on $r \in \mathbb{N}$ we choose sets $S_{n}^{r} \in \B_{\bar{m}_n}, n \in \mathbb{N}$ and tuples $\bar{w}_r \in S_r^r$ satisfying the following:
\begin{align}
	&S_{n}^{r'} \subseteq S_{n}^{r} \subseteq S_n \textrm{ and } \mu_{\bar{m}_n}(S^{r}_n) = \mu_{\bar{m}_n}(S_n) > 0 \textrm{ for all } r' \geq r, n \in \mathbb{N};\label{eq: Sn decreasing}\\
	&S^{r'}_n = S^r_n \textrm{ for all } r,r' \leq n' \in \mathbb{N}; \nonumber\\
	&\norm{f^{t_r}_{\bar{w}_r,x} - f^{t_r}_{ \bar{w},x}}_{L^2} \leq \gamma_r \textrm{ for all } \bar{w} \in S_{n}^{r+1} \textrm{ with } n \geq r \in \mathbb{N}.\label{eq: Cauchy}
\end{align}
Let $S_n^1 := S_n$, then all the conditions are trivially satisfied. Now assume $\bar{w}_1, \ldots, \bar{w}_{r-1}$ and $(S^{r}_n)_{n \in \mathbb{N}}$ satisfying these conditions are given.

For each $n \in \mathbb{N}$, let 
$$T_{r}^{n} := \left\{\bar{w} \in S^r_r : \mu_{\bar{m}_{n}}\left( \left\{ \bar{w}' \in S_{n}^r : \norm{f^{t_r}_{\bar{w},x} -  f^{t_r}_{\bar{w}',x}}_{L^2} > \gamma_r \right\} \right) > 0\right\} \in \B_{\bar{m}_r}.$$

By Claim \ref{cla: pairs approx} and Fubini property (using (\ref{eq: Sn def}) and (\ref{eq: Sn decreasing})), $\mu_{\bar{m}_r}(T_r^{n}) = 0$ for any $n \geq r$.
Let $\tilde{S}_r^r := S_r^r \setminus \bigcup_{n \geq r} T_{r}^{n} $, then $\mu_{\bar{m}_r}(\tilde{S}_r^r) >0$. Let $\bar{w}_r$ be an arbitrary element in $\tilde{S}^r_r$.  For each $n > r$ let 
\begin{align*}
& S_{n}^{r+1} := S_{n}^{r} \cap \left\{ \bar{w}' \in S_{n}^r : \norm{f^{t_r}_{\bar{w}_r,x} -  f^{t_r}_{\bar{w}',x}}_{L^2} \leq  \gamma_r \right\}.
\end{align*}
Then $\mu_{\bar{m}_n} \left( S^{r+1}_n \right) = \mu_{\bar{m}_n} \left(S^{r}_n \right)$ for all 
$n \geq r$, by the choice of $t_r,\bar{w}_r$ and (\ref{eq: Cauchy}) is satisfied, concluding the construction.

The sequence $\left( f^{t_{r}}_{\bar{w}_r,x} \right)_{r \in \mathbb{N}}$ is Cauchy in the space 
$$L^2 \bigg(\sigma \left(\mathcal{F}^{f, (x_1, \ldots, x_l, x)} \cup \{f^{<q}_{x_i} \}_{i \in [l], q \in \mathbb{Q}^{[0,1]}_{\infty}} \right) \bigg)$$
 since for any $r_0, r_1 \geq r$ we have $\norm{f^{t_{r_0}}_{\bar{w}_{r_0},x} - f^{t_{r_1}}_{ \bar{w}_{r_1},x}}_{L^2} \leq \gamma_r$ by (\ref{eq: Sn decreasing}) and (\ref{eq: Cauchy}), and $\gamma_r \to 0$. By completeness of the $L^2$-space it has a limit which we denote by $h$.

For an arbitrary $\sigma >0$, let $r \in \mathbb{N}$ be such that $\gamma_r \leq \frac{\sigma}{2}$ and $\norm{h - f^{t_r}_{\bar{w}_{r},x}}_{L^2} \leq \frac{\sigma}{2}$. By (\ref{eq: Sn decreasing}) and (\ref{eq: Cauchy}), the set of $\bar{w} \in V^{\bar{m}_r}$ such that $\norm{ f^{t_r}_{ \bar{w}_{r},x} - f^{t_r}_{\bar{w},x}}_{L^2} \leq \frac{\sigma}{2}$ has positive $\mu_{\bar{m}_r}$-measure, and is contained in the set of $\bar{w} \in V^{\bar{m}_r}$ such that $\norm{ h - f^{t_r}_{\bar{w},x}}_{L^2} \leq \sigma$. Similarly we have $\norm{f_x - h}_{L^2} \leq \norm{f^{t_r}_{\bar{w}_r,x} - f_x}_{L^2} + \norm{h - f^{t_r}_{\bar{w}_r,x}}_{L^2} \leq (\delta + \theta_{r}) + \sigma$ by (\ref{eq: Sn def}), and as $\theta_r \to 0$ and $\sigma$ can be chosen arbitrarily small, we conclude that $\norm{f_x - h}_{L^2} \leq \delta$.
\end{claimproof}

 We will now show that $\delta\leq\varepsilon$.  Towards a contradiction, suppose that $\delta>\varepsilon$.  Then 
 \begin{gather*}
   \E \left(f_x \mid \B_{\bar{1}^k,k-1} \cup \{f_{x_i}\}_{i \in [l]} \right) \neq h \textrm{, hence}\\
 \norm{\E \left(f_x - h \mid \B_{\bar{1}^k,k-1} \cup \{f_{x_i}\}_{i \in [l]} \right)}_{L^2} = \\
  \norm{\E \left(f_x \mid \B_{\bar{1}^k,k-1} \cup \{f_{x_i}\}_{i \in [l]} \right) - h}_{L^2} > 0.
 \end{gather*}
 
So $f_x-h$ is non-orthogonal to $\B_{\bar{1}^k,k-1} \cup \{f_{x_i}\}_{i \in [l]}$, hence for some $\sigma \left( \left\{ f_{x_i} \right\}_{i \in [l]}\right)$-measurable function $g$ we have 
$$\norm{\E \left(g \cdot (f_x-h)\mid\mathcal{B}_{\bar{1}^k,k-1} \right)}_{L^2}>0.$$
Naturally, we will use this to show that we can find a positive measure set of parameters which give a strictly better approximation to $f_x$, contradicting the choice of $\delta$.

We know that there is some $\mathcal{B}_{\bar{1}^k,k-1}$-measurable function $u$ so that $\int u_0\cdot g\cdot(f_x-h)\,d\mu_{\bar{1}^k}>0$; in standard arguments about projections, we would then choose a $c$ so that $h-c\cdot u_0\cdot g$ would be a better approximation to $f_x$.  However, to contradict the definition of $\delta$, we cannot take an arbitrary $\mathcal{B}_{\bar{1}^k,k-1}\cup \{f_{x_i}\}_{i \in [l]}$-measurable function $u=u_0\cdot g$ to improve our approximation.

The \emph{Gowers uniformity norms} let us construct $u$ explicitly from $g\cdot(f_x-h)$: since $\norm{\E \left(g \cdot (f_x-h)\mid\mathcal{B}_{\bar{1}^k,k-1} \right)}_{L^2}>0$, the Gowers $U^k$-norm is positive.  This fact is by now standard (e.g. \cite{gowers2001new,MR3583029}), but for completeness, we develop it in the partite setting in Section \ref{sec:gowers}.

By Proposition \ref{prop: Gowers pos iff proj pos} we have
 \begin{gather*}
 \gamma := \norm{g \cdot (f_x-h)}^{2^k}_{U^{\bar{1}^k}} >0
\end{gather*}

The remainder of the proof consists of writing this integral out explicitly, approximating it with functions of the right kind, and doing the calculations to show that this gives us approximations of $f_x$ which contradict the definition of $\delta$.

 If we write out $\norm{g \cdot \left(f_x-h \right)}^{2^k}_{U^{\bar{1}^k}}$ (Definition \ref{def: gowers norms}), we get 
\begin{gather*}
\int \prod_{\alpha \in \{0,1\}^k} \Bigg( g \left( y_1^{\alpha(1)}, \ldots, y_{k}^{\alpha(k)} \right) \cdot \\
\cdot \left(f_x - h \right)\left( y_1^{\alpha(1)}, \ldots, y_{k}^{\alpha(k)} \right)  \Bigg) d\mu_{2 \cdot \bar{1}^k}\left(\bar{y}^0 \oplus \bar{y}^1\right) = \gamma.	
\end{gather*}

Let $S$ be the set of all $(y_1^1, \ldots, y_k^1) \in V^{\bar{1}^k}$ such that 
\begin{gather}
\int \prod_{\alpha \in \{0,1\}^k} \Bigg( g \left( y_1^{\alpha(1)}, \ldots, y_{k}^{\alpha(k)} \right) \cdot  \label{eq: inner prod h} \\
\cdot \left(f_x - h \right)\left( y_1^{\alpha(1)}, \ldots, y_{k}^{\alpha(k)} \right) \Bigg) d\mu_{\bar{1}^k}\left(\bar{y}^0\right) \geq  \gamma. \nonumber
\end{gather}

By the Fubini property $S \in \B_{\bar{1}^k}$ and $\mu_{\bar{1}^k}(S) > 0$.
%
%

Let $\sigma = \sigma(\gamma) > 0$ be sufficiently small (see below).
By Claim \ref{cla: choice of h} there exist some $t \in \mathbb{N}, \bar{m} \in \mathbb{N}^k$ and a set $T \in \B_{\bar{m}}$ with $\mu_{\bar{m}}(T) > 0$ so that $\norm{f^t_{\bar{w},x} - h }_{L^2} < \sigma$ for all $\bar{w} \in T$. 

We can also choose a sufficiently large $t_0 \in \mathbb{N}$ so that $\norm{f_x - f'_x}_{L^2} < \sigma$ and $\norm{g - g'}_{L^2} < \sigma$ for some function $f'_x$ that is simple with respect to the Boolean algebra generated by $\left\{ f^{<q}_x : q \in \mathbb{Q}^{[0,1]}_{t_0} \right\}$ and some function $g'$ that is simple with respect to the Boolean algebra generated by $\left\{ f^{<q}_{x_i} : i \in [l], q \in \mathbb{Q}^{[0,1]}_{t_0} \right\}$.

Then for any fixed $\bar{y}^1 \in S$ and any $\bar{w} \in T$, replacing $h$ by $f^t_{ \bar{w},x}$, $f_x$ by $f'_x$ and $g$ by $g'$ in the integral (\ref{eq: inner prod h}) we get (assuming $\sigma$ is small enough with respect to $\gamma$)
$$\int \prod_{\alpha \in \{0,1\}^k} g' \left( y_1^{\alpha(1)}, \ldots, y_{k}^{\alpha(k)} \right) \cdot (f'_x - f^t_{\bar{w},x})\left( y_1^{\alpha(1)}, \ldots, y_{k}^{\alpha(k)} \right) d\mu_{\bar{1}^k}\left(\bar{y}^0\right) \geq  \frac{\gamma}{2}.$$
For $\left( \bar{y}^1, \bar{w} \right) \in S \times T$, let $g'_{\bar{y}^1, \bar{w}}: V^{\bar{1}^k} \to \mathbb{R}$ be the function defined by 
\begin{gather*}
g'_{\bar{y}^1, \bar{w}} \left(y^0_1, \ldots, y^0_k \right) := 
\end{gather*}

$$\prod_{\alpha \in \{0,1\}^k} g' \left( y_1^{\alpha(1)}, \ldots, y_{k}^{\alpha(k)} \right) \cdot   \prod_{\alpha \in \{ 0,1\}^k \setminus \{ (0, \ldots, 0)\}}  (f'_x - f^t_{\bar{w},x})\left( y_1^{\alpha(1)}, \ldots, y_{k}^{\alpha(k)} \right).$$
Then
\begin{gather}
\int g'_{\bar{y}^1, \bar{w}}\left(\bar{y}^0 \right) \cdot \left( f'_x - f^t_{\bar{w},x} \right)\left(\bar{y}^0\right) d\mu_{\bar{1}^k}\left( \bar{y}^0 \right) \geq \frac{\gamma}{2} \textrm{, or in other words}\nonumber\\
\left \langle g'_{\bar{y}^1, \bar{w}}, f'_x - f^t_{\bar{w},x} \right \rangle_{L^2} \geq \frac{\gamma}{2}.\label{eq: inner prod}
\end{gather}
By the choice of $g'$ and $f'_x$, we have that $g'_{\bar{y}^1, \bar{w}}$ is a $\B^{t', x}_{\bar{y}^1 \oplus \bar{w}}$-simple function for $t' := \max\{t,t_0\}$, and also $\norm{g'_{\bar{y}^1, \bar{w}}}_{L^2} \leq 1$.

Taking the orthogonal projection of $f'_x - f^t_{\bar{w},x}$ in the Hilbert space $L^2 \left(\B_{\bar{1}^k,k-1} \right)$ onto the closed subspace generated by $g'_{\bar{y}^1, \bar{w}}$, we can write 
$$f'_x - f^t_{\bar{w},x} = u + v$$
for some $\B^{t',x}_{\bar{y}^1 \oplus \bar{w}}$-simple function $u$ and some  $\B_{\bar{1}^k,k-1}$-measurable function $v$ orthogonal to this subspace.
Note that 
\begin{gather*}
\norm{f'_x - f^t_{x, \bar{w}}}_{L_2} \leq \norm{f'_x - h}_{L^2} + \norm{h - f^t_{\bar{w},x}}_{L^2} \leq \delta + \sigma \textrm{ and }\\
 u = \left\langle f'_x - f^t_{\bar{w},x}, \frac{g'_{\bar{y}^1, \bar{w}}}{\norm{g'_{\bar{y}^1, \bar{w}}}_{L^2}} \right\rangle_{L^2} \cdot \frac{g'_{\bar{y}^1, \bar{w}}}{\norm{g'_{\bar{y}^1, \bar{w}}}_{L^2}}.
 \end{gather*}
 Hence, using \eqref{eq: inner prod},
 \begin{gather*}
\norm{v}^2_{L^2} = \norm{f'_x - f^t_{\bar{w},x}}^2_{L^2} - \frac{\left\langle f'_x - f^t_{\bar{w},x}, g'_{\bar{y}^1, \bar{w}} \right\rangle^2_{L^2}}{\norm{g'_{\bar{y}^1, \bar{w}}}^2_{L^2}} \leq \\
(\delta + \sigma)^2 - \frac{\gamma^2}{4 \norm{g'_{\bar{y}^1, \bar{w}}}^2_{L^2}} \leq (\delta + \sigma)^2 - \frac{\gamma^2}{4}.
\end{gather*}
Hence, assuming $\sigma$ is small enough with respect to $\gamma$, there is some $\delta' = \delta'(\gamma) < \delta - \sigma$ so that $\norm{v}_{L^2} \leq \delta'$. Observe that $f'_x - (f^t_{\bar{w},x} + u) = v$ and $f^t_{\bar{w},x} + u$ is a  $\B^{t',x}_{\bar{y}^1 \oplus \bar{w}}$-simple function.
Thus for any $(\bar{y}^1, \bar{w}) \in S \times T$ we have 
$$\norm{f_x - f^{t'}_{\bar{y}^1 \oplus \bar{w},x}}_{L^2}  \leq \norm{f'_x - f^{t'}_{\bar{y}^1 \oplus \bar{w},x}}_{L^2} + \sigma \leq \delta' + \sigma < \delta,$$
and $\mu_{\bar{1}^k+\bar{m}}(S \times T) > 0$. This contradicts the choice of $\delta$.
\end{proof}

\section{Main Theorem}

\subsection{Proof of the $(k+1,k)$-case}

\begin{prop}\label{prop: large proj k k plus 1}
  Suppose that $\mathfrak{P} = (V_{[k+1]}, \B_{\bar{n}}, \mu_{\bar{n}})_{n \in \mathbb{N}^{k+1}}$ is a $(k+1)$-partite graded probability space, $f: V^{\bar{1}^{k+1}} \to [0,1]$ is a $(k+1)$-ary  $\B_{\bar{1}^{k+1}}$-measurable function and $\VC_k(f)  < \infty$.
 Then $f$ is $\mathcal{B}_{\bar{1}^{k+1},k}$-measurable. 

  More precisely, for every $\varepsilon > 0$ there exist some 
  $N \in \mathbb{N}$, $\gamma_i \in \mathbb{Q}^{[0,1]}$ for $i \in [N]$ and $\B_{I}(f)$-measurable (see Definition \ref{def: B(f) all smaller fibers})  $(\leq k)$-ary functions
  $f^i_I: \prod_{j \in I} V_{j} \to [0,1]$ for $i \in [N], I \in  \binom{[k+1]}{ \leq k} $ so that, defining $g: V^{\bar{1}^{k+1}} \to [0,1]$ via
  \begin{gather*}
 	g(\bar{x}) :=  \sum_{i \in [N] } \gamma_i \cdot \prod_{I \in \binom{[k+1]}{\leq k}} f^i_I(\bar{x}_I),
 \end{gather*}
 we have $\norm{f - g}_{L^2} < \varepsilon$.
\end{prop}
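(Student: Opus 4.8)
The plan is to combine Lemma~\ref{lem: pos meas approx} with an averaging argument over the parameter tuples $\bar w$ appearing in Definition~\ref{def: best approx of fiber}. Fix $\varepsilon>0$ and apply Lemma~\ref{lem: pos meas approx} with $\varepsilon/3$ in place of $\varepsilon$ to obtain $l$ and $x_1,\dots,x_l\in V_{k+1}$ so that every fiber $f_x$ is $(\varepsilon/3)$-nicely approximated with respect to $x_1,\dots,x_l$; unwinding the definition, for each $x$ there are $t_x\in\mathbb{N}$, $\bar m_x\in\mathbb{N}^k$ and a positive $\mu_{\bar m_x}$-measure set $G_x\subseteq V^{\bar m_x}$ with $\norm{f_x-f^{t_x}_{\bar w,x}}_{L^2}\le\varepsilon/3$ for all $\bar w\in G_x$. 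To make the choice of $(t_x,\bar m_x,G_x)$ measurable in $x$, I would enumerate the pairs $(t,\bar m)$ and set $W_{t,\bar m}:=\{x\in V_{k+1}:\mu_{\bar m}(\{\bar w:\norm{f_x-f^t_{\bar w,x}}_{L^2}\le\varepsilon/3\})>0\}$; each $W_{t,\bar m}$ lies in the algebra $\mathcal B^{\mathfrak P}_{k+1}$ on $V_{k+1}$ by the Fubini property (this is precisely the measurability bookkeeping alluded to in Definition~\ref{def: best approx of fiber}(3), and needs to be written out), and these sets cover $V_{k+1}$, so a disjointification gives a countable measurable partition $V_{k+1}=\bigsqcup_n W'_n$ on each of which a single pair $(t_n,\bar m_n)$ works.

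On $W'_n$, set $\tilde f(\bar y,x):=\frac{1}{\mu_{\bar m_n}(G_x)}\int_{G_x}f^{t_n}_{\bar w,x}(\bar y)\,d\mu_{\bar m_n}(\bar w)$, where $G_x=\{\bar w\in V^{\bar m_n}:\norm{f_x-f^{t_n}_{\bar w,x}}_{L^2}\le\varepsilon/3\}$. By Lemma~\ref{lem: operations for k+1,k proof}\eqref{lem: fibers approx impl av approx}, applied with the constant-in-$\bar w$ function $f_x$ and with $g=f^{t_n}_{\bar w,x}$, we get $\norm{f_x-\tilde f(\cdot,x)}_{L^2(\mu_{\bar 1^k})}\le\varepsilon/3$ for every $x$, hence $\norm{f-\tilde f}_{L^2(\mu_{\bar 1^{k+1}})}\le\varepsilon/3$ by Fubini. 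It remains to check $\tilde f$ is $\mathcal B_{\bar 1^{k+1},k}$-measurable. For fixed $\bar w$, the function $(\bar y,x)\mapsto f^{t_n}_{\bar w,x}(\bar y)$ is a finite linear combination of products of indicators of the generating sets of Definition~\ref{def: fiber algebra}: the sets in $\mathcal F^{f,t_n,(x_1,\dots,x_l,x)}_{\bar w}$ depend on at most $k-1$ of the coordinates of $\bar y$ together with (at most) the coordinate $x$, so they are $(\le k)$-ary cylinder sets in $(\bar y,x)$, while the sets $f^{<q}_{x_i}$ depend only on $\bar y$ and are $k$-ary cylinder sets; since the $\mathcal B_{\bar 1^{k+1},k}$-measurable functions form an algebra, products of $(\le k)$-ary functions remain $\mathcal B_{\bar 1^{k+1},k}$-measurable even when the resulting index set is large, so $f^{t_n}_{\bar w,x}$ is $\mathcal B_{\bar 1^{k+1},k}$-measurable in $(\bar y,x)$. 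Averaging out $\bar w$ preserves this by Lemma~\ref{lem: meas of av fib}, and each $W'_n$ is a $1$-ary set, so $\tilde f=\sum_n\chi_{W'_n}(x)\,\tilde f_n(\bar y,x)$ is $\mathcal B_{\bar 1^{k+1},k}$-measurable. As $\varepsilon$ was arbitrary, $f=\E(f\mid\mathcal B_{\bar 1^{k+1},k})$, which is the first assertion.

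For the explicit form I would rerun this construction starting from the quantitative Proposition~\ref{prop: finite VCk-dim implies approx bounded} rather than its soft version, so that $t_n$, the length of $\bar w$, and the coefficients of the approximating simple functions all range over a fixed finite set depending only on $k,\bar d,\varepsilon$. Then the approximating simple functions fall into finitely many ``templates'', the partition of $V_{k+1}$ into the sets $W_{t,\bar m}$ (refined by template) is finite and measurable, and on each piece the coefficients of the template are \emph{constant rationals}. Pulling these constants and $\chi_{W'_n}(x)$ (a $1$-ary function of $x$) out of the $\bar w$-average, one is left with finitely many summands, each a constant rational times a product of: the $\bar w$-average of a product of fiber-of-level-set functions of $f$, and a product of the $f^{<q}_{x_i}$. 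Grouping the remaining factors by the index set $I\in\binom{[k+1]}{\le k}$ they live on, and invoking Lemma~\ref{lem: meas of av fib} together with the analogue of Theorem~\ref{prop: gen fib fin VCk dim} to see that these $\bar w$-averages stay measurable over the relevant $f$-generated algebras $\B_I(f)$ (and Lemma~\ref{lem: smoothening char func} if one prefers $[0,1]$-valued factors of the form $[\cdot<\cdot]^p$ to genuine indicators), produces $g$ of the required shape with $\norm{f-g}_{L^2}<\varepsilon$; a final truncation keeps $g$ $[0,1]$-valued.

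The main obstacle is twofold. First, the measurable-selection step: one must genuinely verify, via the Fubini property of graded probability spaces, that sets of the form $\{x: f_x \text{ admits a positive-measure family of good approximations of a prescribed complexity}\}$ are measurable, so that the (countable, or in the quantitative version finite) partition of $V_{k+1}$ is legitimate and $\tilde f$ is well defined. Second, the combinatorial bookkeeping in the last paragraph needed to recast the $\bar w$-averaged simple functions in exactly the product form $\sum_{i\in[N]}\gamma_i\prod_{I\in\binom{[k+1]}{\le k}}f^i_I(\bar x_I)$ with each factor measurable over the prescribed algebra $\B_I(f)$; this is where the care about which building blocks (fibers of level sets of $f$, the sets $f^{<q}_{x_i}$) survive the averaging is essential.
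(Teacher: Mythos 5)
Your proposal is correct in outline and follows essentially the same route as the paper: Lemma \ref{lem: pos meas approx} plus averaging the good approximations over a positive-measure set of parameter tuples $\bar w$, with the measurable-selection and regrouping bookkeeping you flag being exactly what the paper carries out via the sets $C_{\bar\alpha,\bar\beta}$, $D_{\bar\alpha}$, $H$, Claim \ref{cla: meas of unary func expl} and the functions $B^s_J$. The one imprecise spot --- your claim that $(\bar y,x)\mapsto f^{t_n}_{\bar w,x}(\bar y)$ is a finite combination of indicators of the generating sets --- overlooks that the best-approximation coefficients depend on $x$; they are unary in $x$ and measurable by Fubini (and $\B_{\{k+1\}}(f)$-measurable via Lemma \ref{lem: meas of av fib}), which is precisely what your quantitative rerun with rational-coefficient templates, mirroring the paper's construction of $h$ and Claim \ref{cla: meas of unary func expl}, supplies.
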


\begin{remark}\label{rem: k+1,k explicit}
Furthermore, $N$ can be bounded depending only on $\VC_k(f)$ and $\varepsilon$ (this will be established as part of the more general Corollary \ref{thm: the very main thm}).
\end{remark}

The main idea of the proof is not so complicated.  By Proposition \ref{prop: finite VCk-dim implies approx}, there are $x_1,\ldots,x_N$ so that, for every $x \in V_{k+1}$,
$$\norm{f_{x} - \E\left(f_x \mid \mathcal{B}_{\bar{1}^k, k-1} \cup \{f_{x_1}, \ldots, f_{x_N} \} \right)}_{L^2} < \varepsilon.$$
Now $\E\left(f_x \mid \mathcal{B}_{\bar{1}^k, k-1} \cup \{f_{x_1}, \ldots, f_{x_N} \}\right)$ can be approximated by a finite sum of the form
\[\E\left(f_x \mid \mathcal{B}_{\bar{1}^k, k-1} \cup \{f_{x_1}, \ldots, f_{x_N} \}\right)(\bar x)\approx \sum_{i\leq N}\gamma_i f_{x_i}(\bar x)\prod_{I\in{[k]\choose \leq k-1}}\chi_{C_{i,I,x}}(\bar x_I),\]
where  each $C_{i,I,x}$ is some set from $\mathcal{B}_{\bar{1}^k, I}$.
By countable additivity, outside of a set of measure $<\varepsilon$, $N$ can be bounded uniformly in $x$.  (In fact, by Proposition \ref{prop: finite VCk-dim implies approx bounded}, $N$ can be bounded uniformly in all $x$.)

We can combine these representations for different $x \in V_{k+1}$ by replacing the sets $C_{i,I,x}$ with the set $C_{i,I}=\{(\bar x,x)\mid \bar x_{I} \in C_{i,I,x}\}$, obtaining (with some rearranging of terms) a single function
\[g=\sum_{i\leq N}\gamma_i f_{x_i}(\bar x)\prod_{I\in{[k]\choose \leq k-1}}\chi_{C_{i,I}}(\bar x_I,x).\]
By its form, $g$ is $\mathcal{B}_{\bar{1}^{k+1},k}$-measurable, and for almost every $x \in V_{k+1}$, $\norm{f_x-g_x}_{L^2}$ is small, so $\norm{f-g}_{L^2}$ is small as well.

There is one complication: just because each of the sets $C_{i,I,x}$ are measurable, it does not follow that the set $C_{i,I}$ is also measurable.  Therefore to carry this argument out correctly, we need to write out this cylinder sets in a way that is sufficiently uniform in $x$ (as, more or less, combinations of level sets of fibers of $f$) to guarantee that $C_{i,I}$ is measurable, and use some averaging arguments relying on Lemma \ref{lem: pos meas approx}.

To make this explicit, we define a slight variant of the algebras associated to fibers of a function considered earlier.
\begin{definition}\label{def: B(f) all smaller fibers}
	Let $r \in \mathbb{N}$, $(V_{[r]},\B_{\bar{n}}, \mu_{\bar{n}})_{\bar{n} \in \mathbb{N}^{r}}$ be an $r$-partite graded probability space, $f: V^{\bar{1}^{r}}\to [0,1]$ a $\B_{\bar{1}^{r}}$-measurable function and $\bar{w}_1, \ldots, \bar{w}_{\ell} \in V^{\bar{1}^r}$. Let $t \in \mathbb{N}$, $I \subseteq [r]$, and let $\bar{n}_{I} \in \mathbb{N}^{r}$ be defined by $\bar{n}_{I}  := \sum_{i \in I} \bar{\delta}_i$.
	We let $\B^t_{I, (\bar{w}_1, \ldots, \bar{w}_{\ell})}(f)$ be the finite Boolean subalgebra of $\B_{\bar{n}_I}$ generated by all subsets of $V^{\bar{n}_I} = \prod_{i \in I} V_i$ of the form
	\begin{gather*}
		\left\{\bar{x} \in V^{\bar{n}_I} :  \bar{x} \oplus \left(\bar{w}_i \right)_{[r] \setminus I} \in f^{<q}\right\}
	\end{gather*}
	for some $i \in [\ell]$ and $q \in \mathbb{Q}^{[0,1]}_t$.  
		We let $\B^t_{I}(f)$ be the $\sigma$-subalgebra of $\B_{\bar{n}_I}$ generated by $\bigcup_{\bar{w} \in V^{\bar{1}^r}}B^t_{I,\bar{w}}$, and $\B_{I}(f)$ the $\sigma$-subalgebra of $\B_{\bar{n}_I}$ generated by $\bigcup_{t \in \mathbb{N}}\B^t_{I}(f)$.
\end{definition}
We are ready to prove Proposition \ref{prop: large proj k k plus 1}, in the explicit form stated in Remark \ref{rem: k+1,k explicit}.
\begin{proof}[Proof of Proposition \ref{prop: large proj k k plus 1}]
  Let $\varepsilon \in \mathbb{R}_{>0}$ be given.
  
We fix $\delta \in \mathbb{Q}_{>0}, t \in \mathbb{N}, \bar{m} = (m_1, \ldots, m_k) \in \mathbb{N}^{k}$, to be determined later. 

By Lemma \ref{lem: pos meas approx} there exist some $x_1, \ldots, x_l \in V_{k+1}$ such that for any $x \in V_{k+1}$, $f_x$ is $\delta$-nicely approximated with respect to $x_1, \ldots, x_l$. 

Let $0=r_1 < \ldots < r_{L}=1$ list all elements of $\mathbb{Q}^{[0,1]}_t$ in the increasing order. As usual, for $q,q' \in \mathbb{Q}^{[0,1]}$ and $\bowtie \in \{<, \geq, = \}$ we let
	\begin{gather*}
		f^{\bowtie q} := \{\bar{y} \in V^{\bar{1}^{k+1}} : f(\bar{y}) \bowtie q\},\\
		f^{[q,q')} = f^{<q'} \cap f^{\geq q}.
	\end{gather*}	
	Let 
\begin{gather*}
	S := \left \{ s \mid s: [m_1] \times \ldots \times [m_k] \times [l+1] \times  \binom{[k]}{\leq k-1} \to [L] \right \}.
\end{gather*}	
Let $\bar{m}' := \bar{1}^{k \frown }(0) + \bar{m}^{\frown}(0) + \bar{\delta}_{k+1}  \in \mathbb{N}^{k+1}$. For $s \in S$ let
\begin{gather*}
	A^{s}: V^{\bar{m}'} \to \{0,1\},\\
	A^{s} \left( \bar{y},\bar{w},x \right) := \prod_{\substack{(i_1, \ldots, i_k,j) \in [m_1] \times \ldots \times [m_k] \times [l]\\ I \in \binom{[k]}{\leq k-1}}}  \chi_{f^{ \left[r_{s(\bar{i},j,I)}, r_{s(\bar{i},j,I)+1} \right)}_{x_j} }\left( \bar{y}_{w_{t,i_t} \to t, t \in I} \right) \cdot \\
 \prod_{\substack{(i_1, \ldots, i_k) \in [m_1] \times \ldots \times [m_k] \\ I \in \binom{[k]}{\leq k-1} \setminus \{ \emptyset \} }} \chi_{f^{\left[r_{s(\bar{i},l+1,I)}, r_{s(\bar{i},l+1,I)+1} \right)}} \left(\bar{y}_{w_{t,i_t} \to t, t \in I}, x\right).
\end{gather*}

	By definition (see Definition \ref{def: best approx of fiber}), for every $\bar{w} \in V^{\bar{m}}$ and $x \in V_{k+1}$, every atom of the algebra $\B^{t,x}_{\bar{w}}$  has characteristic function of the form $A^{s}(-,\bar{w},x)$
for some $s \in S$ (some of the atoms may be repeated in this presentation).

For $\bar{\alpha} = (\alpha_{s} \in \mathbb{Q}^{[0,1]}_t : s \in S) \in Q := \left( \mathbb{Q}^{[0,1]}_t  \right)^S$, we consider the function
\begin{gather*}
f^{t}_{\bar{\alpha}}\to [0,1],\\
	f^{t}_{\bar{\alpha}}(\bar{y}, \bar{w},x) := \sum_{s \in S} \alpha_{s} A^{s}(\bar{y},\bar{w},x).
\end{gather*}

Then $f^{t}_{\bar{\alpha}}$ is $\B_{\bar{m}'}$-measurable, and every $\B^{t,x}_{\bar{w}}$-simple function with coefficients in $\mathbb{Q}_t^{[0,1]}$  is of the form $f^{t}_{\bar{\alpha}}(-,\bar{w},x)$ for some $\bar{\alpha} \in Q$.

Recall (Definition \ref{def: best approx of fiber}(2)) that  $f^{t,t}_{\bar{w},x}$ denotes the best $L^2$-approximation to $f_x$ using a $\B^{t,x}_{\bar{w}}$-simple function with coefficients in $\mathbb{Q}_t^{[0,1]}$. We can define it explicitly as follows. Let $\triangleleft$ be an arbitrary well order on $Q$. For $\bar{\alpha}, \bar{\beta} \in Q$, let
\begin{gather*}
	C_{\bar{\alpha}, \bar{\beta}} := 
\left\{ (\bar{w},x) \in V^{\bar{m}^{\frown}(1)} : \norm{f_x - f^t_{\bar{\beta}}(-,\bar{w},x)}_{L^2}  < \norm{f_x - f^t_{\bar{\alpha}}(-,\bar{w},x)}_{L^2} \right\},\\
C_{\bar{\alpha}} := \bigcap_{\bar{\beta} \in Q} \left( V^{\bar{m}^{\frown}(1)} \setminus C_{\bar{\alpha}, \bar{\beta}}\right),\\
D_{\bar{\alpha}} := C_{\bar{\alpha}} \setminus \left( \bigcup_{\bar{\beta} \triangleleft \bar{\alpha}}  C_{\bar{\beta}}\right).
\end{gather*}
Note that $C_{\bar{\alpha}, \bar{\beta}} \in \B_{\bar{m}^{\frown}(1)}$ (as $\norm{x}_{L^2} = \left(\int{x^2}\right)^{\frac{1}{2}}$ is a composition of functions preserving measurability using Fubini). So $(\bar{w},x) \in C_{\bar{\alpha}}$ if and only if $\norm{f_x - f^{t}_{\bar{\alpha}}(-,\bar{w},x)}_{L^2}$ is minimal among all $\bar{\beta} \in Q$. As there can be multiple $\bar{\alpha} \in Q$ that give equally good approximations, we let $D_{\bar{\alpha}}$ consist of those $(\bar{w},x)$ for which $\bar{\alpha}$ is $\triangleleft$-minimal giving the best approximation. Then $\left\{ D_{\bar{\alpha}} : \bar{\alpha} \in Q \right\}$ forms a partition of $V^{\bar{m}^{\frown}(1)}$, and we define a function $h: V^{\bar{m}'} \to [0,1]$ via
\begin{gather*}
	h(\bar{y}, \bar{w}, x) := \sum_{\bar{\alpha} \in Q} \chi_{D_{\bar{\alpha}}}(\bar{w},x) \cdot  f^{t}_{\bar{\alpha}}(\bar{y}, \bar{w},x).
\end{gather*}

From the definition we see that $h$ is $\B_{\bar{m}'}$-measurable and for every fixed $(\bar{w},x)$, $h(-,\bar{w},x) = f^{t,t}_{\bar{w},x}$.

For $\bar{m} \in \mathbb{N}^k$ and $t,s \in \mathbb{N}$, let 
\begin{gather*}
G_{t,\bar{m}} := \left\{ (\bar{w},x) \in V^{\bar{m}^{\frown}(1)} : \norm{f_x - f^t_{\bar{w},x}}_{L^2} < \delta \right\},\\
G_{t,s, \bar{m}} := \left\{ (\bar{w},x) \in V^{\bar{m}^{\frown}(1)} : \norm{f_x - f^{t,s}_{\bar{w},x}}_{L^2} < 2 \delta \right\}.
\end{gather*}
As every $\B^{t,x}_{\bar{w}}$-simple function can be approximated up to $L^2$-distance $\delta$ by some $\B^{t,x}_{\bar{w}}$-simple function with coefficients in $\mathbb{Q}^{[0,1]}_s$ assuming $s \in \mathbb{N}$ is large enough, we have 
\begin{gather}\label{eq: approx for func 10}
	G_{t,\bar{m}} = \bigcup_{s \in \mathbb{N}} G_{t,s, \bar{m}}.
\end{gather}

Also, for $\rho \in \mathbb{Q}_{>0}$, let 
\begin{gather*}
		G_{t, \bar{m}, \rho} := \left\{x \in V_{k+1} : \mu_{\bar{m}} \left( \left(G_{t,\bar{m}}\right)_x \right) \geq \rho \right\},\\
		G_{t,s, \bar{m}, \rho} := \left\{x \in V_{k+1} : \mu_{\bar{m}} \left( \left(G_{t,s,\bar{m}}\right)_x \right) \geq \rho \right\}.
\end{gather*}
Then $G_{t, \bar{m}, \rho}, G_{t,s, \bar{m}, \rho} \in \B_{\bar{\delta}_{k+1}}$ by Fubini. And by the choice of $x_1, \ldots, x_l$ we have that $V_{k+1}$ is covered by the sets $\left\{G_{t,\bar{m}, \rho} : t \in \mathbb{N}, \bar{m} \in \mathbb{N}^k, \rho \in \mathbb{Q}_{>0} \right\}$, hence also covered by 
the sets $\left\{G_{t,s,\bar{m}, \rho} : t,s \in \mathbb{N}, \bar{m} \in \mathbb{N}^k, \rho \in \mathbb{Q}_{>0} \right\}$ by \eqref{eq: approx for func 10}.

Hence, by countable additivity of the measure (noting that $t \leq t' \land s \leq s' \land \bar{m} \leq \bar{m}' \land \rho  \geq \rho'$ implies $G_{t,s,\bar{m}, \gamma} \subseteq G_{t',s',\bar{m}', \gamma'}$), we can choose some $t \in \mathbb{N}, \bar{m} \in \mathbb{N}^k$ and $\rho \in \mathbb{Q}_{> 0}$ so that 
\begin{gather}\label{eq: approx for func 9.5}
	\mu_{\bar{\delta}_{k+1}}(G_{t,t,\bar{m}, \rho}) \geq 1 - \delta.
\end{gather}

We define 
\begin{gather*}
H := \left\{ (\bar{w},x) \in V^{\bar{m}^{\frown}(1)} : \norm{f_x - h(-, \bar{w},x)}_{L^2} < 2\delta \right\}.
\end{gather*}

We also define a $\B_{\bar{1}^{k+1}}$-measurable (by Fubini) function $g': V^{\bar{1}^{k+1}} \to [0,1]$ via
\begin{gather*}
	g'(\bar{y}, x) := \frac{1}{\max \left\{ \rho, \mu_{\bar{m}}(H_x) \right\}} \int h(\bar{y},\bar{w},x) \cdot \chi_{H}(\bar{w},x) d\mu_{\bar{m}}(\bar{w}).
\end{gather*}

As $h(-,\bar{w},x) = f^{t,t}_{\bar{w},x}$ for every fixed $(\bar{w},x) \in V^{\bar{m}^{\frown}(1)}$, we have $\mu_{\bar{\delta}_{k+1}} \left( Z \right) \geq 1 - \delta$ for 
$$Z := \left\{ x \in V_{k+1} : \mu_{\bar{m}} \left(H_x\right) \geq \rho  \right\}.$$
Note that $Z \in \B_{\bar{\delta}_{k+1}}$ by Fubini. Now, for any $x \in Z$ and $\bar{w} \in H_x$,  $\norm{f_x - h(-, \bar{w},x)}_{L^2} < 2\delta$ by definition of $H$. And for every fixed $x \in V_{k+1}$ with $\mu_{\bar{m}}(H_x) > 0$ we have 
$$f_x(\bar{y}) = \frac{1}{\mu_{\bar{m}}(H_x)}\int f_x(\bar{y}) \cdot \chi_{H}(\bar{w},x) d\mu_{\bar{m}} (\bar{w}) $$ 
for all $\bar{y} \in V^{\bar{1}^k}$. Then, by Lemma \ref{lem: operations for k+1,k proof}\eqref{lem: fibers approx impl av approx}, averaging over $\bar{w} \in H_x$, we get
\begin{gather}\label{eq: approx for func 10.5}
	\norm{f_x - g'(-,x)}_{L^2 \left( \mu_{\bar{1}^k} \right)} \leq 2\delta \textrm{ for every fixed } x \in Z.
\end{gather}

But then, as $\mu \left( Z \right) \geq 1 - \delta$ by \eqref{eq: approx for func 9.5}, using the second implication in Lemma \ref{lem: operations for k+1,k proof}\eqref{lemeq: small norm impl small norm for almost all fibers} we get
\begin{gather}\label{eq: measurability expl -1}
	\norm{f - g'}_{L^2} \leq (4 \delta^2)^{\frac{3}{4}}.
\end{gather}

Next we will approximate $g'$  by a function of the required form.

\begin{claim}\label{cla: meas of unary func expl}
	The following functions are $\B_{\{k+1\}}(f)$-measurable.
	\begin{enumerate}
	\item $x \in V_{k+1} \mapsto \chi_{D_{\bar{\alpha}}}({\bar{w}},x)$ for every fixed $\bar{\alpha} \in Q$ and $\bar{w} \in V^{\bar{m}}$;
	\item $x \in V_{k+1} \mapsto  \chi_{H}(\bar{w},x) $ for every fixed $\bar{w} \in V^{\bar{m}}$;
	\item $x \in V_{k+1} \mapsto  \frac{1}{\max \{\rho, \mu_{\bar{m}}(H_x) \} }$.
	\end{enumerate}
\end{claim}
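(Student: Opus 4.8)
The plan is to reduce all three assertions to a single observation about $\B_{\{k+1\}}(f)$ together with one invocation of Lemma \ref{lem: meas of av fib} per part. Unwinding Definition \ref{def: B(f) all smaller fibers} in the case $r=k+1$, $I=\{k+1\}$, $\bar{n}_I=\bar{\delta}_{k+1}$, one sees that $\B_{\{k+1\}}(f)$ is exactly the $\sigma$-subalgebra of $\B_{\bar{\delta}_{k+1}}$ generated by the fibers $(f^{<q})_{\bar{y}}=\{x\in V_{k+1}:f(\bar{y}^{\frown}(x))<q\}$ over all $\bar{y}\in V^{\bar{1}^k}$ and $q\in\mathbb{Q}^{[0,1]}_\infty$. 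Hence, for each fixed $\bar{y}\in V^{\bar{1}^k}$, the functions $x\mapsto f(\bar{y}^{\frown}(x))$ and $x\mapsto\chi_{f^{[r,q)}}(\bar{z}^{\frown}(x))$ (for any fixed $\bar{z}\in V^{\bar{1}^k}$, $r<q$ in $\mathbb{Q}^{[0,1]}_\infty$, using $\chi_{f^{[r,q)}}=\chi_{f^{<q}}-\chi_{f^{<r}}$ and density of $\mathbb{Q}^{[0,1]}_\infty$) are $\B_{\{k+1\}}(f)$-measurable. This is the only ``input'' about $f$ that will be used; everything else is bookkeeping plus Lemma \ref{lem: meas of av fib}, which we shall apply with $\B:=\B_{\{k+1\}}(f)\subseteq\B_{\bar{\delta}_{k+1}}$.

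For part (1): since $Q$ is finite and $D_{\bar{\alpha}}$ is a finite Boolean combination of the sets $C_{\bar{\alpha},\bar{\beta}}$, it suffices to fix $\bar{\alpha},\bar{\beta}\in Q$ and $\bar{w}\in V^{\bar{m}}$ and show that $x\mapsto\chi_{C_{\bar{\alpha},\bar{\beta}}}(\bar{w},x)$ is $\B_{\{k+1\}}(f)$-measurable. By definition this is the indicator of $\{x:\Phi_{\bar{\beta}}(x)<\Phi_{\bar{\alpha}}(x)\}$, where $\Phi_{\bar{\gamma}}(x):=\norm{f_x-f^t_{\bar{\gamma}}(-,\bar{w},x)}_{L^2}^2=\int G_{\bar{\gamma}}(\bar{y},x)\,d\mu_{\bar{1}^k}(\bar{y})$ with $G_{\bar{\gamma}}(\bar{y},x):=\bigl(f(\bar{y}^{\frown}(x))-\sum_{s\in S}\gamma_s A^s(\bar{y},\bar{w},x)\bigr)^2$. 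Now $G_{\bar{\gamma}}$ is $\B_{\bar{1}^{k+1}}$-measurable as a function of $(\bar{y},x)$, being a polynomial in $f$ and in characteristic functions of relations of the form $R_{\bar{a}\to I}$ for level sets $R$ of $f$ (and of $f$ at the fixed $x_j$); and for every fixed $\bar{y}$ the function $x\mapsto G_{\bar{\gamma}}(\bar{y},x)$ is a polynomial in the $\B_{\{k+1\}}(f)$-measurable functions $x\mapsto f(\bar{y}^{\frown}(x))$ and $x\mapsto\chi_{f^{[r,q)}}(\bar{z}^{\frown}(x))$ (the factors of $A^s$ depending on $x$) and in the factors of $A^s$ that are constant in $x$. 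Thus $x\mapsto G_{\bar{\gamma}}(\bar{y},x)$ is $\B_{\{k+1\}}(f)$-measurable for every $\bar{y}$, so Lemma \ref{lem: meas of av fib} (integrating out the $\bar{1}^k$-coordinates, i.e.\ with $\bar{n}=\bar{\delta}_{k+1}$, $\bar{m}=\bar{1}^k$, so that $\bar{n}+\bar{m}=\bar{1}^{k+1}$ and $x\oplus\bar{y}$ is identified with $\bar{y}^{\frown}(x)$) gives that $\Phi_{\bar{\gamma}}$ is $\B_{\{k+1\}}(f)$-measurable; hence so is $\chi_{C_{\bar{\alpha},\bar{\beta}}}(\bar{w},-)$, and therefore $\chi_{D_{\bar{\alpha}}}(\bar{w},-)$.

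Parts (2) and (3) layer on top of this. For (2), fix $\bar{w}$; the sets $D_{\bar{\alpha}}$ (as subsets of $V_{k+1}$) partition $V_{k+1}$, so $h(\bar{y},\bar{w},x)=\sum_{\bar{\alpha}\in Q}\chi_{D_{\bar{\alpha}}}(\bar{w},x)f^t_{\bar{\alpha}}(\bar{y},\bar{w},x)$, and for each fixed $\bar{y}$ the function $x\mapsto(f(\bar{y}^{\frown}(x))-h(\bar{y},\bar{w},x))^2$ is again a polynomial in $\B_{\{k+1\}}(f)$-measurable functions (now also using part (1) for the factors $\chi_{D_{\bar{\alpha}}}(\bar{w},-)$). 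Lemma \ref{lem: meas of av fib} then gives that $x\mapsto\norm{f_x-h(-,\bar{w},x)}_{L^2}^2$ is $\B_{\{k+1\}}(f)$-measurable, so its sublevel set $H_{\bar{w}}=\{x:\norm{f_x-h(-,\bar{w},x)}_{L^2}^2<4\delta^2\}$ lies in $\B_{\{k+1\}}(f)$. For (3), $\mu_{\bar{m}}(H_x)=\int\chi_H(\bar{w},x)\,d\mu_{\bar{m}}(\bar{w})$, and $\chi_H\in\B_{\bar{m}^{\frown}(1)}$; by part (2) the function $x\mapsto\chi_H(\bar{w},x)$ is $\B_{\{k+1\}}(f)$-measurable for every $\bar{w}$, so Lemma \ref{lem: meas of av fib} (now integrating out the $\bar{m}$-coordinates, i.e.\ $\bar{n}=\bar{\delta}_{k+1}$, $\bar{m}$ the $V^{\bar{m}}$-coordinates, $\bar{n}+\bar{m}=\bar{m}^{\frown}(1)$) shows $x\mapsto\mu_{\bar{m}}(H_x)$ is $\B_{\{k+1\}}(f)$-measurable, whence $x\mapsto 1/\max\{\rho,\mu_{\bar{m}}(H_x)\}$ is as well, since $t\mapsto 1/\max\{\rho,t\}$ is continuous on $[0,\infty)$ ($\rho>0$).

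The only genuinely delicate point — really just bookkeeping — is verifying that, for each fixed $\bar{y}$, the $x$-dependence of $G_{\bar{\gamma}}(\bar{y},x)$ and of $h(\bar{y},\bar{w},x)$ passes only through level sets of $f$ in which all coordinates except the last are frozen, so that the arising fibers are precisely generators of $\B_{\{k+1\}}(f)$; and keeping straight, in each application of Lemma \ref{lem: meas of av fib}, which coordinates play the role of $\bar{n}$ and which of $\bar{m}$, together with the identifications $\bar{\delta}_{k+1}\oplus\bar{1}^k\cong V^{\bar{1}^{k+1}}$ and $\bar{\delta}_{k+1}\oplus(\bar{m}^{\frown}\bar{0})\cong V^{\bar{m}^{\frown}(1)}$. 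Once this is in place, all three parts are immediate.
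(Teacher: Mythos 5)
Your proposal is correct and takes essentially the same route as the paper's own proof: fix the integrated variable ($\bar y$, resp.\ $\bar w$), observe that the resulting $x$-slice is built from the generators of $\B_{\{k+1\}}(f)$ (level-set fibers of $f$ with the first $k$ coordinates frozen), apply Lemma \ref{lem: meas of av fib} at each integration, and deduce (1) from the definition of $C_{\bar\alpha,\bar\beta}$ and $D_{\bar\alpha}$ with $Q$ finite, then (2) and (3) by the same pattern. You merely spell out the steps the paper labels ``clearly'' and ``similar unwinding,'' so nothing further is needed.
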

\begin{claimproof}

(1) Let  $\bar{\alpha}$ and $\bar{w}$ be fixed. If we also fix $\bar{y}$, then the function $x \mapsto f_x(\bar{y}) - f^t_{\bar{\alpha}}(\bar{y}, \bar{w},x)$ is clearly $\B_{\{k+1\}}(f)$-measurable. Then $x \mapsto \left(f_x(\bar{y}) - f^t_{\bar{\alpha}}(\bar{y}, \bar{w},x) \right)^2$ is also $\B_{\{k+1\}}(f)$-measurable. Applying Lemma \ref{lem: meas of av fib}, the function 
$$x \mapsto \int \left(f_x(\bar{y}) - f^t_{\bar{\alpha}}(\bar{y}, \bar{w},x)\right)^2 d \mu_{\bar{1}^k}(\bar{y})$$
 is also $\B_{\{k+1\}}(f)$-measurable, and using uniform continuity of $x \mapsto x^{\frac{1}{2}}$ on $[0,1]$, $x \mapsto \norm{ f_x(\bar{y}) - f^t_{\bar{\alpha}}(\bar{y}, \bar{w},x)}_{L^2 \left(\mu_{\bar{1}^k} \right)}$ is also $\B_{\{k+1\}}(f)$-measurable. Following the definition of $D_{\bar{\alpha}}$ and standard arguments, we see that  $x \in V_{k+1} \mapsto \chi_{D_{\bar{\alpha}}}(\bar{w},x)$ is also $\B_{\{k+1\}}(f)$-measurable.

(2),(3) similar unwinding the definitions and using Lemma \ref{lem: meas of av fib} every time integration is applied.
\end{claimproof}

Let $J \in  \binom{[k+1]}{\leq k}$ be arbitrary, and let $\bar{m}_J \in \mathbb{N}^{k+1}$ be given by $\bar{m}_J := \sum_{i \in J}\bar{\delta}_i$. If $J \subseteq [k]$ and $s \in S$, we define $B^{s}_{J}: V^{\bar{m}_J + \bar{m}} \to [0,1]$ via
\begin{gather*}
B^{s}_J(\bar{z} \oplus \bar{w}) := \\
\prod_{\substack{(i_1, \ldots, i_k,j) \in [m_1] \times \ldots \times [m_k] \times [l]}}  \chi_{f^{ \left[r_{s(\bar{i},j,[k]\setminus J)}, r_{s(\bar{i},j,[k]\setminus J)+1} \right)}_{x_j} }\left( \bar{z} \oplus \bar{0}^{k+1}_{w_{t,i_t} \to t, t \in [k] \setminus J} \right).
\end{gather*}

Otherwise, $J = I \sqcup \{k+1\}$ for some $I \in \binom{[k]}{\leq k -1 }$, in particular $[k] \setminus I \neq \emptyset$,  and we define $B^{s}_{J}: V^{\bar{m}_J + \bar{m}} \to [0,1]$ via 
\begin{gather*}
B^{s}_J(\bar{z} \oplus \bar{w}) := \\
 \prod_{\substack{(i_1, \ldots, i_k) \in [m_1] \times \ldots \times [m_k] }} \chi_{f^{\left[r_{s(\bar{i},l+1,[k]\setminus I)}, r_{s(\bar{i},l+1,[k] \setminus I)+1} \right)}} \left(\bar{z} \oplus \bar{0}^{k+1}_{w_{t,i_t} \to t, t \in [k] \setminus I}\right).
\end{gather*}
Comparing to the definition of $A^s$, we see that for every $s \in S$ and $\bar{y} \in V^{\bar{1}^{k}},\bar{w} \in V^{\bar{m}}, x \in V_{k+1}$, taking $\bar{z} :=  \bar{y}^{\frown}(0) + \bar{0}^{k \frown} (x) + \bar{w}$ we have
\begin{gather*}
	A^s(\bar{y}, \bar{w},x) = \prod_{J \in \binom{[k+1]}{\leq k}} B^s_J(\bar{z}_J \oplus \bar{w}).
\end{gather*}

And from the definition, for every $J \in \binom{[k+1]}{\leq k}$ and every fixed $\bar{w} \in V^{\bar{m}}$,
\begin{gather}\label{eq: measurability expl 1}
	\textrm{the function } \bar{z} \in V^{\bar{m}_J} \mapsto B^s_J(\bar{z} \oplus \bar{w}) \in [0,1] \textrm{ is }\B^t_J(f) \textrm{-measurable}
\end{gather}
(see Definition \ref{def: B(f) all smaller fibers}).

Consider the $\sigma$-algebra $\mathcal{C}_1 \subseteq \B_{\bar{1}^{k+1}}$ generated by the collection of sets 
$$\left\{ h^{[q,r)}_{\bar{w}} \cap \chi_{H_{\bar{w}}}^{=t}: \bar{w} \in V^{\bar{m}}, q < r \in \mathbb{Q}^{[0,1]}, t \in \{0,1\} \right\}.$$
 
 For every fixed $\bar{w} \in V^{\bar{m}}$, the function $(\bar{y},x) \mapsto h(\bar{y},\bar{w},x) \cdot \chi_{H}(\bar{w},x)$ is clearly $\mathcal{C}_1$-measurable. Hence, by Lemma \ref{lem: meas of av fib}, the function 
 \begin{gather*}
	h_1: (\bar{y},x) \mapsto \int h(\bar{y},\bar{w},x) \cdot \chi_{H}(\bar{w},x) d\mu_{\bar{m}}(\bar{w})
 \end{gather*}
is $\mathcal{C}_1$-measurable. Then we can approximate it up to $L^2 \left( \mu_{\bar{1}^{k+1}} \right)$-distance $\delta$ by a $\mathcal{C}_1$-simple function 
\begin{gather}\label{eq: measurability expl 2}
\sum_{i } \beta_i \cdot \chi_{h_{\bar{w}_i}^{[q_i, r_i)}}(\bar{y},x) \cdot \chi_{H_{\bar{w}_i}^{=t_i}}(x)
\end{gather}
for some finitely many $\bar{w}_i \in V^{\bar{m}}$, $\beta_i, q_i, r_i \in \mathbb{Q}_{\infty}^{[0,1]}$ and $t_i \in \{0,1\}$.

We consider a single summand, so we fix $\bar{w}$ and $q < r$ and  $t$. By definition of $h$ and $f^t_{\bar{\alpha}}$'s, \begin{gather}\label{eq: measurability expl 3}
	\chi_{h_{\bar{w}}^{[q, r)}}(\bar{y},x) = \sum_{\bar{\alpha} \in Q} \chi_{ \left(D_{\bar{\alpha}} \right)_{\bar{w}} }(x) \cdot  \chi_{\left(f^{t}_{\bar{\alpha}} \right)_{\bar{w}}^{[q,r)} }(\bar{y},x),\\
	\chi_{\left(f^{t}_{\bar{\alpha}} \right)_{\bar{w}}^{[q,r)} }(\bar{y},x) = \sum_{s \in S \land \alpha_s \in [r,s)} \alpha_{s} A^{s}_{\bar{w}}(\bar{y},x), \nonumber\\
	A^{s}_{\bar{w}}(\bar{y},x) = \prod_{J \in \binom{[k+1]}{\leq k}} \left(B^s_J \right)_{\bar{w}}(\left(\bar{y},x \right)_J). \nonumber
\end{gather}
Note that each $(B^s_J)_{\bar{w}}$ is a $\B_J(f)$-measurable $k$-ary function by \eqref{eq: measurability expl 1}, each $\chi_{ \left(D_{\bar{\alpha}} \right)_{\bar{w}} }$ is $\B_{\{k+1\}}(f)$-measurable by Claim \ref{cla: meas of unary func expl}(1) and $\chi_{H^{=t}_{\bar{w}}}$ is $\B_{\{k+1\}}(f)$-measurable by Claim \ref{cla: meas of unary func expl}(2).
Then, replacing each summand in \eqref{eq: measurability expl 2} by a corresponding expression from \eqref{eq: measurability expl 3} and regrouping the sum, we conclude that $h_1$ can be approximated up to $L^2 \left( \mu_{\bar{1}^{k+1}} \right)$-distance $\delta$  by a finite sum of the form
\begin{gather}\label{eq: measurability expl 4}
	\hat{h}_1: \bar{x} \in V^{\bar{1}^{k+1}} \mapsto \sum_{i } \hat{\beta}_i \cdot \prod_{J \in \binom{[k+1]}{\leq k}}\hat{f}^i_J(\bar{x}_J)
\end{gather}
with $\hat{\beta}_i \in \mathbb{Q}^{[0,1]}$ and $\hat{f}^i_J : \prod_{i \in J} V_{i}\to[0,1]$ a $\B_{J}(f)$-measurable $k$-ary function. But then, considering the function $h_2: (\bar{y},x) \in V^{\bar{1}^{k+1}} \mapsto \frac{1}{\max \{\rho, \mu_{\bar{m}}(H_x) \} }$ and applying Lemma \ref{lem: operations for k+1,k proof}\eqref{lem: sum prod of approx}, $\norm{g' - h_2 \cdot \hat{h}_1}_{L_2} = \norm{h_2 \cdot h_1 -  h_2 \cdot \hat{h}_1}_{L_2} \leq 3 \delta$. The map $x \in V_{k+1} \mapsto  \frac{1}{\max \{\rho, \mu_{\bar{m}}(H_x) \} }$ is $\B_{\{k+1\}}(f)$-measurable by Claim \ref{cla: meas of unary func expl}(3), hence multiplying the sum in \eqref{eq: measurability expl 4} by it and regrouping, the product $h_2 \cdot \hat{h}_1$ is of the form
\begin{gather*}
	g: \bar{x} \in V^{\bar{1}^{k+1}} \mapsto \sum_{i } \gamma_i \cdot \prod_{J \in \binom{[k+1]}{\leq k}} f^i_J(\bar{x}_J)
\end{gather*}
for some finitely many $\gamma_i \in \mathbb{Q}^{[0,1]}$ and $\B_{J}(f)$-measurable $k$-ary functions
 $f^i_J : \prod_{i \in J} V_{i}\to[0,1]$. So $g$ is of the required form, and using \eqref{eq: measurability expl -1}
 \begin{gather*}
 	\norm{f - g}_{L^2} \leq \norm{f - g'}_{L^2} + \norm{g' - g}_{L^2} \leq  (4 \delta^2)^{\frac{3}{4}} + 3\delta < \varepsilon
 \end{gather*}
 assuming we started with $\delta$ sufficiently small with respect to $\varepsilon$.  
 \end{proof}

This argument actually gives us an additional uniformity we will need in the next subsection.
 \begin{cor}\label{prop: large proj k k plus 1 uniform}
Suppose that $k' > k \in \mathbb{N}$ and $(V_{[k']}, \B_{\bar{n}}, \mu_{\bar{n}})_{n \in \mathbb{N}^{k'}}$ is a $k'$-partite graded probability space. Let $I' := [k'] \setminus [k+1]$ and $\bar{n}' := \sum_{i \in I'} \bar{\delta}_i \in \mathbb{N}^{k'}$. Suppose $f:V^{\bar{1}^{k'}}\to[0,1]$ is $\B_{\bar{1}^{k'}}$-measurable and, for every $\bar z \in V^{\bar{n}'}$, $\VC_k(f_{\bar z})<\infty$. Then, for every $\varepsilon>0$, there exist some $N \in \mathbb{N}$, $\gamma_i \in \mathbb{Q}^{[0,1]}$ for $i \in [N]$, and $\B_{I \cup I'}(f)$-measurable functions
 $f^i_I:  \left( \prod_{j \in I} V_{j} \right) \times V^{\bar{n}'} \to[0,1] $ for $i \in [N], I \in  \binom{[k+1]}{ \leq k} $ so that, defining $g: V^{\bar{1}^{k'}} \to [0,1]$ via
 \begin{gather*}
	g: \bar{x} \mapsto   \sum_{i \in [N] } \gamma_i \cdot \prod_{I \in \binom{[k+1]}{\leq k}} f^i_I(\bar{x}_I, \bar{x}_{I'}),
\end{gather*}
we have $\norm{f_{\bar{z}} - g_{\bar{z}}}_{L^2 \left( \mu_{\bar{1}^{k+1}} \right)} < \varepsilon$ for all $\bar{z} \in V^{\bar{n}'}$ except for a set of $\mu_{\bar{n}'}$-measure $\varepsilon$.
 \end{cor}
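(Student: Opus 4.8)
The plan is to reprove Proposition \ref{prop: large proj k k plus 1} with the coordinates $\bar z \in V^{\bar n'}$ (those indexed by $I' = [k'] \setminus [k+1]$) carried along as passive parameters throughout, so that each object built there --- the level-set products $A^s$, the simple functions $f^t_{\bar\alpha}$, the best approximation $h$, the averaged function $g'$, and finally $g$ --- acquires an extra $\bar z$-argument, and every $\B_J(f)$-measurability assertion (Definition \ref{def: B(f) all smaller fibers}) is upgraded to a $\B_{J \cup I'}(f)$-measurability assertion. The only two places where the original argument genuinely uses the $(k+1)$-partite structure are (i) the choice of the fibers $x_1, \dots, x_l \in V_{k+1}$ via Lemma \ref{lem: pos meas approx}, and (ii) the covering/countable-additivity step producing uniform parameters $t, \bar m, \rho$; both must be made uniform over $\bar z$.

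First I would reduce to a uniform bound on the $\VC_k$-dimension of the fibers. Fix the resolution $N = N(\varepsilon)$ needed to run the argument, so that only the finitely many pairs $r < s$ in $\mathbb{Q}_N^{[0,1]}$ ever enter. For $d \in \mathbb{N}$ put $Z_d := \{\bar z : \VC_k^{r,s}(f_{\bar z}) \le d \text{ for all } r < s \in \mathbb{Q}_N^{[0,1]}\}$; since every $f_{\bar z}$ has finite $\VC_k$-dimension, $\bigcup_d Z_d = V^{\bar n'}$, so by countable additivity of $\mu_{\bar n'}$ we may fix $d$ (hence a constant sequence $\bar d$) and pass to $Z_0 \subseteq V^{\bar n'}$ with $\mu_{\bar n'}(V^{\bar n'} \setminus Z_0) < \varepsilon/2$ on which $\VC_k(f_{\bar z}) \le \bar d$; the complement $V^{\bar n'} \setminus Z_0$ will be absorbed by the allowed $\varepsilon$-exceptional set, so $g$ is irrelevant there. (The measurability of the $Z_d$ is a minor point, handled either by passing to a measurable over-approximation, or by replacing the existential quantifiers defining shattering by positive-measure statements via Lemma \ref{lem: Erdos-Stone} and Fact \ref{fac: Bergelson}, exactly as in the proof of Proposition \ref{prop: finite VCk-dim implies approx}, where the shattered box is in fact produced from a positive-measure set.)

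Next I would establish a uniform version of Lemma \ref{lem: pos meas approx}: there are $l = l(k, \bar d, \delta)$ and $x_1, \dots, x_l \in V_{k+1}$, \emph{not depending on $\bar z$}, such that for all $\bar z \in Z_0$ outside a set of $\mu_{\bar n'}$-measure $< \delta$ and all $x \in V_{k+1}$, the $k$-ary fiber $(f_{\bar z})_x$ is $\delta$-nicely approximated with respect to $x_1, \dots, x_l$. This is obtained by rerunning the proof of Proposition \ref{prop: finite VCk-dim implies approx} (and its quantitative refinement) with $\bar z$ as a bystander coordinate: if no such tuple worked for a $\delta$-large set of $\bar z$, then after passing to the ultrapower of Section \ref{sec:indiscernibles} and homogenizing simultaneously over $\bar z$ and the fiber sequence, one would produce, for a positive-measure set of $\bar z$, a box $(r',s')$-shattered by $f_{\bar z}$ of size exceeding $d_{r',s'}$, contradicting $\VC_k(f_{\bar z}) \le \bar d$. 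Since by Proposition \ref{prop: finite VCk-dim implies approx bounded} the length $l$ depends only on $k, \bar d, \delta$, a single tuple $x_1, \dots, x_l$ can be extracted. With this tuple fixed, the remainder of the proof of Proposition \ref{prop: large proj k k plus 1} goes through verbatim with $\bar z$ adjoined: the new argument is appended to $(\bar y, \bar w, x)$ in the definitions of $A^s, f^t_{\bar\alpha}, h$; the level sets $f^{<q}$ and fiber level sets $f^{<q}_{x_j}$ are now taken in the $k'$-ary $f$ with the $I'$-coordinates free; the covering by the sets $G_{t,\bar m,\rho}$ and the countable-additivity step are carried out over $Z_0 \times V_{k+1}$; and averaging $h$ over the good parameter set $H \subseteq V^{\bar m} \times V_{k+1} \times V^{\bar n'}$ yields $g'$, with $\norm{f_{\bar z} - g'_{\bar z}}_{L^2(\mu_{\bar 1^{k+1}})}$ small for all $\bar z$ outside an $O(\delta)$-measure set by Lemma \ref{lem: operations for k+1,k proof}. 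Finally $g'$ is approximated by $g$ of the required form exactly as before; the point is that the auxiliary lemma (Claim \ref{cla: meas of unary func expl}, via Lemma \ref{lem: meas of av fib}) now outputs $\B_{\{k+1\} \cup I'}(f)$- and $\B_{J \cup I'}(f)$-measurable functions, and choosing $\delta$ small enough in terms of $\varepsilon$ completes the proof.

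The main obstacle is the uniform version of Lemma \ref{lem: pos meas approx}: one must verify that the homogenization and ultrapower argument underlying Proposition \ref{prop: finite VCk-dim implies approx} localizes cleanly to a positive-measure set of bystander parameters $\bar z$, and that the quantitative bound $l$ is genuinely independent of $\bar z$ once the $\VC_k$-dimension is uniformly bounded; this, together with the (routine) measurability remark about the $\VC_k$-level sets $Z_d$, is where essentially all the content beyond Proposition \ref{prop: large proj k k plus 1} resides.
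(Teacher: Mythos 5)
Your overall plan is the one the paper itself uses: its proof of this corollary is precisely the remark that the proof of Proposition \ref{prop: large proj k k plus 1} can be carried out with the coordinates in $I'$ riding along as passive parameters, with countable additivity over $\bar z$ used to fix the finitely many quantitative choices; your points about upgrading Claim \ref{cla: meas of unary func expl} to $\B_{J\cup I'}(f)$-measurability, the covering/countable-additivity step, and the averaging over $H$ are all in line with that. The problem is the step you yourself identify as carrying ``essentially all the content'': your uniform version of Lemma \ref{lem: pos meas approx} is false as stated, and the argument offered for it does not work.

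Concretely, you claim a single tuple $x_1,\dots,x_l$ with $l=l(k,\bar d,\delta)$ such that, for all $\bar z$ outside a $\delta$-small set and \emph{all} $x\in V_{k+1}$, the fiber $(f_{\bar z})_x$ is $\delta$-nicely approximated. Take $k=1$, $k'=3$, $V_1=V_2=V_3=[0,1]$ with Lebesgue measure and Borel algebras, and let $f(x,y,z):=\chi_{[0,1/2)}(x)$ if $y=z$ and $f(x,y,z):=0$ otherwise. Every slice satisfies $\VC_1(f_z)\le 1$ (its fibers are only $\emptyset$ and $[0,1/2)$), yet for any finite tuple $b_1,\dots,b_l\in V_2$ and every $z\notin\{b_1,\dots,b_l\}$ the chosen fibers $(f_z)_{b_i}$ vanish identically, the approximating algebra is trivial (for $k=1$ the family $\mathcal{F}^{f,t,\cdot}_{\bar w}$ of Definition \ref{def: fiber algebra} is empty and $\B_{(1),0}$ is trivial), and the fiber $(f_z)_{z}=\chi_{[0,1/2)}$ is at $L^2$-distance $1/2$ from every simple function over it; so the bad set of $\bar z$ has measure $1$ for every tuple, even though each individual $z$ admits a good tuple of length $1$. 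This also shows why your extraction step (``since $l$ depends only on $k,\bar d,\delta$, a single tuple can be extracted'') is a non sequitur: per-$\bar z$ tuples of bounded length do not patch together, the family of good tuples being uncountable and possibly of measure zero. What the argument actually needs is only the weaker statement that some fixed tuple works for all pairs $(x,\bar z)$ outside a set of small $\mu_{\bar\delta_{k+1}+\bar n'}$-measure (which my example satisfies trivially, since $f_z=0$ almost everywhere), and this cannot be obtained by transferring per-$\bar z$ tuples: one must rerun the greedy construction of Proposition \ref{prop: finite VCk-dim implies approx} jointly in $(x,\bar z)$. If every finite tuple left a bad set of pairs of measure at least $\varepsilon$, choose by Fubini each new point $x_{m+1}$ to be a bad witness for a set of $\bar z$ of measure at least $\varepsilon$; by reverse Fatou a positive-measure set of $\bar z$ then carries an infinite inapproximable subsequence of the $x_m$'s, and applying the machinery of Section \ref{sec:indiscernibles} to a single such $\bar z$ contradicts $\VC_k(f_{\bar z})\le\bar d$. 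With the claim weakened and proved this way, the remainder of your outline (covering by the sets $G_{t,\bar m,\rho}$ over $Z_0\times V_{k+1}$, the averaging, and Lemma \ref{lem: operations for k+1,k proof}) goes through as you describe.
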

 \begin{proof}
The proof of Proposition \ref{prop: large proj k k plus 1} can be carried out uniformly in all those $\bar z$ such that $(m_1,\ldots,m_k)$ are large enough relative to $\VC_k(f_{\bar z})$.  In particular, by countable additivity, we can choose $(m_1,\ldots,m_k)$ large enough to work except for a set of $\bar z$ of measure $<\varepsilon$.
\end{proof}

\subsection{Proof of the general case}

We are now ready to prove the main theorem.

\begin{theorem}\label{thm: the very main thm soft}

 Suppose that $\left(V_{[k']}, \B_{\bar{n}}, \mu_{\bar{n}} \right)_{\bar{n} \in \mathbb{N}^{k'}}$ is a $k'$-partite graded probability space and $f: V^{\bar{1}^{k'}} \to [0,1]$ is a $k'$-ary $\B_{\bar{1}^{k'}}$-measurable function with $\VC_k(f)  < \infty$ (see Definition \ref{def: VCk dimension of functions}(4)) for some $k<k'$.  Then $f$ is $\mathcal{B}_{\bar 1^{k'},k}$-measurable.
 
 More precisely, for every $\varepsilon > 0$ there exist some 
  $N \in \mathbb{N}$, $\gamma_i \in \mathbb{Q}^{[0,1]}$ for $i \in [N]$ and $\B_{I}(f)$-measurable $(\leq k)$-ary functions
  $f^i_I: \prod_{j \in I} V_{j} \to [0,1]$ for $i \in [N], I \in  \binom{[k']}{ \leq k} $ so that, defining $g: V^{\bar{1}^{k'}} \to [0,1]$ via
  \begin{gather*}
 	g(\bar{x}) :=  \sum_{i \in [N] } \gamma_i \cdot \prod_{I \in \binom{[k']}{\leq k}} f^i_I(\bar{x}_I),
 \end{gather*}
 we have $\norm{f - g}_{L^2(\mu_{\bar{1}^{k'}})} < \varepsilon$.
 
 \end{theorem}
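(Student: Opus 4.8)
The plan is to prove the statement by induction on $k'\geq k+1$. The base case $k'=k+1$ is precisely Proposition~\ref{prop: large proj k k plus 1}. So assume $k'>k+1$ and that the theorem, in its explicit form, holds for every arity $k''$ with $k<k''<k'$. Write $I':=[k']\setminus[k+1]$ and $\bar{n}':=\sum_{i\in I'}\bar{\delta}_i\in\mathbb{N}^{k'}$. Since $\VC_k(f)<\infty$, Definition~\ref{def: VCk dimension of functions}(4) provides a fixed $\bar{d}$ with $\VC_k(f_{\bar z})\leq\bar{d}$ for every $\bar z\in V^{\bar{n}'}$, so Corollary~\ref{prop: large proj k k plus 1 uniform} applies: for a parameter $\varepsilon'>0$ to be fixed later I get $N\in\mathbb{N}$, coefficients $\gamma_i\in\mathbb{Q}^{[0,1]}$, and $\B_{I\cup I'}(f)$-measurable functions $f^i_I\colon V_{I\cup I'}\to[0,1]$ for $i\in[N]$ and $I\in\binom{[k+1]}{\leq k}$, such that $g_0:=\sum_{i\in[N]}\gamma_i\prod_{I}f^i_I(\bar x_{I\cup I'})$ satisfies $\norm{f_{\bar z}-(g_0)_{\bar z}}_{L^2(\mu_{\bar{1}^{k+1}})}<\varepsilon'$ for all $\bar z\in V^{\bar{n}'}$ outside a set of $\mu_{\bar{n}'}$-measure $\varepsilon'$; integrating in $\bar z$ via the Fubini property (Remark~\ref{rem: gen Fub}) gives $\norm{f-g_0}_{L^2(\mu_{\bar{1}^{k'}})}^2\leq(\varepsilon')^2+\varepsilon'$.

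Next I would decompose each $f^i_I$. It is a function of the $|I\cup I'|=|I|+(k'-k-1)\leq k'-1$ coordinates in $I\cup I'$. If $|I\cup I'|\leq k$ I leave it alone, since it is already a $(\leq k)$-ary $\B_{I\cup I'}(f)$-measurable function. If $k<|I\cup I'|\leq k'-1$ I first replace $f^i_I$ by an $L^2$-close function $h^i_I$ of \emph{finite} $\VC_k$-dimension: being $\B_{I\cup I'}(f)$-measurable, $f^i_I$ is an $L^2$-limit of finite Boolean combinations of fibers of level sets of $f$, and replacing each sharp indicator $\chi_{f^{<q}}$ by the smoothed function $p\dot{\times}(q\monus f)$ as in Lemma~\ref{lem: smoothening char func}, and each Boolean operation by the corresponding truncated-arithmetic operation, produces such an $h^i_I$ written as a finite truncated-arithmetic combination of fibers of smoothed level sets of $f$. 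The closure results of Section~\ref{sec:operations} — finiteness of $\VC_k$-dimension is preserved under truncated-arithmetic operations, finite sums and products, passage to fibers (Remark~\ref{rem: props of VCk higher arity}), and, crucially, under the ``average fiber'' operation (Theorem~\ref{prop: gen fib fin VCk dim}) — together with the observation that $\VC_k^{r,s}\!\big(p\dot{\times}(q\monus f)\big)\leq\VC_k^{r',s'}(f)$ for suitable $r'<s'$, show $\VC_k(h^i_I)<\infty$. Since $|I\cup I'|<k'$, the inductive hypothesis applies to $h^i_I$ and yields $\tilde f^i_I:=\sum_j\delta_{i,I,j}\prod_{J\in\binom{I\cup I'}{\leq k}}h^i_{I,J,j}(\bar x_J)$, $L^2$-close to $h^i_I$, with $\delta_{i,I,j}\in\mathbb{Q}^{[0,1]}$ and each $h^i_{I,J,j}$ a $(\leq k)$-ary $\B_J(h^i_I)$-measurable function. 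Because $h^i_I$ is built from fibers of level sets of $f$ supported on the coordinates in $I\cup I'$, fibering over $(I\cup I')\setminus J$ shows $\B_J(h^i_I)\subseteq\B_J(f)$, so each $h^i_{I,J,j}$ is in fact $\B_J(f)$-measurable.

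Finally I would substitute $\tilde f^i_I$ for $f^i_I$, set $g_1:=\sum_{i\in[N]}\gamma_i\prod_{I\in\binom{[k+1]}{\leq k}}\tilde f^i_I$, and distribute the product of sums. Each resulting monomial is a product of $(\leq k)$-ary $\B_J(f)$-measurable functions for various $J\in\binom{[k']}{\leq k}$; grouping the factors by their support (a product of several $\B_J(f)$-measurable $(\leq k)$-ary functions is again one such, and I insert the constant $1$ for missing $J$) and absorbing the scalars into a single coefficient in $\mathbb{Q}^{[0,1]}$ produces $g$ of the required form $\sum_m\gamma_m\prod_{J\in\binom{[k']}{\leq k}}f^m_J(\bar x_J)$. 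The error $\norm{f-g}_{L^2}\leq\norm{f-g_0}_{L^2}+\norm{g_0-g_1}_{L^2}$ is controlled by Lemma~\ref{lem: operations for k+1,k proof}\eqref{lem: sum prod of approx} applied to the (bounded) functions involved, and is made $<\varepsilon$ by choosing $\varepsilon'$ and the approximation errors of the various $h^i_I$ small enough in terms of $N$, $k'$ and $\varepsilon$; in particular $f$ is $\B_{\bar{1}^{k'},k}$-measurable, since $\B_{\bar{1}^{k'},k}$ is a $\sigma$-algebra containing every such $g$. (As in the base case one checks that $g$ may be taken $[0,1]$-valued.)

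I expect the main obstacle to be the middle step: converting the $\B_{I\cup I'}(f)$-measurable pieces returned by Corollary~\ref{prop: large proj k k plus 1 uniform} into genuinely finite-$\VC_k$-dimension functions to which the induction can be applied. Since one cannot in general control the $\VC_k$-dimension of a sharp level set of a finite-$\VC_k$-dimension function, one must pass to smoothed level sets (Lemma~\ref{lem: smoothening char func}) and rely on the closure properties of Section~\ref{sec:operations}, above all the ``average fiber'' theorem (Theorem~\ref{prop: gen fib fin VCk dim}), which is exactly what is needed because the functions produced by the $(k+1,k)$-case arise as averages over auxiliary parameters. A secondary, bookkeeping-style difficulty is tracking the $\sigma$-algebras so that the final $(\leq k)$-ary factors come out $\B_J(f)$-measurable for $J\in\binom{[k']}{\leq k}$, rather than merely measurable with respect to algebras attached to the intermediate functions $h^i_I$; this follows from the $h^i_I$ being themselves constructible from fibers of level sets of $f$, but requires care about which coordinates each such fiber depends on.
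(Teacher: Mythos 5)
Your proposal is correct and follows essentially the same route as the paper: induction on the arity, the uniform $(k+1,k)$-corollary (Corollary \ref{prop: large proj k k plus 1 uniform}), approximation of the resulting $\B_{J}(f)$-measurable pieces by smoothed (truncated-arithmetic) combinations of fibers of level sets of $f$ having finite $\VC_k$-dimension, the inductive hypothesis applied to those, and the same $\sigma$-algebra and $L^2$ bookkeeping; the only cosmetic difference is that you invoke the corollary at level $k$ (with all of $[k']\setminus[k+1]$ as parameter coordinates), whereas the paper invokes it at level $k'-2$ (with $x_{k'}$ alone as the parameter). One small correction: Theorem \ref{prop: gen fib fin VCk dim} is not actually needed here — since your $h^i_I$ are built directly from fibers of level sets of $f$, Propositions \ref{prop: perm of vars fin VCk dim} and \ref{prop: VCk comp with cont funct} already give $\VC_k(h^i_I)<\infty$, exactly as in the paper.
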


\begin{remark}
	Consider the simplest case, where $k'=3$ and $k=1$.  Corollary \ref{prop: large proj k k plus 1 uniform} says
\[f(x_1,x_2,x_3)\approx \sum_{i\in[N]}\gamma_i f^i_1(x_1,x_3)f^i_2(x_2,x_3)\]
for almost all fixed $x_3 \in V_3$, where $f^i_1,f^i_2$ are $\mathcal{B}_{I \cup \{3\}}(f)$-measurable for suitable $I \in \binom{[2]}{\leq 1}$.  The elements of $\mathcal{B}_I(f)$ are built from the levels sets of $f$.  While this does not ensure that they are themselves of finite $\VC_1$-dimension, we will show that they are closely approximated by sets of finite $\VC_1$-dimension.  This implies that we can apply Proposition \ref{prop: large proj k k plus 1} to the approximations of the functions $f^i_1,f^i_2$ approximating them by unary functions, and putting it together we obtain the desired representation of $f$.
\end{remark}

%
%

\begin{proof}[Proof of Theorem \ref{thm: the very main thm soft}]
  We prove the proposition by induction on $k' - k$. The base case $k' - k = 1$ is given by Proposition \ref{prop: large proj k k plus 1}.

			So let $1 \leq k < k'$ with $k'-k \geq 2$ be fixed, and assume that the claim holds for all pairs $k_0 < k'_0$ with $k'_0 - k_0 < k' - k$.  Let $\varepsilon \in \mathbb{R}_{>0}$ be given, and fix $\delta \in \mathbb{R}_{>0}$ sufficiently small with respect to $\varepsilon$, to be determined later.	
					 
			Assume that $\left(V_{[k']}, \B_{\bar{n}}, \mu_{\bar{n}} \right)_{\bar{n} \in \mathbb{N}^{k'}}$ is a $k'$-partite graded probability space, and $f: V^{\bar{1}^{k'}} \to [0,1]$ is  a $k'$-ary $\B_{\bar{1}^{k'}}$-measurable function with $\VC_k(f) \leq \bar{d} < \infty$. 
			
			As $k \leq k'-2 < k'$, the latter implies that also $\VC_{k'-2}(f) < \infty$, in particular $\VC_{k'-2}(f_{x_{k'}}) < \infty$ for every $x_{k'} \in V_{k'}$. Applying Corollary \ref{prop: large proj k k plus 1 uniform} with $k_0 := k'-2, k'_0 := k', \delta$ in place of $k,k',\varepsilon$,  there exist some $N' \in \mathbb{N}$, $\gamma_i \in \mathbb{Q}^{[0,1]}$ for $i \in [N']$, and $\B_{I \cup \{ k' \}}(f)$-measurable $(\leq k'-1)$-ary functions
 $f^i_I:  \left( \prod_{j \in I} V_{j} \right) \times V_{k'} \to[0,1] $ for $i \in [N'], I \in  \binom{[k'-1]}{ \leq k'-2} $ so that, taking $g: V^{\bar{1}^{k'}} \to [0,1]$ to be
 \begin{gather}\label{eq: gen case of main thm def g}
	g: \bar{x} \in V^{\bar{1}^{k'}} \mapsto  \sum_{i \in [N'] } \gamma_i \cdot \prod_{I \in  \binom{[k'-1]}{ \leq k'-2}} f^i_I(\bar{x}_I, \bar{x}_{k'}),
\end{gather}
we have 
\begin{gather}\label{eq: g approx f almost everywhere}
	\norm{f_{x_{k'}} - g_{x_{k'}}}_{L^2 \left( \mu_{\bar{1}^{(k'-1)}} \right)} < \delta \textrm{ for all } x_{k'} \in V_{k'} \setminus X'_{k'},\\
	\textrm{ for some } X'_{k'} \in  \B_{\bar{\delta}_{k'}} \textrm{ with } \mu_{\bar{\delta}_{k'}}(X'_{k'}) < \delta. \nonumber
\end{gather}
At this point we would like to apply the inductive hypothesis to the $(\leq k'-1)$-ary functions $f^i_I$, however a priori there is no reason for them to be of finite $\VC_k$-dimension: if $\VC_k(f) < \infty$, then we might still have $\VC_k(\chi_{f^{<r}}) = \infty$ for a fixed $r \in \mathbb{Q}^{[0,1]}$. We show that at least these functions can be approximated arbitrarily well in $L^2$-norm by functions of finite $\VC_k$-dimension.

%

So fix some $i \in [N']$ and $I \in \binom{[k'-1]}{ \leq k'-2}$, and let $J := I \cup \{k'\} \in \binom{[k']}{ \leq k'-1}$. 
Let $\delta'>0$ be arbitrary. As $f^i_I$ is $\B_{J}(f)$-measurable, by definition of $\B_{J}(f)$ (see Definition \ref{def: B(f) all smaller fibers}) we can choose a sufficiently large $t \in \mathbb{N}$ and some $\bar{w}_1, \ldots, \bar{w}_t \in V^{\bar{1}^{k'}}$ so that $\norm{f^{i}_{I} - h}_{L^2 \left(\mu_{ \bar{m}_J} \right)} < \delta'$, where $\bar{m}_J := \sum_{j \in J} \bar{\delta}_j \in \mathbb{N}^{k'}$ and $h$ is a function of the form 
\begin{gather*}
	h: \bar{x} \in \prod_{j \in J} V_j \mapsto \sum_{u \in [t]} \alpha_u \cdot \chi_{f^{[r_u, s_u)}_{\left(\bar{w}_u \right)_{[k'] \setminus J}}}(\bar{x})
\end{gather*}
for some $\alpha_u, r_u, s_u \in \mathbb{Q}^{[0,1]}_t$. Let $\delta'' >0$ be arbitrary. By Lemma \ref{lem: smoothening char func} we can choose a sufficiently large $p \in \mathbb{N}$ so that, for every $u \in [t]$ and $q \in \{s_u, r_u\}$, taking $\hat{f}^{<q}_u := p \dot{\times} \left(  q \monus  f_{\left(\bar{w}_u \right)_{[k'] \setminus J}} \right) $, we have
\begin{gather*}
	\norm{\chi_{f^{<q}_{\left(\bar{w}_u \right)_{[k'] \setminus J}}} - \hat{f}^{<q}_u}_{L^2 \left( \mu_{\bar{m}_J} \right) } < \delta''.
\end{gather*}
Letting $\hat{f}_u^{[s_u, r_u)} := \hat{f}_u^{<r_u} \cdot (1 - \hat{f}_u^{<s_u})$ and using Lemma \ref{lem: operations for k+1,k proof}\eqref{lem: sum prod of approx}, we thus have 
\begin{gather}\label{eq: main thm induction 3}
	\norm{\chi_{f^{[r_u,s_u)}_{\left(\bar{w}_u \right)_{[k'] \setminus J}}} - \hat{f}^{[r_u, s_u)}_u}_{L^2 \left( \mu_{\bar{m}_J} \right) } < 2 \delta''.
\end{gather}
for every $u \in [t]$. Let 
\begin{gather*}
	h': \bar{x} \in \prod_{j \in J} V_j \mapsto \sum_{u \in [t]} \alpha_i \cdot \hat{f}_u^{[r_u, s_u)}(\bar{x}),
\end{gather*}
then, by \eqref{eq: main thm induction 3} and Lemma \ref{lem: operations for k+1,k proof}\eqref{lem: sum prod of approx} again, we have
\begin{gather}\label{eq: main thm induction 4}
	\norm{f^{i}_{I} - h'}_{L^2 \left(\mu_{ \bar{m}_J} \right)} \leq \norm{f^{i}_{I} - h}_{L^2 \left(\mu_{ \bar{m}_J} \right)} + \norm{h - h'}_{L^2 \left( \mu_{\bar{m}_J} \right) } \\
	\leq \delta' + (2t+1)2\delta'' < 2 \delta' \nonumber
\end{gather}
assuming we took $\delta''$ small enough with respect to $\delta'$ and $t$.

Note that, for every $u \in [t]$, $\hat{f}_u^{<q}$ is clearly $\B_{J}(f)$-measurable from the definition, hence also $h'$ is $\B_{J} (f)$-measurable. Also, since fixing some of the coordinates or permuting the coordinates preserves finiteness of the $\VC_k$-dimension of a function (Proposition \ref{prop: perm of vars fin VCk dim}) and $\VC_{k}(f) < \infty$, it follows that $\VC_k \left( f_{\left( \bar{w}_u \right)_{[k'] \setminus J}  } \right) < \infty$ for every $u \in [t]$. By several applications of Proposition \ref{prop: VCk comp with cont funct} we then have $\VC_k \left( \hat{f}_u^{[r_u, s_u)} \right) < \infty$, and hence $\VC_k \left( h' \right) < \infty$.

We enumerate $J$ as $j_1<\cdots<j_{\ell}\in[k']$ for some $\ell\leq k'-1$, where $j_{\ell}=k'$.  We let $V'_i:=V_{j_i}$ for $i\in[\ell]$ and, for all $\bar m=(m_1,\ldots,m_{\ell})\in\mathbb{N}^\ell$ we let $\bar m':=\sum_{i\in[\ell]}m_i\bar\delta_{j_i}\in\mathbb{N}^{k'}$, $\mathcal{B}'_{\bar m}:=\mathcal{B}_{\bar m'}$, $\mu'_{\bar m}:=\mu_{\bar m'}$.  By Remark \ref{rem: power graded prob space}, $\left(V'_{[\ell]}, \bar{B}'_{\bar{n}}, \mu'_{\bar{n}} \right)_{\bar{n} \in \mathbb{N}^{\ell}}$ is an $\ell$-partite graded probability space and the $\ell$-ary function $h': \prod_{j \in [\ell]} V'_j \to [0,1]$ is $\B'_{\bar{1}^{\ell}}$-measurable with $\VC_k(h') < \infty$. As $\ell \leq k'-1$ (hence $\ell - k < k'-k$), applying the inductive hypothesis and unwinding the conclusion in terms of the original graded probability space we thus have $\norm{h' - g^i_I}_{L^2 \left( \mu_{\bar{m}_J} \right) } < \delta'$ for a function $g^i_I: \prod_{j \in J} V_j \to [0,1]$ of the form
 \begin{gather*}
 	g^i_I : \bar{x} \in \prod_{j \in J} V_j \mapsto  \sum_{u \in [N_{i,I}] } \beta^{i,I}_{u} \cdot \prod_{K \in \binom{J}{\leq k}} f^{i,I,u}_{K} (\bar{x}_K)
 \end{gather*}
 for some $N_{i,I} \in \mathbb{N}$, some $\beta^{i,I}_{u} \in \mathbb{Q}^{[0,1]}$ and some $(\leq k)$-ary $\B_{K}(h')$-measurable (and hence $\B_{K}(f)$-measurable) functions $f^{i,I,u}_{K}: \prod_{j \in K} V_j \to [0,1]$. Combining with \eqref{eq: main thm induction 4}, we have 
 \begin{gather*}
 	\norm{f^i_I - g^i_I}_{L^2 \left( \mu_{\bar{m}_J} \right) } < 3 \delta' \textrm{ for every } i \in [N'] \textrm{ and } I \in \binom{[k'-1]}{ \leq k'-2}.
 \end{gather*}

 Let $g': V^{\bar{1}^{k'}} \to [0,1]$ be obtained from $g$ by replacing $f^i_I$ with $g^i_I$  in \eqref{eq: gen case of main thm def g} for every $i \in [N'], I \in \binom{[k'-1]}{ \leq k'-2}$. Using Lemma \ref{lem: operations for k+1,k proof}\eqref{lem: sum prod of approx} this implies
 \begin{gather}\label{eq: g approx g'}
 \norm{g - g'}_{L^2 \left( \mu_{\bar{1}^{k'}} \right)} \leq   N' \cdot \left( 2 \left \lvert \binom{[k'-1]}{ \leq k'-2}  \right \rvert + 1\right) \cdot 3 \delta' < \delta,
 \end{gather}
assuming that we took $\delta' = \delta'(k', \delta, N')$ sufficiently small.

Regrouping the elements of the expression for $g'$, we see that it is of the form 
	$$g'(\bar{x}) :=  \sum_{i \in [N] } \gamma'_i \cdot \prod_{I \in \binom{[k']}{\leq k}} h^i_I(\bar{x}_I)$$
	 for some $N \in \mathbb{N}, \gamma'_i \in \mathbb{Q}^{[0,1]}$ and $h^i_I$ a $(\leq k)$-ary $\B_{I}(f)$-measurable functions $h^i_I: \prod_{j \in I} V_j \to [0,1]$.	 Hence $g'$ has the required form, and it remains to show that $g'$ approximates $f$ in $L^2$-norm. 
	 
	 By \eqref{eq: g approx g'} and the first implication in Lemma \ref{lem: operations for k+1,k proof}\eqref{lemeq: small norm impl small norm for almost all fibers}, there exists some set $X''_{k'} \in \B_{\bar{\delta}_{k'}}$ with $\mu_{\bar{\delta}_{k'}} \left( X''_{k'} \right) \leq \delta$ such that $\norm{g_{x_{k'}} - g'_{x_{k'}}}_{L^2 \left( \mu_{\bar{1}^{(k'-1)}} \right)} \leq \delta^{\frac{1}{2}}$ for all $x_{k'} \in V_{k'} \setminus X''_{k'}$. Combining this with \eqref{eq: g approx f almost everywhere} and taking $X_{k'} := X'_{k'} \cup X''_{k'}$, we thus have $\mu_{\bar{\delta}_{k'}}(X_{k'}) \leq 2 \delta$ and $\norm{f_{x_{k'}} - g'_{x_{k'}}}_{L^2 \left( \mu_{\bar{1}^{(k'-1)}} \right)} \leq \delta + \delta^{\frac{1}{2}}$ for every $x_{k'} \in V_{k'} \setminus X_{k'}$. Hence, by the second implication in Lemma \ref{lem: operations for k+1,k proof}\eqref{lemeq: small norm impl small norm for almost all fibers}, we have $\norm{f - g'}_{L^2 \left( \mu_{\bar{1}^{k'}} \right)} \leq \left(\max \{2 \delta, (\delta + \delta^{\frac{1}{2}})^2 \} \right)^{\frac{3}{4}} < \varepsilon$ assuming $\delta$ was chosen small enough with respect to $\varepsilon$.
\end{proof}

\begin{remark}
  We note that there is an alternate approach which avoids the careful analysis of the sets in $\mathcal{B}_{I}(f)$, at the price of using additional machinery about $\sigma$-subalgebras.  We illustrate the idea in the simplest case, where $k'=3$ and $k=1$.  Given $f(x_1,x_2,x_3)$, two applications of Corollary \ref{prop: large proj k k plus 1 uniform}---once with $x_1$ as the parameter and once with $x_2$ as the parameter---tells us that
\[f(x_1,x_2,x_3)\approx \sum_{i\in[N]}\beta_i \chi_{U^i_1}(x_1,x_2)\chi_{U^i_2}(x_1,x_3)\]
and also
\[f(x_1,x_2,x_3)\approx \sum_{j\in[N]}\gamma_j \chi_{W^j_1}(x_1,x_2)\chi_{W^j_2}(x_2,x_3),\]
for an appropriate choice of the coefficients $\beta_i, \gamma_j$ and sets 
$$U_1^i, W_1^j \in \B_{\{1,2\}}(f), U^i_2 \in \B_{\{1,3\}}(f), W^{j}_2 \in  \B_{\{2,3\}}(f).$$
By rearranging the sums to be over intersections $U^i_1\cap W^j_1$, we may assume the sums are over the same collection of sets---that is,
\[f(x_1,x_2,x_3)\approx\sum_{i\in[N]}\beta_i \chi_{U^i_1}(x_1,x_2)\chi_{U^i_2}(x_1,x_3)\approx \sum_{i\in[N]}\chi_{U^i_1}(x_1,x_2)\chi_{W^i_1}(x_2,x_3).\]
But then on each of the sets $U^i_1$, we have $\chi_{U^i_2} \approx \chi_{W^i_2}$, which means the sets $U^i_2,W^i_2$ must be close to not depending on $x_1$ or $x_2$, respectively: that is, we could replace $U^i_2$ with $u^i_2(x_3)=\int \chi_{U^i_2}(x_1,x_3)\,d\mu_{\bar{\delta}_1}(x_1)$.

So, after rearranging, we get
\[f(x_1,x_2,x_3)\approx \sum_{i\in[N]}\gamma'_i \chi_{U^i_1}(x_1,x_2)\chi_{U^i_2}(x_3).\]
That is, $f$ is measurable with respect to the $\sigma$-subalgebra of $\mathcal{B}_{\bar 1^3}$ generated by sets of the form $A(x_1,x_2)\times B(x_3)$.  (In the notation of \cite{MR3583029}, this $\sigma$-subalgebra is called $\mathcal{B}_{\bar 1^3,\{\{0,1\},\{2\}\}}$.)

This argument is symmetric, so $f$ also has approximations using sets of the form $A(x_1,x_3)\times B(x_2)$ and $A(x_2,x_3)\times B(x_1)$.  One can show (for instance, using the generalized Gowers uniformity norms) that a function which has several different representations in terms of restricted kinds of sets also has a simultaneous representation respecting all restrictions at once.  In a slightly different setting, this is \cite[Lemma 8.23]{Towsner2018}.


\end{remark}

Finally, we derive a more quantitative version of Theorem \ref{thm: the very main thm soft}.

\begin{cor}\label{thm: the very main thm}
  For every $k < k' \in \mathbb{N}, \bar{d} < \infty$ and $\varepsilon \in \mathbb{R}_{>0}$ there exists some $N = N(k,k',\bar{d}, \varepsilon) \in \mathbb{N}$ satisfying the following.

  Suppose that $\left(V_{[k']}, \B_{\bar{n}}, \mu_{\bar{n}} \right)_{\bar{n} \in \mathbb{N}^{k'}}$ is a $k'$-partite graded probability space and $f: V^{\bar{1}^{k'}} \to [0,1]$ is a $k'$-ary $\B_{\bar{1}^{k'}}$-measurable function with $\VC_k(f)  < \bar{d}$ (see Definition \ref{def: VCk dimension of functions}(4)). 
 
 Then for $i \in [N]$ there exist some $\gamma_i \in \mathbb{Q}^{[0,1]}_N$, $\bar{w}_i \in V^{\bar{1}^{k'}}$  and, for each $I \in \binom{[k']}{\leq k}$, a $(\leq k)$-ary function $f^i_{I}: \prod_{i \in I} V_i \to [0,1]$ simple with respect to the algebra $\B^N_{I, \bar{w}_1, \ldots, \bar{w}_N}(f)$ (see Definition \ref{def: B(f) all smaller fibers}) and with all of its coefficients in $\mathbb{Q}^{[0,1]}_N$ so that, defining a $\B_{\bar{1}^{k'},k}$-measurable function $g: V^{\bar{1}^{k+1}} \to [0,1]$ via
 \begin{gather*}
	g(\bar{x}) :=  \sum_{i \in [N] } \gamma_i \cdot \prod_{I \in \binom{[k']}{\leq k}} f^i_I(\bar{x}_I),
\end{gather*}
 we have $\norm{f - g}_{L^2} < \varepsilon$.


\end{cor}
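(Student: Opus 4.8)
The plan is to derive this quantitative form from the qualitative Theorem~\ref{thm: the very main thm soft} by a compactness/ultraproduct argument, reusing the machinery of Section~\ref{sec:indiscernibles} exactly as in the passage from Proposition~\ref{prop: finite VCk-dim implies approx} to Proposition~\ref{prop: finite VCk-dim implies approx bounded}. Suppose the statement failed for some fixed $k<k'$, $\bar d<\infty$ and $\varepsilon\in\mathbb R_{>0}$; then for each $j\in\mathbb N$ one would have a $k'$-partite graded probability space $\mathfrak P_j=(V^j_{[k']},\B^j_{\bar n},\mu^j_{\bar n})_{\bar n\in\mathbb N^{k'}}$ and a $\B^j_{\bar{1}^{k'}}$-measurable $f^j\colon V^{j,\bar{1}^{k'}}\to[0,1]$ with $\VC_k(f^j)<\bar d$ admitting no approximation of the prescribed form with parameter $N=j$ to within $\varepsilon$ in $L^2$. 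I would fix a nonprincipal ultrafilter $\mathcal U$ on $\mathbb N$ and form the ultraproduct $\mathfrak P=\prod_{j\to\mathcal U}\mathfrak P_j$ as in Section~\ref{sec:indiscernibles}; this is again a $k'$-partite graded probability space, the internal function $f:=[(f^j)_j]$ is $\B_{\bar{1}^{k'}}$-measurable, and $\int|[(h^j)_j]|^2\,d\mu=\lim_{j\to\mathcal U}\int|h^j|^2\,d\mu^j$ for internal $(h^j)_j$ (Loeb-integral transfer).

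The next step is to check $\VC_k(f)<\bar d$. If a $d_{r,s}$-box were $(r,s)$-shattered by $f$ for some rationals $r<s$, then, fixing rationals $r<r'<s'<s$, the internal box and the internal witnesses $c_S$ would $(r',s')$-shatter $f^j$ for $\mathcal U$-almost every $j$: distinctness of the box elements transfers, and each limit inequality $f(\bar a,c_S)\le r$ (resp.\ $\ge s$) forces $f^j(\bar a^j,c_S^j)<r'$ (resp.\ $>s'$) for $\mathcal U$-most $j$. Using that narrowing the gap $[r,s]$ only increases the $\VC_k$-dimension, this contradicts $\VC_k(f^j)<\bar d$. This is the same transfer used for Proposition~\ref{prop: finite VCk-dim implies approx bounded}.

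Then I would apply Theorem~\ref{thm: the very main thm soft} to $f$ with $\tfrac\varepsilon2$, getting $g=\sum_{i\in[M]}\gamma_i\prod_I f^i_I(\bar x_I)$ within $\tfrac\varepsilon2$ of $f$, with $\gamma_i\in\mathbb Q^{[0,1]}$ and $f^i_I$ being $\B_I(f)$-measurable. Since $\B_I(f)$ is generated (Definition~\ref{def: B(f) all smaller fibers}) by level sets $f^{<q}$ of fibers of $f$ over finitely many parameters, I would $L^2$-approximate each $f^i_I$ by a $\B^{t}_{I,\bar w_1,\dots,\bar w_\ell}(f)$-simple function with coefficients in $\mathbb Q^{[0,1]}_s$, round the $\gamma_i$ into $\mathbb Q^{[0,1]}_s$, merge all fibers into a single list, and use Lemma~\ref{lem: operations for k+1,k proof}\eqref{lem: sum prod of approx} to conclude that the result $g'$ is literally of the shape demanded by the statement with some finite parameter $N_0=N_0(f,\varepsilon)$ (padding $M,t,\ell,s$ up to $N_0$) and still satisfies $\norm{f-g'}_{L^2}<\varepsilon$. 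One delicate point: a threshold $q\in\mathbb Q^{[0,1]}_{N_0}$ need not transfer exactly along $\mathcal U$---the discrepancy between $f^{<q}$ and the internal set $[(f^{j,<q})_j]$ lies in $f^{=q}$---so I would choose each threshold actually used to avoid, for a suitable modulus $\eta$, the finitely many dyadics at which a relevant fiber of $f$ has an atom of mass $\ge\eta$, perturbing to an admissible nearby dyadic of larger height, with $\eta$ small enough relative to $\varepsilon$, $M$ and $|\binom{[k']}{\leq k}|$ that the transfer error is negligible.

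Finally, the combinatorial data of $g'$ is finite and each fiber parameter $\bar w_u=[(\bar w_u^j)_j]$ is internal, so running the same recipe inside each $\mathfrak P_j$ produces a $g'^j$ of the prescribed form with parameter $N_0$, and by the threshold choice $[(g'^j)_j]$ agrees with $g'$ in $L^2$ up to the small error. The Loeb-integral transfer then gives $\lim_{j\to\mathcal U}\norm{f^j-g'^j}_{L^2}^2=\norm{f-[(g'^j)_j]}_{L^2}^2<\varepsilon^2$, so $\norm{f^j-g'^j}_{L^2}<\varepsilon$ for $\mathcal U$-almost all $j$; any such $j\ge N_0$ contradicts the choice of $\mathfrak P_j$, and the bound produced is the desired $N(k,k',\bar d,\varepsilon)$. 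I expect the main obstacle to be exactly the descent in this last part: extracting from the qualitative $g$ a $g'$ of precisely the required syntactic form with every complexity parameter under a single bound $N_0$, while keeping all ingredients---especially the level-set thresholds---internal enough that $g'$ pulls back to genuine approximants in the $\mathfrak P_j$; the ultraproduct construction itself and the $\VC_k$-transfer are routine given Section~\ref{sec:indiscernibles}.
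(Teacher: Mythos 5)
Your proposal is correct and shares the paper's skeleton---assume a sequence of counterexamples, take an ultraproduct as in Section \ref{sec:indiscernibles}, transfer finiteness of $\VC_k$ (note that only finiteness, with the slightly adjusted bound of Lemma \ref{lem: shattering is definable}, transfers, but that is all Theorem \ref{thm: the very main thm soft} needs), apply the soft theorem in the ultraproduct, and reduce its output to the level-set normal form---but you organize the decisive transfer in the opposite direction from the paper. The paper never descends a concrete approximant: it encodes the condition ``for every parameter tuple $\bar{w}$, $\norm{f-h_{N,\bar{\alpha},\bar{r},\bar{s},\bar{w}}}_{L^2}\geq\varepsilon$'' by countable families of $\mathcal{L}_{\infty}$-sentences $\Theta^{N,\bar{\alpha},\bar{r},\bar{s}}_{\varepsilon}$ (following Lemma \ref{lem: type-def of norm}), checks that the $j$-th counterexample satisfies all of them for data bounded by $j$, and then \L os' theorem plus $\tilde{\mathcal{M}}\propto\mathcal{M}_{\tilde{\mathfrak{P}},\tilde{f}}$ show that $\tilde{f}$ admits no normal-form approximation for any $N$, contradicting Theorem \ref{thm: the very main thm soft}. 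You instead pull the specific approximant $g'$ constructed in the ultraproduct back down to the factors, which is also sound because all of its parameters are internal, but it incurs exactly the obligation you flagged: the level sets of $\tilde{f}$ at a dyadic threshold $q$ agree with the corresponding internal sets only up to a subset of $\{\tilde{f}=q\}$ (for the relevant fibers), so you must perturb the finitely many thresholds to nearby dyadics at which those fibers carry little mass---and to the side that does not absorb an atom, so that the in-between mass, hence the change in $g'$, stays small---and you also need the exact transfer of integrals of bounded internal functions to get $\lim_{j\to\cU}\norm{f^j-g'^j}_{L^2}=\norm{\tilde{f}-\widetilde{g'}}_{L^2}$; both points are fine but must be carried out. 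What the paper's route buys is immunity to these boundary/atom issues, since the $\propto$ slack between $F^{<q}$ and $f^{<q}$ is absorbed once and for all inside the type-definability lemma; what your route buys is a more hands-on, nonstandard-analysis style argument that avoids writing down the type-definable form of the universally quantified inapproximability statement. The quantifier bookkeeping at the end is right in either version: an approximant of complexity $N_0$ for $\tilde{f}$ yields, for $\cU$-most $j\geq N_0$, an approximant of complexity $j$ for $f^j$, contradicting the choice of counterexamples.
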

\begin{proof}
  This follows from Theorem \ref{thm: the very main thm soft} 
 via a compactness argument relying on the techniques of Section \ref{sec:indiscernibles}, as we explain below. 
 
 Assume first that $\mathfrak{P} = \left(V_{[k']}, \B_{\bar{n}}, \mu_{\bar{n}} \right)_{\bar{n} \in \mathbb{N}^{k'}}$ is an arbitrary $k'$-partite graded probability space and $g: V^{\bar{1}^{k'}} \to [0,1]$ is as in the conclusion of Theorem \ref{thm: the very main thm soft}. Approximating each  $f^i_I$ by a $\B^{t}_{I, \bar{w}_1, \ldots, \bar{w}_{t}}(f)$-simple functions for a sufficiently large $t$ and some  $\bar{w}_1, \ldots, \bar{w}_{t} \in V^{\bar{1}^{k'}}$, we may assume that $g$ is of the form 
$g(\bar{x}) :=  \sum_{i \in [N] } \gamma_i \cdot \prod_{I \in \binom{[k']}{\leq k}} f^i_I(\bar{x}_I)$ for some $N \in \mathbb{N}$, $\gamma_i \in \mathbb{Q}^{[0,1]}_N$ and 
$$f^i_{I} = \sum_{j \in [t_i]} \alpha^{i,I}_j \cdot \chi_{f^{[r^{i,I}_j, s^{i,I}_j)}_{\left(\bar{w}^{i,I}_j \right)_{[k'] \setminus I}}}
$$ 
for some $t_i \in \mathbb{N}$, $\bar{w}^{i,I}_j \in V^{\bar{1}^{k'}}$ and $\gamma_i, \alpha^{i,I}_j, r^{i,I}_j, s^{i,I}_j \in \mathbb{Q}^{[0,1]}_N$. Substituting these expressions for $f^i_I$'s into $g$ and rearranging, we may thus assume that $g$ is of the form 
$$h_{N, \bar{\alpha}, \bar{r}, \bar{s}, \bar{w}}(\bar{x}) :=  \sum_{i \in [N] } \alpha_i \cdot \prod_{I \in \binom{[k']}{\leq k}}  \chi_{f^{[r^{i,I}, s^{i,I})}_{\left(\bar{w}^{i,I} \right)_{[k'] \setminus I}} }(\bar{x}_{I})$$
for some bigger $N \in \mathbb{N}$ and some
$\bar{\alpha} = (\alpha_i \in \mathbb{Q}^{[0,1]}_{N} : i \in [N])$, $\bar{r} = \left( r^{i,I} \in \mathbb{Q}^{[0,1]}_N : i \in [N], I \in \binom{[k']}{\leq k} \right)$ and $\bar{s} = \left( s^{i,I} \in \mathbb{Q}^{[0,1]}_N : i \in [N], I \in \binom{[k']}{\leq k} \right)$ with $r^{i,I} < s^{i,I}$, and $\bar{w} = \left( \bar{w}^{i,I} \in V^{\bar{1}^{k'}} : i \in [N], I \in  \binom{[k']}{\leq k} \right)$. Following the proof of Lemma \ref{lem: type-def of norm}(3) with straightforward modifications, we see that for every fixed $N, \bar{\alpha}, \bar{r}, \bar{s}$ and $\varepsilon \in \mathbb{R}_{>0}$ there exists a countable collection of $\mathcal{L}_{\infty}$-sentences $\Theta^{N, \bar{\alpha}, \bar{r}, \bar{s}}_{\varepsilon}$ so that: for any $k'$-partite graded probability space $\mathfrak{P} = \left(V_{[k']}, \B_{\bar{n}}, \mu_{\bar{n}} \right)_{\bar{n} \in \mathbb{N}^{k'}}$, a $\B_{\bar{1}^{k'}}$-measurable $f$ and any $\mathcal{L}_{\infty}$-structure $\mathcal{M}' \propto \mathcal{M}_{\mathfrak{P},f}$, 
\begin{gather}\label{eq: main thm induction 6}
	\mathcal{M}' \models \Theta^{N, \bar{\alpha}, \bar{r}, \bar{s}}_{\varepsilon} \iff \\
	\textrm{for all tuples } \bar{w} = \left( \bar{w}^{i,I} \in V^{\bar{1}^{k'}} : i \in [N], I \in  \binom{[k']}{\leq k}  \right), \nonumber \\ 
	 \norm{f - h_{N, \bar{\alpha}, \bar{r}, \bar{s}, \bar{w}}}_{L^2} \geq \varepsilon. \nonumber
\end{gather}

Now assume towards a contradiction that the conclusion of the theorem fails for some $k,k',\bar{d}, \varepsilon$. 
This means that for every $j \in \mathbb{N}$, there exists some $k'$-partite graded probability space $\mathfrak{P}_j = (V^j_{[k']}, \B^j_{\bar{n}}, \mu^j_{\bar{n}})_{\bar{n} \in \mathbb{N}^{k'}}$ and  some $\B^j_{\bar{1}^{k'}}$-measurable function $f^j: \prod_{i \in [k']} V^j_i \to [0,1]$ with $\VC_{k}(f^j) \leq \bar{d}$ such that, in view of the previous paragraph  \eqref{eq: main thm induction 6} and that $\mathcal{M}_{\mathfrak{P}^j, f^j} \propto \mathcal{M}_{\mathfrak{P}^j, f^j}$ trivially, 
\begin{gather*}
	\mathcal{M}_{\mathfrak{P}^j, f^j} \models  \bigwedge_{\bar{\alpha} \in \left( \mathbb{Q}^{[0,1]}_j\right)^{[j]} } \bigwedge_{\bar{r}, \bar{s} \in \left( \mathbb{Q}^{[0,1]}_j\right)^{[j] \times \binom{[k']}{\leq k} }} \Theta^{j, \bar{\alpha}, \bar{r}, \bar{s}}_{\varepsilon}.
\end{gather*}
Let $\cU$ be a non-principal ultrafilter on $\mathbb{N}$. Let $\tilde{\mathfrak{P}} :=\left(\tilde{V}_{[k']}, \tilde{\B}_{\bar{n}}, \tilde{\mu}_{\bar{n}} \right)_{\bar{n} \in \mathbb{N}^{k'}}$ be the $k'$-partite graded probability space, the $\tilde{\B}_{\bar{1}^{k'}}$-measurable function $\tilde{f}: \tilde{V}^{\bar{1}^{k'}} \to [0,1]$ and $\mathcal{\tilde{M}}$ the $\mathcal{L}_{\infty}$-structure defined by the corresponding ultraproduct in  Section \ref{sec: ultraproducts of k-GPS} (Fact \ref{fac: ultraproduct props}).
By \L os' theorem we then have
\begin{gather*}
	\tilde{\mathcal{M}} \models \bigwedge_{j \in \mathbb{N} } \bigwedge_{\bar{\alpha} \in \left( \mathbb{Q}^{[0,1]}_j\right)^{[j]} } \bigwedge_{\bar{r}, \bar{s} \in \left( \mathbb{Q}^{[0,1]}_j\right)^{[j] \times \binom{[k']}{\leq k} }} \Theta^{j, \bar{\alpha}, \bar{r}, \bar{s}}_{\varepsilon}.
\end{gather*}

As $\tilde{\mathcal{M}} \propto \mathcal{M}_{\tilde{\mathfrak{P}}, \tilde{f}}$, using \eqref{eq: main thm induction 6} this implies that $\tilde{f}$ does not satisfy the conclusion of Theorem \ref{thm: the very main thm soft} for any $N \in \mathbb{N}$ --- a contradiction.
\end{proof}

Specializing to the case of hypergraphs instead of arbitrary functions, we immediately get the following corollary. 

\begin{cor}\label{cor: main thm for hypergraphs}
For every $k < k' \in \mathbb{N}, d \in \mathbb{N}$ and $\varepsilon \in \mathbb{R}_{>0}$ there exists some $N = N(k,k',d, \varepsilon) \in \mathbb{N}$ satisfying the following.

 Suppose that $\left(V_{[k']}, \B_{\bar{n}}, \mu_{\bar{n}} \right)_{\bar{n} \in \mathbb{N}^{k'}}$ is a $k'$-partite graded probability space and $E \in \B_{\bar{1}^{k'}}$ is a $k'$-ary relation with $\VC_k(E)  \leq d$.	
 
 Then there exists some $(\leq k)$-ary fibers $F_{1}, \ldots, F_N$ of $E$ (so each $F_i$ is obtained from $E$ by fixing all but at most $k$ coordinates by some parameters from the corresponding $V_i$'s) and $F$ a Boolean combination of $F'_1, \ldots, F'_N$ so that $\mu_{\bar{1}^{k'}} \left( E \triangle F \right) < \varepsilon$.
 
 (Where for $I \in \binom{[k']}{k}$ and an $|I|$-ary fiber $F \subseteq \prod_{i \in I} V_i$, $F'$ is the $k'$-ary relation $\left\{ \bar{x} \in \prod_{i \in [k']} V_i : \bar{x}_I \in F \right\}$.)
\end{cor}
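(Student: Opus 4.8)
The plan is to deduce this from Corollary~\ref{thm: the very main thm} by specializing to the case where $f$ is the characteristic function of a relation. First I would observe that if $E \in \B_{\bar{1}^{k'}}$ has $\VC_k(E) \leq d$, then $f := \chi_E$ satisfies $\VC_k(f) \leq \bar{d}$ where $d_{r,s} = d$ for all $r < s$ (this is the remark following Definition~\ref{def: VCk dimension of functions}(4), extended to arity $k'$ via Definition~\ref{def: VCk dimension of functions}(4) itself, which just applies the relational statement fiberwise). So $\VC_k(f) < \bar{d}'$ for a suitable constant $\bar{d}'$ depending only on $d$, and Corollary~\ref{thm: the very main thm} applies.

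Next, I would apply Corollary~\ref{thm: the very main thm} with error parameter $\varepsilon' := \varepsilon / 4$ (or some similar fixed fraction of $\varepsilon$), obtaining $N_0 = N_0(k,k',d,\varepsilon')$, coefficients $\gamma_i \in \mathbb{Q}^{[0,1]}_{N_0}$, parameters $\bar{w}_i \in V^{\bar{1}^{k'}}$, and $(\leq k)$-ary functions $f^i_I$ simple with respect to $\B^{N_0}_{I,\bar{w}_1,\ldots,\bar{w}_{N_0}}(f)$ with coefficients in $\mathbb{Q}^{[0,1]}_{N_0}$, such that $g := \sum_{i}\gamma_i \prod_I f^i_I(\bar{x}_I)$ satisfies $\norm{\chi_E - g}_{L^2} < \varepsilon'$. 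The key point is that $g$ is built out of finitely many sets of the form $f^{<q}_{(\bar{w}_i)_{[k']\setminus I}}$ — these are precisely $(\leq k)$-ary fibers of $E$ (since $f^{<q} = f^{<1} = E$ for $q = 1$, and for a $\{0,1\}$-valued $f$ the level set $f^{<q}$ is either $\emptyset$, $E$, or $V^{\bar 1^{k'}}$ depending on whether $q \leq 0$, $0 < q \leq 1$, or $q > 1$, so each generator of $\B^{N_0}_{I,\bar{w}}(f)$ is a fiber of $E$ obtained by fixing the coordinates outside $I$ by the components of $\bar{w}_i$). Let $F_1, \ldots, F_N$ be the list of all these fiber-generators appearing across all $i$ and $I$; $N$ is bounded in terms of $N_0$, $k$, $k'$, hence in terms of $k,k',d,\varepsilon$.

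Then I would convert the $L^2$-approximation into a set approximation. Since $\chi_E$ is $\{0,1\}$-valued and $g$ is $\B^{N_0}_{\bar{1}^{k'}, k}(f)$-measurable (a finite algebra generated by the $F'_j$'s, viewed as $k'$-ary relations), I apply Remark~\ref{rem : approx by indicator functions}: $\norm{\chi_E - g}_{L^2}$ small, together with $g$ being (close to) $\sigma(\{F'_1,\ldots,F'_N\})$-measurable, yields a set $F$ which is a Boolean combination of the $F'_j$'s with $\mu_{\bar{1}^{k'}}(E \triangle F) < \varepsilon$. Concretely: $g$ takes only finitely many values, each attained on an atom of the finite Boolean algebra generated by $F'_1,\ldots,F'_N$; let $F$ be the union of those atoms on which $g \geq 1/2$. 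A standard Chebyshev/level-set estimate (as in the proof of Remark~\ref{rem : approx by indicator functions} and Lemma~\ref{lem: non-triv cond exp}) bounds $\mu_{\bar{1}^{k'}}(E \triangle F)$ by a constant multiple of $\norm{\chi_E - g}^2_{L^2}$ or $\norm{\chi_E - g}_{L^2}$, which is $< \varepsilon$ for $\varepsilon'$ chosen small enough. The main obstacle — really the only subtlety — is bookkeeping: making sure that the ``$(\leq k)$-ary functions $f^i_I$ simple with respect to $\B^{N_0}_{I,\bar w}(f)$'' really do unwind into honest $(\leq k)$-ary fibers of $E$ and that no new sets of higher arity sneak in through the product $\prod_I f^i_I(\bar x_I)$; but since each factor depends only on the coordinates in $I$ with $|I| \leq k$, and each $f^{<q}_{(\bar w_i)_{[k']\setminus I}}$ restricted to the $I$-coordinates is exactly (the characteristic function of) such a fiber, the rearrangement is routine, and everything else is immediate from Corollary~\ref{thm: the very main thm} and Remark~\ref{rem : approx by indicator functions}.
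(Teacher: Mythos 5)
Your proposal is correct and follows essentially the same route as the paper: apply the quantitative Corollary \ref{thm: the very main thm} to $\chi_E$, observe that the level-set fibers of a $\{0,1\}$-valued function are just $(\leq k)$-ary fibers of $E$ (up to complement, which Boolean combinations absorb), and round the resulting $[0,1]$-valued approximant to a set, with the $L^2$-bound controlling $\mu_{\bar 1^{k'}}(E \triangle F)$ exactly as in Remark \ref{rem : approx by indicator functions}. (Only a harmless slip: for $f=\chi_E$ one has $f^{<1}=\neg E$ rather than $E$, but since complements are allowed in the Boolean combination this changes nothing.)
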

\begin{proof}
	Applying Corollary \ref{thm: the very main thm} to $\chi_E$, we get that $\norm{\chi_E - g}_{L^2} < \varepsilon$ for some $g$ of the form
	$$g(\bar{x}) = \sum_{i \in [N] } \alpha_i \cdot \prod_{I \in \binom{[k']}{\leq k}}  \chi_{E^{=t^{i,I}}_{\left(\bar{w}^{i,I} \right)_{[k'] \setminus I}} }(\bar{x}_{I})$$
for some $\alpha_i \in \mathbb{Q}^{[0,1]}_{N}$, $t^{i,I} \in \{0,1\}$ and $\bar{w}^{i,I} \in V^{\bar{1}^{k'}}$ for $i \in [N], I \in \binom{[k']}{\leq k} $. As in Remark \ref{rem : approx by indicator functions}, replacing $N$ by some larger $N' = N'(N,\varepsilon)$, we may assume that $\alpha_i \in \{0,1\}$ for all $i \in N'$ --- which gives the required presentation.
\end{proof}

\section{High $\VC_k$-dimension implies inapproximability}\label{sec:converse}

We now consider the converse to the results of the previous section.  As pointed out in the introduction, we cannot expect that every $\mathcal{B}_{\bar 1^{k'},k}$-measurable $k'$-ary function has finite $\VC_k$-dimension, because the infinite shattered set could have measure $0$.  To find the right converse, we should notice that the conclusion of Corollary \ref{thm: the very main thm} depends only on the $\VC_k$-dimension of $f$; this means that we would have approximations with the same bound on their complexity if we replaced the measures $\mu_{\bar n}$ with different measures.  That is, Corollary \ref{thm: the very main thm} holds \emph{uniformly under all measures}.\footnote{Compare the distinction between sets with the Glivenko-Cantelli property, the universal Glivenko-Cantelli property, and the uniform Glivenko-Cantelli property.  It is only the last which equivalent to having finite VC dimension \cite{MR893902,MR1385403,MR1115159}.}

So the expected converse is that $f$ should have finite $\VC_k$-dimension if $f$ has the property that for every $\varepsilon>0$ there is an $N$ so that, for all choices of measures on the $V_i$, $f$ can be approximated to within $\varepsilon$ in $L^2$-norm with respect to those measures by a function of the form $g(\bar x)=\sum_{j\in[N]}\gamma_j\cdot\prod_{I\in{[k']\choose\leq k}}f^j_I(\bar x_I)$ as in Theorem \ref{thm: the very main thm soft}.

\begin{theorem}\label{thm:converse}
  Let $k'>k$ and $f:\prod_{i\in[k']}V_i \to[0,1]$ be given such that, for every $\varepsilon>0$ there is an $N \in \mathbb{N}$ such that: for any $k'$-partite graded probability space $\left(V_{[k']}, \B_{\bar{n}}, \mu_{\bar{n}} \right)_{\bar{n} \in \mathbb{N}^{k'}}$ such that $f$ is $\B_{\bar{1}^{k'}}$-measurable, there is a function  $g: \prod_{i \in [k']}V_i \to [0,1]$ of the form
  \[g(\bar x)=\sum_{j\in[N]}\gamma_j\cdot\prod_{I\in{[k']\choose\leq k}}f^j_I(\bar x_I),\]
  with some coefficients $\gamma_j$ and each $f^j_I$ a $\B_{\sum_{i \in I} \bar{\delta}_i}$-measurable $(\leq k)$-ary function, and $\norm{f-g}_{L^2 \left(\mu_{\bar{1}^{k'}} \right)}<\varepsilon$.  Then $\VC_k(f)<\infty$.
\end{theorem}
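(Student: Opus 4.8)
The plan is to prove the contrapositive: assuming $\VC_k(f) = \infty$, I will construct, for each $N$, a choice of measures on the $V_i$ witnessing that no approximation of the required form achieves $L^2$-error $< \varepsilon$ for a fixed $\varepsilon$ (say $\varepsilon = \frac{1}{100}$). Since $\VC_k(f)=\infty$, by Definition \ref{def: VCk dimension of functions}(4) there is some $I \subseteq [k']$ with $|I| = k'-(k+1)$ and a parameter tuple $\bar b \in V_I$ such that the $(k+1)$-ary fiber $f_{\bar b}$ has infinite $\VC_k$-dimension. So there exist $r < s$ in $[0,1]$ such that for every $m$ there is a $k$-dimensional $m$-box $A = A_1 \ttimes A_k$ (with $A_t \subseteq V_t$ for the relevant coordinates) that is $(r,s)$-shattered by $f_{\bar b}$: for every $S \subseteq A$ there is $c_S$ in the remaining coordinate's set with $f_{\bar b}(\bar a, c_S) \leq r$ for $\bar a \in S$ and $\geq s$ for $\bar a \in A \setminus S$.

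The main step is a counting/entropy argument local to such a large shattered box. Fix $m$ very large (to be chosen depending on $N$). Put on each of the $k$ shattering coordinates the uniform measure on $A_t$, on the remaining coordinate the uniform measure on the set of witnesses $\{c_S : S \subseteq A\}$ (collapsing duplicates but keeping at least one representative per $S$ with appropriate weights, or simply take $2^{m^k}$ distinct witnesses and give each weight $2^{-m^k}$), and on the coordinates in $I$ the point mass at the respective entry of $\bar b$; the measures are then extended to a $k'$-partite graded probability space in the standard finite way (each $\B_{\bar 1^{k'}}$ is the full power set, so $f$ is automatically measurable). With respect to these measures, a function of the form $g(\bar x) = \sum_{j \in [N]} \gamma_j \prod_{I' \in \binom{[k']}{\leq k}} f^j_{I'}(\bar x_{I'})$ restricted to the relevant coordinates has a bounded "description complexity": each factor $f^j_{I'}$ depends on at most $k$ of the shattering coordinates, and crucially the factor involving the witness-coordinate cannot also involve all $k$ shattering coordinates. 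I would argue that $g$, viewed as a function of (box-point, witness), is measurable with respect to a $\sigma$-algebra on the witness-side generated by at most $N$ real-valued functions, so it can take only "few" essentially-different values across the $2^{m^k}$ witnesses — quantitatively, the number of distinct "profiles" $(g(\bar a, c))_{\bar a \in A}$ up to small $L^2$-perturbation is bounded by a function of $N$ and $m$ that grows only polynomially (or subexponentially) in $2^{m^k}$, whereas shattering forces $2^{m^k}$ genuinely $(r,s)$-separated profiles. Comparing: if $\norm{f - g}_{L^2}^2 < \varepsilon^2$ then for at least half the witnesses $c_S$ we have $\norm{f(\cdot, c_S) - g(\cdot, c_S)}_{L^2(\text{box})}^2 < 2\varepsilon^2$, so $g(\cdot,c_S)$ must itself approximately $(r,s')$-shatter for a slightly worse $r<s'<s$, on most of the box; this yields $\gtrsim 2^{m^k}$ distinct $g$-profiles, contradicting the bound from description complexity once $m$ is large.

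The hard part will be making the "description complexity" bound precise and getting it to beat $2^{m^k}$: the factors $f^j_{I'}$ are arbitrary measurable $[0,1]$-valued functions, not indicator functions, so I cannot directly invoke Sauer–Shelah or Fact \ref{fac: n-dep Sauer-Shelah}. The right move is probably to discretize — round each $f^j_{I'}$ to a nearby function taking finitely many values (costing a small, controllable $L^2$-error, using Lemma \ref{lem: operations for k+1,k proof}\eqref{lem: sum prod of approx}), so that $g$ becomes a simple function relative to a bounded number of bounded-arity "generalized cylinder" pieces. Then the key combinatorial input is that the restriction of such a $g$ to $A \times \{\text{witnesses}\}$ factors through a map of the witness-set into a space of size bounded in terms of $N$, $m$, and the discretization level — because the only factor that can "see" the witness coordinate has arity $\leq k$ and hence involves $\leq k-1$ of the $k$ shattering coordinates, so along any fixed box point $\bar a$, $g(\bar a, c)$ is determined by the values of boundedly many functions of $c$ (and lower-arity data). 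Counting how many $(r,s)$-separated functions $A \to [0,1]$ can arise this way, versus the $2^{m^k}$ demanded by shattering, gives the contradiction. I would isolate this as a self-contained combinatorial lemma and then assemble the pieces: reduce to the infinite-$\VC_k$ fiber, build the measure, discretize $g$, apply the counting lemma, and conclude.
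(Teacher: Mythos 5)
Your strategy is genuinely different from the paper's. The paper argues the contrapositive by embedding, for each $d$, a \emph{quasirandom} $(k+1)$-partite hypergraph $H_d$ into the shattered configuration of the bad fiber $f_{\bar b}$ (possible because $\VC_k^{r,s}(f_{\bar b})=\infty$ means every finite partite hypergraph occurs ``induced'' in the sense of Remark \ref{rem: VCk for f iff omits}), putting uniform measures on that copy and point masses elsewhere, passing to an ultraproduct, and then using the Gowers-norm characterization (Proposition \ref{prop: Gowers pos iff proj pos}) to conclude that the limiting level set has zero projection onto $\mathcal{B}_{\bar 1^{k+1},k}$, while any ultraproduct of the putative approximations would force $\tilde f_{\tilde b}$ to be $\mathcal{B}_{\bar 1^{k+1},k}$-measurable. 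That argument is soft and never looks at the coefficients $\gamma_j$. Your plan --- uniform measure on a single shattered $m$-box together with its $2^{m^k}$ witnesses, plus a finite counting argument exploiting that any factor seeing the witness coordinate sees at most $k-1$ box coordinates --- is a legitimate and, if completed, more elementary and quantitative alternative.

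However, the central counting step is not established as proposed, and the proposed fix would fail. The theorem places no bound on the $\gamma_j$ (only $g$ itself is $[0,1]$-valued, possibly via cancellation), so rounding each factor $f^j_{I'}$ to precision $\eta$ perturbs $g$ by up to a constant times $\eta\sum_j|\gamma_j|$, which is not controlled by $N$ and $\eta$; Lemma \ref{lem: operations for k+1,k proof}\eqref{lem: sum prod of approx} concerns plain sums and products of $[0,1]$-valued functions and does not absorb the coefficients. A coefficient-free substitute does exist: for each fixed box point $\bar a$, the map $c\mapsto g(\bar a,c)$ is a multilinear polynomial of degree bounded in terms of $k,k'$ in the at most $N\cdot 2^{k'}\cdot m^{k-1}$ real parameters $f^j_{I'}(\bar a_{I'\setminus\{k+1\}},c)$ with $k+1\in I'$, so a Warren-type bound on sign patterns of the $m^k$ polynomials $g(\bar a,\cdot)-\tfrac{r+s}{2}$ gives at most $2^{O_{N,k,k'}(m^{k-1}\log m)}$ threshold patterns regardless of the coefficients; something of this kind (or an honest reduction to bounded coefficients) is needed in place of discretization. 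Separately, your comparison with $2^{m^k}$ is slightly off as stated: $L^2$-closeness only forces a good witness $c_S$ to reproduce the pattern $S$ off an exceptional subset of the box of relative size roughly $\delta\sim\varepsilon^2/(s-r)^2$, so one realizable $g$-pattern can account for up to $2^{H(\delta)m^k}$ different $S$; you therefore need $\varepsilon$ small relative to $s-r$ (not an absolute $1/100$, since $r,s$ come from the infinite $\VC_k$-dimension and $s-r$ may be tiny) so that $1-H(\delta)>0$, after which $2^{(1-H(\delta))m^k}$ still beats the pattern count for large $m$. With these two repairs your outline closes; as written, the key step has a genuine gap.
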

\begin{proof}

  Let $k'>k$ and $f:\prod_{i\in[k']}V_i \to[0,1]$ satisfy the assumption of the theorem, and towards a contradiction suppose that $\VC_k(f)=\infty$.  By Definition \ref{def: VCk dimension of functions}(4) this means that there exist some $I\subseteq [k']$ with $|I|=|k'-(k+1)|$ and some $b = (b_{i} : i \in I) \in V_I$ such that the $k+1$-ary fiber of $f$ at $b$, $f_{b}: \prod_{j \in [k'] \setminus I} \to [0,1]$ has $\VC_k(f) = \infty$. Fix $r,s$ so that $\VC_k^{r,s}(f_b)=\infty$.  Then, by Remark \ref{rem: VCk for f iff omits}, for every finite $(k+1)$-partite hypergraph $H$ there is an induced copy of $H$ in $f_b$, in the sense that $f_b$ is $\leq r$ on edges of $H$, and $\geq s$ on non-edges of $H$. Permuting the coordinates if necessary (see Remark \ref{rem: basic props of GPS}(1)), we may assume that $I = [k+1]$.

 For each $d \in \mathbb{N}$, we choose uniformly at random a finite $(k+1)$-partite $(k+1)$-uniform hypergraph $H_d\subseteq[d]^{k+1}$.  With probability $1$, $\lim_{d\rightarrow\infty}\frac{|H_d|}{d^{k+1}}=1/2$ and $\lim_{d\rightarrow\infty}||\chi_{H_d}-1/2||_{U^{\bar 1^{k+1}}}=0$ (for $U^{\bar 1^{k+1}}$ with respect to the uniform measure; see the proof of \cite[Theorem 9.2]{MR3583029}, for instance, for the second calculation).
  For each $d$  we define probability measures $\mu_{\bar{\delta}_i}^d$ which concentrate on the single element $b_i$ if $i\in I$ and concentrate uniformly on the vertices of the $i$th part in a chosen copy of $H_d$ contained in $V_i$ otherwise.  Note that these are atomic measures, so the extension of the $\mu_{\bar{\delta}_i}^d$ to a Keisler graded probability space on all subsets of the products of the $V_i$ is immediate: there is a unique extension to all subsets depending on the intersection of a set with the finitely many atoms of the measure (see also Remark \ref{rem: basic props of GPS}).

  This gives us $k'$-partite graded probability spaces $\mathfrak{P}_d= \left(V_{[k']},\mathcal{B}_{\bar n},\mu^d_{\bar n} \right)_{\bar n\in\mathbb{N}^{k'}}$ (where $\mathcal{B}_{\bar n}$ is the algebra of all subsets of  $V^{\bar{n}}$).  Fix an arbitrary $E \in \mathbb{N}_{>0}$. Using the assumption we may choose some $N = N_{E} \in \mathbb{N}$ and approximations $g^{d,E}(\bar x)=\sum_{j\in[N]}\gamma^{d,E}_j\cdot\prod_{I\in{[k']\choose\leq k}}f^{d,E,j}_I(\bar x_I)$ of $f$ to within $\frac{1}{E}$ with respect to $L^2 \left( \mu_{\bar{1}^{k'}}^d \right)$.

  We fix some non-principal ultrafilter $\mathcal{U}$ on $\mathbb{N}$ and consider the ultraproduct $\tilde{\mathfrak{P}} :=\left(\tilde{V}_{[k']}, \tilde{\B}_{\bar{n}}, \tilde{\mu}_{\bar{n}} \right)_{\bar{n} \in \mathbb{N}^{k'}}$ of the $\mathfrak{P}_d$'s, $\tilde{f}: \tilde{V}^{\bar{1}^{k'}} \to [0,1]$ of the functions $f_d = f$, $\tilde f^{E,j}_I$  of the functions $f^{d,E,j}_{I}$ and $\tilde g^E: \tilde{V}^{\bar{1}^{k'}} \to [0,1]$ of the functions $g^{d,E}$ as in Section \ref{sec: ultraproducts of k-GPS} (namely, $\tilde{\mathfrak{P}}$ is defined with respect to the ultraproduct of the structures $\mathcal{M}_d := \mathcal{M}_{\mathfrak{P}_d, f, \left(f^{d,E,j}_I : i \in \binom{[k']}{\leq k} \right), g^{d,E}}$ for $d \in \mathbb{N}$ in the notation there). Let $\tilde{b} = (\tilde{b}_i : i \in I)$ with $\tilde{b}_i = (b_i, b_i, \ldots) / \mathcal{U} \in \tilde{V}_i$.

By the choice of $H_d$ and $\mu^d_{\bar{\delta}_i}, i \in [k']$, we have 
  \begin{gather}\label{eq: transf U norm to UP}
  	\lim_{d \to \infty} \mu^d_{\bar{1}^{k'}}\left(f^{\leq r} \right) = \lim_{d \to \infty} \mu^d_{\bar{1}^{k+1}}\left(f^{\leq r}_b \right) = \frac{1}{2}, \textrm{ and }\\
  	\lim_{d\rightarrow\infty} \norm{\chi_{f^{\leq r}}-1/2}_{U^{\bar 1^{k'}} \left( \mu^d_{\bar{1}^{k'}} \right)}= \lim_{d\rightarrow\infty} \norm{\chi_{f^{\leq r}_b}-1/2}_{U^{\bar 1^{k+1}} \left( \mu^d_{\bar{1}^{k+1}} \right)} = 0. \nonumber
  \end{gather}
 This implies that in the ultraproduct we get the exact equalities. Indeed, as in the proof of Lemma \ref{lem: type-def of norm}(2), for any $\alpha \in \mathbb{Q}_{>0}$ there exist countable collections of $\mathcal{L}_{\infty}$-sentences $\Theta_{\alpha}, \Theta'_{\alpha}$ so that: for any $k'$-partite graded probability space $\mathfrak{P} = \left(V_{[k']}, \B_{\bar{n}}, \mu_{\bar{n}} \right)_{\bar{n} \in \mathbb{N}^{k'}}$, a $\B_{\bar{1}^{k'}}$-measurable function $f$ and any $\mathcal{L}_{\infty}$-structure $\mathcal{M}' \propto \mathcal{M}_{\mathfrak{P},f}$, 
\begin{gather*}\label{eq: main thm induction 6}
	\mathcal{M}' \models \Theta_{\alpha} \iff  \mu_{\bar{1}^{k'}} \left(\chi_{f^{\leq r}} \right) \in \left[\frac{1}{2} - \alpha, \frac{1}{2} + \alpha \right],\\
	\mathcal{M}' \models \Theta'_{\alpha} \iff  ||\chi_{f^{\leq r}}-1/2||_{U^{\bar 1^{k'}} \left( \mu_{\bar{1}^{k'}} \right)} \leq \alpha.
\end{gather*}
    As trivially $\mathcal{M}_d \propto \mathcal{M}_d$ for every $d \in \mathbb{N}$, using \L os' theorem and  \eqref{eq: transf U norm to UP} we have that 
   \begin{gather*}
   	\tilde{\mathcal{M}} \models \bigwedge_{\alpha \in \mathbb{Q}_{>0}} \Theta_{\alpha} \land \Theta'_{\alpha},
   \end{gather*}
which together with $\tilde{\mathcal{M}}\propto \mathcal{M}_{\tilde{\mathfrak{P}}, \tilde{f}} $ (Remark \ref{rem : UP is prop to MP 2}) implies (using that $\tilde{\mu}_i$ is concentrated on the single element $\tilde{b}_i$ for all $i \in [k'] \setminus [k+1]$) that
 \begin{gather}
  \tilde{\mu}_{\bar{1}^{k'}}\left(\tilde{f}^{\leq r} \right) =   \tilde{\mu}_{\bar{1}^{k+1}}\left(\tilde{f}^{\leq r}_{\tilde{b}} \right) = \frac{1}{2}, \textrm{ and } \label{eq: highVC impl inapprox 1}\\
 \norm{\chi_{\tilde{f}^{\leq r}}-1/2}_{U^{\bar 1^{k'}} \left( \tilde{\mu}_{\bar{1}^{k'}} \right)}= \norm{\chi_{\tilde{f}^{\leq r}_{\tilde{b}}}-1/2}_{U^{\bar 1^{k+1}} \left( \tilde{\mu}_{\bar{1}^{k+1}} \right)} = 0. \label{eq: highVC impl inapprox 2}
  \end{gather}

 By Lemma \ref{prop: Gowers pos iff proj pos} (applied to the $(k+1)$-partite graded probability space obtained from $\tilde{\mathfrak{P}}$ by forgetting all but the first $k+1$ coordinates and the $(k+1)$-ary function $\tilde{f}_{\tilde{b}}$ on it, see Remark \ref{rem: power graded prob space}), \eqref{eq: highVC impl inapprox 2}  implies 
 $$\norm{\mathbb{E} \left(\chi_{\tilde{f}^{\leq r}_{\tilde{b}}}- 1/2 \mid \tilde{\mathcal{B}}_{\bar 1^{k+1},k} \right)}_{L^2 \left( \tilde{\mu}_{\bar{1}^{k+1}} \right)}=0.$$

 On the other hand, \eqref{eq: highVC impl inapprox 1}  implies  $\norm{\chi_{\tilde{f}^{\leq r}_{\tilde{b}}}-1/2}_{L^2 \left(\tilde{\mu}_{\bar{1}^{k+1}} \right)}=1/4$, hence in particular $\tilde{f}^{\leq r}_{\tilde{b}}$ cannot be $\tilde{\mathcal{B}}_{\bar 1^{k+1},k}$-measurable.

But each of the functions $\tilde f^{E,j}_I$ is $\tilde{B}_{\sum_{i \in I} \bar{\delta}_i}$-measurable (by definition and Fact \ref{fac: ultraproduct props}(7)), hence each of the functions $\tilde g^E, E \in \mathbb{N}_{>0}$ is  $\tilde{\mathcal{B}}_{\bar 1^{k'},k}$-measurable, and so each of their fibers $\tilde g^E_{\tilde{b}}, E \in \mathbb{N}_{>0}$ is $\tilde{\mathcal{B}}_{\bar 1^{k+1},k}$-measurable.

Using type-definability of $L^2$-norm and \L os' theorem as above, the assumption that $\norm{g^{d,E} - f}_{L^2 \left( \mu^d_{\bar{1}^{k'}} \right)} < \frac{1}{E}$ for all $d \in \mathbb{N}$ implies $\norm{\tilde{g}^{E} - \tilde{f}}_{L^2 \left( \tilde{\mu}_{\bar{1}^{k'}} \right)} < \frac{1}{E}$, which implies $\norm{\tilde{g}^{E}_{\tilde{b}} - \tilde{f}_{\tilde{b}}}_{L^2 \left( \tilde{\mu}_{\bar{1}^{k+1}} \right)} < \frac{1}{E}$
 by the choice of the measures. As $E \in \mathbb{N}_{>0}$ was arbitrary, this implies that $\tilde{f}_{\tilde{b}}$ is $\tilde{\mathcal{B}}_{\bar 1^{k+1},k}$-measurable, a contradiction.
 \end{proof}

\begin{remark}\label{rem: converse fin supp is suff}
	As the proof of Theorem \ref{thm:converse} shows, in order to conclude that $\VC_{k}(f) < \infty$ it is enough that the stated approximation by functions of arity  $\leq k$ holds for all $k'$-partite graded probability spaces on $V^{\bar{1}^{k'}}$ with \emph{finitely supported} measures $\mu_{\bar{\delta}_i}, i \in [k']$.
\end{remark}

When the sets $V_i$ are finite, all functions have finite $\VC_k$-dimension, so Theorem \ref{thm:converse} is not directly applicable.  To make sense of this result in the finite setting, we have to consider a ``modulus of uniform approximability''.  Given a function $\tilde N$, we could say $f:\prod_{i\in[k']}V_i\to [0,1]$ has ``$\tilde N$-uniform approximations'' if, for all graded probability spaces on the $V_i$ and all $\varepsilon$, $f$ has an approximation to within $\varepsilon$ in the form $g(\bar x)=\sum_{j\in[\tilde N(\varepsilon)]}\gamma_j\cdot\prod_{I\in{[k']\choose\leq k}}f^j_I(\bar x_I)$.  (To avoid notational issues, it is more convenient to think of $\tilde N$ as a function whose input is the integer $\lceil 1/\varepsilon\rceil$, as we do below.)

What we will show is that for any function $\tilde N$, there is a specific $\bar d$ so that any $f$ with $\tilde N$-uniform approximations must satisfy $\VC_k(f)\leq \bar d$.

\begin{cor}
  Let $k'>k$ be given.  For any function $\tilde N:\mathbb{N}\rightarrow\mathbb{N}$ and any $r<s$ in $[0,1]$ there is a $d \in \mathbb{N}$ so that whenever $V_i$ are finite sets and $f:\prod_{i\in[k']}V_i\to [0,1]$ and $\VC_k^{r,s}(f)\geq d$, there is some $E \in \mathbb{N}$ and some probability measures $\mu_{i}$ on the $V_i$ (uniquely determining a $k'$-partite graded probability space on the algebra of all subsets of $\prod_{i \in [k']}V_i$, see Remark \ref{rem: basic props of GPS}(2)) such that for every function of the form
  \[g(\bar x)=\sum_{j\in[\tilde N(E)]}\gamma_j\cdot\prod_{I\in{[k']\choose\leq k}}f^j_I(\bar x_I),\]
  we have $||f-g||_{L^2}\geq 1/E$.
\end{cor}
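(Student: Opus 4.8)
The plan is to argue by contradiction and replay the proof of Theorem~\ref{thm:converse}, feeding it a function obtained as an ultraproduct of the finite witnesses and incorporating the whole family of approximations (one for each $E$) into a single ultraproduct, exactly as the compactness step in the proof of Corollary~\ref{thm: the very main thm} does.  Suppose the statement fails for some $k'>k$, some $\tilde N:\mathbb N\to\mathbb N$ and some $r<s$.  Then for every $d\in\mathbb N$ there are finite sets $V^d_1,\dots,V^d_{k'}$ and a function $f^d:\prod_{i\in[k']}V^d_i\to[0,1]$ with $\VC_k^{r,s}(f^d)\geq d$ such that, for \emph{every} $E\in\mathbb N$ and \emph{every} tuple of probability measures on the $V^d_i$, there is $g$ of the form $g(\bar x)=\sum_{j\in[\tilde N(E)]}\gamma_j\cdot\prod_{I\in\binom{[k']}{\leq k}}f^j_I(\bar x_I)$ with $\norm{f^d-g}_{L^2}<1/E$; as in Theorem~\ref{thm: the very main thm soft} we may take all coefficients $\gamma_j\in[0,1]$, so that they have ultralimits.

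First I would choose the measures on the $V^d_i$ just as in the proof of Theorem~\ref{thm:converse}.  Fix for each $d$ a set $I\subseteq[k']$ with $|I|=k'-(k+1)$ and a tuple $b^d\in V^d_I$ with $\VC_k^{r,s}(f^d_{b^d})\geq d$; permuting coordinates (Remark~\ref{rem: basic props of GPS}(1)) assume $I=\{k+2,\dots,k'\}$.  For $d$ large, a standard concentration argument (see the proof of Theorem~\ref{thm:converse}) gives a $(k+1)$-partite $(k+1)$-uniform hypergraph $H_d$ with all parts of size $d-1$ whose edge density is within $1/d$ of $1/2$ and with $\norm{\chi_{H_d}-1/2}_{U^{\bar 1^{k+1}}}<1/d$ (uniform measure on each part); since $\VC_k^{r,s}(f^d_{b^d})\geq d$, Remark~\ref{rem: VCk for f iff omits} provides an induced copy of $H_d$ inside $f^d_{b^d}$.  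Let $\mu^d_i$ be the uniform measure on the $d-1$ vertices of this copy lying in $V^d_i$ for $i\in[k+1]$, and the point mass at $b^d_i$ for $i\in\{k+2,\dots,k'\}$; these are finitely supported, hence determine a $k'$-partite graded probability space $\mathfrak P_d=(V^d_{[k']},\mathcal B^d_{\bar n},\mu^d_{\bar n})$ with $\mathcal B^d_{\bar n}$ the algebra of all subsets (Remark~\ref{rem: basic props of GPS}(2)).  For each $E\leq d$, apply the displayed hypothesis to $f^d$, $\mu^d$ and $E$ to obtain $g^{d,E}=\sum_{j\in[\tilde N(E)]}\gamma^{d,E}_j\prod_I f^{d,E,j}_I$ with $\norm{f^d-g^{d,E}}_{L^2(\mu^d_{\bar 1^{k'}})}<1/E$; each $g^{d,E}$ is $\mathcal B^d_{\bar 1^{k'},k}$-measurable, being a sum of products of $(\leq k)$-ary functions.

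Next, fixing a non-principal ultrafilter $\mathcal U$ on $\mathbb N$, I would encode $f^d$, $b^d$ and all of $(g^{d,E})_{E\leq d}$ into a single structure $\mathcal M_d$ and form the ultraproduct $\tilde{\mathfrak P}=(\tilde V_{[k']},\tilde{\mathcal B}_{\bar n},\tilde\mu_{\bar n})$ of the $\mathfrak P_d$ together with the ultraproducts $\tilde f$ of the $f^d$, $\tilde b$ of the $b^d$, and $\tilde g^E$ of $(g^{d,E})_{d\geq E}$ (well defined for each fixed $E$ since $\{d:d\geq E\}\in\mathcal U$), all as in Section~\ref{sec: ultraproducts of k-GPS} (Fact~\ref{fac: ultraproduct props}).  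Since each $\mu^d_i$ ($i\in I$) is a point mass at $b^d_i$, by \L o\'s' theorem $\tilde\mu_i$ is concentrated on $\tilde b_i$.  Using type-definability of edge density and of the $U^{\bar 1^{k+1}}$-norm (as in the proof of Lemma~\ref{lem: type-def of norm} and the displays (\ref{eq: transf U norm to UP})--(\ref{eq: highVC impl inapprox 2})) together with the choice of $H_d$, we get $\tilde\mu_{\bar 1^{k+1}}(\tilde f^{\leq r}_{\tilde b})=1/2$ and $\norm{\chi_{\tilde f^{\leq r}_{\tilde b}}-1/2}_{U^{\bar 1^{k+1}}(\tilde\mu_{\bar 1^{k+1}})}=0$.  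By Lemma~\ref{prop: Gowers pos iff proj pos} the second equality gives $\E(\chi_{\tilde f^{\leq r}_{\tilde b}}-1/2\mid\tilde{\mathcal B}_{\bar 1^{k+1},k})=0$, so by the first equality $\chi_{\tilde f^{\leq r}_{\tilde b}}$ is at $L^2$-distance $1/2$ from every $\tilde{\mathcal B}_{\bar 1^{k+1},k}$-measurable function.

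Finally, each $\tilde g^E$ is $\tilde{\mathcal B}_{\bar 1^{k'},k}$-measurable: the ultraproduct of the $(\leq k)$-ary functions $f^{d,E,j}_I$ is $\tilde{\mathcal B}_{\sum_{i\in I}\bar\delta_i}$-measurable by Fact~\ref{fac: ultraproduct props}, and the coefficients converge since they lie in $[0,1]$; hence $\tilde g^E_{\tilde b}$ is $\tilde{\mathcal B}_{\bar 1^{k+1},k}$-measurable.  By type-definability of the $L^2$-norm and \L o\'s' theorem, $\norm{\tilde f-\tilde g^E}_{L^2(\tilde\mu_{\bar 1^{k'}})}\leq 1/E$, whence $\norm{\tilde f_{\tilde b}-\tilde g^E_{\tilde b}}_{L^2(\tilde\mu_{\bar 1^{k+1}})}\leq 1/E$ because $\tilde\mu_i$ is a point mass for $i\in I$.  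Letting $E\to\infty$ and using that the $\tilde{\mathcal B}_{\bar 1^{k+1},k}$-measurable functions form a closed subspace of $L^2(\tilde\mu_{\bar 1^{k+1}})$, it follows that $\tilde f_{\tilde b}$, and therefore its level set $\tilde f^{\leq r}_{\tilde b}$, is $\tilde{\mathcal B}_{\bar 1^{k+1},k}$-measurable --- contradicting the previous paragraph.  (Equivalently, one checks that $\tilde f$ satisfies the hypothesis of Theorem~\ref{thm:converse} via Remark~\ref{rem: converse fin supp is suff}, while $\VC_k(\tilde f)=\infty$ by \L o\'s' theorem since $\VC_k^{r,s}(f^d)\geq d$ for each $d$.)  I expect the only delicate point to be the uniform bookkeeping: packaging the entire family $(g^{d,E})_{E\leq d}$ into the structures $\mathcal M_d$ before taking the ultraproduct so that all $\tilde g^E$ are simultaneously available and measurable, together with the (routine, given Section~\ref{sec: ultraproducts of k-GPS}) verification that distinct elements of $\tilde V_i$ have $\mathcal U$-almost-everywhere distinct representatives so that the atomic measures pull back correctly.
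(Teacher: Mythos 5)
Your proposal is correct, and it relies on the same core ingredients as the paper's proof --- measures concentrated on an induced copy of a quasirandom $(k+1)$-partite hypergraph, the Gowers-norm criterion of Proposition \ref{prop: Gowers pos iff proj pos} for correlation with $\mathcal{B}_{\bar 1^{k+1},k}$, and an ultraproduct/\L os transfer --- but it is organized differently. The paper argues in two stages: it ultraproducts the finite counterexamples to get $\tilde f$ with $\VC_k(\tilde f)=\infty$, invokes Theorem \ref{thm:converse} (with Remark \ref{rem: converse fin supp is suff}) as a black box to produce finitely supported defeating measures on the ultraproduct sorts, pulls these back to measures on the finite $V^d_i$, applies the hypothesized approximability there, and ultraproducts the resulting approximations to reach a contradiction. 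You instead build the defeating measures directly at the finite level, using the quantitative direction of Remark \ref{rem: VCk for f iff omits}: an $(r,s)$-shattered $d$-box in $f^d_{b^d}$ contains induced copies of every $(k+1)$-partite hypergraph with parts of size $<d$, in particular a quasirandom $H_d$; you then run a single ultraproduct in which $\tilde f$, $\tilde b$ and all the $\tilde g^E$ live simultaneously, effectively inlining the proof of Theorem \ref{thm:converse} rather than citing it. This buys a one-pass argument with no pull-back-of-measures step; the cost is redoing the embedding argument in the finite setting, where you should also record that distinct slices of the random $H_d$ force the $d-1$ witnesses in $V_{k+1}$ to be distinct (almost surely for large $d$), so the uniform measure on them is well defined. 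Two remaining points are glossed equally in the paper's own write-up and so are not counted against you, but deserve a sentence each: the reduction to coefficients $\gamma_j$ (and component functions $f^j_I$) taking values in $[0,1]$ is needed for the $\mathcal{L}_{\infty}$-encoding of the $g^{d,E}$ and is not literally forced by the negated statement, so one should fix this reading of the corollary at the outset; and the measurability of $\tilde f_{\tilde b}$ obtained as an $L^2$-limit is measurability modulo a null set, which is exactly what is needed to contradict $\E\left(\chi_{\tilde f^{\leq r}_{\tilde b}}\mid\tilde{\mathcal{B}}_{\bar 1^{k+1},k}\right)=1/2$ on a level set of measure $1/2$.
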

\begin{proof}
  Towards a contradiction, suppose this failed, and let $k'>k$, $\tilde N$, and $r<s$ be a counterexample.  That is, for each $d \in \mathbb{N}$, we have some finite sets $(V_i^d)_{i \in [k']}$ and a function $f^d:\prod_{d\in[k']}V_i^d\to[0,1]$ satisfying $\VC_k^{r,s}(f^d)\geq d$, but such that for any probability measures $\mu_i$ on $V_i$, $f^d$ can be approximated in $L^2$-norm on the corresponding graded probability space up to $\frac{1}{E}$ by some function $g$ of the above form given by a sum of size $\tilde{N}(E)$.  
  
  Taking a non-principal ultraproduct of these examples (see Section \ref{sec: ultraproducts of k-GPS}), we obtain a $k'$-ary function $\tilde{f}:\prod_{i\in[k']} \tilde{V}_i\to[0,1]$ with $\VC_k(\tilde{f})=\infty$ (by Lemma \ref{lem: shattering is definable}).  Then Theorem \ref{thm:converse} gives us measures $\mu'_i$ on the $\tilde{V}_i$ with finite support, $\varepsilon>0$ and a corresponding $k'$-partite graded probability space $\left(\tilde{V}_{[k']}, \B'_{\bar{n}}, \mu'_{\bar{n}} \right)_{\bar{n} \in \mathbb{N}^{k'}}$ uniquely determined by setting $\B'_{\bar{n}}$ to be the algebra of all internal subsets of $V^{\bar{n}}$, and $\mu'_{\bar{\delta}_i} = \mu'_i$, such that $\tilde{f}$ is $\B'_{\bar{1}^{k'}}$-measurable, but no function  $g: \prod_{i \in [k']}\tilde{V}_i \to [0,1]$ of the form
  \[g(\bar x)=\sum_{j\in[N]}\gamma_j\cdot\prod_{I\in{[k']\choose\leq k}}f^j_I(\bar x_I),\]
  with some coefficients $\gamma_j$ and each $f^j_I$ a $\B'_{\sum_{i \in I} \bar{\delta}_i}$-measurable $(\leq k)$-ary function can satisfy $\norm{\tilde{f}-g}_{L^2 \left(\mu'_{\bar{1}^{k'}} \right)}<\varepsilon$. 

Replacing $\varepsilon$ with $\frac{1}{\lceil 1/\varepsilon\rceil}$, we may assume $\varepsilon=1/E$ for some $E$.  Since the measures $\mu'_i, i \in [k']$ have finite support, for each $i \in [k']$ and $d \in \mathbb{N}$ there exist probability measures $\mu^d_i$ on the $V^d_i$ so that the ultraproduct of $\left( \mu^d_i : d \in \mathbb{N} \right)$ (in the sense of Section \ref{sec: ultraproducts of k-GPS}) is the measure $\mu'_i$.  But then, by assumption, for each $d$ there also exists an approximation $g^{d}=\sum_{j\in[\tilde N(E)]}\gamma^d_j\cdot\prod_{I\in{[k']\choose\leq k}}f^{d,j}_I(\bar x_I)$ with $||f^d-g^d||_{L^2 \left(\mu^d_{\bar{1}^{k'}} \right)}<1/E$ and each $f^{d,j}_{I}$ is $\B^d_{\sum_{i \in I} \bar{\delta}_i}$-measurable, where $\left(V^d_{[k']}, \B^d_{\bar{n}}, \mu^d_{\bar{n}}\right)_{\bar{n} \in \mathbb{N}^{k'}}$ is the $k'$-partite graded probability space with $\B^d_{\bar{n}}$ the algebra of all subsets of $\prod_{i \in [k']} \left(V^d \right)^{n_i}$ and $\mu^d_{\bar{\delta}_i} := \mu^d_i$ for $i \in [k']$.  But then their ultraproduct $\tilde g=\sum_{j\in[\tilde N(E)]}\gamma_j\cdot\prod_{I\in{[k']\choose\leq k}}\tilde f^j_I(\bar x_I)$ satisfies $\norm{ \tilde{f}-\tilde g}_{L^2 \left(\mu'_{\bar{1}^{k'}} \right)}<1/E$, and each $\tilde{f}^j_I$ is $\B'_{\sum_{i \in I} \bar{\delta}_i}$-measurable --- which is a contradiction.
\end{proof}

\section{Correlation and measurability with respect to subalgebras}\label{sec:gowers}
In this section, we develop some aspects of the theory of Gowers' uniformity norms in the context of partite graded probability spaces used throughout the article. Throughout this section, we let $\left( V_{[k]}, \mathcal{B}_{\bar{n}}, \mu_{\bar{n}} \right)_{\bar{n} \in \mathbb{N}^k}$ be a $k$-partite graded probability space. We fix $\bar{n} = (n_1, \ldots, n_k) \in \mathbb{N}^k$, $n := \sum_{i \in [k]} n_i$ and a bounded $\B_{\bar{n}}$-measurable function $f: \prod_{i\in [k]} V^{n_i} \to \mathbb{R}$.
\subsection{Gowers uniformity norms}
Gowers' uniformity norms were introduced in \cite{gowers2001new}. The crucial property is Proposition \ref{prop: Gowers pos iff proj pos} below, which says that they exactly measure correlation with the $\sigma$-algebra $\mathcal{B}_{\bar n,n-1}$; the useful feature is that it lets us test whether $f$ has any correlation with $\mathcal{B}_{\bar n,n-1}$ by evaluating a single integral which only involves $f$.

The material in this section is standard, and the presentation in this subsection closely follows \cite[Section 7.4]{goldbring2014approximate}, however we work in the partite setting and include the details for the sake of completeness.

 \begin{definition}\label{def: gowers norms}
 
 	We define the (partite) \emph{Gowers uniformity seminorm}  of $f$ by 
 	\begin{gather}
 		 \norm{f}_{U^{\bar{n}}}  = \Bigg[\int_{V^{2\bar{n}}} \prod_{\substack{\bar{\alpha}_1 \in \{0,1\}^{n_1},\\ \vdots \\ \bar{\alpha}_n \in \{0,1\}^{n_k}}} f\left( x_{1,1}^{\alpha_{1,1}}, \ldots, x_{1,n_1}^{\alpha_{1,n_1}}, \ldots, x_{k,1}^{\alpha_{k,1}}, \ldots, x_{k,n_k}^{\alpha_{k,n_k}} \right) \nonumber\\
 		 d\mu_{2\bar{n}}\left( x_{1,1}^0, \ldots, x_{1,n_1}^0, x_{1,1}^1, \ldots, x_{1,n_1}^1; \ldots; x_{k,1}^0, \ldots, x_{k,n_k}^0, x^1_{k,1}, \ldots, x^1_{k,n_k} \right) \Bigg]^{\frac{1}{2^{n}}}. \nonumber
 	\end{gather}
      \end{definition}
      The usual Gowers $U^k$-norm is the case where $\vec n=(\underbrace{1,\ldots,1}_{n\text{ times}})$.  More generally, the integral is taken over two copies of $V^{\bar n}$, and given two elements $\bar x^0,\bar x^1\in V^{\bar n}$, the product is taken over all the $2^n$ possible ways to select an element of $V^{\bar n}$ by choosing, separately for each coordinate, whether to take it from the corresponding component of $\bar x^0$ or $\bar x^1$.
 
 Given a tuple $\bar{\alpha} = (\bar{\alpha}_1, \ldots, \bar{\alpha}_k) \in \prod_{i \in [k]} \{0,1 \}^{n_i}$ and $\bar{x} = (\bar{x}_1, \ldots, \bar{x}_k) \in V^{\bar{n}}$, we write $\bar{x}^{\bar{\alpha}} = (\bar{x}_1^{\bar{\alpha}_1}, \ldots, \bar{x}_k^{\bar{\alpha}_k})$, with $\bar{x}_i^{\bar{\alpha}_i} = (x_{i,j}^{\alpha_{i,j}} : j \in [n_i])$ for $i \in [k]$.
 
 \begin{lemma}\label{lem: Gowers norms 1}
 $$\left \lvert \int f d\mu_{\bar{n}} \right \rvert \leq \norm{f}_{U^{\bar{n}}}.$$
 \end{lemma}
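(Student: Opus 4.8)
The plan is to prove the inequality by the standard iterated Cauchy--Schwarz (``Gowers--Cauchy--Schwarz'') argument, doubling one variable at a time, adapted to the partite graded setting; this is exactly the approach of \cite[Section 7.4]{goldbring2014approximate}.

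First I would fix an enumeration $y_1,\dots,y_n$ of the $n$ variables $x_{i,j}$ ($i\in[k]$, $j\in[n_i]$). For $0\le j\le n$ let $f_j$ be the function obtained from $f$ by ``doubling'' the variables $y_1,\dots,y_j$: it is a function of $n+j$ variables, namely $f_j=\prod_{\varepsilon\in\{0,1\}^j}f(y_1^{\varepsilon_1},\dots,y_j^{\varepsilon_j},y_{j+1},\dots,y_n)$, where $y_1^0,y_1^1,\dots,y_j^0,y_j^1$ are the doubled copies. Then $f_0=f$, and comparing with Definition \ref{def: gowers norms} we have $\int f_n\,d\mu=\norm{f}_{U^{\bar n}}^{2^n}$; in particular this number will turn out to be nonnegative, so $\norm{f}_{U^{\bar n}}$ is well defined. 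Measurability of each $f_j$ for the corresponding factor algebra follows from closure under products in the graded probability space (Remark \ref{rem: basic props of GPS}), and all iterated and partial integrals below are legitimate and yield measurable functions by the Fubini property (Remark \ref{rem: gen Fub}).

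The main step is the inequality $\left(\int f_j\,d\mu\right)^2\le\int f_{j+1}\,d\mu$ for $0\le j<n$. Indeed, $f_{j+1}$ is obtained from $f_j$ by doubling the single variable $y_{j+1}$, so $f_{j+1}=\bigl(f_j|_{y_{j+1}=z^0}\bigr)\cdot\bigl(f_j|_{y_{j+1}=z^1}\bigr)$ with $z^0,z^1$ the two new copies. Integrating first over $z^0,z^1$ (which are independent) and using Fubini gives $\int f_{j+1}\,d\mu=\int\bigl(\int f_j\,d\mu(y_{j+1})\bigr)^2\,d\mu(\textrm{remaining variables})$, since both inner integrals equal the same function of the remaining variables. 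Cauchy--Schwarz (equivalently Jensen) with respect to the probability measure on the remaining variables gives $\int\bigl(\int f_j\,d\mu(y_{j+1})\bigr)^2\ge\bigl(\int\int f_j\,d\mu(y_{j+1})\,d\mu\bigr)^2$, and one more application of Fubini collapses the double integral on the right to $\bigl(\int f_j\,d\mu\bigr)^2$.

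Chaining these estimates (using only that all the quantities are nonnegative and that $t\mapsto t^{m}$ is increasing on $[0,\infty)$) yields $\left|\int f\,d\mu\right|^{2^n}=\left(\int f_0\,d\mu\right)^{2^n}\le\left(\int f_1\,d\mu\right)^{2^{n-1}}\le\cdots\le\int f_n\,d\mu=\norm{f}_{U^{\bar n}}^{2^n}$, and taking $2^n$-th roots gives the claim. I expect no serious obstacle here: the only point needing care is the bookkeeping of which of the $n$ (partitioned) variables is being doubled at each stage, and checking that the measurability and Fubini manipulations refer to the correct factor algebras of the partite graded probability space --- which is precisely what closure under products and Remark \ref{rem: gen Fub} are for. (Alternatively one could deduce the lemma from the Gowers--Cauchy--Schwarz inequality for the box inner product by specializing one factor to $f$ and the remaining $2^n-1$ factors to the constant $1$, but the direct induction above is shorter in this setting.)
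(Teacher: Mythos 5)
Your proposal is correct and is essentially the paper's own argument: the paper likewise proves the lemma by iterated Cauchy--Schwarz, doubling one coordinate $x_{i,j}$ at a time via the Fubini property of the graded space until all $n$ variables are doubled and the resulting integral is $\norm{f}_{U^{\bar n}}^{2^n}$. The only difference is bookkeeping (the paper keeps the chain of inequalities raised to the powers $2^n, 2^{n-1},\dots$ directly, while you isolate the single-step inequality $(\int f_j)^2\le\int f_{j+1}$), which is immaterial.
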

 \begin{proof} Let $i \in [k]$ and $j \in [n_i]$ be arbitrary. By Fubini property,
 \begin{gather}
 \left \lvert \int  f (\bar{x}) d \mu_{\bar{n}}(\bar{x}) \right \rvert^{2^n} = \bigg \lvert \int \left(  \int f(\bar{x}) d \mu_{\bar{\delta}_i}(x_{i,j})\right) d \mu_{\bar{n}_{(n_i - 1) \to i}} \nonumber\\ 
 	(\bar{x}_1, \ldots, \bar{x}_{i-1}, (x_{i,1}, \ldots, x_{i,j-1}, x_{i,j+1}, \ldots, x_{i,n_i}), \bar{x}_{i+1}, \ldots, \bar{x}_k) \bigg \rvert^{2^n} \nonumber \\
  \textrm{(by Cauchy-Schwarz and Fubini again)} \nonumber\\
 \leq \Bigg ( \int f\left( \bar{x}_1, \ldots, \bar{x}_{i-1},  (x_{i,1}, \ldots, x_{i,j-1}, x^0_{i,j}, x_{i,j+1}, \ldots, x_{i,n_i}), \bar{x}_{i+1}, \ldots, \bar{x}_k \right) \cdot \nonumber \\
 	\cdot f\left( \bar{x}_1, \ldots, \bar{x}_{i-1},  (x_{i,1}, \ldots, x_{i,j-1}, x^1_{i,j}, x_{i,j+1}, \ldots, x_{i,n_i}), \bar{x}_{i+1}, \ldots, \bar{x}_k \right) \nonumber \\
 	d \mu_{\bar{n}_{(n_i + 1) \to i}}\Bigg)^{2^{n-1}} \nonumber
 \end{gather}
 
 Repeating this process for every pair $i \in [k]$ and $j \in [n_i]$, we arrive at 
\begin{gather}
 \left \lvert \int  f (\bar{x}) d \mu_{\bar{n}}(\bar{x}) \right \rvert^{2^n} \leq \left( \int \prod_{\bar{\alpha} \in \prod_{i \in [k]} \{0,1 \}^{n_i}} f(\bar{x}^{\bar{\alpha}}) d \mu_{2 \bar{n}} \left( \bar{x}^{\bar{0}} \oplus \bar{x}^{\bar 1} \right)\right) =  \norm{f}_{U^{\bar{n}}}. \nonumber
 \end{gather}

 \end{proof}
 
 \begin{lemma} \label{lem: Gowers norms 2}
 	For each $\bar{I} = (I_1, \ldots, I_k)$ with $I_i \subseteq [n_i]$ and $\sum_{i \in [k]} |I_i| = n-1$, let $B_{\bar{I}}$ be a set in $\mathcal{B}_{\bar{n},\bar{I}}$. Then $0 \leq \norm{f \cdot \prod_{\bar{I}} \chi_{B_{\bar{I}}}}_{U^{\bar{n}}} \leq \norm{f}_{U^{\bar{n}}}$. 
 \end{lemma}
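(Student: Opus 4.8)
The plan is to prove Lemma \ref{lem: Gowers norms 2} by the standard \emph{Gowers--Cauchy--Schwarz} unfolding of the definition of $\norm{\cdot}_{U^{\bar n}}$, exploiting the fact that each factor $\chi_{B_{\bar I}}$ is ``missing'' one coordinate. Recall that $\norm{g}_{U^{\bar n}}^{2^n}$ is an integral over $V^{2\bar n}$ of $\prod_{\bar\alpha\in\prod_i\{0,1\}^{n_i}}g(\bar x^{\bar\alpha})$. First I would observe that it is harmless to assume $f\geq 0$ everywhere (otherwise apply the result to $|f|$, noting $\norm{f}_{U^{\bar n}}\le\norm{\,|f|\,}_{U^{\bar n}}$ by the triangle-inequality-free monotonicity of the integrand when $f$ is replaced by $|f|$; or simply carry the absolute values through the estimate below). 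The integrand of $\norm{f\cdot\prod_{\bar I}\chi_{B_{\bar I}}}_{U^{\bar n}}^{2^n}$ is then
\[
\prod_{\bar\alpha}\Bigl(f(\bar x^{\bar\alpha})\prod_{\bar I}\chi_{B_{\bar I}}(\bar x^{\bar\alpha}_{\bar I})\Bigr),
\]
where $\bar x^{\bar\alpha}_{\bar I}$ denotes the appropriate subtuple.

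The key step is to peel off the cube-variables one pair at a time, exactly as in the proof of Lemma \ref{lem: Gowers norms 1}. Fix $i\in[k]$ and $j\in[n_i]$ and single out the pair $(x^0_{i,j},x^1_{i,j})$. Write the $2^n$-fold product as a product over $\bar\alpha$ with $\alpha_{i,j}=0$ of a term depending on $x^0_{i,j}$, times the analogous product over $\bar\alpha$ with $\alpha_{i,j}=1$ depending on $x^1_{i,j}$; the two products have the same ``shape'', so after integrating $d\mu_{\bar\delta_i}(x^0_{i,j})\,d\mu_{\bar\delta_i}(x^1_{i,j})$ and applying Cauchy--Schwarz in the variable that the two halves share, one bounds the integral by the geometric mean, i.e.\ by the corresponding expression in which $x^0_{i,j}$ and $x^1_{i,j}$ are \emph{both} integrated against the \emph{same} dummy variable. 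The point specific to this lemma: a factor $\chi_{B_{\bar I}}$ with $(i,j)\notin I_i$ does not involve $x_{i,j}$ at all, so it is constant in the Cauchy--Schwarz step and simply survives; a factor $\chi_{B_{\bar I}}$ with $(i,j)\in I_i$ \emph{does} involve $x_{i,j}$, but since $\sum|I_i|=n-1=|\text{all coordinates}|-1$, for each $(i,j)$ there is \emph{at most one} such $\bar I$ (namely $\bar I$ being the complement of $\{(i,j)\}$, if it occurs among the chosen $\bar I$'s at all). Using $0\le\chi_{B_{\bar I}}\le 1$, that single offending factor can simply be bounded above by $1$ before the Cauchy--Schwarz step, which only increases the integral. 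Thus at each pair $(i,j)$ we both (a) pass from $f$ to a ``doubled'' copy of $f$ and (b) discard at most one characteristic function; after running through all $n$ coordinate-pairs every characteristic function has been discarded and we are left with exactly the integrand defining $\norm{f}_{U^{\bar n}}^{2^n}$. Taking $2^n$-th roots gives the claim, and the lower bound $0\le\norm{f\cdot\prod\chi_{B_{\bar I}}}_{U^{\bar n}}$ is immediate since the integrand is a product of $[0,1]$-valued functions (so nonnegative) — or, if $f$ can be negative, since $\norm{\cdot}_{U^{\bar n}}$ is a seminorm, which itself follows from the Gowers--Cauchy--Schwarz inequality applied in the partite setting (standard; see the references already cited, \cite{gowers2001new,goldbring2014approximate}).

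The main obstacle is purely bookkeeping: keeping track of which of the $\chi_{B_{\bar I}}$ factors depend on which cube-variable as the unfolding proceeds, and checking that discarding one factor per coordinate-pair is enough to remove all $n$ of them (there are exactly $n$ sets $\bar I$ in the worst case, one for each omitted coordinate, matching the $n$ coordinate-pairs). A clean way to organize this is to index the factors by the omitted coordinate $(i_0,j_0)$ — write $B_{\bar I}=B_{(i_0,j_0)}$ where $\{(i_0,j_0)\}$ is the complement of $\bar I$ — and process the coordinate-pairs $(i,j)$ in any fixed order, at the step for $(i,j)$ bounding $\chi_{B_{(i,j)}}\le 1$ (if that index occurs) before doing the Cauchy--Schwarz in $x_{i,j}$. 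Since the only factor involving $x_{i,j}$ after the earlier steps is $\chi_{B_{(i,j)}}$ (the others having either been discarded or never depending on $x_{i,j}$), and $f$ has been replaced by a product of doubled copies none of which we wish to discard, the induction goes through. I would present this as a short induction on the number of coordinate-pairs yet to be processed, with the inductive statement being the bound
\[
\norm{f\cdot\textstyle\prod_{\bar I}\chi_{B_{\bar I}}}_{U^{\bar n}}^{2^n}\ \le\ \int (\text{partially-unfolded product of copies of }f,\ \text{remaining }\chi\text{'s})\,d\mu,
\]
and the base case of the induction being precisely the integral defining $\norm{f}_{U^{\bar n}}^{2^n}$.
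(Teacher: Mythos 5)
Your overall strategy (remove the cylinder factors one at a time, exploiting that each $\chi_{B_{\bar I}}$ is independent of the single coordinate omitted from $\bar I$) is close in spirit to the paper's argument, but as written there are genuine gaps, both concerning signs. Here $f$ is an arbitrary bounded \emph{real-valued} function (and the lemma is later applied to signed functions such as $g\cdot(f_x-h)$ and $f-\E(f\mid\cdot)$), and your reduction to $f\ge 0$ fails: from the statement for $|f|$ you only get $\norm{f\cdot\prod_{\bar I}\chi_{B_{\bar I}}}_{U^{\bar n}}\le\norm{\,|f|\,}_{U^{\bar n}}$, and the inequality you invoke, $\norm{f}_{U^{\bar n}}\le\norm{\,|f|\,}_{U^{\bar n}}$, points the wrong way for this purpose --- $\norm{\,|f|\,}_{U^{\bar n}}$ is in general far larger than $\norm{f}_{U^{\bar n}}$ (e.g.\ a quasirandom $\pm1$-valued $f$ has tiny norm while $|f|\equiv 1$ has norm $1$), so you end up proving a strictly weaker bound. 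Relatedly, bounding $\chi_{B_{(i,j)}}\le 1$ \emph{before} the integration in the $(i,j)$ coordinate is only legitimate when the quantity multiplying it is nonnegative; pointwise the integrand contains $2^n$ signed copies of $f$, so this step is unjustified for general $f$. The valid order --- and this is exactly the mechanism in the paper's proof --- is to first integrate out the pair $x^0_{i,j},x^1_{i,j}$, which turns the part of the integrand depending on that coordinate into a perfect square of an inner integral, hence nonnegative; since $\chi_{B_{(i,j)}}$ is precisely the factor that does \emph{not} depend on that coordinate, it factors out of the square and only then may be discarded using $0\le\chi\le 1$. Your bookkeeping is also inverted at this point: $\chi_{B_{(i,j)}}$ is the unique remaining factor \emph{not} involving $x_{i,j}$ (all the other remaining $\chi_{B_{(i',j')}}$ do involve $x_{i,j}$ and must simply stay inside the square), not the unique one involving it.

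With those corrections your plan does work and is a legitimate variant: each step removes one characteristic function, the intermediate quantity is again $\norm{f\cdot\prod_{\text{remaining}}\chi_{B_{\bar I}}}_{U^{\bar n}}^{2^n}$, and the same square structure gives the lower bound $0\le\norm{f\cdot\prod_{\bar I}\chi_{B_{\bar I}}}_{U^{\bar n}}$ directly, with no need to cite the seminorm property as a black box (which is essentially what is being established here). The paper packages the same idea differently: it reduces to a single $\bar I$, expands $f=f\chi_{B_{\bar I}}+f\chi_{\neg B_{\bar I}}$, and shows every one of the resulting $2^{2^n}$ terms is nonnegative (either zero, because $\chi_{B}\chi_{\neg B}=0$ on patterns agreeing off the omitted coordinate, or a square after integrating out that coordinate pair), so that the term $\norm{f\chi_{B_{\bar I}}}_{U^{\bar n}}^{2^n}$ is dominated by the full sum $\norm{f}_{U^{\bar n}}^{2^n}$. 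Your (corrected) route avoids that expansion at the cost of doing the integrate-out-and-drop step once per cylinder factor; either way the heart of the matter is the perfect-square structure in the omitted coordinate, which your current write-up applies in the wrong order.
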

 
 \begin{proof}
 	It suffices to show that $0 \leq \norm{f \cdot \chi_{B_{\bar{I}}}}_{U^{\bar{n}}} \leq \norm{f}_{U^{\bar{n}}}$ for a single $\bar{I}$ (as each $\chi_{B_{\bar{I}}}$ takes values in $[0,1]$). We consider $\bar{I}$ with $I_i := [n_i]$ for $i \in [k-1]$ and $I_k := [n_k - 1]$. As $f = f \cdot \chi_{B_{\bar{I}}} + f \cdot \chi_{\neg B_{\bar{I}}}$, we have
 	$$\norm{f}_{U^{\bar{n}}}^{2^n} = \norm{f \cdot \chi_{B_{\bar{I}}} + f \cdot \chi_{\neg B_{\bar{I}}}}_{U^{\bar{n}}}^{2^n},$$
 	which in turn expands into a sum of $2^{2^n}$ terms of the form 
 	\begin{gather}
 	\int \prod_{\bar{\alpha}_1 \in \{0,1 \}^{n_1}, \ldots, \bar{\alpha}_k \in \{0,1 \}^{n_k}} \left(f \cdot \chi_{S_{\bar{\alpha}_1, \ldots, \bar{\alpha}_k}} \right) \left( \bar{x}_1^{\bar{\alpha}_1}, \ldots, \bar{x}_k^{\bar{\alpha}_k} \right) d \mu_{2 \bar{n}} (\bar{x}^{\bar{0}} \oplus \bar{x}^{\bar{1}}), \label{eq: Gowers lemma 1}	
 	\end{gather}
 	where each $S_{\bar{\alpha}_1, \ldots, \bar{\alpha}_k}$ is either $B_{\bar{I}}$ or its complement $\neg B_{\bar{I}}$. Note that $\norm{f \cdot \chi_{B_{\bar{I}}}}_{U^{\bar{n}}}$ is equal to such a term with each $S_{\bar{\alpha}_1, \ldots, \bar{\alpha}_k} = B_{\bar{I}}$. Thus it suffices to show that all of the $2^{2^n}$ terms are non-negative.
 	
 	Assume that $\bar{\alpha}, \bar{\alpha}' \in \prod_{i \in [k]} \{0,1 \}^{n_i}$ are such that $\alpha_{i,j} = \alpha'_{i,j}$ for all $i \in [k]$ and $j \in I_k$, but $S_{\bar{\alpha}} \neq S_{\bar{\alpha}'}$. As $B_{\bar{I}} \in \mathcal{B}_{\bar{n}, \bar{I}}$ (so whether a tuple $\bar{x} \in V^{\bar{n}}$ belongs to it or not does not depend on the coordinate $x_{k, n_k}$ by the choice of $\bar{I}$), for every tuple $\bar{v} = \bar{v}^{\bar{0}} \oplus \bar{v}^{\bar{1}} \in V^{2 \bar{n}}$, we have $\chi_{S_{\bar{\alpha}}} \cdot \chi_{S_{\bar{\alpha}'}}(\bar{v}) = \chi_{S_{\bar{\alpha}}} \cdot \chi_{\neg S_{\bar{\alpha}}}(\bar{v}) = 0$ --- hence the corresponding integral in  \eqref{eq: Gowers lemma 1} is $0$. We thus only need to consider the case where, whenever $\alpha_{i,j} = \alpha'_{i,j}$ for all $i \in [k], j \in I_k$, then $S_{\bar{\alpha}} = S_{\bar{\alpha}'}$. In this case, using Fubini, we have
 		\begin{gather}
 	\int \prod_{\bar{\alpha}_1 \in \{0,1 \}^{n_1}, \ldots, \bar{\alpha}_k \in \{0,1 \}^{n_k}} \left(f \cdot \chi_{S_{\bar{\alpha}}} \right) \left( \bar{x}_1^{\bar{\alpha}_1}, \ldots, \bar{x}_k^{\bar{\alpha}_k} \right) d \mu_{2 \bar{n}} (\bar{x}^{\bar{0}} \oplus \bar{x}^{\bar{1}}) = \nonumber \\
 	\int  \left(\int\prod_{\bar{\alpha}_1 \in \{0,1 \}^{n_1}, \ldots, \bar{\alpha}_k \in \{0,1 \}^{n_k}} \left(f \cdot \chi_{S_{\bar{\alpha}}} \right) \left( \bar{x}_1^{\bar{\alpha}_1}, \ldots, \bar{x}_k^{\bar{\alpha}_k} \right) d \mu_{0, \ldots, 0, 2}\left(x^0_{k,n_k}, x^1_{k,n_k} \right) \right) \nonumber \\
 	 d \mu_{2n_1, \ldots, 2 n_{k-1}, 2n_k - 2} \left(\bar{x}^0_1, \bar{x}^1_1; \ldots; \bar{x}^0_{k-1}, \bar{x}^1_{k-1}; (x^0_{k,1}, \ldots, x^0_{k, n_k -1}), (x^1_{k,1}, \ldots, x^1_{k, n_k -1}) \right) \nonumber\\
 	 = \int \Bigg( \int \prod_{\substack{\bar{\alpha}_1 \in \{0,1 \}^{n_1},\\ \ldots,\\ \bar{\alpha}_{k-1} \in \{0,1 \}^{n_{k-1}},\\ \bar{\alpha}_k \in \{0,1 \}^{n_k - 1}}} \left(f \cdot \chi_{S_{\bar{\alpha}}} \right) \left( \bar{x}_1^{\bar{\alpha}_1}, \ldots, \bar{x}_{k-1}^{\bar{\alpha}_{k-1}}, (x^{\alpha_{k,1}}_{k,1}, \ldots, x^{\alpha_{k, n_{k-1}}}_{k,n_k -1}, x_{k,n_k}) \right) \nonumber\\
 	    d \mu_{0, \ldots, 0, 1}\left(x_{k,n_k}\right)\Bigg)^2  d \mu_{2n_1, \ldots, 2 n_{k-1}, 2n_k - 2}. \nonumber 
 		\end{gather}
 	Since the inside of the integral is always non-negative, this term is non-negative.
 \end{proof}

\begin{definition}
	 We let the function $D(f) : V^{\bar{n}} \to \mathbb{R}$ be defined by 
 $$D(f) \left( \bar{x}^{\bar{0}} \right) := \int \prod_{\bar{\alpha} \in \prod_{i \in [k]} \{0,1\}^{n_i},\bar{\alpha} \neq (\bar{0}, \ldots, \bar{0})} f(\bar{x}_1^{\bar{\alpha}_1}, \ldots, \bar{x}_k^{\bar{\alpha}_k}) d\mu_{\bar{n}}\left(\bar{x}^1 \right).$$
\end{definition}

\begin{remark}\label{rem: Gowers norms 1}
Observe that, by Fubini, $\norm{f}_{U^{\bar{n}}}^{2^n} = \int f \cdot D(f) d\mu_{\bar{n}}\left(\bar{x}^{\bar{0}} \right)$.
\end{remark}
 
 \begin{lemma}\label{lem: Gowers norms 3}
 	The function $D(f)$ is measurable with respect to $\mathcal{B}_{\bar{n}, n-1}$.
 \end{lemma}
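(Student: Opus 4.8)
\section*{Proof proposal}

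The plan is to prove Lemma~\ref{lem: Gowers norms 3} by induction on $n=\sum_{i\in[k]}n_i$. The base case $n=1$ is immediate: then $D(f)$ is the constant $\int f\,d\mu_{\bar 1}$, and $\mathcal{B}_{\bar n,0}$ is the trivial $\sigma$-algebra. For the inductive step I would first extract a \emph{dimension–reduction identity} for $D(f)$, obtained exactly in the spirit of Lemmas~\ref{lem: Gowers norms 1} and~\ref{lem: Gowers norms 2}, by a single round of Fubini together with the combinatorial splitting of the cube. Using the symmetry axiom (Definition~\ref{def: kGPS}(1)) we may fix one coordinate $c_0$ of $V^{\bar n}$, say the last coordinate of part $i_0$; write $\bar n=\bar n'+\bar\delta_{i_0}$ with $n'=n-1$, and split a point of $V^{\bar n}$ as $(\bar z,a)$ with $a\in V_{i_0}$. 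For $a\in V_{i_0}$ let $f_a$ denote the fiber $\bar z\mapsto f(\bar z\oplus(a))$, a bounded $\mathcal{B}_{\bar n'}$-measurable function on $V^{\bar n'}$ (measurable by Remark~\ref{rem: gen Fub}).

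Now integrate the variable $x_{c_0}^1$ of the second copy out first, and split the product $\prod_{\bar\alpha\neq\bar 0}f(\bar x^{\bar\alpha})$ according to the value $\alpha_{c_0}\in\{0,1\}$. The factors with $\alpha_{c_0}=0$ form, over the remaining $n'$ coordinates, the $n'$-dimensional cube-integrand of $f_{a}$; the factors with $\alpha_{c_0}=1$ (which do not involve $a$), after integration against $d\mu_{i_0}(x_{c_0}^1=:b)$, reassemble as $\int_{V_{i_0}}f_b(\bar z^0)\cdot[\text{$n'$-cube-integrand of }f_b](\bar z^0,\bar z^1)\,d\mu_{i_0}(b)$, since the $\bar\gamma=\bar 0$ corner of the full cube on the remaining coordinates is precisely $f_b(\bar z^0)$. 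Using the elementary identity that a product of two $n'$-cube-integrands equals the $n'$-cube-integrand of the product (each is a product over corners of ``corner evaluations'') and Fubini once more, one arrives at
\[
  D(f)(\bar z^0,a)\;=\;\int_{V_{i_0}} f_b(\bar z^0)\cdot D\bigl(f_a\cdot f_b\bigr)(\bar z^0)\,d\mu_{i_0}(b),
\]
where $D(f_a\cdot f_b)$ is the (plain) dual function on $V^{\bar n'}$ of the bounded $\mathcal{B}_{\bar n'}$-measurable function $f_a\cdot f_b$.

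By the inductive hypothesis, for every fixed $a,b\in V_{i_0}$ the function $\bar z^0\mapsto D(f_a\cdot f_b)(\bar z^0)$ is $\mathcal{B}_{\bar n',n'-1}$-measurable. I would then combine the two factors by viewing $D(f)(\bar z^0,a)$ as an $L^2(\mu_{i_0})$-pairing: of the map $\bar z^0\mapsto[b\mapsto f_b(\bar z^0)]$, which is measurable into $L^2(V_{i_0})$ and uses only the $\bar z^0$-coordinates (hence ``misses $c_0$''), with the map $(\bar z^0,a)\mapsto[b\mapsto D(f_a\cdot f_b)(\bar z^0)]$, which is measurable into $L^2(V_{i_0})$ and, combining the inductive hypothesis with a ``parameters do not hurt'' observation (fiberwise $\mathcal{B}_{\bar n',n'-1}$-measurability in $\bar z^0$ plus joint measurability, via Remark~\ref{rem: gen Fub} and the computation of conditional expectations as in Lemma~\ref{lem: meas of av fib}), lies in the $\sigma$-algebra generated by functions that use $a$ together with at most $n'-1$ of the $\bar z^0$-coordinates. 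Since a bounded bilinear (in particular continuous) function of two Hilbert-space-valued measurable maps, one factoring through a coordinate set $S\subsetneq[n]$ and the other through a coordinate set $T\subsetneq[n]$, is measurable with respect to $\sigma(\mathcal{B}_{\bar n,S}\cup\mathcal{B}_{\bar n,T})\subseteq\mathcal{B}_{\bar n,n-1}$, approximating the second map by simple functions of that restricted type and passing to the $L^2$-limit shows $D(f)\in L^2(\mathcal{B}_{\bar n,n-1})$, completing the induction.

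The routine parts (the Fubini manipulation, the cube-integrand product identity) are as in Lemmas~\ref{lem: Gowers norms 1}--\ref{lem: Gowers norms 2}. The part that will take the most care is exactly the measurability bookkeeping in the last paragraph: verifying the ``parameters do not hurt'' lemma and, above all, checking that integrating out the auxiliary variable $b$ against an integrand measurable for $\sigma(\mathcal{B}_{\bar n,S}\cup\mathcal{B}_{\bar n,T})$ keeps one inside $\mathcal{B}_{\bar n,n-1}$ --- here one must be careful that the GPS $\sigma$-algebras properly contain the product $\sigma$-algebras, so the argument should be phrased via the Fubini property (Remark~\ref{rem: gen Fub}) and Hilbert-space-valued measurable maps rather than via ``product simple functions''. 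A clean alternative, if one is willing to first record the multilinear Gowers--Cauchy--Schwarz inequality (a routine iteration of the Cauchy--Schwarz step in Lemma~\ref{lem: Gowers norms 1}), is to note $\langle D(f),g\rangle=\int g(\bar x^0)\prod_{\bar\alpha\neq\bar 0}f(\bar x^{\bar\alpha})\le\|g\|_{U^{\bar n}}\|f\|_{U^{\bar n}}^{2^n-1}$, together with the (separately provable, by a similar induction) implication $\mathbb{E}(g\mid\mathcal{B}_{\bar n,n-1})=0\Rightarrow\|g\|_{U^{\bar n}}=0$, which forces $D(f)\perp(\mathcal{B}_{\bar n,n-1})^{\perp}$.
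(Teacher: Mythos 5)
Your dimension--reduction identity is correct: splitting the cube product according to the value of $\alpha_{c_0}$ and applying Fubini (Remark \ref{rem: gen Fub}) does give
\[
D(f)(\bar z^0,a)=\int_{V_{i_0}} f_b(\bar z^0)\cdot D\bigl(f_a\cdot f_b\bigr)(\bar z^0)\,d\mu_{i_0}(b),
\]
and the base case is fine. The gap is exactly where you flag it, and it is not a routine patch: the inductive hypothesis only gives, for each \emph{fixed} $a$ (and $b$), that $\bar z^0\mapsto D(f_af_b)(\bar z^0)$ is $\mathcal{B}_{\bar n',n'-1}$-measurable, and your ``parameters do not hurt'' principle --- fiberwise measurability with respect to a sub-$\sigma$-algebra plus joint measurability implies measurability with respect to the $\sigma$-algebra generated by sets using $a$ together with at most $n'-1$ of the $\bar z^0$-coordinates --- is not a theorem you can cite and is not delivered by the tools you invoke. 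Lemma \ref{lem: meas of av fib} only says that the \emph{average over the parameter} of fiberwise-$\mathcal{B}$-measurable functions is $\mathcal{B}$-measurable; it says nothing about joint measurability when the parameter is retained, which is what your $L^2(\mu_{i_0})$-pairing step needs. In abstract graded probability spaces (where $\mathcal{B}_{\bar n,\bar I}$ need not behave like Borel algebras of standard spaces) this joint-measurability claim is precisely the crux, and as written the induction does not close. Note also that your fallback route is close to circular: the implication ``$\E(g\mid\mathcal{B}_{\bar n,n-1})=0\Rightarrow\norm{g}_{U^{\bar n}}=0$'' is the direction of Proposition \ref{prop: Gowers pos iff proj pos} that the paper proves \emph{using} this very lemma, so you would need a genuinely independent proof of it, which your sketch does not supply.

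The workable fix makes the induction unnecessary, and is in fact the paper's proof: freeze the entire second copy $\bar x^{\bar 1}$ and observe that every factor $f(\bar x^{\bar\alpha})$ with $\bar\alpha\neq\bar 0$ has at least one coordinate frozen at a value from $\bar x^{\bar 1}$, hence (by the Fubini property, which makes such fibers measurable) depends measurably on at most $n-1$ of the $\bar x^{\bar 0}$-coordinates; so the integrand is $\mathcal{B}_{\bar n,n-1}$-measurable in $\bar x^{\bar 0}$ for each fixed $\bar x^{\bar 1}$, and a single application of Lemma \ref{lem: meas of av fib} (averaging over $\bar x^{\bar 1}$) gives the measurability of $D(f)$. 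If you try to justify the joint measurability of $(\bar z^0,a)\mapsto D(f_af_b)(\bar z^0)$ honestly, you are forced to re-expand $D(f_af_b)$ as an integral and run exactly this freezing argument on its factors --- at which point the dimension-reduction identity and the induction on $n$ have done no work.
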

 \begin{proof}
 
 Note that, for a fixed $\bar{x}^{\bar{1}} \in V^{\bar{n}}$, the function
 	\begin{gather}\label{eq: Gowers 5}
 	\bar{x}^0 \mapsto \prod_{\substack{\bar{\alpha} \in \prod_{i \in [k]} \{0,1\}^{n_i}\\ \bar{\alpha} \neq (\bar{0}, \ldots, \bar{0})}} f\left( \bar{x}^{\bar{\alpha}} \right)
 	\end{gather} 
 is $\B_{\bar{n}, n-1}$-measurable (as for every such $\bar{\alpha}$, at least one of the coordinates in $\bar{x}^{\bar{\alpha}}$ is then fixed).
 Then $D(f)$ is also $\B_{\bar{n}, n-1}$-measurable by Lemma \ref{lem: meas of av fib}.

 \end{proof}

 \begin{prop}\label{prop: Gowers pos iff proj pos}
 	$\norm{f}_{U^{\bar{n}}} > 0$ if and only if $\norm{\E\left( f \mid \B_{\bar{n},n-1} \right)}_{L^2} > 0$.
 \end{prop}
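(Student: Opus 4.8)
The plan is to prove the two implications separately: the easy direction from the already-established identity $\norm{f}_{U^{\bar{n}}}^{2^n}=\int f\cdot D(f)\,d\mu_{\bar{n}}$ (Remark \ref{rem: Gowers norms 1}) together with Lemma \ref{lem: Gowers norms 3}, and the harder direction from Lemmas \ref{lem: Gowers norms 1} and \ref{lem: Gowers norms 2}. For the easy direction I would argue contrapositively: suppose $\E(f\mid\B_{\bar{n},n-1})=0$. Since $f$ is bounded, $D(f)$ is bounded, hence lies in $L^2$, and it is $\B_{\bar{n},n-1}$-measurable by Lemma \ref{lem: Gowers norms 3}; so, using self-adjointness of conditional expectation,
\[\norm{f}_{U^{\bar{n}}}^{2^n}=\int f\cdot D(f)\,d\mu_{\bar{n}}=\int \E(f\mid\B_{\bar{n},n-1})\cdot D(f)\,d\mu_{\bar{n}}=0,\]
so $\norm{f}_{U^{\bar{n}}}=0$. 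This gives $\norm{f}_{U^{\bar{n}}}>0\Rightarrow\norm{\E(f\mid\B_{\bar{n},n-1})}_{L^2}>0$.

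For the converse I would set $g:=\E(f\mid\B_{\bar{n},n-1})$, assume $\norm{g}_{L^2}>0$, and put $\eta:=\langle f,g\rangle_{L^2}=\norm{g}_{L^2}^2>0$ (note $f\not\equiv 0$). The key estimate is: for any set of the form $P=\bigcap_{\bar{I}}B_{\bar{I}}$, the intersection ranging over all $\bar{I}=(I_1,\dots,I_k)$ with $I_i\subseteq[n_i]$ and $\sum_i|I_i|=n-1$, each $B_{\bar{I}}\in\B_{\bar{n},\bar{I}}$, we have $\chi_P=\prod_{\bar{I}}\chi_{B_{\bar{I}}}$, and hence, chaining Lemmas \ref{lem: Gowers norms 1} and \ref{lem: Gowers norms 2},
\[\bigl|\langle f,\chi_P\rangle_{L^2}\bigr|=\Bigl|\int f\cdot\prod_{\bar{I}}\chi_{B_{\bar{I}}}\,d\mu_{\bar{n}}\Bigr|\le\Bigl\|f\cdot\prod_{\bar{I}}\chi_{B_{\bar{I}}}\Bigr\|_{U^{\bar{n}}}\le\norm{f}_{U^{\bar{n}}}.\]
Calling such $P$ a \emph{basic cylinder}, I would then observe that basic cylinders form a $\pi$-system generating $\B_{\bar{n},n-1}$, so $g$ can be approximated in $L^2$ by a finite linear combination $h=\sum_m c_m\chi_{P_m}$ of characteristic functions of basic cylinders; choosing $\norm{g-h}_{L^2}<\eta/(2\norm{f}_{L^2})$ gives $\langle f,h\rangle_{L^2}\ge\eta-\norm{f}_{L^2}\norm{g-h}_{L^2}>\eta/2$, while $|\langle f,h\rangle_{L^2}|\le\bigl(\sum_m|c_m|\bigr)\norm{f}_{U^{\bar{n}}}$ by the display, so $\norm{f}_{U^{\bar{n}}}\ge\eta/(2\sum_m|c_m|)>0$.

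The step I expect to be the main obstacle is this last reduction, which is elementary but fiddly. One has to note first that $\B_{\bar{n},n-1}$ is already generated by the \emph{maximal} subalgebras $\B_{\bar{n},\bar{I}}$ (those with $\sum_i|I_i|=n-1$), because $\B_{\bar{n},\bar{I}'}\subseteq\B_{\bar{n},\bar{I}}$ whenever $I'_i\subseteq I_i$ for all $i$ (a condition on fewer coordinates is a condition on more, via closure under products and symmetry in Definition \ref{def: kGPS} and Remark \ref{rem: basic props of GPS}(1)); then that basic cylinders are closed under finite intersection, hence form a $\pi$-system, so that the characteristic function of any set in the Boolean algebra they generate is, by inclusion--exclusion, a finite integer linear combination of characteristic functions of basic cylinders; and finally that simple functions over a generating Boolean algebra are $L^2$-dense in the $L^2$-space of the generated $\sigma$-algebra, which produces the approximant $h$ with controlled $\sum_m|c_m|$. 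Matching the precise product form demanded by Lemma \ref{lem: Gowers norms 2} is exactly where this bookkeeping is needed.
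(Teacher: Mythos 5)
Your proof is correct and follows essentially the same route as the paper: the forward direction via Remark \ref{rem: Gowers norms 1} and the $\B_{\bar{n},n-1}$-measurability of $D(f)$ (Lemma \ref{lem: Gowers norms 3}), and the converse by chaining Lemmas \ref{lem: Gowers norms 1} and \ref{lem: Gowers norms 2} against products of indicators from the algebras $\B_{\bar{n},\bar{I}}$. The only difference is that you spell out explicitly the density/$\pi$-system bookkeeping that the paper compresses into the single remark that non-orthogonality of $f$ to $L^2(\B_{\bar{n},n-1})$ yields sets $B_{\bar{I}}$ with $\int f\prod_{\bar I}\chi_{B_{\bar I}}\,d\mu_{\bar n}\neq 0$.
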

 \begin{proof}
 If $\norm{f}_{U^{\bar{n}}} >0$, then $\int f \cdot D(f) d\mu_{\bar{n}} (\bar{x}^{\bar{0}}) > 0$ (by Remark \ref{rem: Gowers norms 1}). That is, $f$ is not orthogonal to $D(f)$ in the space $L^2(\B_{\bar{n}})$. As $D(f)$ is $\B_{\bar{n},n-1}$-measurable by Lemma \ref{lem: Gowers norms 3}, we conclude $\norm{\E \left( f \mid \B_{\bar{n}, n-1} \right)} >0$.
 	
 	For the other direction, assume that $\norm{\E\left(f \mid \mathcal{B}_{\bar{n}, n-1} \right)} > 0$. The there exist some sets $B_{\bar{I}} \in \mathcal{B}_{\bar{n},\bar{I}}$, for $\bar{I} = (I_1, \ldots, I_k)$ with $I_i \subseteq [n_i]$ and $\sum_{i \in [k]}|I_i| \leq n-1$, so that $\int f \cdot \prod_{\bar{I}} \chi_{B_{\bar{I}}} d\mu_{\bar{n}} \neq 0$ (as $f$ and its projection onto the subspace of $\mathcal{B}_{\bar{n}, n-1}$-measurable functions are non-orthogonal). Then, by Lemmas \ref{lem: Gowers norms 1} and \ref{lem: Gowers norms 2},
 	$$0 < \left \lvert \int f \prod_{\bar{I}}\chi_{B_{\bar{I}}} d\mu_{\bar{n}} \right\rvert \leq \norm{f \prod_{\bar{I}} \chi_{B_{\bar{I}}}}_{U^{\bar{n}}} \leq \norm{f}_{U^{\bar{n}}}.$$
 	 \end{proof}
 
 \subsection{Subalgebras of fibers}

 We will later need to know when $\mathcal{D}\subseteq\mathcal{B}_{\bar n,n-1}$ is large enough that $\mathbb{E}(f\mid\mathcal{B}_{\bar n,n-1})=\mathbb{E}(f\mid\mathcal{D})$ and, slightly more generally, when $\mathbb{E}(f\mid\mathcal{B}_{\bar n,n-1}\cup G)=\mathbb{E}(f\mid\mathcal{D}\cup G)$ for some set $G$.

 We can determine this by examining the previous subsection more carefully: if $||\mathbb{E}(f\mid\mathcal{B}_{\bar n,n-1})||_{L^2}>0$, we know that it is because $\int f \cdot  D(f) d\mu_{2 \bar{n}}>0$, so it suffices to investigate exactly which sets are needed to ensure that $D(f)$ is $\mathcal{D}$-measurable.  To deal with the more general case, we need to consider not just when $D(f)$ is measurable, but when functions of the form $D(f)\cdot g$ are measurable for a certain class of functions $g$.
 
\begin{definition}\label{def: containing fibers}
Let $\mathcal{D}$ be a $\sigma$-subalgebra of $\B_{\bar{n}}$. 
\begin{enumerate}
\item  Let $\bar{a} \in V^{\bar{n}}$.
We say that $\mathcal{D}$ \emph{contains $\bar{a}$-fibers of $f$} if, for each interval $I \subseteq \mathbb{R}$ and each $i\leq k$ and $j \in [n_i]$, 
$$\left\{ \bar{x} = (x_{i,j} : i \in [k], j \in [n_i]) \in V^{\bar{n}} : f \left( \bar{x}_{a_{i,j} \to (i,j)} \right) \in I \right\} \in \mathcal{D}.$$
Recall that $ \bar{x}_{a_{i,j} \to (i,j)}$ is the tuple obtained from $\bar{x}$ by substituting $a_{i,j}$ into position $(i,j)$ (see Section \ref{sec: notation}).
\item We say that $\mathcal{D}$ \emph{contains $(n-1)$-ary fibers of $f$} if the set of $\bar{a} \in V^{\bar{n}}$ such that $\mathcal{D}$ contains $\bar{a}$-fibers of $f$ has $\mu_{\bar{n}}$-measure $1$.
\item  We say that $\mathcal{D}$ is \emph{closed under fibers} if for every set $B \in \mathcal{D}$, $\mathcal{D}$ contains $(n-1)$-ary fibers of $\chi_{B}$.

	\item   Let $G$ be a set of $\mathcal{B}_{\bar{n}}$-measurable functions.  We say that $\mathcal{D}$ \emph{contains $(n-1)$-ary fibers of $f$ with products from $G$} if, for every function $g$ which is a finite product of functions from $G$, $\mathcal{D}$ contains $(n-1)$-ary fibers of $g$ and $f \cdot g$.
\end{enumerate}
 \end{definition}

\begin{remark}\label{rem: fib alg 1}
\begin{enumerate}
\item If $\mathcal{D}$ is closed under fibers, then for any $\mathcal{D}$-measurable function $f$, $\mathcal{D}$ contains $(n-1)$-ary fibers of $f$ (by assumption this holds for the indicator functions of sets in $\mathcal{D}$, and follows for an arbitrary $\mathcal{D}$-measurable function approximating it by $\mathcal{D}$-simple functions)
	\item The algebra $\B_{\bar{n}, n-1}$ is both closed under fibers (by Fubini and closure under products) and contains $(n-1)$-ary fibers of any $\B_{\bar{n}}$-measurable function (by Fubini property, see Remark \ref{rem: gen Fub}).
\end{enumerate}
\end{remark}

The following is immediate from the definitions (see Definition \ref{def: containing fibers}).
\begin{remark}\label{rem: algebra of fibers}
\begin{enumerate}
\item Each of the algebras $\mathcal{B}^{f, n, \bar{b}}_{\bar{w}}, \B^{f,\bar{b}}, \B^f_{\bar{1}^k, k-1}$ is closed under fibers.

	\item For every $b \in V_{k+1}$ and a tuple $\bar{b}$ in $V_{k+1}$, the algebra $\B^{f,(b)^{\frown}\bar{b}}$ contains $(k-1)$-ary fibers of $f_{b}$ with products from $\left\{ f_{b'} : b' \in \bar{b} \right\}$.
	\item For every  $b$ and  $(b_i)_{i \in I}$ in  $V_{k+1}$, where $I$ is an arbitrary index set, the algebra $\mathcal{B}^{f}_{\bar{1}^k, k-1}$  contains $(k-1)$-ary fibers of $f_b$ with products from $\left\{f_{b_i} : i \in I \right\}$.
\end{enumerate}

\end{remark}

\begin{lemma}\label{lem: fib alg 1}
	  If $||\mathbb{E}(f\mid\mathcal{B}_{\bar{n},n-1})||_{L^2}>0$ and $\mathcal{D}$ contains $(n-1)$-ary fibers of $f$, then $||\mathbb{E}(f\mid\mathcal{D})||_{L^2}>0$.
\end{lemma}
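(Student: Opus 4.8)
The plan is to reduce the statement to the key calculation from the previous subsection, Remark~\ref{rem: Gowers norms 1}, which expresses $\norm{f}_{U^{\bar n}}^{2^n}$ as $\int f\cdot D(f)\,d\mu_{\bar n}$. First I would observe that since conditional expectation is orthogonal projection, $\norm{\E(f\mid\mathcal{B}_{\bar n,n-1})}_{L^2}>0$ is equivalent to $f$ being non-orthogonal to $L^2(\mathcal{B}_{\bar n,n-1})$. By Proposition~\ref{prop: Gowers pos iff proj pos}, this is in turn equivalent to $\norm{f}_{U^{\bar n}}>0$, hence to $\int f\cdot D(f)\,d\mu_{\bar n}\neq 0$ by Remark~\ref{rem: Gowers norms 1} (note $D(f)\geq 0$ issues don't arise here since we only need non-vanishing of the integral, though in fact $\norm f_{U^{\bar n}}^{2^n}\geq 0$).

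The heart of the argument is then showing that $D(f)$ is $\mathcal{D}$-measurable whenever $\mathcal{D}$ contains $(n-1)$-ary fibers of $f$. This mirrors the proof of Lemma~\ref{lem: Gowers norms 3}: there $D(f)$ was shown to be $\mathcal{B}_{\bar n,n-1}$-measurable because, for fixed $\bar x^{\bar 1}$, the integrand $\bar x^0\mapsto\prod_{\bar\alpha\neq\bar 0}f(\bar x^{\bar\alpha})$ is $\mathcal{B}_{\bar n,n-1}$-measurable (each factor has at least one coordinate frozen to come from $\bar x^{\bar 1}$), and then Lemma~\ref{lem: meas of av fib} lets us integrate out $\bar x^{\bar 1}$. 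Here I would instead note that for $\mu_{\bar n}$-almost every fixed $\bar x^{\bar 1}=\bar a$ (namely those $\bar a$ for which $\mathcal{D}$ contains $\bar a$-fibers of $f$), each factor $f(\bar x^{\bar\alpha})$ with $\bar\alpha\neq\bar 0$ has some coordinate $(i,j)$ with $\alpha_{i,j}=1$, so it is of the form $\bar x\mapsto f(\bar x_{a_{i,j}\to(i,j)}\ \text{with further substitutions})$; by definition of $\mathcal{D}$ containing $\bar a$-fibers of $f$ and iterating the substitutions, this factor is $\mathcal{D}$-measurable, hence so is the product. Then Lemma~\ref{lem: meas of av fib}, applied with the $\sigma$-algebra $\mathcal{D}$ in place of $\mathcal{B}$, shows the average fiber $D(f)$ is $\mathcal{D}$-measurable.

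Given $\mathcal{D}$-measurability of $D(f)$, the conclusion is immediate: $\int f\cdot D(f)\,d\mu_{\bar n}\neq 0$ says $f$ is non-orthogonal to a $\mathcal{D}$-measurable function, so $\norm{\E(f\mid\mathcal{D})}_{L^2}>0$. The main obstacle I anticipate is the bookkeeping in the reduction to single-coordinate substitutions: $\mathcal{D}$ containing $\bar a$-fibers of $f$ is stated in Definition~\ref{def: containing fibers}(1) only for substituting $a_{i,j}$ into one position $(i,j)$ at a time, whereas a factor $f(\bar x^{\bar\alpha})$ with $\bar\alpha$ having several $1$'s requires substituting several coordinates of $\bar a$ at once. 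I would handle this by a short induction: substituting one coordinate at a time, noting that at each stage the set defined by the resulting condition lies in $\mathcal{D}$ because the condition on the remaining free variables is again (after relabeling) a single-coordinate fiber condition—this uses that $\mathcal{D}$ is a $\sigma$-algebra and that the intermediate functions obtained are still $\mathcal{D}$-measurable. This is the one genuinely fiddly point; everything else is a direct transcription of the earlier lemmas with $\mathcal{D}$ replacing $\mathcal{B}_{\bar n,n-1}$.
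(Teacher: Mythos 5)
Your outline is the paper's own proof: positivity of $\norm{\E(f\mid\B_{\bar n,n-1})}_{L^2}$ gives $\norm{f}_{U^{\bar n}}>0$ via Proposition \ref{prop: Gowers pos iff proj pos}, hence $\int f\cdot D(f)\,d\mu_{\bar n}>0$ by Remark \ref{rem: Gowers norms 1}; then one shows $D(f)$ is $\mathcal{D}$-measurable by checking the integrand in \eqref{eq: Gowers 5} is $\mathcal{D}$-measurable for the measure-one set of $\bar x^{\bar 1}=\bar a$ at which $\mathcal{D}$ contains $\bar a$-fibers of $f$, and applies Lemma \ref{lem: meas of av fib}; non-orthogonality to a $\mathcal{D}$-measurable function finishes it. So the skeleton is fine and matches the paper.

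The one place you go beyond the paper is the induction you propose for the factors $f(\bar x^{\bar\alpha})$ in which two or more coordinates are frozen, and that step does not close as you wrote it. After the first substitution you have $g(\bar x)=f(\bar x_{a_{i,j}\to(i,j)})$, which is indeed $\mathcal{D}$-measurable by the hypothesis; but the next substitution asks that the level sets of $\bar x\mapsto g(\bar x_{a_{i',j'}\to(i',j')})$ lie in $\mathcal{D}$, i.e.\ that $\mathcal{D}$ contains fibers of $g$ (equivalently, fibers of sets already in $\mathcal{D}$). That is the separate property ``closed under fibers'' of Definition \ref{def: containing fibers}(3), which is \emph{not} assumed in this lemma and is not implied by $\mathcal{D}$-measurability of $g$ nor by ``$\mathcal{D}$ contains $\bar a$-fibers of $f$'' (which speaks only about substitutions into $f$ itself, one coordinate at a time). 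So for $n\geq 3$ your relabeling argument is circular; for $n=2$ the issue does not arise, since every $\bar\alpha\neq\bar 0$ there freezes either one coordinate or all of them. To be fair, the paper's proof asserts the $\mathcal{D}$-measurability of the integrand directly from the single-fiber hypothesis and passes over this same point; the way it is resolved in practice is that in every application the algebra $\mathcal{D}$ contains the fibers of $f$ obtained by freezing \emph{any} nonempty proper set of coordinates at entries of $\bar a$ (the generating families $\mathcal{F}^{f,n,\bar b}_{\bar w}$ of Definition \ref{def: fiber algebra} allow all $I$ with $1\leq|I|\leq k-1$), and with the hypothesis read in that stronger form the measurability of each factor, hence of the product, is immediate. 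So the correct fix is to strengthen (or reinterpret) the hypothesis, not to derive the multi-coordinate case from the single-coordinate one by induction.
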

\begin{proof}
  If $||\mathbb{E}(f\mid\mathcal{B}_{\bar{n},n-1})||_{L^2}>0$, then $||f||_{U^{\bar{n}}}>0$ (by Proposition \ref{prop: Gowers pos iff proj pos}).
  If $||f||_{U^{\bar{n}}}>0$ then, by Remark \ref{rem: Gowers norms 1}, we have
  \[0<||f||_{U^{\bar{n}}}^{2^n}= \int f \cdot  D(f) d\mu_{2 \bar{n}}.\]
  
  As $\mathcal{D}$ contains $(n-1)$-ary fibers of $f$, the function in \eqref{eq: Gowers 5} is $\mathcal{D}$-measurable for a measure $1$ set of $\bar{x}^{\bar{1}} \in V^{\bar{n}}$. Hence $D(f)$ is $\mathcal{D}$-measurable by Lemma \ref{lem: meas of av fib}.
Thus $f$ is not orthogonal to $L^2\left( \mathcal{D}  \right)$.
\end{proof}

\begin{lemma}\label{lem: fib alg 3}
	  If $||\mathbb{E}(f\mid\mathcal{B}_{\bar{n},n-1}\cup  G )||_{L^2}>0$ and $\mathcal{D}$ contains $(n-1)$-ary fibers of $f$ with products from $G$, then $||\mathbb{E}(f\mid\mathcal{D} \cup G)||_{L^2}>0$.
\end{lemma}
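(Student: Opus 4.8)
The plan is to mimic the proof of Lemma \ref{lem: fib alg 1}, but running the Gowers-norm argument with $f$ replaced by the product $f\cdot g$ for a suitable finite product $g$ of functions from $G$. First I would observe that, by definition of conditional expectation onto a generated $\sigma$-algebra, the hypothesis $\norm{\E(f\mid\B_{\bar n,n-1}\cup G)}_{L^2}>0$ means $f$ is not orthogonal to $L^2(\sigma(\B_{\bar n,n-1}\cup G))$. Since $\sigma(\B_{\bar n,n-1}\cup G)$-measurable functions are $L^2$-approximable by finite sums of products of a $\B_{\bar n,n-1}$-measurable function with finitely many functions from $G$, there must be a $\B_{\bar n,n-1}$-measurable set $B=\prod_{\bar I}\chi_{B_{\bar I}}$-type function (more precisely some $B_{\bar I}\in\B_{\bar n,\bar I}$ with $\sum|I_i|\le n-1$) and a finite product $g$ of members of $G$ such that $\int f\cdot g\cdot\prod_{\bar I}\chi_{B_{\bar I}}\,d\mu_{\bar n}\neq 0$.

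Next I would apply Lemmas \ref{lem: Gowers norms 1} and \ref{lem: Gowers norms 2} to the bounded measurable function $f\cdot g$ in place of $f$: the nonvanishing integral $\left|\int (f\cdot g)\prod_{\bar I}\chi_{B_{\bar I}}\,d\mu_{\bar n}\right|$ is bounded above by $\norm{(f\cdot g)\prod_{\bar I}\chi_{B_{\bar I}}}_{U^{\bar n}}\le\norm{f\cdot g}_{U^{\bar n}}$, so $\norm{f\cdot g}_{U^{\bar n}}>0$. By Remark \ref{rem: Gowers norms 1} this gives $0<\norm{f\cdot g}_{U^{\bar n}}^{2^n}=\int (f\cdot g)\cdot D(f\cdot g)\,d\mu_{\bar n}$, so $f\cdot g$ is not orthogonal to $D(f\cdot g)$ in $L^2(\B_{\bar n})$.

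Then comes the key point, which parallels Lemma \ref{lem: fib alg 1}: because $\mathcal D$ contains $(n-1)$-ary fibers of $f$ with products from $G$, it contains $(n-1)$-ary fibers of both $g$ and $f\cdot g$ (here $g$ is a finite product of functions from $G$ — if $g$ is empty we are in the situation of Lemma \ref{lem: fib alg 1}). Hence, for a $\mu_{\bar n}$-measure $1$ set of $\bar x^{\bar 1}\in V^{\bar n}$, the function $\bar x^{\bar 0}\mapsto\prod_{\bar\alpha\neq\bar 0}(f\cdot g)(\bar x^{\bar\alpha})$ appearing in the definition of $D(f\cdot g)$ is $\mathcal D$-measurable (each factor fixes at least one coordinate to an $\bar x^{\bar 1}$-value, so it is an $\bar x^{\bar 1}$-fiber of $f\cdot g$), and therefore $D(f\cdot g)$ is $\mathcal D$-measurable by Lemma \ref{lem: meas of av fib}. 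Also $g$ itself is $\sigma(G)$-measurable, hence $\sigma(\mathcal D\cup G)$-measurable. Thus $f$ is not orthogonal to the $\sigma(\mathcal D\cup G)$-measurable function $g\cdot D(f\cdot g)$, which gives $\norm{\E(f\mid\mathcal D\cup G)}_{L^2}>0$, as desired.

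The main obstacle I anticipate is the first step: justifying that non-orthogonality of $f$ to $L^2(\sigma(\B_{\bar n,n-1}\cup G))$ yields a witnessing test function of the precise shape $g\cdot\prod_{\bar I}\chi_{B_{\bar I}}$ with $g$ a \emph{finite product} of elements of $G$ (rather than an arbitrary $\sigma(\B_{\bar n,n-1}\cup G)$-measurable function), since $\sigma$-algebra generation only gives approximation by functions built from $G$ via countable Boolean operations. One must argue that $L^2(\sigma(\B_{\bar n,n-1}\cup G))$ is the closed linear span of products of a $\B_{\bar n,n-1}$-measurable function with finitely many indicator functions of sets in $G$ (or of level sets of functions in $G$), and that level-set indicators of a function $h\in G$ can in turn be $L^2$-approximated using finite products built from $h$ — this is where the hypothesis about products from $G$ is calibrated. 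The rest is a routine transcription of Lemma \ref{lem: fib alg 1}.
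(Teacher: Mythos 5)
Your argument is correct and takes essentially the same route as the paper: the paper's proof extracts a finite product $g$ of functions from $G$ with $\norm{\E(f\cdot g\mid\B_{\bar n,n-1})}_{L^2}>0$ and then simply cites Lemma \ref{lem: fib alg 1} applied to $f\cdot g$, which is exactly the Gowers-norm and fiber-measurability computation you write out by hand before concluding via the $\sigma(\mathcal D\cup G)$-measurable test function $g\cdot D(f\cdot g)$. The density step you flag as the main obstacle is precisely what the paper asserts without proof when it claims such a $g$ exists, and your Stone--Weierstrass/monotone-class sketch is the appropriate justification.
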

\begin{proof}
    Suppose $||\mathbb{E}(f\mid\mathcal{B}_{\bar{n},n-1}\cup G)||_{L^2}>0$.  Then there must exist some $g$, a product of finitely many functions from $G$, so that $||\mathbb{E}(f \cdot g\mid\mathcal{B}_{\bar{n},n-1})||_{L^2}>0$, and therefore $||\mathbb{E}(f \cdot g\mid\mathcal{D})||_{L^2}>0$ by Lemma \ref{lem: fib alg 1}, hence $||\mathbb{E}(f\mid\mathcal{D} \cup G)||_{L^2}>0$.
\end{proof}

\begin{lemma}\label{lem: min alg with add functions}
	 If $\mathcal{D}$ is closed under fibers and contains $(n-1)$-ary fibers of $f$ with products from $G$, then
  \[\mathbb{E}(f\mid\mathcal{B}_{\bar{n},n-1}\cup G)=\mathbb{E}(f\mid\mathcal{D} \cup G).\]\end{lemma}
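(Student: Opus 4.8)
The plan is to reduce the claim to the two correlation lemmas just established. The statement to prove is that if $\mathcal{D}$ is closed under fibers and contains $(n-1)$-ary fibers of $f$ with products from $G$, then $\mathbb{E}(f\mid\mathcal{B}_{\bar{n},n-1}\cup G)=\mathbb{E}(f\mid\mathcal{D}\cup G)$. First I would observe that $\mathcal{D}\cup G$ generates a $\sigma$-algebra contained in the one generated by $\mathcal{B}_{\bar{n},n-1}\cup G$: indeed $\mathcal{D}\subseteq\mathcal{B}_{\bar{n},n-1}$ because, being closed under fibers, every set $B\in\mathcal{D}$ has the property that $\chi_B$ depends on fewer than $n$ coordinates after fixing a measure-one set of parameters --- more precisely, a set closed under fibers sits inside $\mathcal{B}_{\bar n, n-1}$ since each of its defining conditions can be witnessed by an $(n-1)$-ary fiber. (If this containment is not literally automatic, it suffices that $\sigma(\mathcal{D}\cup G)\subseteq\sigma(\mathcal{B}_{\bar n,n-1}\cup G)$, which is what the argument actually needs and which follows from $\mathcal{D}$ being closed under fibers.) Hence $L^2(\sigma(\mathcal{D}\cup G))$ is a closed subspace of $L^2(\sigma(\mathcal{B}_{\bar{n},n-1}\cup G))$, and $\mathbb{E}(f\mid\mathcal{D}\cup G)$ is the orthogonal projection of $\mathbb{E}(f\mid\mathcal{B}_{\bar{n},n-1}\cup G)$ onto that smaller subspace.

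Set $h:=\mathbb{E}(f\mid\mathcal{B}_{\bar{n},n-1}\cup G)-\mathbb{E}(f\mid\mathcal{D}\cup G)$; equivalently $h=\mathbb{E}(f-\mathbb{E}(f\mid\mathcal{D}\cup G)\mid\mathcal{B}_{\bar n,n-1}\cup G)$, since projecting onto the larger algebra and then onto the smaller one agrees with projecting directly onto the smaller one. The goal is $\|h\|_{L^2}=0$. Write $f':=f-\mathbb{E}(f\mid\mathcal{D}\cup G)$, so that $h=\mathbb{E}(f'\mid\mathcal{B}_{\bar n,n-1}\cup G)$ and, by construction, $\mathbb{E}(f'\mid\mathcal{D}\cup G)=0$. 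Now I would apply Lemma~\ref{lem: fib alg 3} contrapositively to $f'$: that lemma says that if $\mathcal{D}$ contains $(n-1)$-ary fibers of a function with products from $G$ and $\|\mathbb{E}(f'\mid\mathcal{B}_{\bar n,n-1}\cup G)\|_{L^2}>0$, then $\|\mathbb{E}(f'\mid\mathcal{D}\cup G)\|_{L^2}>0$. Since the latter is $0$, we would conclude $\|h\|_{L^2}=\|\mathbb{E}(f'\mid\mathcal{B}_{\bar n,n-1}\cup G)\|_{L^2}=0$, which is exactly the desired equality of conditional expectations.

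The one genuine point to check is that $\mathcal{D}$ \emph{also} contains $(n-1)$-ary fibers of $f'=f-\mathbb{E}(f\mid\mathcal{D}\cup G)$ with products from $G$, so that Lemma~\ref{lem: fib alg 3} is applicable to $f'$ rather than merely to $f$. For the product $g$ of finitely many functions from $G$, linearity of fibers gives that the fiber of $f'\cdot g$ is the fiber of $f\cdot g$ minus the fiber of $\mathbb{E}(f\mid\mathcal{D}\cup G)\cdot g$; the first is handled by the hypothesis on $f$, and $\mathbb{E}(f\mid\mathcal{D}\cup G)$ is measurable with respect to $\sigma(\mathcal{D}\cup G)$, a $\sigma$-algebra that (using that $\mathcal{D}$ is closed under fibers, Remark~\ref{rem: fib alg 1}(1), and that $\mathcal{D}$ contains $(n-1)$-ary fibers of the functions in $G$ and their products) contains $(n-1)$-ary fibers of all its measurable functions with products from $G$. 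So the expected main obstacle is this bookkeeping: verifying that being ``closed under fibers'' plus ``containing $(n-1)$-ary fibers of $f$ with products from $G$'' propagates through the subtraction of the conditional expectation $\mathbb{E}(f\mid\mathcal{D}\cup G)$. Once that is in place, the result follows immediately from Lemma~\ref{lem: fib alg 3} and the Hilbert-space nesting of projections.
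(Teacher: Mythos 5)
Your overall strategy is the paper's: subtract a conditional expectation from $f$, check that the fiber hypotheses pass to the residual, and use Lemma \ref{lem: fib alg 3} to force the residual's projection onto $\sigma(\mathcal{B}_{\bar{n},n-1}\cup G)$ to vanish. (The paper works with $f-\E(f\mid\mathcal{D})$, which keeps the correction term $\mathcal{D}$-measurable so that Remark \ref{rem: fib alg 1}(1) applies to it verbatim; your $f'=f-\E(f\mid\mathcal{D}\cup G)$ is a reasonable variant that makes the orthogonality $\E(f'\mid\mathcal{D}\cup G)=0$ immediate.) Two of your steps, however, need repair.

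First, the containment you need is not a consequence of closure under fibers. The algebra $\mathcal{B}_{\bar{n}}$ itself is closed under fibers (by the Fubini property and closure under products) and contains $(n-1)$-ary fibers of every $\mathcal{B}_{\bar{n}}$-measurable function with products from any $G$, yet for $\mathcal{D}=\mathcal{B}_{\bar{n}}$ the conclusion of the lemma is plainly false. Closure under fibers only puts the fibers of a set in $\mathcal{D}$ back into $\mathcal{D}$; it says nothing about the set itself depending on fewer coordinates. The inclusion $\mathcal{D}\subseteq\mathcal{B}_{\bar{n},n-1}$ is the standing assumption of this subsection (and holds for the algebras of Definition \ref{def: fiber algebra} used in the applications); it must be invoked as a hypothesis, not derived. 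Second, your verification places the $(n-1)$-ary fibers of $\E(f\mid\mathcal{D}\cup G)\cdot g$, and hence of $f'\cdot g$, only in $\sigma(\mathcal{D}\cup G)$, not in $\mathcal{D}$, so Lemma \ref{lem: fib alg 3} cannot be quoted for $f'$ with the algebra $\mathcal{D}$ as stated. The fix is to apply it with $\sigma(\mathcal{D}\cup G)$ in place of $\mathcal{D}$, after checking (as you gesture) that $\sigma(\mathcal{D}\cup G)$ is closed under fibers because the fibers of its generators, namely sets in $\mathcal{D}$ and level sets of functions in $G$, already lie in $\mathcal{D}$; this substitution is harmless since $\sigma\left(\sigma(\mathcal{D}\cup G)\cup G\right)=\sigma(\mathcal{D}\cup G)$, so the conclusion still reads $\norm{\E(f'\mid\mathcal{D}\cup G)}_{L^2}>0$, the desired contradiction. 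With these two adjustments your argument is correct and coincides in substance with the paper's.
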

\begin{proof}
  Let $f^- :=f-\mathbb{E}(f\mid\mathcal{D})$.  Consider any $\bar{a} \in V^{\bar{n}}$ such that $\mathcal{D}$ contains $\bar{a}$-fibers of $f$.  Let $g$ be a finite product of functions from $G$. Then, for every $i \in [k], j \in [n_i]$ and $\bar{x} \in V^{\bar{n}}$, we have 
  $$f^- \cdot g \left(\bar{x}_{a_{i,j} \to (i,j)} \right)= f \cdot g \left(\bar{x}_{a_{i,j} \to (i,j)} \right)-\mathbb{E} \left(f\mid\mathcal{D} \right) \cdot g \left(\bar{x}_{a_{i,j} \to (i,j)} \right).$$
    For any interval $I$, the sets 
    $$\left\{\bar x\mid f \cdot g\left(\bar{x}_{a_{i,j} \to (i,j)}\right)\in I\right\}, \left\{\bar x\mid g\left(\bar{x}_{a_{i,j} \to (i,j)}\right)\in I\right\}$$
     are both in $\mathcal{D}$, as $\mathcal{D}$ contains $(n-1)$-ary fibers of $f$ with products from $G$. And 
    $\{\bar x\mid\mathbb{E}(f\mid\mathcal{D})(\bar{x}_{a_{i,j} \to (i,j)})\in I\}$ also belongs to $\mathcal{D}$ by Remark \ref{rem: fib alg 1}(1), as $\mathcal{D}$ is closed under fibers. So, by taking unions and intersections of such sets, $\{\bar{x} \mid f^-(\bar{x}_{a_{i,j} \to (i,j)})\in I\}$ belongs to $\mathcal{D}$ as well, hence $\mathcal{D}$ contains $(n-1)$-ary fibers of $f^-$ with products from $G$.

 If $\mathbb{E}(f\mid\mathcal{D} \cup G)\neq \mathbb{E}(f\mid\mathcal{B}_{\bar{n},n-1} \cup G)$, then 
  $$\norm{\E \left( f^- \mid \B_{\bar{n},n-1} \cup G \right)}_{L^2} = \norm{\E (\left(f \mid \B_{\bar{n},n-1} \cup G \right)}_{L^2} - \norm{\E(f \mid \mathcal{D} \cup G)} > 0.$$
   Hence $||\mathbb{E}(f^-\mid\mathcal{D} \cup G)||_{L^2}>0$ by Lemma \ref{lem: fib alg 1}, which is a contradiction to the choice of $f^{-}$.
\end{proof}

\section{Indiscernible sequences of random variables}\label{sec:indiscernibles}

In this section we gather the model theoretic compactness arguments we need and providing the necessary background on ultraproducts and indiscernible sequences. We also prove a couple of de Finetti-style results that are used in the proof of the main theorem.
  
\subsection{Generic $k$-partite $k$-uniform hypergraphs}
We define some classes of ordered partite hypergraphs and related structures, and discuss their basic model-theoretic properties (see \cite{chernikov2014n} for further discussion).

\begin{definition}\label{def: generic hypergraph}
For $k \in \mathbb{N}_{\geq 1}$, let $G_{k,p}$ denote the countable \emph{generic $k$-partite $k$-uniform ordered hypergraph}, viewed as the unique countable first-order structure in the language $\mathcal{L}_{\opg}^k = (R_k,P_1, \ldots, P_k, <)$ with the underlying set $G$ satisfying the following first-order $\mathcal{L}_{\opg}^k$-theory $T^k_{\opg}$:
\begin{enumerate}
\item $P_1, \ldots, P_k$ are unary predicates giving a partition of $G$;
\item $R_k \subseteq \prod_{i \in [k]} P_i$;
\item $<$ is a total linear order on $G$ and $P_1 < \ldots < P_k$;
\item $(P_i, <\restriction_{P_i})$ is a dense linear ordering for each $i \in  [k]$;
\item
for every $j \in [k]$, any finite disjoint sets 
$A_0,A_1\subset {\prod_{i \in [k] \setminus \{j\}}P_i}$
 and
$b_0<b_1\in P_j$, there exists some $b_0<b<b_1$ such that 
\begin{gather*}
	G_{k,p} \models R_k(a_1, \ldots , a_{j-1}, b, a_{j+1}, \ldots, a_{k}) \iff\\ \bar{a} = (a_1, \ldots, a_{j-1}, a_{j+1}, \ldots, a_k) \in A_0
\end{gather*}
 for all $\bar{a} \in A_0 \cup A_1$.

\end{enumerate}

We also let $\mathcal{G}_{k,p}$ be the class of all finite $k$-partite $k$-uniform ordered hypergraphs (i.e.~$\mathcal{G}_{k,p}$ is the class of all finite $\mathcal{L}_{\opg}^k$-structures satisfying axioms (1)--(3) in Definition \ref{def: generic hypergraph}).

\end{definition}

\begin{definition}\label{def: reducts of Gkp}
\begin{enumerate}
	\item We denote by $O_{k,p}$ the reduct of $G_{k,p}$ to the language $\mathcal{L}^k_{\ord} = (P_1, \ldots, P_k, <)$ (i.e.~the structure obtained from $G_{k,p}$ by forgetting the edge relation).
	We let $T^k_{\ord}$ be the $\mathcal{L}^k_{\ord}$-theory consisting of (1),(3) and (4) in Definition \ref{def: generic hypergraph}; and $\mathcal{O}_{k,p}$ be the class of all finite $\mathcal{L}^k_{\ord}$-structures satisfying (1) and (3).
	\item We let $G'_{k,p}$ the reduct of $G_{k,p}$ to the language  $\mathcal{L}^k_{\pg} = (R_k, P_1, \ldots, P_k)$ (i.e.~the structure obtained from $G_{k,p}$ by forgetting the ordering).	 We let $T^{k}_{\pg}$ be the $\mathcal{L}^k_{\pg} = (R_k, P_1, \ldots, P_k)$-theory consisting of (1), (2) and the infinite set of sentences expressing the following:
	\begin{enumerate}
	\item[$(5)'$] for every $j \in [k]$ and any finite disjoint sets 
$A_0,A_1\subset {\prod_{i \in [k] \setminus \{j\}}P_i}$
 there exists some $b \in P_j$ such that 
\begin{gather*}
	G_{k,p} \models R_k(a_1, \ldots , a_{j-1}, b, a_{j+1}, \ldots, a_{k}) \iff\\ \bar{a} = (a_1, \ldots, a_{j-1}, a_{j+1}, \ldots, a_k) \in A_0
\end{gather*}
 for all $\bar{a} \in A_0 \cup A_1$.
	\end{enumerate}
	Finally, we let $\mathcal{G}'_{k,p}$ be the class of all finite $\mathcal{L}^k_{\pg} $-structures satisfying (1) and (2) in Definition \ref{def: generic hypergraph}.
\end{enumerate}

\end{definition}

\begin{definition}\label{def: induced substr}
	Given a structure $\mathcal{M} = (M, (R_i)_{i \in I})$ in a relational language $\mathcal{L} = (R_i : i \in I)$, with $R_i$ a relational symbol of arity $n_i$, and $A \subseteq M$, we let $\mathcal{M}|_A := \left( A, (R_i \cap A^{n_i})_{i \in I} \right)$ be the \emph{substructure induced on $A$}.
\end{definition}

The following is well-known (we refer to e.g.~\cite[Chapter 7.1]{hodges1993model} for the details).
\begin{fac}\label{fac: Ramsey classes}
\begin{enumerate}
\item Each of the theories $T^k_{\opg}, T^{k}_{\pg}$ and $T^k_{\ord}$ is complete, has quantifier elimination, and is $\aleph_0$-categorical (i.e.~ there exists a unique, up to isomorphism, countable structure satisfying the corresponding theory).
	\item $G_{k,p}$ ($G'_{k,p}$, $O_{k,p}$) is the Fra\"iss\'e limit of 	$\mathcal{G}_{k,p}$ ($\mathcal{G}'_{k,p}$, $\mathcal{O}_{k,p}$, respectively).
	\item In particular, $G_{k,p}$ embeds every countable $k$-partite $k$-uniform ordered hypergraphs as an induced substructure; and its finite induced substructures, up to isomorphism, are precisely the structures in $\mathcal{G}_{k,p}$. Analogous statements hold for $G'_{k,p}$, $O_{k,p}$.
	\item Each of the structures $G_{k,p}, G'_{k,p}$, $O_{k,p}$ is \emph{ultrahomogeneous}, i.e.~every isomorphism between two finite induced substructures extends to an isomorphism of the whole structure.
	\end{enumerate}
\end{fac}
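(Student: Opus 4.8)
The plan is to recognize each of the classes $\mathcal{G}_{k,p}$, $\mathcal{G}'_{k,p}$, $\mathcal{O}_{k,p}$ as a Fra\"iss\'e class and then read off every assertion in the statement from the general theory of Fra\"iss\'e limits; the only step with genuine content is checking amalgamation. Concretely, I would first fix one of the three classes, say $\mathcal{G}_{k,p}$, and observe: it has only countably many isomorphism types (the language $\mathcal{L}^k_{\opg}$ is finite and relational), it is closed under isomorphism and under induced substructures in the sense of Definition \ref{def: induced substr}, and it has the joint embedding property (take a disjoint union, order the two pieces arbitrarily inside each $P_i$, and put no cross-edges). For the amalgamation property, given embeddings $A\hookrightarrow B$ and $A\hookrightarrow C$ in $\mathcal{G}_{k,p}$, form $D$ on the pushout $B\sqcup_A C$ of underlying sets, set $R_k^D:=R_k^B\cup R_k^C$ (so any tuple meeting both $B\setminus A$ and $C\setminus A$ is a non-edge), and extend the order inside each part $P_i$ by interleaving $(B\setminus A)\cap P_i$ and $(C\setminus A)\cap P_i$ freely subject to their positions relative to $A\cap P_i$; one checks $D\in\mathcal{G}_{k,p}$ and that the structure maps are embeddings. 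The same ``free'' amalgamation works verbatim for $\mathcal{G}'_{k,p}$ (delete the order coordinate) and for $\mathcal{O}_{k,p}$ (delete the edge coordinate). By Fra\"iss\'e's theorem each class then has a unique countable ultrahomogeneous structure with that class as its age; this already yields part (4), and the half of part (3) saying that the finite induced substructures are exactly the members of the class.

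Next I would identify these Fra\"iss\'e limits with $G_{k,p}$, $G'_{k,p}$, $O_{k,p}$ and deduce part (1). Let $M$ denote the Fra\"iss\'e limit of $\mathcal{G}_{k,p}$. Axioms (1)--(3) of Definition \ref{def: generic hypergraph} hold in $M$ because its age is $\mathcal{G}_{k,p}$; axioms (4) and (5) are exactly the \emph{extension properties} supplied by ultrahomogeneity: given finite disjoint $A_0,A_1\subseteq\prod_{i\ne j}P_i$ and $b_0<b_1$ in $P_j$, the configuration obtained by adjoining a point $b$ with the prescribed edge pattern and $b_0<b<b_1$ lies in $\mathcal{G}_{k,p}$, hence embeds into $M$ over the finite set already present, so such a $b$ exists in $M$; thus $M\models T^k_{\opg}$. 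Conversely any countable $N\models T^k_{\opg}$ has age $\mathcal{G}_{k,p}$ (axioms (1)--(3) bound it above; (4)--(5) give enough room to realize every finite configuration) and is ultrahomogeneous via a back-and-forth in which each step of extending a partial isomorphism is solved by a single application of (5); hence $N\cong M$. So $T^k_{\opg}=\mathrm{Th}(M)$ is $\aleph_0$-categorical, and complete by the \L o\'s--Vaught test since axiom (4) rules out finite models; quantifier elimination follows because $M$ is the Fra\"iss\'e limit of a Fra\"iss\'e class in a finite relational language (equivalently, run the back-and-forth between finite substructures of $\omega$-saturated models). The identical argument, using the extension axioms $(5)'$ in place of (5), handles $T^k_{\pg}$ and $G'_{k,p}$; and $T^k_{\ord}$ with $O_{k,p}$ is just $k$ disjoint copies of $\mathrm{DLO}$ with the blocks named by unary predicates, for which $\aleph_0$-categoricity, completeness and QE are classical.

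For the remaining ``universal'' half of part (3) I would invoke the standard fact that a Fra\"iss\'e limit is universal among all \emph{countable} structures whose age is contained in the class: given countable $P$ with $\mathrm{Age}(P)\subseteq\mathcal{G}_{k,p}$, enumerate $P$ and build an increasing chain of embeddings of its finite substructures into $M=G_{k,p}$, each extension step being an amalgamation problem inside $\mathcal{G}_{k,p}$ solved by ultrahomogeneity; the union embeds $P$ into $G_{k,p}$. The same works for $G'_{k,p}$ and $O_{k,p}$.

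The one place deserving an explicit check — and the only point where ``ordered partite $k$-uniform hypergraphs'' could a priori fail to form a Fra\"iss\'e class — is the amalgamation step for the ordered classes: that two linear orders agreeing on a common subset can always be amalgamated (equivalently, interleaved) into a single linear order. This is elementary (it is precisely the amalgamation property of the class of finite linear orders, applied part by part, the edge relation being amalgamated freely on top), so I do not expect any real obstacle; but it is the heart of the argument and should be stated rather than taken for granted.
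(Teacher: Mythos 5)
The paper offers no proof of this statement at all: it is quoted as a well-known fact with a pointer to Hodges, Chapter 7.1, i.e., exactly the Fra\"iss\'e theory you reconstruct, and your argument (hereditariness, JEP, free amalgamation of the edge relation together with part-by-part interleaving of the linear orders, identification of the limit with the generic structure via the extension axioms, $\aleph_0$-categoricity by back-and-forth, completeness by \L o\'s--Vaught, and quantifier elimination from ultrahomogeneity in a finite relational language) is precisely the standard proof being cited. The only point worth making explicit is that in your ``converse'' and universality steps you implicitly read axiom (4) of Definition \ref{def: generic hypergraph} as ``dense linear order \emph{without endpoints}'' (otherwise axiom (5) cannot be applied when a new point must be placed above or below everything already chosen in some $P_i$), which is the intended reading.
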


The following property will be important in Section \ref{sec: Aldous-Hoover}. 
\begin{definition}\cite[Definition 2.17]{crane2018relatively}\label{def: nDAP}
	Let $\mathcal{K}$ be a collection of finite structures in a relational language $\mathcal{L}$. For $n \in \mathbb{N}_{\geq 1}$, we say that $\mathcal{K}$ satisfies the \emph{$n$-disjoint amalgamation property} (\emph{$n$-DAP}) if for every collection of $\mathcal{L}$-structures $\left( \mathcal{M}_i = (M_i, \ldots)  : i \in [n] \right)$ so that each $\mathcal{M}_i$ is isomorphic to some structure in $\mathcal{K}$, $M_i = [n] \setminus \{ i \}$ and $\mathcal{M}_i|_{[n] \setminus \{i,j\}} = \mathcal{M}_j|_{[n] \setminus \{i,j\}}$ for all $i \neq j \in [n]$, there exists an $\mathcal{L}$-structure $\mathcal{M} = (M, \ldots)$ isomorphic to some structure in $\mathcal{K}$, and such that $M = [n]$ and $\mathcal{M}|_{[n] \setminus \{ i \}} = \mathcal{M}_i$ for every $1 \leq i \leq n$.
	
	We say that an $\mathcal{L}$-structure $\mathcal{M}$ satisfies $n$-DAP if the collection of its finite induced substructures does.
\end{definition}

\begin{prop}\label{prop: Gkp has nDAP}
	$G'_{k,p}$ satisfies $n$-DAP for all $n \in \mathbb{N}_{\geq 1}$.
\end{prop}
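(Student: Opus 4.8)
The goal is to show that the generic partite hypergraph $G'_{k,p}$ (equivalently, the Fra\"iss\'e class $\mathcal{G}'_{k,p}$ of all finite $k$-partite $k$-uniform hypergraphs) has the $n$-disjoint amalgamation property for every $n$. Since finite induced substructures of $G'_{k,p}$ are exactly the structures in $\mathcal{G}'_{k,p}$ (Fact \ref{fac: Ramsey classes}(3)), it suffices to verify $n$-DAP for the class $\mathcal{G}'_{k,p}$ directly. So suppose we are given $\mathcal{L}^k_{\pg}$-structures $\mathcal{M}_i = (M_i, R_k^{\mathcal{M}_i}, P_1^{\mathcal{M}_i}, \ldots, P_k^{\mathcal{M}_i})$ for $i \in [n]$, each in $\mathcal{G}'_{k,p}$, with $M_i = [n] \setminus \{i\}$ and $\mathcal{M}_i|_{[n] \setminus \{i,j\}} = \mathcal{M}_j|_{[n] \setminus \{i,j\}}$ for all $i \neq j$. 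I want to build $\mathcal{M} = (M, \ldots)$ in $\mathcal{G}'_{k,p}$ with $M = [n]$ and $\mathcal{M}|_{[n] \setminus \{i\}} = \mathcal{M}_i$.

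\textbf{Step 1: the underlying set and the partition.} The compatibility condition on the faces already determines the partition on $[n]$: for each element $\ell \in [n]$ and each pair $i \neq j$ in $[n] \setminus \{\ell\}$ (which exists as long as $n \geq 3$; the cases $n = 1, 2$ are trivial or vacuous since there is nothing to amalgamate), the structures $\mathcal{M}_i$ and $\mathcal{M}_j$ assign $\ell$ to the same part because $\ell \in [n] \setminus \{i,j\}$ and the induced substructures on $[n] \setminus \{i,j\}$ agree. So define $P_t^{\mathcal{M}}$ to consist of those $\ell \in [n]$ that lie in part $t$ in every (equivalently, in some) $\mathcal{M}_i$ with $i \neq \ell$. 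This is a well-defined partition of $[n]$, and axioms (1), (2) restricted to the partition data are immediate; moreover $\mathcal{M}|_{[n] \setminus \{i\}}$ has the correct partition for each $i$.

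\textbf{Step 2: defining the hyperedge relation.} The only data not yet pinned down is whether the single $k$-tuple $\bar{e} = (e_1, \ldots, e_k)$ with $e_t \in P_t^{\mathcal{M}}$ lies in $R_k^{\mathcal{M}}$, \emph{provided} such a $k$-tuple with all $k$ coordinates distinct and drawn from the $n$ points actually exists — this requires $n \geq k$ and the $k$ coordinates $e_1, \ldots, e_k$ to be $k$ distinct elements of $[n]$, one from each part. (If $n < k$, or more generally if $[n]$ does not contain a transversal of the $k$ parts, there are no potential hyperedges on $[n]$ at all beyond those already specified by the faces, so $R_k^{\mathcal{M}} := \bigcup_i (R_k^{\mathcal{M}_i})$ works and we are done.) For a transversal $\bar e = (e_1,\ldots,e_k)$: if some $e_t$ is the unique "new" coordinate — i.e. if there is $i \in [n]$ with $i \notin \{e_1, \ldots, e_k\}$ — then $\bar e$ already lives inside $M_i = [n]\setminus\{i\}$, so its $R_k$-status is forced to be whatever $\mathcal{M}_i$ says (and consistency across different such $i$ follows from the pairwise agreement of faces). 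The genuinely free choice occurs only when $\{e_1, \ldots, e_k\} = [n]$, i.e. precisely when $n = k$ and $\bar e$ is an enumeration of all of $[n]$; in that single case we may set its $R_k$-status arbitrarily, say $\bar e \in R_k^{\mathcal{M}}$. Define $R_k^{\mathcal{M}}$ by these prescriptions. By construction $R_k^{\mathcal{M}} \subseteq \prod_t P_t^{\mathcal{M}}$, so $\mathcal{M} \in \mathcal{G}'_{k,p}$, and by construction $\mathcal{M}|_{[n]\setminus\{i\}} = \mathcal{M}_i$ for every $i$: each hyperedge of $\mathcal{M}_i$ is a hyperedge of $\mathcal{M}$ by the "forced" clause, and conversely a hyperedge of $\mathcal{M}$ contained in $[n]\setminus\{i\}$ came from some $\mathcal{M}_j$, $j \neq i$, and agrees with $\mathcal{M}_i$ on the common face.

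\textbf{Main obstacle and remarks.} There is essentially no obstacle: the point is that a hyperedge of a $k$-uniform hypergraph uses exactly $k$ vertices, so on an $n$-set with $n > k$ every potential hyperedge is contained in one of the faces $[n]\setminus\{i\}$ and its status is already determined — disjoint amalgamation is then a matter of checking the face data is consistent, which is exactly the hypothesis of $n$-DAP. The only care needed is the bookkeeping in Step 2 to confirm that when two different faces $\mathcal{M}_i, \mathcal{M}_j$ both contain a given transversal, they agree on it; this follows because that transversal then lies in $[n] \setminus \{i, j\}$ (it omits both $i$ and $j$, as it has only $k \leq n-1 < n$ entries hence misses at least one element besides any fixed one when $n \geq k+1$; and when the transversal is all of $[n]$ we are in the $n = k$ free case where no two faces both contain it). The remaining axioms for membership in $\mathcal{G}'_{k,p}$ — that $R_k \subseteq \prod_t P_t$ — are immediate from the definition. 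This completes the construction, establishing $n$-DAP for all $n \geq 1$.
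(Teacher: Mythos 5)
Your proof is correct and follows essentially the same route as the paper: the paper amalgamates by taking the unions $P_t^{\mathcal{M}} := \bigcup_j P_t^{\mathcal{M}_j}$ and $R_k^{\mathcal{M}} := \bigcup_j R_k^{\mathcal{M}_j}$, using the pairwise agreement of faces exactly as you do to see that the parts and edge statuses are consistently determined. Your extra bookkeeping (the free full-transversal tuple when $n = k$, and the explicit check that $\mathcal{M}|_{[n]\setminus\{i\}} = \mathcal{M}_i$) is a harmless refinement of the same argument.
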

\begin{proof}
	Fix $k \geq 2$. By Fact \ref{fac: Ramsey classes}, we need to show that the class of finite structures $\mathcal{G}'_{k,p}$ satisfies $n$-DAP. Let $n \in \mathbb{N}$ and $\left( \mathcal{M}_i : i \in [n] \right)$ with $\mathcal{M}_i \in G'_{k,p}$ as in Definition \ref{def: nDAP} be given. In particular, each $\mathcal{M}_i$ satisfies (1) and (2) in Definition \ref{def: generic hypergraph}.
	Then 
	\begin{gather}\label{eq: nDAP1}
		P_i^{\mathcal{M}_j} \cap P_{i'}^{\mathcal{M}_{j'}} = \emptyset \textrm{ for every } i \neq i' \in [k] \textrm{ and } j,j' \in [n]. 
	\end{gather}
	
	Indeed, assume $\ell \in [n]$ is such that $\ell \in P_i^{\mathcal{M}_j} \cap P_{i'}^{\mathcal{M}_{j'}}$. If $j \neq j'$, then necessarily $\ell \in [n] \setminus \{j,j'\}$. By assumption $\mathcal{M}_j|_{[n] \setminus \{j,j'\}} = \mathcal{M}_{j'}|_{[n] \setminus \{j,j'\}}$, hence $\ell \in P_i^{\mathcal{M}_j} \cap P_{i'}^{\mathcal{M}_{j}}$. But this is impossible as $\mathcal{M}_j$ satisfies (1) of Definition \ref{def: generic hypergraph}.
	Also 
	\begin{gather}\label{eq: nDAP2}
		\ell \in [n] \implies \ell \in P_i^{\mathcal{M}_j} \textrm{ for some } i \in [k], j \in [n].
	\end{gather}
	Indeed, if $\ell \in [n]$, then $\ell \in M_j$ for any $j \in [n] \setminus \{ \ell \}$, hence belongs to $P_i^{\mathcal{M}_j}$ for some $i \in [k]$ as $\left( P_i^{\mathcal{M}_j} \right)_{i \in [k]}$ is a partition of $M_j$ by assumption.
	
	For $i \in [k]$, we let $P_i^{\mathcal{M}} := \bigcup_{j \in [n]} P_i^{\mathcal{M}_j}$. Then the sets $P_1^{\mathcal{M}}, \ldots, P_k^{\mathcal{M}}$ give a partition of $M = [n]$ by \eqref{eq: nDAP1} and \eqref{eq: nDAP2}.
	
	We let $R_k^{\mathcal{M}} := \bigcup_{j \in [n]} R_k^{\mathcal{M}_j}$. As $R^{\mathcal{M}_j}_k \subseteq \prod_{i \in [k]} P_i^{\mathcal{M}_j}$ for every $j \in [n]$ by assumption, it follows that $R_k^{\mathcal{M}} \subseteq \prod_{i \in [k]} P_i^{\mathcal{M}}$.
	Hence the structure $\mathcal{M} := \left(M, \left(P_i^{\mathcal{M}} \right)_{i \in [k]}, R_k^{\mathcal{M}}  \right)$ satisfies (1) and (2) of Definition \ref{def: generic hypergraph}, hence $\mathcal{M} \in G'_{k,p}$.
\end{proof}

\begin{remark}
$O_{k,p}$ (and hence $G_{k,p}$) do not satisfy $3$-DAP.
\end{remark}

\subsection{Generalized indiscernibles}

Many combinatorial arguments around $\VC_k$-dimension can be considerably  simplified using a combination of structural Ramsey theory and logical compactness,  encapsulated in the model-theoretic notion of \emph{generalized indiscernible sequences} (this method does not typically provide strong bounds however).

\begin{definition} Let $\mathcal{M}$ be a first-order structure in a language $\mathcal{L}$.
\begin{enumerate}
\item Let $I$ be a structure in a language $\mathcal{L}_0$.
We say that a collection $\left(a_{i}\right)_{i\in I}$ of tuples in $\mathcal{M}$ is \emph{$I$-indiscernible over a set of parameters $C \subseteq \mathcal{M}$} if for
all $n\in\mathbb{N}$ and all $i_{0},\ldots,i_{n}$ and $j_{0},\ldots,j_{n}$
from $I$ we have:
$$
\qftp_{\mathcal{L}_0}\left(i_{0}, \ldots, i_{n}\right)=\qftp_{\mathcal{L}_0}\left(j_{0},\ldots,j_{n}\right)
\Rightarrow $$
$$\tp_{\mathcal{L}}\left(a_{i_{0}}, \ldots, a_{i_{n}} / C\right)=\tp_{\mathcal{L}}\left(a_{j_{0}}, \ldots, a_{j_{n}} / C \right)
.$$

	\item For two $\mathcal{L}_0$-structures $I$ and $J$, we say that a collection of tuples $\left(b_{i}\right)_{i\in J}$ in $\mathcal{M}$
is \emph{based on} a collection of tuples $\left(a_{i}\right)_{i\in I}$ in $\mathcal{M}$ over a set of parameters $C \subseteq \mathcal{M}$ if for any finite
set $\Delta$ of $\mathcal{L}(C)$-formulas,  and for any finite tuple $\left(j_{0},\ldots,j_{n}\right)$
from $J$ there is a tuple $\left(i_{0},\ldots,i_{n}\right)$ from
$I$ such that:

\begin{itemize}
\item $\qftp_{\mathcal{L}_0}\left(j_{0},\ldots,j_{n}\right)=\qftp_{\mathcal{L}_0}\left(i_{0},\ldots,i_{n}\right)$ and
\item $\tp_{\Delta}\left(b_{j_{0}},\ldots,b_{j_{n}}\right)=\tp_{\Delta}\left(a_{i_{0}},\ldots,a_{i_{n}}\right)$.
\end{itemize}

\end{enumerate}

\end{definition}

\begin{definition}
	When $(I,<)$ is an arbitrary linear order and $(a_i)_{i \in I}$ is a sequence of finite tuples in $\mathcal{M}$, we say that the sequence $(a_i)_{i \in I}$ is \emph{indiscernible} (indiscernible over $C$) if $(a_i)_{i \in I}$ is $(I,<)$-indiscernible over $\emptyset$ (over $C$).
\end{definition}

%
%

The following is standard, relying on the usual Ramsey theorem for (1), and on the fact that finite ordered partitioned hypergraphs form a \emph{Ramsey class} \cite{nevsetvril1977partitions, abramson1978models, nevsetvril1983ramsey} for (2).

\begin{fac}\label{fac: existence of indiscernibles}
	Let $\mathcal{L}$ be a countable language, $\mathcal{M}$ an $\aleph_1$-saturated $\mathcal{L}$-structure and $C \subseteq \mathcal{M}$ a countable subset. 
	
\begin{enumerate}
\item (see e.g.~\cite[Lemma 5.1.3]{tent2012course}) For every countable infinite linear orders $I$ and $J$ and a sequence $(a_i)_{i \in I}$ of finite tuples in $\mathcal{M}$, there exists some sequence $(b_i)_{i \in J}$ of tuples in $\mathcal{M}$ indiscernible over $C$ and based on $(a_i)_{i \in I}$.
\item \cite[Corollary 4.8]{chernikov2014n} For any $k \in \mathbb{N}_{\geq 1}$ and a collection of finite tuples $\left(a_{g}\right)_{g\in G_{k,p}}$ in $\mathcal{M}$, there is some collection of finite tuples $\left(b_{g}\right)_{g\in G_{k,p}}$ in $\mathcal{M}$ which is $G_{k,p}$-indiscernible over $C$ and is based on $\left(a_{g}\right)_{g\in G_{k,p}}$ over $C$. The same holds with $O_{k,p}$ instead of $G_{k,p}$ everywhere.
\end{enumerate}
\end{fac}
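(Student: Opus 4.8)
\textbf{Proof plan for Fact \ref{fac: existence of indiscernibles}.}

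This is a standard fact, so the plan is to cite the two sources and explain how each part follows. For part (1), the statement is the classical existence of indiscernible sequences modeled on a given linear order $I$ in a sufficiently saturated structure; I would simply invoke \cite[Lemma 5.1.3]{tent2012course}. The only mild point to note is that the countability of $\mathcal{L}$ and of $C$ together with $\aleph_1$-saturation of $\mathcal{M}$ are exactly what is needed: one forms the type $\Gamma((x_i)_{i\in J})$ expressing that $(x_i)_{i\in J}$ is $C$-indiscernible with each finite subtuple realizing a type that occurs cofinally often among increasing tuples from $(a_i)_{i\in I}$; by Ramsey's theorem (applied to colorings of finite increasing tuples from $I$ by $\Delta$-types over $C$, for each finite $\Delta$) every finite fragment of $\Gamma$ is consistent, and by compactness together with $\aleph_1$-saturation (there are only countably many formulas and countably many parameters, so the type is over a countable set) it is realized. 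The resulting sequence is automatically based on $(a_i)_{i\in I}$ over $C$ by construction.

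For part (2), the argument is identical in structure, with Ramsey's theorem for linear orders replaced by the statement that the class $\mathcal{G}_{k,p}$ of finite ordered partitioned $k$-uniform hypergraphs is a Ramsey class \cite{nevsetvril1977partitions, abramson1978models, nevsetvril1983ramsey}. Concretely, given $(a_g)_{g\in G_{k,p}}$, for each finite set $\Delta$ of $\mathcal{L}(C)$-formulas one colors the finite induced subhypergraphs of $G_{k,p}$ (of a fixed isomorphism type) by the $\Delta$-type over $C$ of the corresponding tuple of $a_g$'s; the Ramsey property of $\mathcal{G}_{k,p}$ yields, inside any finite configuration, a large monochromatic sub-configuration, and this gives consistency of the partial type asserting $G_{k,p}$-indiscernibility and basedness. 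Compactness plus $\aleph_1$-saturation (again the relevant type is over a countable set, since $\mathcal{L}$ and $C$ are countable and $G_{k,p}$ is countable) produce the desired $(b_g)_{g\in G_{k,p}}$. This is precisely \cite[Corollary 4.8]{chernikov2014n}. The identical statement for $O_{k,p}$ follows from the fact that the class $\mathcal{O}_{k,p}$ of finite ordered partitioned sets is also a Ramsey class (indeed a special case, being the hypergraph language with the edge relation forgotten).

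There is no real obstacle here; the content is entirely in the cited Ramsey-class results and the routine compactness packaging, so I would keep the write-up to a sentence or two pointing to \cite[Lemma 5.1.3]{tent2012course} and \cite[Corollary 4.8]{chernikov2014n} and noting the role of the Ramsey property of $\mathcal{G}_{k,p}$ and $\mathcal{O}_{k,p}$.
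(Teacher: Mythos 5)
Your proposal is correct and matches the paper's treatment: the paper states this as a Fact with the same citations, noting only that (1) rests on the usual Ramsey theorem and (2) on the fact that finite ordered partitioned hypergraphs form a Ramsey class, which is exactly the compactness-plus-Ramsey packaging you describe. Nothing further is needed.
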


%

\subsection{Ultraproducts of functions on partite graded probability spaces}\label{sec: ultraproducts of k-GPS}

We assume familiarity with ultraproducts of first-order structures and the construction of Loeb's measure. There are multiple ways to make sense of ultraproducts and compactness of measure spaces and measurable functions (Keisler's \emph{probability logic} \cite{keisler1985chapter} and its variants, \emph{continuous logic} \cite{yaacov2008model}, \emph{AML logic} \cite{goldbring2014approximate}, etc.), but here we use the most basic approach relying on the familiar ultraproduct construction for first-order logic (and similar to the one used e.g.~by Hrushovski in \cite{hrushovski2012stable}).

\begin{definition}\label{def: k-GPS as a FO struct}
		Assume that $\mathfrak{P} = \left( V_{[k]}, \B_{\bar{n}}, \mu_{\bar{n}} \right)_{\bar{n} \in \mathbb{N}^k}$ is a $k$-partite graded probability space, $I$ is a \emph{countable} set and  $\bar{f} = \left(f_{\alpha} : \alpha \in I \right), f_{\alpha}: \prod_{i \in [k]} V_i \to [0, 1]$ is a collection of $\B_{\bar{1}^k}$-measurable functions. We associate to it a $k$-sorted first-order structure $\mathcal{M}_{\mathfrak{P},\bar{f}}$ in a language $\mathcal{L}_{\infty,I}$ (or just $\mathcal{L}_{\infty}$ when $I$ is clear from the context) with sorts $V_1, \ldots, V_k$ in the following way.
		
	For every $q \in \mathbb{Q}^{[0,1]}$ and $\alpha \in I$, $\mathcal{L}_0$ contains a $k$-ary relational symbol 
	$$F^{< q}_{\alpha}(x_1, \ldots, x_k)$$
	 with the variable $x_i$ of sort $V_i$, interpreted in $\mathcal{M}_{\mathfrak{P},\bar{f}}$ via
	$$\mathcal{M}_{\mathfrak{P},f} \models F^{<q}_{\alpha}(x_1, \ldots, x_k) ~ :\iff ~ f_{\alpha}(x_1, \ldots, x_k) < q$$
	for any $(x_1, \ldots, x_k) \in V^{\bar{1}^k}$. We write $F^{\geq  q}_{\alpha}$ as an abbreviation for $\neg F^{< q}$. Note that for every $q,\alpha$, the set $\left\{\bar{b} \in V^{\bar{1}^k} : \mathcal{M}_{\mathfrak{P},\bar{f}} \models F^{<q}_{\alpha}(\bar{b}) \right\}$ is in $\B_{\bar{1}^k}$ by measurability of $f_{\alpha}$.
	
By induction on $i \in \mathbb{N}$, we define a \emph{countable} language $\mathcal{L}_i$ as follows. In addition to all the symbols in $\mathcal{L}_i$, for every \emph{quantifier-free} $\mathcal{L}
_i$-formula $\varphi(\bar{x}, \bar{y})$ such that the tuple $\bar{x}$ corresponds to $V^{\bar{n}}, \bar{n} \in \mathbb{N}^k$ and $r \in \mathbb{Q}$, we add to $\mathcal{L}_{i+1}$ a new relational symbol $m_{\bar{x}} < r. \varphi(\bar{x}, \bar{y})$ with free variables $\bar{y}$, interpreted by: for every tuple $\bar{b}$ corresponding to $\bar{y}$,
	$$\mathcal{M}_{\mathfrak{P},\bar{f}} \models m_{\bar{x}} < r. \varphi(\bar{x}, \bar{b}) ~ :\iff \mu_{\bar{n}} \left( \varphi(\bar{x}, \bar{b}) \right) < r,$$
	where as usual $\varphi(\bar{x},\bar{b}) = \{ \bar{a} \in V^{\bar{n}} \mid  \mathcal{M}_{\mathfrak{P},\bar{f}} \models \varphi(\bar{a}, \bar{b}) \}$ is the set defined by the corresponding instance of $\varphi$
	(note that this set is $\mu_{\bar{n}}$-measurable by Fubini property in $\mathfrak{P}$ and induction). Let $\mathcal{L}_{\infty} := \bigcup_{i \in \mathbb{N}} \mathcal{L}_i$.
	We will write $m_{\bar{x}} \geq r$ as an abbreviation for $\neg m_{\bar{x}} < r$.
	
	We also write $\mathcal{M}_{\bar{f}}$ to denote the $\mathcal{L}_0$-reduct of $\mathcal{M}_{\mathfrak{P},\bar{f}}$.
\end{definition}

Now assume that for each $j \in \mathbb{N}$, $\mathfrak{P}^j = \left( V^j_{[k]}, \B^j_{\bar{n}}, \mu^j_{\bar{n}} \right)_{\bar{n} \in \mathbb{N}^k}$ is a $k$-partite graded probability space and $f^j_{\alpha}: \prod_{i \in [k]}V^j_i \to [0,1]$ is a  $\B^j_{\bar{1}^{k}}$-measurable function for $\alpha \in I$.
Let $\mathcal{U}$ be a non-principal ultrafilter on $\mathbb{N}$.

For $i \in [k]$, we let $\tilde{V}_i := \prod_{j \in \mathbb{N} } V^j_i / \cU$. Then for any $\bar{n} \in \mathbb{N}^k$, $\tilde{V}^{\bar{n}}$ is naturally identified with $\prod_{j \in \mathbb{N}}\left( \prod_{i \in [k]} \left(V_i^j\right)^{n_i} \right) /\cU$.

We let $\tilde{\mathcal{M}} := \prod_{j \in \mathbb{N}} \mathcal{M}_{\mathfrak{P}_j, \bar{f}^j} / \cU$ (i.e., the usual ultraproduct of $\mathcal{L}_{\infty}$-structures). 

For $\alpha \in I$, we define a function $\tilde{f}_{\alpha}: \tilde{V}^{\bar{1}^k} \to [0,1]$  via $\tilde{f}_{\alpha}(\bar{x}) := \inf\{q \in \mathbb{Q}^{[0,1] }: \tilde{M
} \models F^{<q}_{\alpha}(\bar{x}) \}$ (and refer to it as the ultraproduct of $f^j_{\alpha}$'s with respect to $\mathcal{U}$). 

For $\bar{n} \in \mathbb{N}^k$, we let $\tilde{\B}_{\bar{n}}^0$ consist of all subsets of $\tilde{V}^{[\bar{n}]}$ of the form $X = \prod_{j \in \mathbb{N}} X_j / \cU$ for some $X_j \in \B^j_{\bar{n}}$.

For such a set $X$, we define $\tilde{\mu}^0_{\bar{n}} (X) := \lim_{j \to \cU} \mu^j_{\bar{n}}(X_j) \in [0,1]$.

We let $\tilde{\B}_{\bar{n}}$ be the $\sigma$-algebra of subsets of $\tilde{V}^{\bar{n}}$ generated by $\tilde{\B}_{\bar{n}}^0$. 

As in the standard construction of Loeb's measure, we have the following fact.
\begin{fac}\label{fac: ultraproduct props}
	\begin{enumerate}
	\item  For every $\bar{n} \in \mathbb{N}^k$, $\tilde{\mu}^0_{\bar{n}}$ is a finitely-additive probability measure on the Boolean algebra $\tilde{\B}^0_{\bar{n}}$.		\item $\tilde{\mathcal{M}}$ is an \emph{$\aleph_1$-saturated} $\mathcal{L}_{\infty}$-structure (in particular, for every finite tuple of variables $\bar{x}$ and a countable collection of $\mathcal{L}_{\infty}$-formulas $\varphi_i(\bar{x},\bar{b}_i)$, with $\bar{b}_i$ an arbitrary tuple of parameters from $\tilde{\mathcal{M}}$, if every finite subset of $\{\varphi_i(\bar{x},\bar{b}_i) : i \in \mathbb{N} \}$ is realized by some tuple in $\tilde{\mathcal{M}}$, then the whole set is realized by some tuple in $\tilde{\mathcal{M}}$).
		\item For every $\bar{n} \in \mathbb{N}^k$, there exists a unique countably-additive probability measure $\tilde{\mu}_{\bar{n}}$ on $\tilde{\B}_{\bar{n}}$ extending $\tilde{\mu}^0_{\bar{n}}$.
		\item $\tilde{\mathfrak{P}} :=\left(\tilde{V}_{[k]}, \tilde{\B}_{\bar{n}}, \tilde{\mu}_{\bar{n}} \right)_{\bar{n} \in \mathbb{N}^{k}}$ is a $k$-partite graded probability space.
		\item Fot every $r \in \mathbb{Q}^{[0,1]}, \alpha \in I$ and $\bar{x} \in \tilde{V}^{\bar{1}^k}$, we have
		$$\tilde{f}_{\alpha}(\bar{x}) < r \implies \tilde{\mathcal{M}} \models F^{<r}_{\alpha}(\bar{x}) \implies \tilde{f}_{\alpha}(\bar{x}) \leq r.$$
		\item For every $r \in \mathbb{Q}^{[0,1]}$, $\bar{n} \in \mathbb{N}^k$, $\varphi(\bar{x}, \bar{y})$ a quantifier-free $\mathcal{L}_\infty$-formula with $\bar{x}$ corresponding to $\tilde{V}^{\bar{n}}$ and $\bar{b}$ a tuple from $\tilde{\mathcal{M}}$, we have 
		$$\tilde{\mu}_{\bar{n}}\left( \varphi(\bar{x}, \bar{b}) \right) < r \implies \tilde{\mathcal{M}} \models m_{\bar{x}} < r . \varphi(\bar{x}, \bar{b}) \implies \tilde{\mu}_{\bar{n}} \left( \varphi(\bar{x}, \bar{b}) \right) \leq r.$$
		\item The functions $\tilde{f}_{\alpha}$ are $\tilde{\B}_{\bar{1}^k}$-measurable.
	\end{enumerate}
\end{fac}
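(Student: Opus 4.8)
The plan is to follow the standard Loeb measure construction, adapted to this many-sorted setting, treating the seven items in order of logical dependence. First I would record the purely combinatorial facts that ultraproducts commute coordinatewise with finite Boolean operations, with $\oplus$, and with reordered products, and that a set $\prod_j Z_j/\cU$ is empty if and only if $\{j : Z_j = \emptyset\} \in \cU$. From these, $\tilde{\B}^0_{\bar{n}}$ is a Boolean algebra, and since $\prod_j X_j/\cU = \prod_j Y_j/\cU$ forces $\{j : X_j = Y_j\} \in \cU$, the premeasure $\tilde{\mu}^0_{\bar{n}}$ is well defined; finite additivity and total mass $1$ then transfer from each $\mathfrak{P}_j$ via elementary properties of ultralimits, giving (1). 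Item (2) I would simply quote: an ultraproduct over a countably incomplete ultrafilter of structures in a countable language is $\aleph_1$-saturated, and here $\mathcal{L}_\infty$ is countable by construction while every non-principal ultrafilter on $\mathbb{N}$ is countably incomplete.

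For (3) the crucial step is to show that $\tilde{\mu}^0_{\bar{n}}$ is already countably additive on the Boolean algebra $\tilde{\B}^0_{\bar{n}}$; this is the heart of the Loeb construction. Fixing a decreasing sequence $\mathbb{N} = A_0 \supseteq A_1 \supseteq \cdots$ with $A_m \in \cU$ and $\bigcap_m A_m = \emptyset$ (available by countable incompleteness), I would argue that in any countable disjoint union $X = \bigsqcup_m X_m$ with $X$ and all $X_m$ in $\tilde{\B}^0_{\bar{n}}$, only finitely many $X_m$ can be nonempty --- otherwise a diagonal choice of representatives, using that $X \setminus \bigcup_{m \le M} X_m$ is itself a nonempty set in $\tilde{\B}^0_{\bar{n}}$, produces a point of $X$ lying in no $X_m$. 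Countable additivity of $\tilde{\mu}^0_{\bar{n}}$ on $\tilde{\B}^0_{\bar{n}}$ then follows from finite additivity as usual, and the Carath\'eodory extension theorem produces a countably additive probability measure $\tilde{\mu}_{\bar{n}}$ on $\tilde{\B}_{\bar{n}} = \sigma(\tilde{\B}^0_{\bar{n}})$, unique by the Hahn uniqueness theorem since $\tilde{\mu}^0_{\bar{n}}$ is finite.

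Items (5)--(7) I would derive from \L os' theorem. Since $\forall \bar{x}\,(F^{<q}_\alpha(\bar{x}) \to F^{<r}_\alpha(\bar{x}))$ holds in each $\mathcal{M}_j$ for $q < r$ in $\mathbb{Q}^{[0,1]}$, it holds in $\tilde{\mathcal{M}}$, and combined with the definition of $\tilde{f}_\alpha$ as an infimum this gives the two implications in (5) (with the convention $\inf\emptyset = 1$); (7) is then immediate, writing $\tilde{f}_\alpha^{<r}$ as the countable union over rationals $q < r$ of the sets $\{\bar{x} : \tilde{\mathcal{M}} \models F^{<q}_\alpha(\bar{x})\} = \prod_j (f^j_\alpha)^{<q}/\cU \in \tilde{\B}^0_{\bar{1}^k}$. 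For (6) I would first check, by induction on the quantifier-free formula $\varphi$, that each instance $\varphi(\bar{x},\bar{b})$ defines a set of the form $\prod_j \varphi_j(\bar{x},\bar{b}_j)/\cU \in \tilde{\B}^0_{\bar{n}}$ (atomic and measure-predicate instances by \L os' theorem, Boolean combinations since $\tilde{\B}^0_{\bar{n}}$ is an algebra), whence $\tilde{\mu}_{\bar{n}}(\varphi(\bar{x},\bar{b})) = \lim_{j \to \cU} \mu^j_{\bar{n}}(\varphi_j(\bar{x},\bar{b}_j))$; the two implications then follow from the elementary facts that $\lim_\cU a_j < r$ implies $\{j : a_j < r\} \in \cU$, and conversely $\{j : a_j < r\} \in \cU$ implies $\lim_\cU a_j \le r$.

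Finally, (4) requires verifying the three axioms of Definition \ref{def: kGPS} for $\tilde{\mathfrak{P}}$. Symmetry and closure under products are straightforward on $\tilde{\B}^0$ (everything computed coordinatewise, the measure statements handled by multiplicativity of ultralimits and the corresponding axioms in the $\mathfrak{P}_j$), and I would push them up to $\tilde{\B}$ by a monotone-class argument, with invariance of $\tilde{\mu}_{\bar{n}}$ under permutations and its agreement with the product measure holding on $\tilde{\B}^0$ and hence everywhere by uniqueness in (3). The Fubini axiom is where the real work lies, and it is the step I expect to be the main obstacle: for $B = \prod_j B_j/\cU \in \tilde{\B}^0_{\bar{n}+\bar{m}}$ one gets $B_{\bar{w}} = \prod_j (B_j)_{w_j}/\cU \in \tilde{\B}^0_{\bar{n}}$, and applying the internal integral form of Fubini (Remark \ref{rem: gen Fub}) in each $\mathfrak{P}_j$ together with an ultralimit computation shows that $\bar{w} \mapsto \tilde{\mu}_{\bar{n}}(B_{\bar{w}})$ is $\tilde{\B}^0_{\bar{m}}$-measurable with $\tilde{\mu}_{\bar{n}+\bar{m}}(B) = \int \tilde{\mu}_{\bar{n}}(B_{\bar{w}})\, d\tilde{\mu}_{\bar{m}}(\bar{w})$; but extending this from $\tilde{\B}^0$ to all of $\tilde{\B}$ demands a monotone-class argument that simultaneously controls measurability of the fiber-measure functions and the behaviour of their integrals under countable monotone limits (invoking countable additivity from (3) and dominated convergence), which is exactly the delicate part of such constructions and where I would concentrate the effort.
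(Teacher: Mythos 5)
Your proposal is correct and follows essentially the same route as the paper's (deliberately brief) proof: \L os' theorem and ultralimit properties for (1), (5), (6), the standard saturation fact for (2), the Loeb-style countable-additivity-on-internal-sets argument plus Carath\'eodory for (3), a countable union of internal level sets for (7), and a routine verification of the axioms for (4). The only cosmetic difference is that for the Fubini axiom you extend from $\tilde{\B}^0$ to $\tilde{\B}$ via a monotone-class argument, whereas the paper phrases the same routine step as approximation by $\tilde{\B}^0_{\bar{n}}$-simple functions combined with \L os; both work.
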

Here (1), (5)  and (6) hold by \L os' theorem and basic properties of ultralimits; (2) is a standard model-theoretic fact; (3) follows from $\aleph_1$-saturation restricting to any countable sublanguage and Carath\'eodory's extention theorem; (4) is a routine verification, e.g.~to check that Fubini property holds in the ultraproduct, one approximates the integral by a sum of $\tilde{\B}^0_{\bar{n}}$-simple functions, and these are arbitrary close to satisfying Fubini by \L os and the assumption that each $\mathfrak{P}_j$ satisfies Fubini; (7) holds as $\left\{ \bar{x} \in \tilde{V}^{\bar{1}^k} : \tilde{f}(\bar{x}) < r \right\} = \bigcup_{\varepsilon \in \mathbb{Q}_{>0}} \left\{ \bar{x} \in \tilde{V}^{\bar{1}^k} : \tilde{M} \models F^{< r- \varepsilon}(\bar{x}) \right\}$ by (5), and every set on the right is in $\tilde{\B}^0_{\bar{1}^k}$. 

The following subtle point can be  mostly  ignored in the conclusions, but we will have to keep track of it in the proofs.
\begin{remark}\label{rem : UP is prop to MP 2}
	Note that the interpretation of the $F^{<r}_{\alpha}$ and $m_{\bar{x}} < r$ predicates may differ in $\mathcal{M}_{\tilde{\mathfrak{P}}, \bar{\tilde{f}}}$ and $\tilde{\mathcal{M}}$, but not by much: due to Fact \ref{fac: ultraproduct props}(5) and (6), we have  $\tilde{\mathcal{M}} \propto \mathcal{M}_{\tilde{\mathfrak{P}}, \bar{\tilde{f}}}$ in the sense of the following definition.
\end{remark}

\begin{definition}
	Let $\mathcal{M}, \mathcal{M}'$ be two $\mathcal{L}_{\infty}$-structures. We write $\mathcal{M} \propto \mathcal{M}'$ if the structures $\mathcal{M}, \mathcal{M}'$ have the same underlying sorts $V_1, \ldots, V_k$, and 
	for every $\alpha \in I$, $r \in \mathbb{Q}^{[0,1]}$ and $\varepsilon \in \mathbb{Q}_{>0}$ so that $r+\varepsilon \leq 1$ we have
\begin{gather*}
	\mathcal{M}' \models F^{<r}_{\alpha}(\bar{b}) \Rightarrow  \mathcal{M} \models F^{<r}_{\alpha}(\bar{b}) \Rightarrow 	\mathcal{M}' \models F^{<(r+\varepsilon)}_{\alpha}(\bar{b}) \textrm{ and}\\
		\mathcal{M}' \models m_{\bar{x}} < r . \varphi(\bar{x}, \bar{b}) \Rightarrow \mathcal{M} \models m_{\bar{x}} < r . \varphi(\bar{x}, \bar{b}) \Rightarrow
		\mathcal{M}' \models m_{\bar{x}} < \left(r + \varepsilon\right) . \varphi(\bar{x}, \bar{b})
\end{gather*}
for every  quantifier-free $\mathcal{L}_{\infty}$-formula $\varphi(\bar{x}, \bar{y})$ and a tuple $\bar{b}$ from $\mathcal{M}$ of appropriate length.

If $\mathcal{M}, \mathcal{M}'$ are just $\mathcal{L}_{0}$-structures, we write $\mathcal{M} \propto \mathcal{M}'$ when the first of these two conditions is satisfied.
\end{definition}

\subsection{Lemmas on indiscernible sequences}
Throughout this section, $k \in \mathbb{N}_{\geq 1}$, $\mathfrak{P} = \left(V_{[k]}, \B_{\bar{n}}, \mu_{\bar{n}} \right)_{\bar{n} \in \mathbb{N}^k}$ is a $(k+1)$-partite graded probability space and $f: V^{\bar{1}^{k+1}} \to [0,1]$ is a $ \B_{\bar{1}^{k+1}}$-measurable function. We let $\mathcal{M}_{\mathfrak{P},f}$ be the associated $\mathcal{L}_{\infty}$-structure and let $\mathcal{M}'$ be some $\mathcal{L}_{\infty}$-structure satisfying $\mathcal{M}' \propto \mathcal{M}_{\mathfrak{P},f}$ (Definition \ref{def: k-GPS as a FO struct}). We verify that various probabilistic conditions on the fibers of $f$ are \emph{type-definable} in $\mathcal{M}'$, via appropriate finitary approximations, and prove some lemmas on indiscernible sequences in the spirit of the classical de Finetti's theorem on exchangeable sequences of random variables.

\begin{definition}

A set $X \subseteq V^{\bar{n}}$ is \emph{type-definable} in an $\mathcal{L}_{\infty}$-structure $\mathcal{M}'$ if there exists a countable set $\left\{\varphi_i(\bar{x}, \bar{b}_i) : i \in \mathbb{N} \right\}$ of $\mathcal{L}_{\infty}$-formulas with the tuple of variables $\bar{x}$ corresponding to  $V^{\bar{n}}$ and parameters in $\mathcal{M}'$ so that $X = \left\{\bar{a} \in  V^{\bar{n}} : \mathcal{M}' \models \varphi_i(\bar{a}, \bar{b}_i) \textrm{ for all } i \in \mathbb{N} \right\}$.
%
%
\end{definition}

\begin{remark}\label{rem: fib alg unif def}
	The $\sigma$-algebra $\mathcal{B}^{f}_{\bar{1}^k,k-1}$ (recall Definition \ref{def: fiber algebra}(5)) has a generating set  that is uniformly definable in $\mathcal{M}'$. Namely, given $q \in \mathbb{Q}^{[0,1]}$, we consider the $\mathcal{M}'$-definable set 
	$$F^{<q} := \left\{\bar{x} \in V^{\bar{1}^{k+1}} : \mathcal{M}' \models F^{<q}(\bar{x}) \right\}.$$	
	Using $\mathcal{M}' \propto \mathcal{M}_{\mathfrak{P}, f}$ we have $f^{<r} = \bigcup_{\varepsilon \in \mathbb{Q}_{>0}} F^{< r - \varepsilon}$ and $F^{< r} = \bigcup_{\varepsilon \in \mathbb{Q}_{>0}} f^{r-\varepsilon}$, hence $\{F^{<q} : q \in \mathbb{Q}^{[0,1]} \}$ is a generating set for $\sigma(f)$.
	
Now, for each $\emptyset \neq I \in \binom{[k]}{\leq k-1}$ and $q \in \mathbb{Q}^{[0,1]}$, we consider the quantifier-free $\mathcal{L}_0$-formula 
$$\varphi_{I,q} \left( \bar{x}, \bar{y} \right) := F^{<q} \left(  \bar{x}_{y_i \to x_i, i \in I} \ ^{\frown}  ( y_{k+1} )  \right),$$
where $\bar{x}$ is a tuple of variables corresponding to $V^{\bar{1}^k}$ and $\bar{y}$ is a tuple of variables corresponding to $V^{\bar{1}^{k+1}}$.

\noindent Then, for any $a \in V_{k+1}$, every set in $\B^{f,a}$ (see Definition \ref{def: fiber algebra}) is in the $\sigma$-algebra generated  by the sets of solutions of $\varphi_{I,q} \left( \bar{x}, \bar{b}^{\frown} (a) \right)$ in $\mathcal{M}_{\mathfrak{P}}$ for some $I  \in  K := \binom{[k]}{\leq k-1} \setminus \{\emptyset \}$, $q \in \mathbb{Q}^{[0,1]}$ and $\bar{b} \in V^{\bar{1}^k}$.	
\end{remark}

This allows us to uniformly define various other algebras and their generating sets.

%
\begin{definition}\label{def: various formulas}
Given $n \in \mathbb{N}$, let 
	\begin{gather*}
		S_{n} := \left\{ s \mid s: [n] \times K \times \mathbb{Q}^{[0,1]}_n \to \{-1, 1\} \right\}, \\
		U_{n} := \left\{ u \mid u: [n] \times \mathbb{Q}_n^{[0,1]} \to \{-1, 1\} \right\}, \textrm{ and}\\
		Q_{n} := S_{n} \times S_n \times U_{n}.
	\end{gather*}

 Given $n$ and $(s,t,u) \in Q_n$, we consider the quantifier-free $\mathcal{L}_0$-formula 
\begin{gather*}
	\varphi^{s,t,u}(\bar{x}; \bar{y}_1, \ldots, \bar{y}_n; z_1, \ldots, z_n) := \\
	 \bigwedge_{(i,I,q) \in [n] \times K \times \mathbb{Q}_n^{[0,1]}, s(i,I,q)=1 } \varphi_{I,q} \left(\bar{x},{\bar{y}_i}\right) \land  \\\bigwedge_{(i,I,q) \in [n] \times K \times \mathbb{Q}_n^{[0,1]}, s(i,I,q)=-1 } \neg \varphi_{I,q} \left(\bar{x},\bar{y}_i \right) \land 
\end{gather*}

	$$\bigwedge_{(i,q) \in [n] \times \mathbb{Q}_n^{[0,1]}, t(i,q)=1 } F^{<q}(\bar{x},z_i) \land \bigwedge_{(i,q) \in [n] \times \mathbb{Q}_n^{[0,1]} \land t(i,q)= -1 } \neg F^{<q}(\bar{x},z_i). $$
	\end{definition}

\begin{remark}\label{rem: meaning of phi}
	\begin{enumerate}
		\item Every subset of $V^{\bar{1}^k}$ defined by an instance of $\varphi^{s,t,u}$ in $\mathcal{M}'$ is in $\B_{\bar{1}^k}$. 
		
		\item For any $\bar{a} \in V_k^n$ and $\bar{b}_1, \ldots, \bar{b}_n \in V^{\bar{1}^{k+1}}$, the sets 
	$$\left\{ \varphi^{\bar{v}} \left(\bar{x}; \bar{b}_1, \ldots, \bar{b}_n; \bar{a} \right) \mid  \bar{v} \in Q_n \right\}$$ 
	are precisely the atoms of the Boolean algebra generated by 
	$$\left\{\varphi_{I,q}\left(\bar{x}, \bar{b}_i \right) : I \in K, i \in [n], q \in \mathbb{Q}_n^{[0,1]}  \right\} \cup \left\{F^{<q}_{a_i} : i \in [n], q \in \mathbb{Q}_n^{[0,1]} \right\}.$$
	\end{enumerate}
\end{remark}

\begin{lemma}\label{lem: lin ineq type def}
	For any $n \in \mathbb{N}_{\geq 1}$, any quantifier-free $\mathcal{L}_{\infty}$-formulas $\varphi_i(\bar{x}, \bar{y}_1, \bar{y}_2)$, $1 \leq i \leq n$ with $\bar{x}$ corresponding to $V^{\bar{n}}$ and $\bar{y}_i$ to $V^{\bar{m}_i}$, $\varepsilon \in \mathbb{R}_{>0}$ and $ \beta_1, \ldots, \beta_n  \in \mathbb{R}$, there exists countable partial $\mathcal{L}_{\infty}$-types $\Gamma^{\bar{\varphi}}_{\leq \varepsilon, \bar{\beta}}(\bar{y}_1)$, $\Gamma^{\bar{\varphi}}_{\geq \varepsilon, \bar{\beta}}(\bar{y}_1)$ satisfying the following.

	For every $\mathfrak{P}, f, \mathcal{M}' \propto \mathcal{M}_{\frak{P},f}$ and $\bar{b} \in V^{\bar{m}_1}$, 
		\begin{gather*}
		\mathcal{M}' \models \Gamma^{\bar{\varphi}}_{\leq \varepsilon, \bar{\beta}}(\bar{b}) \iff \textrm{ for every }\bar{c} \in V^{\bar{m}_2}, \sum_{i=1}^{n} \beta_i \cdot \mu_{\bar{n}}\left( \varphi_i(\bar{x}, \bar{b}, \bar{c}) \right) \leq \varepsilon.
	\end{gather*}
	
	And similarly for ``$\leq$'' replaced by ``$\geq$''.
	\end{lemma}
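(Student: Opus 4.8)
The strategy is to express the condition ``for every $\bar c \in V^{\bar m_2}$, $\sum_{i=1}^n \beta_i \mu_{\bar n}(\varphi_i(\bar x,\bar b,\bar c)) \le \varepsilon$'' as a countable conjunction of $\mathcal{L}_\infty$-formulas in the free variables $\bar y_1$. First I would reduce to the case of a single measure-term rather than a linear combination. The standard trick: for a fixed tuple $\bar c$, the Boolean algebra generated by the sets $\varphi_1(\bar x,\bar b,\bar c),\dots,\varphi_n(\bar x,\bar b,\bar c)$ has at most $2^n$ atoms, each of which is itself defined by a quantifier-free $\mathcal{L}_\infty$-formula $\psi_S(\bar x,\bar y_1,\bar y_2)$ (a Boolean combination of the $\varphi_i$), indexed by $S \subseteq [n]$; and $\sum_i \beta_i \mu_{\bar n}(\varphi_i(\bar x,\bar b,\bar c)) = \sum_{S\subseteq[n]} \big(\sum_{i\in S}\beta_i\big)\mu_{\bar n}(\psi_S(\bar x,\bar b,\bar c))$ by finite additivity. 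So the target inequality is a fixed rational (or real) linear combination of $2^n$ pairwise-disjoint-piece measures, which is a purely finitary bookkeeping matter.

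Next, for each choice of a rational ``approximating vector'' I would quantize: for $\bar q = (q_S)_{S\subseteq[n]} \in (\mathbb{Q}^{[0,1]})^{2^n}$ with $\sum_S q_S \le 1$ (the $q_S$ will bound $\mu_{\bar n}(\psi_S)$ from above on the relevant piece, or below, as appropriate), introduce the $\mathcal{L}_\infty$-formula
\[
\theta_{\bar q}(\bar y_1) := \forall \bar y_2 \left( \bigwedge_{S\subseteq [n]} \big(m_{\bar x} < q_S + \tfrac{1}{n_0}\big).\psi_S(\bar x,\bar y_1,\bar y_2) \ \longrightarrow\ \bot \right),
\]
wait --- since $\mathcal{L}_\infty$ need not have a universal quantifier available as a relation symbol, I must instead follow exactly the device used in Definition \ref{def: k-GPS as a FO struct}: the language $\mathcal{L}_\infty$ is built to contain, for each quantifier-free formula and each rational $r$, a new relation symbol $m_{\bar x}<r.\varphi$; but it does \emph{not} close under $\forall$. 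So the universal quantifier over $\bar c$ has to be handled by taking the countable \emph{type} to consist of all the instances obtained by ranging over finitely many ``test configurations'' --- which is precisely the point: a $\bar b$ satisfies the desired property iff for \emph{every} rational $\bar q$ with $\sum_S (\sum_{i\in S}\beta_i)^+ q_S$-style bound violating $\varepsilon$, it is \emph{not} the case that all the $m_{\bar x}\ge q_S.\psi_S(\bar x,\bar b,\bar c)$ hold for some $\bar c$. Concretely, I would put
\[
\Gamma^{\bar\varphi}_{\le\varepsilon,\bar\beta}(\bar y_1) := \Big\{\ \neg\big(\exists \bar y_2\big)\!\!\bigwedge_{S\subseteq[n]}\!\! m_{\bar x}\ \diamond_S\ q_S.\psi_S(\bar x,\bar y_1,\bar y_2)\ :\ \bar q\in(\mathbb{Q}^{[0,1]})^{2^n},\ \textstyle\sum_{S}c_S q_S > \varepsilon + \tfrac1{n_0},\ n_0\in\mathbb{N}_{>0}\ \Big\},
\]
where $c_S = \sum_{i\in S}\beta_i$, $\diamond_S$ is ``$\ge$'' when $c_S>0$ and ``$<$'' (with a tiny perturbation) when $c_S \le 0$, and the ``$\exists \bar y_2$'' is handled not as a symbol of $\mathcal{L}_\infty$ but by noting that negating an existential over a sort is the same as asserting a $\Pi_1$ sentence, which in an $\aleph_1$-saturated structure is equivalent to a countable conjunction of instances --- alternatively, and more cleanly, I would simply observe that by $\aleph_1$-saturation of the ultraproducts we actually \emph{do} get to use bounded quantifiers over the (internal) sorts, exactly as in the proof of Lemma \ref{lem: type-def of norm} referenced later in the paper, so the existential over $\bar y_2$ can be kept.

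The verification of the biconditional then runs as follows. For the ($\Leftarrow$) direction, if the inequality holds for all $\bar c$ then for any $\bar q$ with $\sum_S c_S q_S > \varepsilon$ there can be no $\bar c$ realizing all $m_{\bar x}\diamond_S q_S.\psi_S$, because by Fact \ref{fac: ultraproduct props}(6) (the $\propto$-relation, Remark \ref{rem : UP is prop to MP 2}) satisfaction of $m_{\bar x}\ge q_S.\psi_S(\bar x,\bar b,\bar c)$ in $\mathcal{M}'$ forces $\mu_{\bar n}(\psi_S(\bar x,\bar b,\bar c)) \ge q_S - \delta$ for arbitrarily small $\delta$, and summing would push $\sum_i\beta_i\mu_{\bar n}(\varphi_i) $ above $\varepsilon$, a contradiction. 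For ($\Rightarrow$), if the inequality fails for some $\bar c$, say $\sum_i \beta_i\mu_{\bar n}(\varphi_i(\bar x,\bar b,\bar c)) = \varepsilon + 3\eta$ for some $\eta>0$, I pick rationals $q_S$ within $\eta/2^{n+1}$ of $\mu_{\bar n}(\psi_S(\bar x,\bar b,\bar c))$ (on the correct side dictated by the sign of $c_S$), so that $\sum_S c_S q_S > \varepsilon + \eta$, and then $\mathcal{M}' \models m_{\bar x}\diamond_S q_S.\psi_S(\bar x,\bar b,\bar c)$ for each $S$ again by the $\propto$-relation; hence the corresponding formula in $\Gamma^{\bar\varphi}_{\le\varepsilon,\bar\beta}$ fails, so $\mathcal{M}' \not\models \Gamma^{\bar\varphi}_{\le\varepsilon,\bar\beta}(\bar b)$. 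The ``$\ge$'' version is symmetric, replacing $\varepsilon$ by $-\varepsilon$ after negating the $\beta_i$, or directly by swapping the roles of the inequalities and the perturbation directions.

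\textbf{Main obstacle.} The genuinely delicate point is the handling of the existential quantifier over $\bar y_2$ inside the type: $\mathcal{L}_\infty$ as literally defined in Definition \ref{def: k-GPS as a FO struct} is a relational language closed under the ``measure-quantifier'' $m_{\bar x}<r$ but \emph{not} under $\exists \bar y_2$, so one must either (a) enrich the presentation to allow bounded existential quantification over sorts (harmless, since all sorts are first-order sorts and the measure relations are interpreted on genuine subsets), and then note that a countable type using $\exists\bar y_2$ still type-\emph{defines} the set in the $\aleph_1$-saturated model as required, or (b) avoid $\exists\bar y_2$ by replacing it with the countable set of its instances --- but that only works if there are countably many relevant $\bar c$, which there need not be. Option (a) is what the paper effectively uses elsewhere (cf.\ the proof of Lemma \ref{lem: type-def of norm} invoked in Corollary \ref{thm: the very main thm}), so I would adopt it and remark that $\mathcal{M}' \propto \mathcal{M}_{\mathfrak{P},f}$ transfers the needed one-sided measure estimates. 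Everything else --- the atomization into $2^n$ disjoint pieces, the rational quantization, the $\eta$-bookkeeping --- is routine and parallels the construction in Definition \ref{def: various formulas}.
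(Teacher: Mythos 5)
Your proof is correct and follows essentially the same route as the paper: quantize the measures by rational thresholds, encode each threshold configuration using the $m_{\bar x}<r$ predicates (applied to quantifier-free Boolean combinations of the $\varphi_i$), quantify over $\bar y_2$, and use the sandwich given by $\mathcal{M}'\propto\mathcal{M}_{\mathfrak{P},f}$ to pass between satisfaction in $\mathcal{M}'$ and the true measures, with the strict/non-strict slack absorbed by ranging over all rational tuples. Your ``main obstacle'' is not actually an obstacle: $\mathcal{L}_\infty$ is an ordinary multi-sorted relational first-order language, so the formulas appearing in a partial type may freely quantify over the sorts---the quantifier-free restriction applies only to formulas placed under the measure quantifier $m_{\bar x}<r$---and this is exactly what the paper's own proof does, writing $\forall\bar y_2\,\bigl(m_{\bar x}\bowtie_i r_i.\,\varphi_i(\bar x,\bar b,\bar y_2)\bigr)$, so no enrichment of the language and no saturation of $\mathcal{M}'$ is needed.
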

	\begin{proof}
		Fix some $\mathfrak{P}, f, \mathcal{M}' \propto \mathcal{M}_{\frak{P},f}$. Without loss of generality $\beta_i \neq 0$ for all $i \in [n]$. Then for any $\bar{b} \in V^{\bar{m}_1}$ we have
		\begin{gather*}
			\forall \bar{c} \in V^{\bar{m}_2}, \sum_{i=1}^{n} \beta_i \cdot \mu_{\bar{n}}\left( \varphi_i(\bar{x}, \bar{b}, \bar{c}) \right) \leq \varepsilon \iff \\
			\bigwedge_{\substack{r_1, \ldots, r_n \in \mathbb{Q}_{\geq 0}\\\sum_{i=1}^{n} \beta_i r_i > \varepsilon } } \bigwedge_{i \in [n]} \forall \bar{c} \in V^{\bar{m}_2}, \mu_{\bar{n}}\left( \varphi_i(\bar{x}, \bar{b}, \bar{c}) \right) \bowtie_i r_i ,
		\end{gather*}
		where $\bowtie_i$ is ``$<$'' if $\beta_i > 0$, and $\bowtie_i$ is ``$\geq$'' if $\beta_i < 0$, for every $i \in [n]$.
		
		As $\mathcal{M}' \propto \mathcal{M}_{\mathfrak{P}}$, for every $r \in \mathbb{Q}$, $i \in [n]$ and $\bar{b} \in V^{\bar{m}_1}, \bar{c} \in V^{\bar{m}_2}$ we have 
		$$\mu_{\bar{n}}\left( \varphi_i(\bar{x}, \bar{b}, \bar{c}) \right) < r \Rightarrow \mathcal{M}' \models m_{\bar{x}} < r . \varphi(\bar{x}, \bar{b}, \bar{c}) \Rightarrow \mu_{\bar{n}} \left( \varphi(\bar{x}, \bar{b}, \bar{c}) \right) \leq r.$$
		Hence, for any $\bar{b} \in V^{\bar{m}_1}$,
			\begin{gather*}
			\forall \bar{c} \in V^{\bar{m}_2}, \sum_{i=1}^{n} \beta_i \cdot \mu_{\bar{n}}\left( \varphi_i(\bar{x}, \bar{b}, \bar{c}) \right) \leq \varepsilon \iff \\
			 \mathcal{M}' \models \Gamma^{\bar{\varphi}}_{\leq \varepsilon, \bar{\beta}}(\bar{b}) :=  \bigwedge_{t \in \mathbb{N}}\bigwedge_{\substack{r_1, \ldots, r_n \in (\mathbb{Q}_t)_{\geq 0}\\\sum_{i=1}^{n} \beta_i r_i > \varepsilon } } \bigwedge_{i \in [n]} \forall \bar{y}_2\left(m_{\bar{x}} \bowtie_i r_i. \varphi_i(\bar{x}, \bar{b}, \bar{y}_2) \right).
		\end{gather*}
		Note that the definition of $\Gamma^{\bar{\varphi}}_{\leq \varepsilon, \bar{\beta}}$ does not depend on  $\mathfrak{P}, f, \mathcal{M}' \propto \mathcal{M}_{\frak{P},f}$. The argument for ``$\geq$'' is symmetric.
	\end{proof}
	
	\begin{definition}\label{def: simple func h}
	Given some $n \in \mathbb{N}$, $r \in \mathbb{Q}$, $\bar{\alpha} = \left( \alpha_{\bar{v}} \in \mathbb{Q} : \bar{v} \in Q_n \right)$, tuples $\left( \bar{b}_j \in V^{\bar{1}^{k+1}} : j \in [n] \right)$ and $\bar{a} = \left(  a_j \in V_{k+1} : j \in [n] \right)$, we define the function
	
		$$h_{n, \bar{\alpha}, \bar{a}, \bar{b}_1, \ldots, \bar{b}_n }\left( \bar{x} \right) = \sum_{\bar{v} \in Q_n } \alpha_{\bar{v}}\chi_{\varphi^{\bar{v}}(\bar{x}, \bar{b}_1, \ldots, \bar{b}_n, a_{1}, \ldots, a_{n})}$$
		from $V^{\bar{1}^k}$ to $[0,1]$ (where, as usual, $\varphi^{\bar{v}}(\bar{x}, \bar{b}_1, \ldots, \bar{b}_n, a_{1}, \ldots, a_{n})$ represents the set of solutions of this formula evaluated in $\mathcal{M}'$).
	\end{definition}
	
	\begin{lemma}\label{lem: type-def of norm}
	 For any fixed $n \in \mathbb{N}$, $r \in \mathbb{Q}$, $\bar{\alpha}= \left( \alpha_{\bar{v}} \in \mathbb{Q} : \bar{v} \in Q_n \right)$ there exist quantifier-free $\mathcal{L}_{\infty}$-formulas $\Theta_{<r}^{n, \bar{\alpha}}, \Theta^{n, \bar{\alpha}}_{\geq r}$ and countable partial $\mathcal{L}_{\infty}$-types $\Lambda_{\leq r}^{n, \bar{\alpha}}$, $\Lambda_{\geq r}^{n, \bar{\alpha}}$, $\tilde{\Lambda}_{\leq r}^{n, \bar{\alpha}}$ and $\tilde{\Lambda}_{\geq r}^{n, \bar{\alpha}}$  satisfying the following for any $\mathfrak{P}, f, \mathcal{M}' \propto \mathcal{M}_{\frak{P},f}$:
	 \begin{enumerate}
	 \item for any $(c, \bar{a}, \bar{b}_1, \ldots, \bar{b}_n) \in V^{\bar{1}^k} \times \times V_{k+1}^n \times \left( V^{\bar{1}^{k+1}} \right)^n$, 
	 $$  \mathcal{M}' \models \Theta_{ < r}^{n, \bar{\alpha}}(c, \bar{a}, \bar{b}_1, \ldots, \bar{b}_n) \iff h_{n, \bar{\alpha}, \bar{a}, \bar{b}_1, \ldots, \bar{b}_n }(c) < r;$$
	 	\item for any $(a, \bar{a}, \bar{b}_1, \ldots, \bar{b}_n) \in V_{k+1} \times V_{k+1}^n \times \left( V^{\bar{1}^{k+1}} \right)^n$, 
	 $$  \mathcal{M}' \models \Lambda_{\leq r}^{n, \bar{\alpha}}(a, \bar{a}, \bar{b}_1, \ldots, \bar{b}_n) \iff \norm{f_{a} - h_{n, \bar{\alpha}, \bar{a}, \bar{b}_1, \ldots, \bar{b}_n }}_{L^2} \leq r;$$
	 \item for any $(a,\bar{a}) \in V_{k+1} \times V^n_{k+1}$,
	 \begin{gather*}
	 	\mathcal{M}' \models \tilde{\Lambda}_{\leq r}^{n, \bar{\alpha}}(a, \bar{a}) \iff \\
	 	 \forall (\bar{b}_1, \ldots, \bar{b}_n) \in  \left( V^{\bar{1}^{k+1}} \right)^n, \norm{f_{a} - h_{n, \bar{\alpha}, \bar{a}, \bar{b}_1, \ldots, \bar{b}_n }}_{L^2} \leq r.
	 \end{gather*}
	 \end{enumerate}
		And the same for ``$\geq$''.
		 
	\end{lemma}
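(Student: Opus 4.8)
The plan is to build each of the three objects by expressing the relevant real-valued condition as a countable Boolean combination of the ``approximate'' $\mathcal{L}_\infty$-predicates $F^{<q}_\alpha$ and $m_{\bar x}<r.\varphi$, using that $\mathcal{M}'\propto\mathcal{M}_{\mathfrak{P},f}$ only costs us an arbitrarily small slack $\varepsilon$ in each such predicate, which we absorb by ranging over all rational thresholds. The key preliminary observation is that for fixed $n,\bar\alpha$ the function $h_{n,\bar\alpha,\bar a,\bar b_1,\dots,\bar b_n}$ is a simple function whose pieces are the atoms $\varphi^{\bar v}(\bar x,\bar b_1,\dots,\bar b_n,a_1,\dots,a_n)$ (Remark \ref{rem: meaning of phi}(2)), and these atoms are pairwise disjoint, so for each $\bar v$ the value of $h$ on that atom is the constant $\sum_{\bar v'\le\bar v}$-type sum $\alpha_{\bar v}$ — more precisely $h$ takes value $\alpha_{\bar v}$ exactly on $\varphi^{\bar v}$. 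Hence $h(c)<r$ holds iff $c$ lies in the union of those atoms $\varphi^{\bar v}$ with $\alpha_{\bar v}<r$, which is a finite disjunction over $\bar v\in Q_n$ with $\alpha_{\bar v}<r$ of the quantifier-free formula $\varphi^{\bar v}(c,\bar a,\bar b_1,\dots,\bar b_n)$ — giving directly a quantifier-free $\mathcal{L}_0$-formula $\Theta^{n,\bar\alpha}_{<r}$, and $\Theta^{n,\bar\alpha}_{\ge r}$ as its negation. This proves (1).

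For (2), the point is to write $\norm{f_a-h}_{L^2}^2\le r^2$ as a condition on measures of definable sets. Since $(f_a-h)^2$ is bounded, $\norm{f_a-h}^2_{L^2}=\int(f_a-h)^2\,d\mu_{\bar 1^k}$, and on each atom $\varphi^{\bar v}$ of the Boolean algebra generated by $\{\varphi_{I,q}(\bar x,\bar b_i)\}\cup\{F^{<q}_{a_i}(\bar x)\}\cup\{F^{<q}_a(\bar x)\}$ (now including thresholds for the extra fiber $f_a$), both $f_a$ and $h$ are ``almost constant'': $h$ is literally constant (value $\alpha_{\bar v}$), and $f_a$ is pinned to a small interval of width $2^{-t}$ determined by which $F^{<q}_a$-predicates hold. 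So the integral is within $O(2^{-t})$ of a finite linear combination $\sum_{\bar v}\gamma_{\bar v}\,\mu_{\bar 1^k}(\varphi^{\bar v}(\bar x,\bar a,\bar b_1,\dots,\bar b_n,a))$ of measures of definable sets, with rational coefficients $\gamma_{\bar v}$ depending only on $\bar\alpha$ and the level thresholds. Applying Lemma \ref{lem: lin ineq type def} (with the parameter tuple being $(\bar b_1,\dots,\bar b_n,\bar a,a)$ and the ``$\bar c$'' quantifier empty) gives, for each $t$, a partial type asserting that this finite linear combination is $\le r^2+2^{-t+2}$; intersecting over all $t\in\mathbb N$ yields the type $\Lambda^{n,\bar\alpha}_{\le r}$ whose satisfaction is equivalent to $\int(f_a-h)^2\le r^2$, i.e.~to $\norm{f_a-h}_{L^2}\le r$, by monotonicity as in the proofs of Lemmas \ref{lem: non-triv cond exp} and \ref{lem: comparing integrals}. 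The formula $\Lambda^{n,\bar\alpha}_{\ge r}$ is obtained symmetrically with the reversed inequality, again absorbing the $O(2^{-t})$ error. For (3), we simply prepend a universal quantifier over the parameters $\bar b_1,\dots,\bar b_n\in V^{\bar 1^{k+1}}$ to (the formulas making up) $\Lambda^{n,\bar\alpha}_{\le r}$: universally quantifying a countable type is still expressible as a countable type (each conjunct becomes $\forall\bar y_1\cdots\forall\bar y_n(\text{conjunct})$), which gives $\tilde\Lambda^{n,\bar\alpha}_{\le r}$, and likewise $\tilde\Lambda^{n,\bar\alpha}_{\ge r}$.

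The main obstacle I expect is the careful bookkeeping in step (2): one must choose the level partition $0=q_0<\cdots<q_{2^t}=1$ fine enough that on each atom $\varphi^{\bar v}$ the fiber $f_a$ is confined to $[q_j,q_{j+1})$ for the appropriate $j$ (this requires the atoms of the generating Boolean algebra to include \emph{all} the $F^{<q_j}_a$-predicates, so $Q_n$ must be enlarged accordingly — consistent with Definition \ref{def: various formulas}, where the $t$-component of $\bar v$ records the thresholds for $f_a$ itself), and then verify that the resulting finite linear combination of measures approximates $\int(f_a-h)^2$ uniformly to within $O(2^{-t})$ independently of $\mathfrak{P},f,\mathcal{M}'$. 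Once this uniformity is in hand, the passage from ``$\int(f_a-h)^2\le r^2+2^{-t+2}$ for all $t$'' to ``$\norm{f_a-h}_{L^2}\le r$'' is routine, and the transfer across $\propto$ and ultraproducts (needed downstream in Corollary \ref{thm: the very main thm} and Theorem \ref{thm:converse}) is immediate from {\L}o\'s' theorem since everything has been written as a countable $\mathcal{L}_\infty$-type. The formulas $\Theta^{n,\bar\alpha}_{<r},\Theta^{n,\bar\alpha}_{\ge r}$ from (1) will of course be reused inside $\Lambda^{n,\bar\alpha}_{\le r}$ to express events like ``$h(\bar x)\in[q_j,q_{j+1})$'' when one wants the quantifier-free presentation, though here it is cleaner to work directly with the atom formulas $\varphi^{\bar v}$.
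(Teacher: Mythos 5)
Your treatment of parts (2) and (3) is essentially the paper's own argument: discretize $f_a$ by dyadic level sets, reduce $\norm{f_a-h}_{L^2}\leq r$ to countably many rational linear inequalities in measures of quantifier-free definable sets, invoke Lemma \ref{lem: lin ineq type def} at each precision, intersect over precisions, and obtain $\tilde\Lambda$ by placing the universal quantifier over $(\bar b_1,\ldots,\bar b_n)$ inside each conjunct (which is exactly how the paper's $\tilde\Gamma_{\varepsilon,r}$ is built). The uniformity-in-$(\mathfrak{P},f,\mathcal{M}')$ bookkeeping you flag as the main obstacle is indeed the content of the paper's $A_\varepsilon, B_\varepsilon, C_\varepsilon, D_\varepsilon, E_\varepsilon$ computation, and your atom-by-atom version of it is a harmless variant of the paper's expansion of the square into three integrals.

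However, part (1) as you wrote it has a genuine error. By Definition \ref{def: simple func h}, $h_{n,\bar\alpha,\bar a,\bar b_1,\ldots,\bar b_n}(c)=\sum_{\bar v\in Q_n:\, c\models\varphi^{\bar v}}\alpha_{\bar v}$, and the map $\bar v\mapsto\varphi^{\bar v}$ is not injective (the formula $\varphi^{s,t,u}$ in Definition \ref{def: various formulas} does not use all components of $\bar v$, so each atom of the generated Boolean algebra is named by many indices $\bar v$; the paper explicitly allows repeated presentations of atoms). Consequently $h$ does \emph{not} take the value $\alpha_{\bar v}$ on $\varphi^{\bar v}$; its value on an atom is the sum of $\alpha_{\bar v}$ over all indices whose formula is satisfied there. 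Your proposed $\Theta^{n,\bar\alpha}_{<r}:=\bigvee_{\alpha_{\bar v}<r}\varphi^{\bar v}$ therefore fails: e.g.\ if two indices naming the same atom each carry coefficient $\tfrac{2r}{3}$, every point of that atom satisfies your formula although $h=\tfrac{4r}{3}\geq r$ there. The correct (and paper's) formula disjoins over all subsets $w\subseteq Q_n$ with $\sum_{\bar v\in w}\alpha_{\bar v}<r$ the complete description $\bigwedge_{\bar v\in w}\varphi^{\bar v}\land\bigwedge_{\bar v\notin w}\neg\varphi^{\bar v}$, so that the set of satisfied indices, and hence the value of $h$, is pinned down exactly. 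The same slip appears in your part (2) when you say $h$ is ``literally constant (value $\alpha_{\bar v}$)'' on each atom; there it is harmless, since the argument only needs that $h$ is constant on each atom with a rational value computable from $\bar\alpha$, which remains true once the value is read as the appropriate sum.
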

	\begin{proof}
	
	(1) Let 
$W := \left\{ w \subseteq Q_n \mid  \sum_{\bar{v} \in w} \alpha_{\bar{v}} < r \right\}$.
Then clearly
\begin{gather*}
h_{n, \bar{\alpha}, \bar{a}, \bar{b}_1, \ldots, \bar{b}_n }(c) < r \iff 
\mathcal{M}' \models 
	\Theta_{ < r}^{n, \bar{\alpha}}(c, \bar{a}, \bar{b}_1, \ldots, \bar{b}_n) :=\\
	 \bigvee_{w \in W} \Bigg(  \bigwedge_{\bar{v} \in w} \varphi^{\bar{v}}(c, \bar{b}_1, \ldots, \bar{b}_n, a_{i_1}, \ldots, a_{i_n}) \land \\
	 \bigwedge_{\bar{v} \in Q_n \setminus w} \neg \varphi^{\bar{v}}(c, \bar{b}_1, \ldots, \bar{b}_n, a_{i_1}, \ldots, a_{i_n}) \Bigg).
\end{gather*}

 (2) and (3) Note that, using $\mathcal{M}' \propto \mathcal{M}_{\frak{P},f}$, for any $r < s \in \mathbb{Q} \cap [0,1]$ and $a \in V_{k+1}$, if $\bar{x} \in F^{\geq r}_{a} \cap F^{<s}_{a}$, then $|f_{a}(\bar{x}) - r| \leq r-s$. Then for any $\varepsilon > 0$ we can choose $m_{\varepsilon}, \ell_{\varepsilon} \in \mathbb{N}$ large enough and a partition $(q^{\varepsilon}_i : i \in [\ell_{\varepsilon}] )$ of $[0,1]$ with $q^{\varepsilon}_i \in \mathbb{Q}_{m_{\varepsilon}}^{[0,1]}, q^{\varepsilon}_i < q^{\varepsilon}_{i+1}, q^{\varepsilon}_1 = 0, q^{\varepsilon}_{\ell} = 1$ so that for any $a \in V_{k+1}$, any quantifier-free $\mathcal{L}_{\infty}$-formula $\psi(\bar{x}, \bar{y})$ with $\bar{x}$ corresponding to $V^{\bar{1}^k}$ and any tuple $\bar{c}$ corresponding to $\bar{y}$ we have:
	\begin{gather}\label{eq: type-def integrals1}
	\int f_a \cdot \chi_{\psi(\bar{x},\bar{c})} d\mu_{\bar{1}^k} 	\approx^{\varepsilon}  A_{\varepsilon}^{a, \psi(\bar{x}, \bar{c})} := \\
 \sum_{i = 1}^{\ell_{\varepsilon} - 2} q^{\varepsilon}_i \cdot \mu_{\bar{1}^k} \left( F^{\geq q^{\varepsilon}_i}_a \cap  F^{<q^{\varepsilon}_{i+1}}_a \cap \psi(\bar{x}, \bar{c})\right) + q^{\varepsilon}_{\ell - 1} \cdot \mu_{\bar{1}^k} \left( F^{\geq q^{\varepsilon}_{\ell-1}}_a \cap \psi(\bar{x}, \bar{c}) \right).\nonumber
	\end{gather}

	For any tuple $(a, \bar{a}, \bar{b}_1, \ldots, \bar{b}_n)$, using \eqref{eq: type-def integrals1} we have 
	\begin{gather*}
\norm{f_{a} - h_{n, \bar{\alpha}, \bar{a}, \bar{b}_1, \ldots, \bar{b}_n }}_{L^2}^2 = \\
\int \left(f_a -  \sum_{\bar{v} \in Q_n} \alpha_{\bar{v}}\chi_{\varphi^{\bar{v}}(\bar{x}, \bar{b}_1, \ldots, \bar{b}_n, a_{1}, \ldots, a_{n})} \right)^2 d\mu_{\bar{1}^k} \\
= \int  f_{a}^2 d \mu_{\bar{1}^k} + \sum_{\bar{v} \in Q_n} (-2 \alpha_{\bar{v}}) \int f_{a} \cdot  \chi_{\varphi^{\bar{v}}(\bar{x}, \bar{b}_1, \ldots, \bar{b}_n, a_{1}, \ldots, a_{n}) } d\mu_{\bar{1}^k}
	+ \\
	 \sum_{\bar{v},\bar{v}' \in Q_n} \alpha_{\bar{v}}  \alpha_{\bar{v}'}\int \chi_{\varphi^{\bar{v}}(\bar{x}, \bar{b}_1, \ldots, \bar{b}_n, a_{1}, \ldots, a_{n})} 
	\chi_{\varphi^{\bar{v}'}(\bar{x}, \bar{b}_1, \ldots, \bar{b}_n, a_{1}, \ldots, a_{n})}  
	d\mu_{\bar{1}^k} 
\end{gather*}

As $f_a$ takes values in $[0,1]$, as in \eqref{eq: type-def integrals1} for the first integral we have 
$$\int  f_{a}^2 d \mu_{\bar{1}^k} \approx^{2 \varepsilon} B_{\varepsilon}^{a} := \sum_{i = 1}^{\ell_{\varepsilon} - 2} \left(q_i^{\varepsilon} \right)^2 \cdot \mu_{\bar{1}^k} \left( F^{\geq q^{\varepsilon}_i}_a \cap  F^{<q^{\varepsilon}_{i+1}}_a \right) + \left(q^{\varepsilon}_{\ell - 1} \right)^2 \cdot \mu_{\bar{1}^k} \left( F^{\geq q^{\varepsilon}_{\ell-1}}_a \right).$$
	Using \eqref{eq: type-def integrals1} for the second integral we have 
	\begin{gather*}
		\sum_{\bar{v} \in Q_n} (-2 \alpha_{\bar{v}}) \int f_{a} \cdot  \chi_{\varphi^{\bar{v}}(\bar{x}, \bar{b}_1, \ldots, \bar{b}_n, a_{1}, \ldots, a_{n}) } d\mu_{\bar{1}^k} \approx^{\varepsilon \cdot |Q_n|} C_{\varepsilon}^{a, \bar{b}_1, \ldots, \bar{b}_n, \bar{a}} :=\\
		\sum_{\bar{v} \in Q_n} (-2 \alpha_{\bar{v}}) \cdot A_{\varepsilon}^{a, \varphi^{\bar{v}}(\bar{x}, \bar{b}_1, \ldots, \bar{b}_n, a_{1}, \ldots, a_{n})} = \\
		\sum_{\bar{v} \in Q_n} \sum_{i = 1}^{\ell_{\varepsilon}-2} (-2\alpha_{\bar{v}}\cdot q^{\varepsilon}_i) \cdot \mu_{\bar{1}^k} \left( F^{\geq q^{\varepsilon}_i}_a \cap  F^{<q^{\varepsilon}_{i+1}}_a \cap \varphi^{\bar{v}}(\bar{x}, \bar{b}_1, \ldots, \bar{b}_n, a_{1}, \ldots, a_{n})\right) \\
		+ \sum_{\bar{v} \in Q_n}(-2\alpha_{\bar{v}} \cdot q^{\varepsilon}_{\ell - 1}) \cdot \mu_{\bar{1}^k} \left( F^{\geq q^{\varepsilon}_{\ell-1}}_a \cap \varphi^{\bar{v}}(\bar{x}, \bar{b}_1, \ldots, \bar{b}_n, a_{1}, \ldots, a_{n}) \right).
	\end{gather*}
And the third integral is equal to
\begin{gather*}
	D^{\bar{b}_1, \ldots, \bar{b}_n, \bar{a}}_{\varepsilon} := \sum_{\substack{\bar{v},\bar{v}' \in Q_n}} (\alpha_{\bar{v}} \cdot \alpha_{\bar{v}'} ) \cdot \mu_{\bar{1}^k} \left( \left(\varphi^{\bar{v}} \land \varphi^{\bar{v}'} \right)\left(\bar{x}, \bar{b}_1, \ldots, \bar{b}_n, a_{1}, \ldots, a_{n}\right) \right).
\end{gather*}
Combining, we get 
\begin{gather*}
	\norm{f_{a} - h_{n, \bar{\alpha}, \bar{a}, \bar{b}_1, \ldots, \bar{b}_n }}^2_{L^2} \approx^{\left(|Q_n| + 2 \right) \varepsilon} E_{\varepsilon}^{a,\bar{a}, \bar{b}_1, \ldots, \bar{b}_n} :=
	\\  B_{\varepsilon}^{a} + C_{\varepsilon}^{a, \bar{b}_1, \ldots, \bar{b}_n, \bar{a}} + D^{\bar{b}_1, \ldots, \bar{b}_n, \bar{a}}_{\varepsilon}.
\end{gather*}

By definition of $h_{n, \bar{\alpha}, \bar{a}, \bar{b}_1, \ldots, \bar{b}_n }$ and assumption on $f$, $\norm{f_{a} - h_{n, \bar{\alpha}, \bar{a}, \bar{b}_1, \ldots, \bar{b}_n }}$ takes values in $[-c,c]$ for some $c = c(\bar{\alpha}) \in \mathbb{R}_{>0}$, hence

\begin{gather*}
	\norm{f_{a} - h_{n, \bar{\alpha}, \bar{a}, \bar{b}_1, \ldots, \bar{b}_n }}_{L^2} \approx^{ c \sqrt{\left(|Q_n| + 2 \right) \varepsilon}} E_{\varepsilon}^{a,\bar{a}, \bar{b}_1, \ldots, \bar{b}_n} :=
	\\  B_{\varepsilon}^{a} + C_{\varepsilon}^{a, \bar{b}_1, \ldots, \bar{b}_n, \bar{a}} + D^{\bar{b}_1, \ldots, \bar{b}_n, \bar{a}}_{\varepsilon}.
\end{gather*}

By Lemma \ref{lem: lin ineq type def} and the definition of $E_{\varepsilon}^{a,\bar{a}, \bar{b}_1, \ldots, \bar{b}_n}$, for any $r \in \mathbb{Q}$ there exist some countable partial $\mathcal{L}_{\infty}$-types $\Gamma_{\varepsilon,r}, \tilde{\Gamma}_{\varepsilon,r}$ over $\emptyset$
	such that 
	\begin{gather*}
	\mathcal{M}' \models \Gamma_{\varepsilon,r}(a, \bar{a}, \bar{b}_1, \ldots, \bar{b}_n) \iff E_{\varepsilon}^{a,\bar{a}, \bar{b}_1, \ldots, \bar{b}_n} \leq r,\\
	\mathcal{M}' \models \tilde{\Gamma}_{\varepsilon,r}(a,\bar{a}) \iff \forall(\bar{b}_1, \ldots, \bar{b}_n), E_{\varepsilon}^{a,\bar{a}, \bar{b}_1, \ldots, \bar{b}_n} \leq r.
	\end{gather*}
	
	For each $\varepsilon \in \mathbb{Q}_{>0}$, pick some $\varepsilon' \in \mathbb{Q}_{>0}$ such that $c \sqrt{\left(|Q_n| + 2 \right) \cdot \varepsilon'} < \varepsilon$. Then 
	\begin{gather*}
		\norm{f_{a} - h_{n, \bar{\alpha}, \bar{a}, \bar{b}_1, \ldots, \bar{b}_n }}_{L^2} \leq r \iff \\
		\mathcal{M}' \models \Lambda_{\leq r}^{n, \bar{\alpha}}(a, \bar{a}, \bar{b}_1, \ldots, \bar{b}_n) :=  \bigwedge_{\varepsilon \in \mathbb{Q}_{>0}} \Gamma_{\varepsilon', r + \varepsilon}(a, \bar{a}, \bar{b}_1, \ldots, \bar{b}_n),\\
		\forall(\bar{b}_1, \ldots, \bar{b}_n), \norm{f_{a} - h_{n, \bar{\alpha}, \bar{a}, \bar{b}_1, \ldots, \bar{b}_n }}_{L^2} \leq r \iff \\
		\mathcal{M}' \models \tilde{\Lambda}_{\leq r}^{n, \bar{\alpha}}(a, \bar{a}) :=  \bigwedge_{\varepsilon \in \mathbb{Q}_{>0}} \tilde{\Gamma}_{\varepsilon', r + \varepsilon}(a, \bar{a}).
	\end{gather*}
	Note that the definitions of $\Lambda_{\leq r}^{n, \bar{\alpha}}, \tilde{\Lambda}_{\leq r}^{n, \bar{\alpha}}$ do not depend on $\mathfrak{P}, f, \mathcal{M}' \propto \mathcal{M}_{\frak{P},f}$. The argument for ``$\geq$'' is analogous.
	\end{proof}
	
	\begin{lemma}\label{lem: meaning of formulas}

	 Given an arbitrary countable linear order $I$ and $\varepsilon \in \mathbb{Q}_{>0}$, there exists a countable partial $\mathcal{L}_{\infty}$-type $\pi_{I, \varepsilon}\left( (z_i)_{i \in I} \right)$ such that the following holds.
	 
	 For any $\mathfrak{P}, f, \mathcal{M}' \propto \mathcal{M}_{\frak{P},f}$  and sequence $(a_i)_{i \in I}$ in $V_{k+1}$,
	 \begin{gather*}
	 	\mathcal{M}' \models \pi_{I, \varepsilon} \left( (a_i)_{i \in I} \right) \iff \\
	 	\norm{f_{a_i} - \E \left(f_{a_i} \mid \B_{\bar{1}^k, k-1} \cup \{ f_{a_j} : j \in I \land j <i \} \right)}_{L^2} \geq \varepsilon \textrm{ for every } i \in I.
	 \end{gather*}
	\end{lemma}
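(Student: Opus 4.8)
The idea is to reduce the statement about the conditional-expectation inequality holding for \emph{all} $i\in I$ to a countable union of conditions, each of which is already known to be type-definable by the previous lemmas (Lemma~\ref{lem: type-def of norm} and its ingredients) together with the material on fibers of subalgebras from Section~\ref{sec:gowers}. The key observation is that, by Lemma~\ref{lem: min alg with add functions} applied to the function $f_{a_i}$, the $\sigma$-algebra $\B_{\bar{1}^k,k-1}$ can be replaced by the much smaller, \emph{uniformly definable} algebra $\mathcal{B}^{f,(a_i)^\frown (a_j)_{j<i}}$ (using Remark~\ref{rem: algebra of fibers}(2), which says this algebra is closed under fibers and contains $(k-1)$-ary fibers of $f_{a_i}$ with products from $\{f_{a_j}: j<i\}$). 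Concretely,
\[
\norm{f_{a_i} - \E\left(f_{a_i}\mid\B_{\bar{1}^k,k-1}\cup\{f_{a_j}: j<i\}\right)}_{L^2}
=\norm{f_{a_i} - \E\left(f_{a_i}\mid\mathcal{B}^{f,(a_i)^\frown(a_j)_{j<i}}\cup\{f_{a_j}: j<i\}\right)}_{L^2},
\]
and the right-hand algebra is generated by the sets defined by the formulas $\varphi_{I,q}(\bar x,\bar b)$ and $F^{<q}_{a_j}$ from Remark~\ref{rem: fib alg unif def} and Definition~\ref{def: various formulas}, with parameters $a_i,(a_j)_{j<i}$ and varying $\bar b\in V^{\bar 1^k}$.

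First I would fix $i\in I$ and finitely many $j_1<\cdots<j_n<i$ in $I$ and consider the finite truncation: the quantity
\[
\inf_{\bar b_1,\ldots,\bar b_n\in V^{\bar 1^k},\ \bar\alpha}\ \norm{f_{a_i}-h_{n,\bar\alpha,(a_{j_1},\ldots,a_{j_n}),\bar b_1,\ldots,\bar b_n}}_{L^2},
\]
which measures the best $L^2$-approximation to $f_{a_i}$ by a simple function relative to the Boolean algebra generated by the relevant instances of $\varphi_{I,q}$ and $F^{<q}$ (compare Remark~\ref{rem: meaning of phi}(2), which identifies the atoms of this algebra). Since $\E(f_{a_i}\mid\mathcal{D})$ is the $L^2$-limit of such simple functions as $n$ and the sets of parameters grow, the condition ``$\norm{f_{a_i}-\E(f_{a_i}\mid\B_{\bar 1^k,k-1}\cup\{f_{a_j}:j<i,j\in I\})}_{L^2}\geq\varepsilon$'' is equivalent to: for \emph{every} finite $j_1<\cdots<j_n<i$ from $I$, every $n$-fold choice of $\bar b_1,\ldots,\bar b_n\in V^{\bar 1^k}$, and every rational coefficient tuple $\bar\alpha$ of the appropriate length (with coefficients drawn from $\mathbb{Q}$), $\norm{f_{a_i}-h_{n,\bar\alpha,(a_{j_1},\ldots,a_{j_n}),\bar b_1,\ldots,\bar b_n}}_{L^2}\geq\varepsilon-\varepsilon'$ for all rational $\varepsilon'>0$. (One direction is clear; the other uses that a simple function with real coefficients approximating $\E(f_{a_i}\mid\mathcal{D})$ within $\varepsilon'$ can be replaced by one with rational coefficients.) Each of these conditions is, by Lemma~\ref{lem: type-def of norm}(3) (with the universal quantifier over the $\bar b$'s built in, i.e.\ the type $\tilde\Lambda^{n,\bar\alpha}_{\geq \varepsilon-\varepsilon'}$), given by a countable partial $\mathcal{L}_\infty$-type in the variables $z_i,z_{j_1},\ldots,z_{j_n}$ that does not depend on $\mathfrak{P},f,\mathcal{M}'\propto\mathcal{M}_{\mathfrak{P},f}$.

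Therefore I would \emph{define}
\[
\pi_{I,\varepsilon}\bigl((z_i)_{i\in I}\bigr):=\bigcup_{i\in I}\ \bigcup_{\substack{n\in\mathbb{N},\ j_1<\cdots<j_n<i\\ \text{in }I}}\ \bigcup_{\bar\alpha}\ \bigcup_{\varepsilon'\in\mathbb{Q}_{>0}}\ \tilde\Lambda^{n,\bar\alpha}_{\geq \varepsilon-\varepsilon'}\bigl(z_i,(z_{j_1},\ldots,z_{j_n})\bigr),
\]
which is a countable union of countable partial types (since $I$ is countable and there are countably many finite increasing tuples from $I$ and countably many rational coefficient tuples), hence itself a countable partial $\mathcal{L}_\infty$-type in the variables $(z_i)_{i\in I}$. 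The equivalence in the statement then follows by unwinding: $\mathcal{M}'\models\pi_{I,\varepsilon}((a_i)_{i\in I})$ iff for every $i$ and every finite approximation the displayed $L^2$-distance is $\geq\varepsilon-\varepsilon'$ for all $\varepsilon'$, which by the discussion above is exactly the assertion that $\norm{f_{a_i}-\E(f_{a_i}\mid\B_{\bar 1^k,k-1}\cup\{f_{a_j}:j<i\})}_{L^2}\geq\varepsilon$ for every $i\in I$.

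\textbf{Main obstacle.} The delicate point is not the type-definability machinery itself---that is essentially Lemma~\ref{lem: type-def of norm}(3)---but rather justifying cleanly that the projection $\E(f_{a_i}\mid\B_{\bar 1^k,k-1}\cup\{f_{a_j}:j<i\})$ can be computed using only the uniformly definable fiber algebra $\mathcal{B}^{f,\ldots}$ in place of $\B_{\bar 1^k,k-1}$, and then that the best $L^2$-approximation by simple functions over that algebra is captured by letting $n$, the parameter tuples $\bar b$, and the rational-coefficient vectors $\bar\alpha$ range freely. The first half is exactly Lemma~\ref{lem: min alg with add functions} combined with Remark~\ref{rem: algebra of fibers}(2); the second half requires care because the atoms of the relevant Boolean algebra (Remark~\ref{rem: meaning of phi}(2)) involve only finitely many parameters at a time, so one must argue that taking a directed union over finite parameter sets recovers the full conditional expectation in $L^2$, and that passing from real to rational coefficients costs only an arbitrarily small amount. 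Both are routine density arguments in $L^2$, but they are where the bookkeeping lives.
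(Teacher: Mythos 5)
Your argument is correct and is essentially the paper's own proof: the same reduction of $\B_{\bar 1^k,k-1}$ to the uniformly definable fiber algebra via Lemma \ref{lem: min alg with add functions} together with Remark \ref{rem: algebra of fibers}, the same re-expression of the inequality as a countable conjunction, over finite index tuples and rational coefficient vectors, of universally quantified conditions $\norm{f_{a_i}-h_{n,\bar\alpha,\ldots}}_{L^2}\geq\varepsilon$, captured by the types $\tilde\Lambda^{n,\bar\alpha}_{\geq\cdot}$ of Lemma \ref{lem: type-def of norm}(3). The only differences are cosmetic: your ``$\geq\varepsilon-\varepsilon'$ for all rational $\varepsilon'>0$'' collapses to the paper's single condition ``$\geq\varepsilon$'', and the parameter tuples $\bar b_j$ should be taken in $V^{\bar 1^{k+1}}$ (as in Definition \ref{def: simple func h}), not $V^{\bar 1^{k}}$.
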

	\begin{proof}
 Fix $\mathfrak{P}, f, \mathcal{M}' \propto \mathcal{M}_{\frak{P},f}, (a_i)_{i \in I}$ and $i \in I$.
 
  By Lemma \ref{lem: min alg with add functions} (which can be applied here in view of Remarks \ref{rem: algebra of fibers} and \ref{rem: fib alg unif def}), we have 
		 \begin{gather*}
		 	\E \left( f_{a_i} \vert \B_{\bar{1}^k,k-1} \cup \left\{ f_{a_j} : j < i\right\} \right) = \E \left( f_{a_i} \vert \B^{f, (a_j : j \leq i)} \cup \left\{ F^{<q}_{a_j} : j < i, q \in \mathbb{Q} \right\} \right).
		 \end{gather*}

		 Approximating by a simple function, for any $\delta \in \mathbb{R}_{>0}$ there exist some $n \in \mathbb{N}, \bar{\alpha} = (\alpha_{\bar{v}} \in \mathbb{Q}_n^{[0,1]} : \bar{v} \in Q_n)$, some tuples   $\bar{b}_j \in V^{\bar{1}^{k+1}}$  and some $i_1 < \ldots < i_n < i$ in $I$ so that the $\sigma\left(\B^{f, (a_j : j \leq i)} \cup \left\{F^{<q}_{a_j} : j < i, q \in \mathbb{Q}_n^{[0,1]} \right\} \right)$-simple function  
$h_{n, \bar{\alpha}, (a_{i_1}, \ldots, a_{i_n}), \bar{b}_1, \ldots, \bar{b}_n }$ (all of them are of these form, see Definition \ref{def: simple func h} and Remark \ref{rem: meaning of phi}(2)) satisfies 
	$$\norm{\E \left( f_{a_i} \vert \B^{f, (a_j : j \leq i)} \cup \left\{ F^{<q}_{a_j} : j < i, q \in \mathbb{Q} \right\} \right) - h_{n, \bar{\alpha}, (a_{i_1}, \ldots, a_{i_n}), \bar{b}_1, \ldots, \bar{b}_n }}_{L^2} \leq \delta.$$
	
	Hence, for a fixed $i \in I$,
	\begin{gather*}
		\norm{f_{a_i} - \E \left(f_{a_i} \mid \B_{\bar{1}^k, k-1} \cup \{ f_{a_j} :  j <i \} \right)}_{L^2} \geq  \varepsilon  \iff \\
		 \bigwedge_{n \in \mathbb{N}} \bigwedge_{i_1< \ldots < i_n < i \in I} \bigwedge_{ \substack{\bar{\alpha} = ( \alpha_{\bar{v}} \in \mathbb{Q}_n^{[0,1]} : \\ \bar{v} \in Q_n )}}
		 \forall (\bar{b}_1, \ldots, \bar{b}_n) \norm{f_{a_i} - h_{n, \bar{\alpha}, (a_{i_1}, \ldots, a_{i_n}), \bar{b}_1, \ldots, \bar{b}_n }}_{L^2} \geq \varepsilon.
	\end{gather*}
	
	By Lemma \ref{lem: type-def of norm} we thus have
		\begin{gather*}
		\norm{f_{a_i} - \E \left(f_{a_i} \mid \B_{\bar{1}^k, k-1} \cup \{ f_{a_j} :  j <i \} \right)}_{L^2} \geq  \varepsilon \textrm{ for all } i \in I  \iff \\
		\mathcal{M}' \models \pi_{I, \varepsilon} ((a_i)_{i \in I}) := \\\bigwedge_{i \in I} \bigwedge_{n \in \mathbb{N}} \bigwedge_{i_1< \ldots < i_n < i \in I} \bigwedge_{\bar{\alpha} = ( \alpha_{\bar{v}} \in \mathbb{Q}_n^{[0,1]} : \bar{v} \in Q_n )}
		 \tilde{\Lambda}_{\geq \varepsilon}^{n, \bar{\alpha}}(a_i, a_{i_1}, \ldots, a_{i_n}).
	\end{gather*}
	
	Note that the definition of the partial type $\pi_{I, \varepsilon}$ does not depend on $\mathfrak{P}, f, \mathcal{M}' \propto \mathcal{M}_{\frak{P},f}, (a_i)_{i \in I}$, since neither did $ \tilde{\Lambda}_{\geq \varepsilon}^{n, \bar{\alpha}}$.
		\end{proof}

\begin{remark}\label{rem : star subsequence}
It is easy to see from the definition that $(a_i)_{i \in I} \models \pi_{I, \varepsilon}$ $\iff$ $(a_i)_{i \in I'} \models \pi_{I', \varepsilon}$ for every finite $I' \subseteq I$.
\end{remark}

The following is a version of de Finetti's theorem suitable for our context (in particular we observe that $\mathcal{L}_{\infty}$-indiscernibility implies \emph{exchangeability} in the probabilistic sense).

\begin{prop}\label{prop: de Finetti}
Assume that $\mathcal{M}' \propto \mathcal{M}_{\mathfrak{P},f}$, $\mathcal{M}'$ is an $\aleph_1$-saturated $\mathcal{L}_{\infty}$-structure, 
	 $I = \mathbb{Z}$ and $(a_i : i \in I)$ in $V_{k+1}$ is an $\mathcal{L}_{\infty}$-indiscernible sequence in the sense of $\mathcal{M}'$. Let $ \mathcal{B} := \sigma \left(\left\{f_{a_j} : j < 0 \right\} \cup \mathcal{B}_{\bar{1}^k,k-1} \right)$. Then:
	\begin{enumerate}
		\item $\B_{\bar{1}^k,k-1} \subseteq \B \subseteq \B_{\bar{1}^k}$;
		\item for all $i \in \mathbb{N}$ we have
 	\[\mathbb{E} \left(f_{a_i}\mid\mathcal{B}_{\bar{1}^k,k-1}\cup\left\{f_{a_j} : j < i\right\} \right) = \mathbb{E} \left(f_{a_i}\mid\mathcal{B}\cup\left\{f_{a_j} : j < i\right\} \right)=\mathbb{E}\left(f_{a_i} \mid\mathcal{B} \right).\]
	\end{enumerate}
	\end{prop}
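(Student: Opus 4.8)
The proof of Proposition \ref{prop: de Finetti} will be a de Finetti-style argument exploiting the fact that $\mathcal{L}_{\infty}$-indiscernibility of $(a_i)_{i\in\mathbb{Z}}$ transfers to genuine exchangeability of the random variables $(f_{a_i})_{i\in\mathbb{Z}}$ over the $\sigma$-algebra $\B_{\bar 1^k,k-1}$. First I would establish (1): the inclusion $\B_{\bar 1^k,k-1}\subseteq\B$ is immediate from the definition of $\B$, and $\B\subseteq\B_{\bar 1^k}$ holds because each $f_{a_j}$ is $\B_{\bar 1^k}$-measurable (by Remark \ref{rem: gen Fub}) and $\B_{\bar 1^k,k-1}\subseteq\B_{\bar 1^k}$ by Definition \ref{def: smaller argebras in GPS}.

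For (2), the key observation is that by Lemma \ref{lem: min alg with add functions} (applicable in view of Remarks \ref{rem: algebra of fibers} and \ref{rem: fib alg unif def}, exactly as in Lemma \ref{lem: meaning of formulas}), we may replace $\B_{\bar 1^k,k-1}$ throughout by the smaller algebra $\B^{f,(a_j:j\leq i)}$ together with the fibers $\{F^{<q}_{a_j}:j<i,q\in\mathbb{Q}\}$, so it suffices to prove the corresponding equality with this uniformly-definable algebra in place of $\B_{\bar 1^k,k-1}$. The first equality $\mathbb{E}(f_{a_i}\mid\mathcal{B}_{\bar 1^k,k-1}\cup\{f_{a_j}:j<i\})=\mathbb{E}(f_{a_i}\mid\mathcal{B}\cup\{f_{a_j}:j<i\})$ then reduces to showing that the ``extra'' generators of $\mathcal{B}$ beyond $\B_{\bar 1^k,k-1}$—namely the fibers $f_{a_j}$ for $j<0$—do not improve the projection of $f_{a_i}$ once we already condition on $\{f_{a_j}:j<i\}$ (which includes all of them since $0\leq i$... wait, we need $i\in\mathbb{N}$ so $i\geq 0$, hence $\{j:j<i\}\supseteq\{j:j<0\}$). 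So actually the first equality is essentially trivial: for $i\in\mathbb{N}$, $\{f_{a_j}:j<0\}\subseteq\{f_{a_j}:j<i\}$, so $\sigma(\B_{\bar 1^k,k-1}\cup\{f_{a_j}:j<i\})=\sigma(\mathcal{B}\cup\{f_{a_j}:j<i\})$ and the two conditional expectations literally coincide.

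The substantive content is the second equality $\mathbb{E}(f_{a_i}\mid\mathcal{B}\cup\{f_{a_j}:j<i\})=\mathbb{E}(f_{a_i}\mid\mathcal{B})$, i.e.\ that conditioning additionally on the finitely many fibers $f_{a_0},\ldots,f_{a_{i-1}}$ adds nothing beyond $\mathcal{B}=\sigma(\{f_{a_j}:j<0\}\cup\B_{\bar 1^k,k-1})$. This is where exchangeability enters. I would argue as follows: suppose not; then there is a function $g$ which is a finite product of fibers $f_{a_0},\ldots,f_{a_{i-1}}$ (using that the obstruction to measurability lives at the level of such products, as in Lemma \ref{lem: fib alg 3} / the theory of Section \ref{sec:gowers}) such that $\norm{\mathbb{E}((f_{a_i}-\mathbb{E}(f_{a_i}\mid\mathcal{B}))\cdot g\mid\mathcal{B})}_{L^2}>0$; equivalently, writing $h:=f_{a_i}-\mathbb{E}(f_{a_i}\mid\mathcal{B})$, we have $\int h\cdot g\cdot w\,d\mu_{\bar 1^k}\neq 0$ for some $\mathcal{B}$-measurable bounded $w$, and by approximation $w$ can be taken to be a bounded function of finitely many fibers $f_{a_{j_1}},\ldots,f_{a_{j_s}}$ with all $j_t<0$ together with a $\B_{\bar 1^k,k-1}$-measurable factor. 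Now I invoke exchangeability: since $(a_i)_{i\in\mathbb{Z}}$ is $\mathcal{L}_{\infty}$-indiscernible and the measures of all quantifier-free definable sets are recorded by the $m_{\bar x}<r$ predicates, the joint distribution of any finite tuple $(f_{a_{i_1}},\ldots,f_{a_{i_m}})$ depends only on the quantifier-free type of $(i_1,\ldots,i_m)$ in the linear order $\mathbb{Z}$—in particular only on the relative order. Therefore for each $N$ I can find $i<i'_1<\cdots<i'_{i}$ (shift the block $\{0,\ldots,i-1\}$ far to the right past $i$, keeping $i$ and the negative indices fixed) so that, by order-indiscernibility composed through the shift of the whole configuration, the quantity $\int h\cdot g'\cdot w\,d\mu$—with $g'$ the corresponding product of shifted fibers—equals the original nonzero value. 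But now $g'$ is a product of fibers $f_{a_{i'_t}}$ with all $i'_t>i$, while $h$ and $w$ only involve $f_{a_i}$ and $f_{a_j}$ with $j<0<i<i'_t$; iterating (or rather, packing infinitely many disjoint such far-right blocks) one produces infinitely many $\B_{\bar 1^k}$-measurable functions, each correlated with $h\cdot w$ the same nonzero amount, which, being ``fresh'' fibers, should be conditionally orthogonal—contradicting $L^2$-boundedness via a Bessel-type estimate. Cleaner: I would instead directly show $h$ is conditionally independent of $\sigma(\{f_{a_j}:0\leq j<i\})$ given $\mathcal{B}$ by the standard de Finetti argument—indiscernibility of $(a_i)_{i\geq 0}$ over the parameters $\{a_j:j<0\}$ gives an exchangeable sequence $(f_{a_i})_{i\geq 0}$ over $\mathcal{B}$, hence by the Hewitt–Savage/de Finetti theorem it is conditionally i.i.d.\ over the exchangeable (= tail) $\sigma$-algebra, which here is contained in $\mathcal{B}$ up to null sets (this is the point where $\B_{\bar 1^k,k-1}$ being ``closed under fibers'' matters, so the tail information is already captured); conditional i.i.d.-ness immediately yields $\mathbb{E}(f_{a_i}\mid\mathcal{B}\cup\{f_{a_j}:j<i\})=\mathbb{E}(f_{a_i}\mid\mathcal{B})$.

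\textbf{Main obstacle.} The delicate point is justifying that the ``exchangeable $\sigma$-algebra'' of the sequence $(f_{a_i})_{i\geq 0}$—the relevant invariant/tail algebra in the de Finetti decomposition—is already contained (modulo null sets) in $\mathcal{B}=\sigma(\{f_{a_j}:j<0\}\cup\B_{\bar 1^k,k-1})$, rather than merely in $\sigma(\{f_{a_j}:j\in\mathbb{Z}\}\cup\B_{\bar 1^k,k-1})$. Making this precise is exactly where one must use that the sequence is two-sided ($I=\mathbb{Z}$, so there are ``enough'' parameters $a_j$, $j<0$, to absorb the tail), together with $\mathcal{L}_{\infty}$-indiscernibility pushing the symmetry through the measure predicates, and the closure-under-fibers properties from Section \ref{sec:gowers} ensuring the conditional expectations themselves stay inside the relevant algebras. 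I expect this to be handled by combining Lemma \ref{lem: min alg with add functions}, a shifting argument using indiscernibility over the tail parameters, and an $L^2$-martingale-convergence / reverse-martingale argument to identify the limit algebra; the calculation that correlations are preserved under order-shifts is routine given the type-definability machinery of Lemmas \ref{lem: lin ineq type def}–\ref{lem: type-def of norm}.
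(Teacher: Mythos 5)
Parts of your plan line up with the paper: (1) is handled the same way, the observation that the first equality is trivial for $i\in\mathbb{N}$ is correct, and the reduction of $\B_{\bar{1}^k,k-1}$ to the fiber algebras via Lemma \ref{lem: min alg with add functions} and Remarks \ref{rem: algebra of fibers}, \ref{rem: fib alg unif def} is exactly the right first move. But the core of (2) — the equality $\mathbb{E}(f_{a_i}\mid\mathcal{B}_{\bar 1^k,k-1}\cup\{f_{a_j}:j<i\})=\mathbb{E}(f_{a_i}\mid\B)$ — is not actually carried out, and both routes you sketch have genuine gaps. In the first sketch, moving the block $\{0,\ldots,i-1\}$ to the right of $i$ changes the quantifier-free order type of the configuration (the index $i$ sits on the other side of the block), so $\mathcal{L}_{\infty}$-indiscernibility does not preserve the correlation $\int h\cdot g'\cdot w\,d\mu$; moreover $h=f_{a_i}-\mathbb{E}(f_{a_i}\mid\B)$ and the $\B$-measurable factor $w$ are not definable objects, so indiscernibility cannot be applied to this quantity without first approximating them by definable data — and that approximation is precisely the missing work. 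In the ``cleaner'' de Finetti route, the needed statement is conditional i.i.d.-ness \emph{relative to} $\B$, i.e.\ exchangeability of $(f_{a_i})_{i\geq 0}$ over $\sigma(\{f_{a_j}:j<0\}\cup\B_{\bar 1^k,k-1})$; but the sequence is only indiscernible over $\emptyset$, and the generators of $\B$ (arbitrary $\B_{\bar 1^k,k-1}$-sets, or after your reduction, level-set fibers $f^{<q}_{\bar w}$ at parameters $\bar w$ outside the sequence) are not controlled by the $m_{\bar x}<r$ predicates evaluated on the $a_i$'s alone, so relative exchangeability does not follow. On top of that, the containment of the exchangeable/tail $\sigma$-algebra in $\B$ — which you yourself flag as the main obstacle — is left unresolved, and it is essentially of the same difficulty as the proposition itself.

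The paper's proof avoids all of this with a more elementary device that your proposal is missing. One approximates $\mathbb{E}(f_{a_i}\mid\B^f_{\bar 1^k,k-1}\cup\{F^{<q}_{a_j}:j<i\})$ up to $\delta$ by a simple function $h_{n,\bar\alpha,(a_{i_1},\ldots,a_{i_n}),\bar b_1,\ldots,\bar b_n}$ with $i_1<\cdots<i_n<i$ and auxiliary parameter tuples $\bar b_t\in V^{\bar 1^{k+1}}$; by Lemma \ref{lem: type-def of norm} and $\aleph_1$-saturation the condition ``$\exists\bar y_1\ldots\exists\bar y_n\,\norm{f_{a'}-h_{n,\bar\alpha,\bar a',\bar y_1,\ldots,\bar y_n}}_{L^2}\leq\varepsilon+\delta$'' is type-definable over $\emptyset$ as a condition on $(a_{i_1},\ldots,a_{i_n},a_i)$ alone — this existential quantification over the witnesses is exactly what makes indiscernibility applicable despite the foreign parameters. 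Indiscernibility then transfers it to $(a_{j_1},\ldots,a_{j_n},a_i)$ with $j_1<\cdots<j_n<0$ (same order type, since the conditioning indices stay to the \emph{left} of $i$), producing a $\B$-simple approximant of $f_{a_i}$ within $\varepsilon+\delta$. Letting $\varepsilon,\delta\to 0$ gives $\norm{f_{a_i}-\mathbb{E}(f_{a_i}\mid\B)}_{L^2}\leq\norm{f_{a_i}-\mathbb{E}(f_{a_i}\mid\mathcal{B}_{\bar 1^k,k-1}\cup\{f_{a_j}:j<i\})}_{L^2}$, and since $L^2(\B)$ is a closed subspace of $L^2(\sigma(\mathcal{B}_{\bar 1^k,k-1}\cup\{f_{a_j}:j<i\}))$, equality of the two projections follows. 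No conditional independence, no identification of a tail algebra, and no Bessel-type estimate is needed; the result is a norm comparison between nested projections, which is the idea your proposal lacks.
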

\begin{proof}
It is obvious that (1) holds for $\B$. In (2), it is enough to show the equality of the first and the last expressions. As in the proof of Lemma \ref{lem: meaning of formulas}, by Lemma \ref{lem: min alg with add functions} and Remark \ref{rem: algebra of fibers}, we have 
\begin{gather*}
		 	\E \left( f_{a_i} \vert \B_{\bar{1}^k,k-1} \cup \left\{ f_{a_j} : j < i\right\} \right) = \E \left( \chi_{E_{a_i}} \vert \B^f_{\bar{1}^k,k-1} \cup \left\{ F^{<q}_{a_j} : j < i, q \in \mathbb{Q} \right\} \right).
		 \end{gather*}
	Fix $i \geq 0$ arbitrary. Let $\varepsilon \in \mathbb{Q}_{\geq 0}$ be arbitrary, and assume that 
	$$\norm{f_{a_i} - \mathbb{E} \left(f_{a_i} \mid\mathcal{B}^f_{\bar{1}^k,k-1}\cup\left\{F^{<q}_{a_j} : j < i, q \in \mathbb{Q}\right\} \right)}_{L^2} \leq \varepsilon.$$
	
	By definition of $\mathbb{E}$, for any $\delta \in \mathbb{R}_{>0}$ there exist some $n \in \mathbb{N}$, $\alpha_{s,t,u} \in \mathbb{Q}$, tuples $\bar{b}_j \in V^{\bar{1}^{k+1}}$  and $i_1 < \ldots < i_n < i$ in $I$, such that taking $\bar{a} = (a_{i_1}, \ldots, a_{i_n})$, the $\sigma\left(\B_{\bar{1}^k,k-1}^f \cup \left\{F^{<q}_{a_j} : j < i, q \in \mathbb{Q}_n^{[0,1]} \right\} \right)$-simple function  
	$h_{n, \bar{\alpha}, \bar{a}, \bar{b}_1, \ldots, \bar{b}_n }$ satisfies
	
	$$\norm{\E \left( f_{a_i} \vert \B^f_{\bar{1}^k,k-1} \cup \left\{ F^{<q}_{a_j} : j < i, q \in \mathbb{Q} \right\} \right) - h_{n, \bar{\alpha}, \bar{a}, \bar{b}_1, \ldots, \bar{b}_n }}_{L^2} \leq \delta,$$
	hence	
	$$\norm{f_{a_i} - h_{n, \bar{\alpha}, \bar{a}, \bar{b}_1, \ldots, \bar{b}_n }}_{L^2} \leq \varepsilon + \delta.$$

By Lemma \ref{lem: type-def of norm}, there is a countable partial $\mathcal{L}_{\infty}$-type $\Lambda_{\leq \varepsilon + \delta}^{n, \bar{\alpha}}$ so that for any $\bar{b}'_1, \ldots, \bar{b}'_n \in V^{\bar{1}^{k+1}}, a'_1, \ldots, a'_n, a' \in V_{k+1}$ we have
\begin{gather*}
	\norm{f_{a'} - h_{n, \bar{\alpha}, (a'_1, \ldots, a'_n), \bar{b}'_1, \ldots, \bar{b}'_n }}_{L^2} \leq \varepsilon + \delta \iff\\
	\mathcal{M}' \models\Lambda_{\leq \varepsilon + \delta}^{n, \bar{\alpha}} \left(a', a'_1, \ldots, a'_n, \bar{b}'_1, \ldots, \bar{b}'_n \right).
\end{gather*}

Then, by $\aleph_1$-saturation of $\mathcal{M}'$, the condition 
	$$\exists \bar{y}_1 \ldots \exists \bar{y}_n \norm{f_{a'} - h_{n, \bar{\alpha}, \bar{a}', \bar{y}_1, \ldots, \bar{y}_n }}_{L^2} \leq \varepsilon + \delta$$
	on the tuple $(a'_1, \ldots, a'_n, a')$ is also $\mathcal{L}_{\infty}$-type-definable in $\mathcal{M}'$, and is satisfied on $(a_{i_1}, \ldots, a_{i_n}, a_i)$ in $\mathcal{M}'$ by assumption. Since the sequence $(a_i)_{i \in \mathbb{Z}}$ is $\mathcal{L}_{\infty}$-indiscernible in $\mathcal{M}'$, it follows that it is also satisfied by the tuple $(a_{j_1}, \ldots, a_{j_n}, a_j)$ for any $j_1 < \ldots < j_n < j$ in $\mathbb{Z}$.
	
	In particular, taking arbitrary $j_1 < \ldots < j_n < 0$ and $j=i$, 
we have 	
	\begin{gather}
\norm{f_{a_i} - h_{n, \bar{\alpha}, (a_{j_1}, \ldots, a_{j_n}), \bar{b}'_1, \ldots, \bar{b}'_n }}_{L^2} \leq \varepsilon + \delta	\label{eq: indisc bounds exp}
	\end{gather}
	for some $\bar{b}'_1, \ldots, \bar{b}'_n \in V^{\bar{1}^{k+1}}$. Note that $h_{n, \bar{\alpha}, (a_{j_1}, \ldots, a_{j_n}), \bar{b}'_1, \ldots, \bar{b}'_n }$ is a $\B$-simple function.
	As $\varepsilon, \delta > 0$ were arbitrary, we thus conclude that 
	$$\norm{f_{a_i} - \mathbb{E}\left(f_{a_i} \mid \B \right)}_{L^2} \leq \norm{f_{a_i} - \mathbb{E}\left(f_{a_i}\mid\mathcal{B}_{\bar{1}^k,k-1}\cup \left\{f_{a_j} : j < i\right\}\right)}_{L^2}.$$
	
	But since conditional expectation corresponds to orthogonal projection in the Hilbert space $L^2(\B_{\bar{1}^k})$, and $L^2(\B)$ is a closed subspace of $L^2(\sigma(\mathcal{B}_{\bar{1}^k,k-1}\cup\{f_{a_j} : j < i\}))$, this last inequality implies that 
	$$\mathbb{E}\left(f_{a_i} \mid\mathcal{B}_{\bar{1}^k,k-1}\cup\left\{f_{a_j} : j < i\right\} \right) = \mathbb{E}\left(f_{a_i} \mid \B \right).$$
\end{proof}

\begin{lemma}\label{lem: indisc Bergelson}
Let $\mathcal{M}' \propto \mathcal{M}_{\mathfrak{P},f}$, $\mathcal{M}'$ an $\aleph_1$-saturated $\mathcal{L}_{\infty}$-structure, 
	 $I = \mathbb{Z}$ and $(a_i : i \in I)$ in $V_{k+1}$ is an $\mathcal{L}$-indiscernible sequence in the sense of $\mathcal{M}$.
	 Let $\B = \sigma \left(\left\{f_{a_j} : j < 0 \right\} \cup \mathcal{B}_{\bar{1}^k,k-1} \right)$, $\delta \in \mathbb{R}_{>0} $ and $r< s \in \mathbb{Q}^{[0,1]}$. Let 
	 $$G^{r,s}_\delta(a_i):=\left\{\bar{x} \in V^{\bar{1}^k} \mid \E\left(\chi_{f^{<r}_{a_i}}\mid \B \right)(x) \geq \delta \land \E \left( \chi_{f^{\geq s}_{a_i}} \mid \B \right)(x) \geq  \delta  \right\} \in \B.$$
	  Assume that $\mu_{\bar{1}^k} \left( G^{r,s}_\delta(a_0) \right) > 0$. Then $\mu_{\bar{1}^k} \left( \bigcap_{i \in [l]} G^{r',s'}_\delta(a_i) \right) > 0$ for any $l \in \mathbb{N}$ and $r < r' < s' < s$.
\end{lemma}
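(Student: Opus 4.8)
The plan is to prove the lemma by combining the type-definability machinery of this section with the exchangeability coming from $\mathcal{L}_{\infty}$-indiscernibility of $(a_i)_{i\in\mathbb{Z}}$ over $C:=\{a_j:j<0\}$ and Fact \ref{fac: Bergelson}. First I would replace the sets $G^{r,s}_\delta$ by an ``approximate'' version visible in $\mathcal{M}'$: fix rationals $r'',s''$ with $r<r''<r'$ and $s'<s''<s$, and for $i\in\mathbb{Z}$ set
\[\hat G_\delta(a_i):=\left\{\bar x\in V^{\bar{1}^k}\mid \E\bigl(\chi_{F^{<r''}_{a_i}}\mid\mathcal{B}\bigr)(\bar x)\geq\delta\ \wedge\ \E\bigl(\chi_{F^{\geq s''}_{a_i}}\mid\mathcal{B}\bigr)(\bar x)\geq\delta\right\},\]
where $F^{<q}$ is the $\mathcal{M}'$-definable set of Remark \ref{rem: fib alg unif def}. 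Since $\mathcal{M}'\propto\mathcal{M}_{\mathfrak{P},f}$ we have, by Remark \ref{rem: fib alg unif def}, the inclusions $f^{<r}\subseteq F^{<r''}\subseteq f^{<r'}$ and $f^{\geq s}\subseteq F^{\geq s''}\subseteq f^{\geq s'}$, so monotonicity of conditional expectation yields $G^{r,s}_\delta(a_i)\subseteq\hat G_\delta(a_i)\subseteq G^{r',s'}_\delta(a_i)$ for every $i$. Hence it suffices to prove $\mu_{\bar{1}^k}\bigl(\bigcap_{i\in[l]}\hat G_\delta(a_i)\bigr)>0$ from $\mu_{\bar{1}^k}\bigl(\hat G_\delta(a_0)\bigr)>0$.

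The key technical point is the following type-definability claim: for every rational $\beta>0$ and every finite set of indices $j_1<\cdots<j_p$ with $j_1>0$, the condition ``$\mu_{\bar{1}^k}\bigl(\bigcap_{t\in[p]}\hat G_\delta(x_t)\bigr)\geq\beta$'' is type-definable over $C$ in the variables $x_1,\dots,x_p$. To see this I would first use Lemma \ref{lem: min alg with add functions} together with Remark \ref{rem: algebra of fibers} (exactly as in the proof of Proposition \ref{prop: de Finetti}) to rewrite each $\E(\chi_{F^{<r''}_{x_t}}\mid\mathcal{B})$ as a conditional expectation onto the uniformly $\mathcal{M}'$-definable algebra generated by the sets $\varphi_{I,q}(\bar x,-)$ of Remark \ref{rem: fib alg unif def} together with $\{F^{<q}_{a_j}:j<0\}$, so that it is an $L^2$-limit of simple functions $h_{n,\bar\alpha,\dots}$ of Definition \ref{def: simple func h} with parameters from $C$. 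Then, writing $\phi$ for such a conditional expectation, ``$\mu_{\bar{1}^k}(\{\phi\geq\delta\}\cap\cdots)\geq\beta$'' holds if and only if for every $\varepsilon\in\mathbb{Q}_{>0}$ there exist simple functions of the appropriate shape approximating the relevant conditional expectations to within a small enough $L^2$-error and whose level sets $\{\cdot\geq\delta-\varepsilon\}$ have joint $\mu_{\bar{1}^k}$-measure $>\beta-\varepsilon$ --- the two implications being a routine Chebyshev estimate followed by letting $\varepsilon\to0$. Each such clause is type-definable over $C$: ``being a good enough approximation'' is type-definable by Lemma \ref{lem: type-def of norm} (the relevant best-approximation infimum is itself type-definable, as a $\forall$ over simple-function shapes), and ``the level sets have joint measure $>\beta-\varepsilon$'' is expressed through the $m_{\bar x}<\cdot$ predicates applied to Boolean combinations of the $\Theta$-formulas, via Lemma \ref{lem: lin ineq type def}; a countable conjunction over $\varepsilon\in\mathbb{Q}_{>0}$, and countable disjunctions over shapes and auxiliary rationals together with existential quantifiers over the $\bar b$-parameters, keep everything inside a single countable partial type over $C$.

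Granting this claim, the lemma follows quickly. From $\mu_{\bar{1}^k}(\hat G_\delta(a_0))>0$ choose a rational $\beta_0>0$ with $\mu_{\bar{1}^k}(\hat G_\delta(a_0))\geq\beta_0$; since for every $i\geq 0$ the element $a_i$ lies to the right of all of $C$ and hence has the same $\mathcal{L}_{\infty}$-type over $C$ as $a_0$, the type-definability claim (with $p=1$) gives $\mu_{\bar{1}^k}(\hat G_\delta(a_i))\geq\beta_0$ for all $i\geq 0$. Now apply Fact \ref{fac: Bergelson} with $\varepsilon:=\beta_0$ and $m:=l$, producing $N=N(\beta_0,l)$ and $\xi=\xi(\beta_0,l)>0$; applied to $\hat G_\delta(a_1),\dots,\hat G_\delta(a_N)$ in $(V^{\bar{1}^k},\mathcal{B}_{\bar{1}^k},\mu_{\bar{1}^k})$ it yields $I\subseteq[N]$ with $|I|=l$ and $\mu_{\bar{1}^k}\bigl(\bigcap_{i\in I}\hat G_\delta(a_i)\bigr)>\xi$. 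Fix a rational $\xi''$ with $0<\xi''\leq\xi$; the increasing $l$-tuple $(a_i)_{i\in I}$ has the same $\mathcal{L}_{\infty}$-type over $C$ as $(a_1,\dots,a_l)$ (all of these indices exceed those of $C$), so the type-definability claim transfers ``$\mu_{\bar{1}^k}\bigl(\bigcap_{t}\hat G_\delta(x_t)\bigr)\geq\xi''$'' to give $\mu_{\bar{1}^k}\bigl(\bigcap_{i\in[l]}\hat G_\delta(a_i)\bigr)\geq\xi''>0$, and therefore $\mu_{\bar{1}^k}\bigl(\bigcap_{i\in[l]}G^{r',s'}_\delta(a_i)\bigr)\geq\xi''>0$ by the inclusions above. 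The main obstacle is precisely the type-definability claim: the conditional expectations are only $L^2$-limits of definable simple functions and $\hat G_\delta$ is cut out by a pointwise threshold, so some care (the threshold-loosening plus $\varepsilon\to0$ argument, and the handling of the best-approximation infimum) is needed to remain inside a countable partial type --- although each ingredient is already present in the lemmas of this section.
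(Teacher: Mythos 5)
Your proposal is sound and reaches the conclusion by the same basic mechanism as the paper---definable surrogates for the sets, transfer of simple-function approximation data along the indiscernible sequence, Fact \ref{fac: Bergelson}, then undoing the approximation---but with the bookkeeping organized differently. The paper fixes the constants $\beta_1,\beta_2$ (the $L^2$-distances of $\chi_{F^{<r}_{a_i}},\chi_{F^{\geq s'}_{a_i}}$ to $\B$, constant in $i$), transfers existence of simple functions within $\beta_t+\gamma'$ of the indicators, converts this to closeness to the conditional expectations via the parallelogram law (Claim 2), relates the true sets to the tightened-threshold sets $F_{h_1,h_2,\gamma}$ via countable additivity (Claim 1 and (\ref{eqmu})), and applies Fact \ref{fac: Bergelson} to those definable surrogates. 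You instead sandwich $G^{r,s}_\delta\subseteq\hat G_\delta\subseteq G^{r',s'}_\delta$, package everything into a single transferred condition (``for every $\varepsilon$ there are near-best simple approximants whose $(\delta-\varepsilon)$-level sets have joint measure $>\beta-\varepsilon$''), apply Fact \ref{fac: Bergelson} directly to the sets $\hat G_\delta(a_i)$, and transfer the intersection bound. Your loosened threshold $\delta-\varepsilon$ together with continuity from above ($\bigcap_{\varepsilon>0}\{\phi\geq\delta-2\varepsilon\}=\{\phi\geq\delta\}$) plays the role of the paper's $\delta+\gamma$ step, and the near-best-approximant clause (equivalent, by the Pythagoras identity $\norm{\chi-h}^2=\norm{\chi-\E(\chi\mid\B)}^2+\norm{\E(\chi\mid\B)-h}^2$ for $h$ in the definable subalgebra, to $h$ being $L^2$-close to the conditional expectation) is your analogue of Claim 2. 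Both directions of your equivalence do check out (Chebyshev one way; Chebyshev plus continuity from above the other), granted the reduction of $\E(\cdot\mid\B)$ to definable simple functions via Lemma \ref{lem: min alg with add functions} and Remark \ref{rem: algebra of fibers}, which you cite and which the paper uses at the same point.

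The one statement that needs repair is that your condition is \emph{type-definable} over $C$. It is not: after using $\aleph_1$-saturation to turn each block ``$\exists\bar b$ (countable conjunction of formulas)'' into a countable conjunction of formulas over $C$, you are still left with a countable \emph{disjunction} over the simple-function shapes $(n,\bar\alpha)$ and over the finite tuples of parameters from $C$, and a countable disjunction of type-definable conditions is in general not a partial type. Fortunately this does not damage your argument: the two transfers you perform only require that satisfaction of the condition depends solely on the $\mathcal{L}_{\infty}$-type of the tuple over $C$, and that is true, since each saturation-reduced block is a countable conjunction of formulas over $C$ and countable Boolean combinations of such conditions are invariant under equality of types; you never realize the purported type or apply compactness to it. So replacing ``type-definable over $C$'' by ``determined by the $\mathcal{L}_{\infty}$-type over $C$'' (with the saturation reduction spelled out, as in the proof of Proposition \ref{prop: de Finetti}) fixes the claim without altering the rest of the proof.
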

\begin{proof}
Fix some $l \in \mathbb{N}$ and $r < r' < s' < s$. Let
$$F^{r,s}_\delta(a_i):=\left\{\bar{x} \in V^{\bar{1}^k} \mid \E\left(\chi_{F^{<r}_{a_i}}\mid \B \right)(x) \geq \delta \land \E \left( \chi_{F^{\geq s}_{a_i}} \mid \B \right)(x) \geq  \delta  \right\} \in \B.$$
As $\mathcal{M}' \propto \mathcal{M}_{\mathfrak{P},f}$, by monotonicity of conditional expectation we have $q := \mu_{\bar{1}^k}\left(F^{r,s'}_{\delta}(a_0) \right) \geq  \mu_{\bar{1}^k}\left(G^{r,s}_{\delta}(a_0) \right) > 0$.
 Let $\xi = \xi(\frac{q}{2},l) > 0$ be as given by Fact \ref{fac: Bergelson}. Fix some $0 < \varepsilon < \min \left\{\frac{q}{2},  \frac{\xi}{l}\right\}$.

Fix $i \in \mathbb{N}$. Note that $F^{r,s'}_{\delta}(a_i) = \bigcup_{\gamma \in \mathbb{Q}_{>0}} F^{r,s'}_{\delta+\gamma}(a_i)$, and by countable additivity 
\begin{align}
& \mu_{\bar{1}^k} \left( F^{r,s'}_\delta(a_i) \setminus F^{r,s'}_{\delta + \gamma}(a_i) \right) \to 0 \textrm{ as } \gamma \to 0. \label{eqmu}
\end{align}

For arbitrary $\B_{\bar{1}^k}$-measurable functions $h_1, h_2$ and $\gamma \in \mathbb{R}_{>0}$, we define the set 
$$F_{h_1, h_2, \gamma} := \left\{\bar{x} \in V^{\bar{1}^k} \mid h_1 (\bar{x}) \geq \delta + \gamma \land h_2 (\bar{x}) \geq \delta + \gamma\right\}.$$ 

\begin{claim}
There exists some $ \gamma >0 $ such that for arbitrary $h_1, h_2$ we have: 
\begin{gather*}
	\norm{\E \left(\chi_{F^{<r}_{a_i}} \mid \B \right) - h_1}_{L^2} < \gamma^{\frac{3}{2}}  \land \norm{\E \left(\chi_{F^{\geq s'}_{a_i}} \mid \B \right) - h_2}_{L^2}  < \gamma^{\frac{3}{2}} \implies \\
	 \mu_{\bar{1}^k}\left(F^{r,s'}_\delta(a_i) \triangle F_{h_1, h_2,\gamma} \right) < \varepsilon.
\end{gather*}

\end{claim}
\begin{claimproof}
	Let 
	\begin{gather*}
		D_1 := \left\{ \bar{x} \in V^{\bar{1}^k}: \left \lvert\E 
	\left(\chi_{F^{<r}_{a_i}} \mid \B \right)(\bar{x}) - h_1(\bar{x}) \right \rvert \geq \gamma \right\} \in \B_{\bar{1}^k},\\
	D_2 := \left\{ \bar{x} \in V^{\bar{1}^k}: \left \lvert\E 
	\left(\chi_{F^{\geq s'}_{a_i}} \mid \B \right)(\bar{x}) - h_2(\bar{x}) \right \rvert \geq \gamma \right\} \in \B_{\bar{1}^k}.
	\end{gather*}
	 Then $\mu_{\bar{1}^k}(D_t) < \gamma$ by assumption on $h_t$ for $t \in \{1,2\}$, and $F_{h_1, h_2, \gamma} \setminus (D_1 \cup D_2) \subseteq F^{r,s'}_\delta(a_i)$.
	And similarly $F^{r,s'}_{\delta + 2 \gamma}(a_i) \setminus (D_1 \cup D_2) \subseteq F_{h_1, h_2,\gamma}$.  Taking $0 < \gamma < \frac{\varepsilon}{4}$ small enough, by (\ref{eqmu}) we have
	$\mu_{\bar{1}^k} \left(F^{r,s'}_\delta(a_i) \setminus F^{r,s'}_{\delta + 2\gamma}(a_i) \right) < \frac{\varepsilon}{2}$. But 
	$$F^{r,s'}_\delta(a_i) \triangle F_{h_1, h_2,\gamma} \subseteq D_1 \cup D_2 \cup \left( F^{r,s'}_\delta(a_i) \setminus F^{r,s'}_{\delta + 2\gamma}(a_i) \right),$$
hence $\mu_{\bar{1}^k} \left(F^{r,s'}_\delta(a_i) \triangle F^{r,s'}_{h,\gamma} \right) < 2\gamma +  \frac{\varepsilon}{2} < \varepsilon$.
\end{claimproof}

From now on, fix some $\gamma >0$ satisfying the conclusion of Claim 1. By definition of $\B$, for every $i \in \mathbb{N}$, the function $\mathbb{E} \left(\chi_{F^{<r}_{a_i}} \mid \B \right)$ can be approximated  arbitrarily well in $L^2$-norm by functions of the form 
$$h_{n, \bar{\alpha}, (a_{j_1}, \ldots, a_{j_n}), \bar{b}'_1, \ldots, \bar{b}'_n }$$
 with $n \in \mathbb{N}$, $\bar{\alpha} = \left(\alpha_{\bar{v}} \in \mathbb{Q} : \bar{v} \in Q_n \right)$, $j_1, \ldots, j_n < 0$, $\bar{b}'_1, \ldots, \bar{b}'_n \in V^{\bar{1}^{k+1}}$ (and all such functions are $\B$-simple).
As in the proof of Proposition \ref{prop: de Finetti},  using $\mathcal{L}_{\infty}$-indiscernibility of the sequence $\left( a_i \right)_{i \in \mathbb{Z}}$ and type-definability of the corresponding condition, for every such function, $\beta \in \mathbb{R}_{>0}$ and $i,i' \in \mathbb{N}$, we have 
\begin{align*}
	&\exists\bar{b}'_1 \ldots \exists \bar{b}'_n \in V^{\bar{1}^{k+1}} \norm{\chi_{F^{<r}_{a_i}} - h_{n, \bar{\alpha}, (a_{j_1}, \ldots, a_{j_n}), \bar{b}'_1, \ldots, \bar{b}'_n }}_{L^2} \leq \beta \iff \\
	&\exists\bar{b}'_1 \ldots \exists \bar{b}'_n \in V^{\bar{1}^{k+1}} \norm{\chi_{F^{<r}_{a_{i'}}} - h_{n, \bar{\alpha}, (a_{j_1}, \ldots, a_{j_n}), \bar{b}'_1, \ldots, \bar{b}'_n }}_{L^2} \leq \beta.
\end{align*}
It follows that $\norm{\chi_{F^{<r}_{a_i}} - \mathbb{E} \left(\chi_{F^{<r}_{a_i}} \mid \B \right)}_{L^2}$ does not depend on $i \in \mathbb{N}$, and we denote its value by $\beta_1 \in \mathbb{R}_{\geq 0}$. Similarly $\beta_2 := \norm{\chi_{F^{\geq s'}_{a_i}} - \mathbb{E} \left(\chi_{F^{\geq s'}_{a_i}} \mid \B \right)}_{L^2}$ does not depend on $i \in \mathbb{N}$.

\begin{claim}
For every $\gamma > 0$ there exists $\gamma' = \gamma'(\gamma, \beta_1, \beta_2)>0$ such that: for every $i \in \mathbb{N}$ and a $\B$-measurable function $h$, 
\begin{gather*}
\norm{\chi_{F^{<r}_{a_i}} - h}_{L^2} \leq \beta_1 + \gamma' \implies \norm{\mathbb{E} \left(\chi_{F^{<r}_{a_i}}\mid \B \right) - h}_{L^2} \leq  \gamma, \textrm { and }\\	
\norm{\chi_{F^{\geq s'}_{a_i}} - h}_{L^2} \leq \beta_2 + \gamma' \implies \norm{\mathbb{E} \left(\chi_{F^{\geq s'}_{a_i}} \mid \B \right) - h}_{L^2} \leq  \gamma.
\end{gather*}
\end{claim}

\begin{claimproof}
	Assume that $\norm{\chi_{F^{<r}_{a_i}} - h}_{L^2} \leq \beta_1 + \gamma' $. By the parallelogram rule for the $L^2$-norm, as the function $\frac{1}{2} \left(\mathbb{E} \left(\chi_{F^{<r}_{a_i}} \mid \B \right)+h \right)$ is $\B$-measurable, we have
	\begin{gather*}
		\norm{\mathbb{E} \left(\chi_{F^{<r}_{a_i}}\mid \B \right) - h}_{L^2}^2 = 2 \norm{\chi_{F^{<r}_{a_i}} - \mathbb{E} \left(\chi_{F^{<r}_{a_i}}\mid \B \right)}^2_{L^2} + 2 \norm{\chi_{F^{<r}_{a_i}}-h}_{L^2}^2\\ 
		- 4\norm{\chi_{F^{<r}_{a_i}} - \frac{1}{2} \left(\mathbb{E} \left(\chi_{F^{<r}_{a_i}}\mid \B \right)+h \right)}_{L^2}^2\\
	\leq 	2 \beta_1^2 + 2 \left( \beta_1 + \gamma' \right)^2 - 4 \beta_1^2 = 2 \beta_1 \gamma' + \left( \gamma' \right)^2 \leq \gamma
	\end{gather*}
	assuming that $\gamma'$ is sufficiently small with respect to $\beta_1$ and $\gamma$. The argument for $\chi_{F^{\geq s'}_{a_i}}$ is similar.
\end{claimproof}
From now on, fix some $\gamma' > 0$ satisfying the conclusion of Claim 2 with respect to $\gamma^{\frac{3}{2}}$ instead of $\gamma$. By the choice of $\beta_1, \beta_2$,  we can choose $n, \bar{\alpha}^1, \bar{\alpha}^2, \bar{b}_1, \ldots, \bar{b}_n$ and $i_1, \ldots, i_n < 0$ in $\mathbb{Z}$ so that, writing $\bar{a} := (a_{i_1}, \ldots, a_{i_n})$,  
\begin{gather}\label{eq: choice of approx Beil}	
\norm{\chi_{F^{<r}_{a_0}} - h_{n, \bar{\alpha}^1, \bar{a},\bar{b}_1, \ldots, \bar{b}_n}}_{L^2} \leq \beta_1 + \gamma', \\
\norm{\chi_{F^{\geq s'}_{a_0}} - h_{n, \bar{\alpha}^2, \bar{a},\bar{b}_1, \ldots, \bar{b}_n}}_{L^2} \leq \beta_2 + \gamma'. \nonumber
\end{gather}

The set $F_{h_{n, \bar{\alpha}^1, \bar{a}, \bar{b}_1, \ldots, \bar{b}_n }, h_{n, \bar{\alpha}^2, \bar{a}, \bar{b}_1, \ldots, \bar{b}_n }, \gamma}$ is definable in $\mathcal{M}'$ by a quantifier-free $\mathcal{L}_{\infty}$-formula by Lemma \ref{lem: type-def of norm}(1).  Hence the 
condition 
$$\mu_{\bar{1}^k}\left(F_{h_{n, \bar{\alpha}^1, \bar{a}, \bar{b}_1, \ldots, \bar{b}_n }, h_{n, \bar{\alpha}^2, \bar{a}, \bar{b}_1, \ldots, \bar{b}_n }, \gamma} \right) \geq r -\varepsilon$$
 on the tuple $\left(\bar{b}_1, \ldots, \bar{b}_n, a_{i_1}, \ldots, a_{i_n} \right)$ is $\mathcal{L}_{\infty}$-type-definable in $\mathcal{M}'$ by Lemma \ref{lem: lin ineq type def}. Then the following condition on the tuple $(a_{i_1}, \ldots, a_{i_n}, a_i)$ is also $\mathcal{L}_{\infty}$-type-definable in $\mathcal{M}'$:
\begin{align*}
	\exists \bar{b}_1 \ldots \exists \bar{b}_n \Bigg(  & \norm{\chi_{F^{<r}_{a_i}} - h_{n, \bar{\alpha}^1, \bar{a}, \bar{b}_1, \ldots, \bar{b}_n }}_{L^2} \leq \beta_1  + \gamma' \ \land \\
	& \norm{\chi_{F^{\geq s'}_{a_i}} - h_{n, \bar{\alpha}^2, \bar{a}, \bar{b}_1, \ldots, \bar{b}_n }}_{L^2} \leq \beta_2  + \gamma' \ \land \\
	&  \mu_{\bar{1}^k} \left(F_{h_{n, \bar{\alpha}^1, \bar{a}, \bar{b}_1, \ldots, \bar{b}_n }, h_{n, \bar{\alpha}^2, \bar{a}, \bar{b}_1, \ldots, \bar{b}_n },  \gamma} \right) \geq q - \varepsilon \Bigg).
\end{align*}

It holds for the tuple $\left( a_{i_1}, \ldots, a_{i_n}, a_0 \right)$ by \eqref{eq: choice of approx Beil}, the choice of $\gamma'$ and Claims 1 and 2. Hence, by $\mathcal{L}_{\infty}$-indiscernibility of the sequence $(a_i)_{i \in \mathbb{Z}}$, it holds for any tuple $\left( a_{i_1}, \ldots, a_{i_n}, a_i \right)$ with $i \in \mathbb{N}$; let $\vec{b}_i = \left(\bar{b}_1^i, \ldots, \bar{b}^i_n \right)$ be some tuple witnessing that. 

In particular, for every $i \in \mathbb{N}$, we have $\mu_{\bar{1}^k} \left(F_{h_{n, \bar{\alpha}^1, \bar{a}, \vec{b}_i }, h_{n, \bar{\alpha}^2, \bar{a}, \vec{b}_i }, \gamma} \right) \geq q - \varepsilon \geq \frac{q}{2}$ by the choice of $\varepsilon$. Then, by the choice of $\xi$ and Fact \ref{fac: Bergelson}, there exist some $j_1 < \ldots < j_l \in \mathbb{N}$ such that  
\begin{gather*}
	\mu_{\bar{1}^k} \left(\bigcap_{p \in [l]} F_{h_{n, \bar{\alpha}^1, \bar{a}, \vec{b}_{j_p} }, h_{n, \bar{\alpha}^2, \bar{a}, \vec{b}_{j_p} }, \gamma}  \right) \geq \xi.
\end{gather*}

Then, by Lemma \ref{lem: lin ineq type def} and $\mathcal{L}_{\infty}$-indiscernibility of $\left(a_i\right)_{i \in \mathbb{Z}}$ again,
there exist some $\vec{b}'_1, \ldots, \vec{b}'_l$ so that: 
\begin{enumerate}
	\item[(a)] $\norm{\chi_{F^{<r}_{a_i}} - h_{n, \bar{\alpha}^1, \bar{a}, \vec{b}'_i}}_{L^2} \leq \beta_1  + \gamma'$ for every $i \in [l]$;
	\item[(b)] $\norm{\chi_{F^{\geq s'}_{a_i}} - h_{n, \bar{\alpha}^2, \bar{a}, \vec{b}'_i}}_{L^2} \leq \beta_2  + \gamma'$ for every $i \in [l]$;
	\item[(c)] $\mu_{\bar{1}^k} \left(\bigcap_{i \in [l]} F_{h_{n, \bar{\alpha}^1, \bar{a}, \vec{b}'_i }, h_{n, \bar{\alpha}^2, \bar{a}, \vec{b}'_i }, \gamma}  \right) \geq \xi$.
\end{enumerate}
By (a), (b), Claim 2 and the choice of $\gamma'$, for every $i \in [l]$ we have 
$$\norm{ \E \left( \chi_{F^{<r}_{a_i}} \mid \B  \right) - h_{n, \bar{\alpha}^1, \bar{a}, \vec{b}'_i}}_{L^2} \leq \gamma^{\frac{3}{2}} \ \land \ \norm{ \E \left( \chi_{F^{\geq s'}_{a_i}} \mid \B  \right) - h_{n, \bar{\alpha}^2, \bar{a}, \vec{b}'_i}}_{L^2} \leq \gamma^{\frac{3}{2}}.$$
By Claim 1 this implies that $\mu_{\bar{1}^k} \left( F^{r,s'}_{\delta}(a_i) \triangle F_{h_{n, \bar{\alpha}^1, \bar{a}, \vec{b}'_i }, h_{n, \bar{\alpha}^2, \bar{a}, \vec{b}'_i }, \gamma}  \right) < \varepsilon$ for every $i \in [l]$.
But then from (c),  $\mathcal{M}' \propto \mathcal{M}_{\mathfrak{P},f}$ and  monotonicity of conditional expectation, we have
\begin{gather*}
	\mu_{\bar{1}^k} \left(  \bigcap_{i \in [l]} G^{r',s'}_{\delta}(a_i) \right)  \geq \mu_{\bar{1}^k} \left(  \bigcap_{i \in [l]} F^{r,s'}_{\delta}(a_i) \right) \geq \\
	\mu_{\bar{1}^k} \left(  \bigcap_{i \in [l]} F_{h_{n, \bar{\alpha}^1, \bar{a}, \vec{b}'_i}, h_{n, \bar{\alpha}^2, \bar{a}, \vec{b}'_i}, \gamma} \right) - l\varepsilon \geq \xi - l \varepsilon > 0
\end{gather*}

by the choice of $\varepsilon$.
\end{proof}

\subsection{Passing to an indiscernible counterexample}

Finally, we use the results developed in this section to show how to achieve the Assumption \ref{ass: ineapproximable} in the proof of Proposition \ref{prop: finite VCk-dim implies approx}.

\begin{theorem}\label{thm:pass to indiscernible}
  Let $\bar d$ be fixed and suppose that for each $j$ there is a $(k+1)$-partite graded probability space $\mathfrak{P}_j=(V^j_{[k+1]},\mathcal{B}^j_{\bar n},\mu^j_{\bar n})_{\bar n\in\mathbb{N}^{k+1}}$, a $\mathcal{B}^j_{\bar 1^{k+1}}$-measurable function $f^j:\prod_{i\in[k+1]}V^j_i\to[0,1]$ with $\VC_k(f^j)\leq\bar d$ and some $x_1^j, \ldots, x_{j}^j \in V^j_{k+1}$ such that for every $t \leq j$ we have:
  for any sets $D_1, \ldots, D_{j} \in \mathcal{F}^{f^j, j,  (x^j_1, \ldots, x^j_t)}$ and any $\left(\{ D_i \}_{i \in [j]} \cup \{f^{<q}_{x^j_i}\}_{i \in [t-1], q \in \mathbb{Q}_{j}^{[0,1]}} \right)$-simple function $g$ with coefficients in $\mathbb{Q}_{j}^{[0,1]}$, $\norm{f_{x_t^j} - g}_{L^2} \geq \varepsilon$.

  Then there exists a $(k+1)$-partite graded probability space 
  $$\mathfrak{P} = (V_{[k+1]}, \B_{\bar{n}}, \mu_{\bar{n}})_{n \in \mathbb{N}^{k+1}},$$
   $\varepsilon \in \mathbb{R}_{>0}$, a $ \B_{\bar{1}^k}$-measurable function $f: V^{\bar{1}^{k+1}} \to [0,1]$
	and a sequence $(x_l)_{l \in \mathbb{Z}}$ in $V_{k+1}$ satisfying the following:
	\begin{enumerate}
	\item  $\VC_k(f) \leq \bar{d}$;
        \item whenever $0\leq r<r'<s'<s\leq s$ are in $\mathbb{Q}$, $\delta\in\mathbb{R}_{>0}$, and
          $$\mu_{\bar 1^k} \left(\left\{\bar{x} \in V^{\bar{1}^k} \mid \E\left(\chi_{f^{<r}_{x_0}}\mid \B \right)(x) \geq \delta \land \E \left( \chi_{f^{\geq s}_{x_0}} \mid \B \right)(x) \geq  \delta  \right\} \right)>0,$$
          then for any $l\in\mathbb{N}$,
          $$\mu_{\bar 1^k} \left(\bigcap_{i\in[l]}\left\{\bar{x} \in V^{\bar{1}^k} \mid \E\left(\chi_{f^{<r'}_{x_i}}\mid \B \right)(x) \geq \delta \land \E \left( \chi_{f^{\geq s'}_{x_i}} \mid \B \right)(x) \geq  \delta  \right\} \right)>0;$$     
	\item $ \norm{f_{x_l} - \mathbb{E} \left(f_{x_l}\mid\mathcal{B}_{\bar{1}^k,k-1}\cup\{f_{x_i} : i < l\} \right)}_{L^2} \geq\varepsilon$ for all $l \in \mathbb{Z}$;
		\item $\B_{\bar{1}^k,k-1} \subseteq \B \subseteq \B_{\bar{1}^k}$;
		\item for all $l \in \mathbb{N}$ we have 
		\begin{gather*}
			\mathbb{E}\left(f_{x_l}\mid\mathcal{B}_{\bar{1}^k,k-1}\cup\{f_{x_i} : i < l\} \right)\\
			=\mathbb{E} \left(f_{x_l}\mid\mathcal{B}\cup\{f_{x_i} : i < l\} \right)\\
			= \mathbb{E} \left(f_{x_l}\mid\mathcal{B} \right),
		\end{gather*}
	\end{enumerate}
where $\B := \sigma \left(\left\{f_{x_i} : i < 0\right\} \cup \mathcal{B}_{\bar{1}^k,k-1} \right)$.
\end{theorem}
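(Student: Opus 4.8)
\textbf{Proof plan for Theorem \ref{thm:pass to indiscernible}.} The plan is to pass to a non-principal ultraproduct of the $\mathfrak{P}_j$ together with the functions $f^j$ and the points $x^j_i$, and then to extract an order-indiscernible subsequence inside a saturated model, applying the de Finetti-style results of this section to verify conditions (2), (4), (5). First I would fix a non-principal ultrafilter $\mathcal{U}$ on $\mathbb{N}$ and, using the construction of Section \ref{sec: ultraproducts of k-GPS}, form the $(k+1)$-partite graded probability space $\tilde{\mathfrak{P}}=(\tilde V_{[k+1]},\tilde\B_{\bar n},\tilde\mu_{\bar n})_{\bar n\in\mathbb{N}^{k+1}}$, the $\tilde\B_{\bar 1^{k+1}}$-measurable function $\tilde f$ obtained as the ultraproduct of the $f^j$, and the associated $\mathcal{L}_\infty$-structure $\tilde{\mathcal M}$; by Fact \ref{fac: ultraproduct props}, $\tilde{\mathcal M}$ is $\aleph_1$-saturated and $\tilde{\mathcal M}\propto\mathcal M_{\tilde{\mathfrak P},\tilde f}$. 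For each $j$ consider the sequence $(x^j_1,\ldots,x^j_j)$; taking an appropriate index shift so that these are indexed by an initial segment of $\mathbb{Z}$ (or by $\{1,\ldots,j\}$), and using \L o\'s' theorem together with the type-definability lemma (Lemma \ref{lem: meaning of formulas}, cf.\ Remark \ref{rem : star subsequence}), I would check that in $\tilde{\mathcal M}$ the ultraproduct sequence $(\tilde x_l)_{l\in\mathbb{N}}$ satisfies the partial type $\pi_{\mathbb{N},\varepsilon}$ — this encodes exactly that $\norm{\tilde f_{\tilde x_l}-\E(\tilde f_{\tilde x_l}\mid\B_{\bar 1^k,k-1}\cup\{\tilde f_{\tilde x_i}:i<l\})}_{L^2}\geq\varepsilon$. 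The hypothesis is phrased with the bounded collections $\mathcal{F}^{f^j,j,(x^j_1,\ldots,x^j_t)}$ and coefficients in $\mathbb{Q}^{[0,1]}_j$; since every simple function relative to the relevant algebra is captured in the limit as these bounds $\to\infty$ through the ultrafilter (each individual level set and rational appearing eventually), the limiting inequality is against all simple functions, i.e.\ against the conditional expectation, which is precisely condition (3) in the limit and yields $\tilde{\mathcal M}\models\pi_{\mathbb{N},\varepsilon}((\tilde x_l)_{l\in\mathbb{N}})$.

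Next I would apply Fact \ref{fac: existence of indiscernibles}(1) to extract, inside (an $\aleph_1$-saturated elementary extension of) $\tilde{\mathcal M}$, a sequence $(x_l)_{l\in\mathbb{Z}}$ which is $\mathcal{L}_\infty$-indiscernible and based on $(\tilde x_l)_{l\in\mathbb{N}}$ over $\emptyset$. Basedness together with Remark \ref{rem : star subsequence} guarantees that the sequence $(x_l)_{l\in\mathbb{Z}}$ still satisfies $\pi_{\mathbb{Z},\varepsilon}$, which is condition (3) of the conclusion. Since the ambient structure is still an $\aleph_1$-saturated $\mathcal{L}_\infty$-structure $\propto\mathcal M_{\mathfrak P,f}$ for the corresponding graded probability space $\mathfrak P$ and function $f$ (these being the ones carved out from the extension), we may now invoke the de Finetti proposition: Proposition \ref{prop: de Finetti} applied to $(x_l)_{l\in\mathbb{Z}}$ with $\B:=\sigma(\{f_{x_i}:i<0\}\cup\mathcal B_{\bar 1^k,k-1})$ gives exactly conditions (1)' $\B_{\bar 1^k,k-1}\subseteq\B\subseteq\B_{\bar 1^k}$ — i.e.\ (4) — and the chain of equalities in (5). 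Condition (1), that $\VC_k(f)\leq\bar d$, follows from Lemma \ref{lem: shattering is definable} (shattering an $m$-box is type-definable, so $\VC_k^{r,s}(f^j)<d_{r,s}$ for all $j$ transfers to the ultraproduct, and indiscernible extraction does not change the $\VC_k$-dimension since it only realizes types already present); alternatively one notes directly that a shattered box for $f$ would, by saturation and basedness, yield one for some $f^j$.

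Finally, for condition (2) I would apply Lemma \ref{lem: indisc Bergelson} to the indiscernible sequence $(x_l)_{l\in\mathbb{Z}}$: that lemma says precisely that if $\mu_{\bar 1^k}(G^{r,s}_\delta(x_0))>0$ then $\mu_{\bar 1^k}(\bigcap_{i\in[l]}G^{r',s'}_\delta(x_i))>0$ for all $l$ and all $r<r'<s'<s$, where $G^{r,s}_\delta(x_i)$ is the set in condition (2); so condition (2) is immediate. The main obstacle, I expect, is the bookkeeping in the first paragraph: carefully matching the quantitative, bounded-complexity form of the hypothesis (collections $\mathcal{F}^{f^j,j,\cdot}$, coefficients in $\mathbb{Q}^{[0,1]}_j$, the index bound $t\leq j$) with the unbounded partial type $\pi_{\mathbb{Z},\varepsilon}$, and making sure via \L o\'s' theorem and the measurability/definability machinery of Lemmas \ref{lem: lin ineq type def}, \ref{lem: type-def of norm}, \ref{lem: meaning of formulas} that the diagonal ultraproduct genuinely realizes $\pi_{\mathbb{Z},\varepsilon}$ rather than some weaker finitary approximation — this is the same kind of compactness argument used in Lemma \ref{lem: meaning of formulas}, so the required ingredients are all in place, but the diagonalization over $j$ needs to be stated with care.
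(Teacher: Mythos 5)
Your proposal is correct and follows essentially the same route as the paper's proof: ultraproduct plus \L o\'s' theorem together with the type-definability machinery (Lemma \ref{lem: meaning of formulas}, Remark \ref{rem : star subsequence}) to realize $\pi_{\mathbb{Z},\varepsilon}$ on a diagonal sequence, Fact \ref{fac: existence of indiscernibles}(1) to extract the indiscernible sequence, Lemma \ref{lem: shattering is definable} for condition (1), Proposition \ref{prop: de Finetti} for (4)--(5), and Lemma \ref{lem: indisc Bergelson} for (2). The only small difference is your parenthetical passage to an $\aleph_1$-saturated elementary extension: this is unnecessary (and would force you to re-carve the graded probability space), since $\tilde{\mathcal{M}}$ is already $\aleph_1$-saturated and the indiscernible sequence can be found inside it, exactly as the paper does.
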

\begin{proof}
Let $\cU$ be a non-principal ultrafilter on $\mathbb{N}$. Let $\tilde{\mathfrak{P}} :=\left(\tilde{V}_{[k]}, \tilde{\B}_{\bar{n}}, \tilde{\mu}_{\bar{n}} \right)_{\bar{n} \in \mathbb{N}^{k}}$ be the $(k+1)$-partite graded probability space, the $\tilde{\B}_{\bar{1}^{k+1}}$-measurable function $\tilde{f}: \tilde{V}^{\bar{1}^{k+1}} \to [0,1]$ and $\mathcal{\tilde{M}}$ the $\mathcal{L}_{\infty}$-structure defined by the corresponding ultraproduct in  Section \ref{sec: ultraproducts of k-GPS} (Fact \ref{fac: ultraproduct props}).

\begin{claim}\label{lem: taking UP of counterex}
	\begin{enumerate}
		\item $\VC_k(\tilde{f}) \leq \bar{d}'$ for some $\bar{d}' = \bar{d}'(\bar{d})$.
		\item There exists an infinite sequence $(x_i : i \in \mathbb{Z})$ in $\tilde{V}_{k+1}$ such that $\tilde{\mathcal{M}} \models \pi_{\mathbb{Z}, \varepsilon}\left( (x_i)_{i \in \mathbb{Z}} \right)$ and $(x_i : i \in \mathbb{Z})$ is $\mathcal{L}_{\infty}$-indiscernible in $\tilde{\mathcal{M}}$.
	\end{enumerate}
\end{claim}
\begin{claimproof}

	(1) By Lemma \ref{lem: shattering is definable}.	
	
	(2) 	
	For $i \in \mathbb{N}$, let $\tilde{x}_i := \left(x^j_i : j \in \mathbb{N} \right) / \cU \in \tilde{V}_{k+1}$.
	
	Let $\pi_0$ be an arbitrary finite set of formulas from $\pi_{\mathbb{N}, \varepsilon}$, all formulas in $\pi_{0}$ only involve the variables $ z_{i_1}, \ldots, z_{i_n}$ for some $n \in \mathbb{N}$ and $i_1 \leq \ldots \leq i_{n} \in \mathbb{N}$. From the definition of $\pi_{\mathbb{N}, \varepsilon}$ and Lemma \ref{lem: meaning of formulas} (as obviously $\mathcal{M}_{\mathfrak{P}^j, f^j} \propto \mathcal{M}_{\mathfrak{P}^j, f^j}$ for every $j \in \mathbb{N}$), it is not hard to see that for all sufficiently large	$j \in \mathbb{N}$ (so that $j > n$ and all the rational coefficients appearing among the formulas in $\pi_0$ are in $\mathbb{Q}_j^{[0,1]}$) we have 
	$$\mathcal{M}_{\mathfrak{P}_j,f^j} \models \pi_{0}\left(x^j_{i_1}, \ldots, x^j_{i_n} \right),$$
	hence by \L os' theorem
	$$\tilde{\mathcal{M}}  \models \pi_{0}\left(\tilde{x}_{i_1}, \ldots, \tilde{x}_{i_n} \right),$$
	and so $$\tilde{\mathcal{M}} \models \pi_{\mathbb{N}, \varepsilon} \left( \left(\tilde{x}_i \right)_{i \in \mathbb{N}} \right).$$
	
	As $\tilde{\mathcal{M}}$ is an $\aleph_1$-saturated $\mathcal{L}_{\infty}$-structure, by Fact \ref{fac: existence of indiscernibles}(1) we can find an infinite $\mathcal{L}_{\infty}$-indiscernible sequence $(x_i : i \in \mathbb{Z})$ in $\tilde{V}_k$ based on $(\tilde{x}_i)_{i \in \mathbb{N}}$. In particular, using Remark \ref{rem : star subsequence}, $\tilde{\mathcal{M}} \models \pi_{\mathbb{Z}, \varepsilon} \left( \left(x_i \right)_{i \in \mathbb{Z}} \right)$.
\end{claimproof}

As $\tilde{\mathcal{M}} \propto \mathcal{M}_{\tilde{\mathfrak{P}}, \tilde{f}}$ by Remark \ref{rem : UP is prop to MP 2}, by Lemma \ref{lem: meaning of formulas} for every $i \in \mathbb{Z}$ we have
\begin{gather*}
		 	\norm{\tilde{f}_{a_i} - \E \left(\tilde{f}_{a_i} \mid \tilde{\B}_{\bar{1}^k, k-1} \cup \{ \tilde{f}_{a_j} : j \in I \land j <i \} \right)}_{L^2} \geq \varepsilon.
\end{gather*}
	
Taking $\mathfrak{P} := \tilde{\mathfrak{P}}, \mathcal{M}' := \tilde{\mathcal{M}}, f := \tilde{f}$, replacing $\bar{d}$ by $\bar{d}'$ and applying Proposition \ref{prop: de Finetti}, we have thus arrived at the desired situation.
\end{proof}

\section{Operations on functions preserving finite $\VC_k$-dimension}\label{sec:operations}

\subsection{Basic operations }

In this section we demonstrate that finiteness of the $\VC_k$-dimension is preserved under various natural operations on real-valued functions, obtaining a generalization of Fact \ref{fac: basic prop of k-dep}. These results are used in the proof of the main Theorem \ref{thm: the very main thm soft} in particular.


\begin{lemma}\label{lem: shattering is definable}
	Assume that, in the notation of Section \ref{sec: ultraproducts of k-GPS}, for some $\bar{d}$ we have $\VC_{k} \left( f^j \right) \leq \bar{d}$ for all $j \in \mathbb{N}$. Then $\VC_{k}\left( \tilde{f} \right) \leq \bar{d}'$, where we can take $d'_{r,s} := d_{r', s'}$ for any $r < r' < s' < s$ in $\mathbb{Q}^{[0,1]}$.
\end{lemma}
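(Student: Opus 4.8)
The statement asserts that finite $\VC_k$-dimension of the functions $f^j$ transfers to the ultraproduct $\tilde f$, with only a slight loss of the shattering thresholds (from $(r,s)$ to $(r',s')$). The strategy is the standard ``definability of shattering'' argument: I would express the failure of $(r',s')$-shattering by a $d$-box as a first-order (in fact quantifier-free) $\mathcal{L}_\infty$-condition on the relevant parameters, show this condition holds in each $\mathcal{M}_{\mathfrak{P}^j,f^j}$ by hypothesis, invoke \L os' theorem to pull it up to $\tilde{\mathcal{M}}$, and then translate back using $\tilde{\mathcal{M}}\propto\mathcal{M}_{\tilde{\mathfrak{P}},\tilde f}$ (Remark \ref{rem : UP is prop to MP 2} / Fact \ref{fac: ultraproduct props}(5)) to obtain the bound on $\VC_k^{r,s}(\tilde f)$ itself.

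\textbf{Key steps.} First, fix $r<s$ in $\mathbb{Q}^{[0,1]}$ and pick rationals $r<r'<s'<s$; set $d:=d_{r',s'}$. Recall (Definition \ref{def: VCk dimension of functions}(1)) that a box $A=A_1\times\cdots\times A_k$ with each $|A_i|=d$ is $(r',s')$-shattered by $f^j$ iff for every $S\subseteq A$ there is $c_S\in V^j_{k+1}$ with $f^j(\bar a,c_S)\le r'$ on $S$ and $f^j(\bar a,c_S)\ge s'$ on $A\setminus S$. The statement ``no box of side $d$ is $(r',s')$-shattered by $f^j$'' is captured by the single first-order $\mathcal{L}_\infty$-sentence
\[\neg\exists (a_{i,t})_{i\in[k],t\in[d]}\ \bigwedge_{S\subseteq [d]^k}\exists c_S\ \Bigl(\bigwedge_{\bar t\in S}\neg F^{<r'}(a_{1,t_1},\dots,a_{k,t_k},c_S)\wedge\bigwedge_{\bar t\notin S}F^{<s'}(a_{1,t_1},\dots,a_{k,t_k},c_S)\Bigr),\]
together with the usual conjunction of inequalities $a_{i,t}\neq a_{i,t'}$ for $t\neq t'$ ensuring the $A_i$ are genuine $d$-element sets; here I use $\neg F^{<r'}=F^{\ge r'}$ and that $f^j(\cdot)\le r'$ is implied by $\neg F^{<r''}$ for $r''$ slightly above $r'$, which is where the margin is spent. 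Second, by hypothesis $\VC_k(f^j)\le\bar d$, so $\VC_k^{r',s'}(f^j)\le d$, meaning each $\mathcal{M}_{\mathfrak{P}^j,f^j}$ satisfies this sentence (using $\mathcal{M}_{\mathfrak{P}^j,f^j}\propto\mathcal{M}_{\mathfrak{P}^j,f^j}$ trivially, to pass between ``$f^j<r'$'' and ``$\models F^{<r'}$''). Third, \L os' theorem gives that $\tilde{\mathcal{M}}=\prod_j\mathcal{M}_{\mathfrak{P}^j,f^j}/\mathcal{U}$ satisfies the same sentence. Fourth, since $\tilde{\mathcal{M}}\propto\mathcal{M}_{\tilde{\mathfrak{P}},\tilde f}$, the interpretation of $F^{<q}$ in $\tilde{\mathcal{M}}$ differs from $\{\tilde f<q\}$ only by an arbitrarily small shift in $q$ (Fact \ref{fac: ultraproduct props}(5)); absorbing this shift into the gap between $r'$ and $r$ (and between $s'$ and $s$) yields that no $d$-box is $(r,s)$-shattered by $\tilde f$, i.e.\ $\VC_k^{r,s}(\tilde f)\le d=d_{r',s'}$. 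Running this for all $r<s$ gives $\VC_k(\tilde f)\le\bar d'$ with $d'_{r,s}=d_{r',s'}$.

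\textbf{Main obstacle.} The only genuinely delicate point is the bookkeeping of thresholds: because the ultraproduct predicates $F^{<q}$ are only ``$\propto$-approximately'' the level sets of $\tilde f$, one cannot directly transfer $(r,s)$-shattering, and must work with the strictly interior pair $(r',s')$ and choose the auxiliary rationals in the defining sentence (the $r''$ above $r'$, etc.) so that every implication in Fact \ref{fac: ultraproduct props}(5) goes the right way simultaneously at the $f^j$ level, at the $\tilde{\mathcal{M}}$ level, and at the $\mathcal{M}_{\tilde{\mathfrak{P}},\tilde f}$ level. This is purely a matter of inserting finitely many rationals between $r$ and $s$ and is routine once set up carefully, but it is the reason the statement loses a margin rather than preserving $\bar d$ exactly. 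Everything else (writing down the sentence, applying \L os, unwinding $\propto$) is formal.
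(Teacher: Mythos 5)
Your overall strategy is exactly the paper's: encode the failure of shattering at interior thresholds $(r',s')$ as an $\mathcal{L}_0$-sentence in the predicates $F^{<q}$, verify it in each $\mathcal{M}_{\mathfrak{P}^j,f^j}$, apply \L os, and transfer back to $\tilde f$ via $\tilde{\mathcal{M}}\propto\mathcal{M}_{\tilde{\mathfrak{P}},\tilde f}$ (Fact \ref{fac: ultraproduct props}(5)). However, the sentence you wrote does not express non-shattering, and your second step (``each $\mathcal{M}_{\mathfrak{P}^j,f^j}$ satisfies this sentence'') fails for it. You placed $\neg F^{<r'}$ on the tuples indexed by $S$ and $F^{<s'}$ on those outside $S$; in $\mathcal{M}_{\mathfrak{P}^j,f^j}$ this asserts $f^j\geq r'$ on $S$ and $f^j<s'$ off $S$. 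Since $r'<s'$, these thresholds overlap: if, say, every $f^j$ is the constant $\frac{r'+s'}{2}$ (which is not $(r_0,s_0)$-shattered by any box for any $r_0<s_0$, so the hypothesis $\VC_k(f^j)\leq\bar d$ holds), then the inner conjunction is satisfied by any box and any choice of $c_S$, so your negated sentence is \emph{false} in $\mathcal{M}_{\mathfrak{P}^j,f^j}$. Relabeling $S$ by its complement does not repair this, because the thresholds are still crossed, and the supporting remark that ``$f^j(\cdot)\leq r'$ is implied by $\neg F^{<r''}$ for $r''$ slightly above $r'$'' is backwards: $\neg F^{<r''}$ means $f^j\geq r''$.

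The correct encoding (the paper's) puts $F^{<r'}$ on the tuples in $S$ and $F^{\geq s'}=\neg F^{<s'}$ on those outside $S$. Then witnesses in $\mathcal{M}_{\mathfrak{P}^j,f^j}$ would yield an $(r',s')$-shattered $d_{r',s'}$-box for $f^j$, contradicting $\VC_k^{r',s'}(f^j)\leq d_{r',s'}$, so the negated sentence holds and \L os transfers it to $\tilde{\mathcal{M}}$; conversely, an $(r,s)$-shattered box for $\tilde f$ produces witnesses in $\tilde{\mathcal{M}}$ because $\tilde f\leq r<r'$ gives $\tilde{\mathcal{M}}\models F^{<r'}$ and $\tilde f\geq s>s'$ gives $\tilde{\mathcal{M}}\models\neg F^{<s'}$ by Fact \ref{fac: ultraproduct props}(5). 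Note also that your bookkeeping of where the margin is spent is off: at the $f^j$ level the predicates are interpreted exactly, so no margin is needed there; the strict gaps $r<r'$ and $s'<s$ are used only in this last back-transfer step, where the implications in the definition of $\propto$ are one-sided. Your remaining points are fine (the distinctness clauses are harmless and in fact redundant, since distinctness is forced by the shattering pattern), so the argument becomes the paper's proof once the polarity of the predicates is corrected.
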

\begin{proof}
	Fix arbitrary $r< r' < s' < s \in \mathbb{Q} \cap [0,1]$. By assumption, for any $j \in \mathbb{N}$, no $d_{r',s'}$-box can be $(r',s')$-shattered by $f^j$, hence 
	\begin{gather*}
		\mathcal{M}_{f^j} \models \neg \exists \left(x^{s}_{t} :  s \in [k], t \in [d_{r',s'}]\right), \left(y_{u} : u \subseteq [d_{r',s'}]^k \right) \bigwedge_{u \subseteq [d_{r',s'}]^k}\\
		\bigwedge_{(t_1, \ldots, t_k) \in u} F^{<r'}(x^1_{t_1}, \ldots, x^k_{t_k}) \land \bigwedge_{(t_1, \ldots, t_k) \in [d_{r',s'}]^k \setminus u} F^{\geq s'}(x^1_{t_1}, \ldots, x^k_{t_k}).
	\end{gather*}
	
	By \L os' theorem, the same $\mathcal{L}_0$-sentence holds in the ultraproduct $\tilde{\mathcal{M}}$ as well. As $\tilde{\mathcal{M}} \propto \mathcal{M}_{\tilde{f}}$, this implies that no $d_{r',s'}$-box is $(r,s)$-shattered by $\tilde{f}$.
\end{proof}
The following characterization of finiteness of $\VC_k$-dimension in terms of generalized indiscernibles was observed in \cite[Lemma 6.2]{chernikov2014n} for relations, and we generalize it to real-valued functions.
\begin{lemma}\label{lem: full arity indisc witness to IPn} 

Let $\mathcal{M}$ be an $\aleph_1$-saturated $\mathcal{L}$-structure in a language $\mathcal{L} \supseteq \mathcal{L}_0$, $k < r \in \mathbb{N}$, $\mathcal{M}\restriction_{\mathcal{L}_0} \propto \mathcal{M}_{f} = \left(V_1, \ldots, V_r, \ldots \right)$, $f: \prod_{i \in [r]}V_i \to [0,1]$ and $\bar{b} \in \prod_{i \in [r] \setminus [k+1]} V_i$ (could be an empty tuple when $r = k+1$). Then the following are equivalent for $f_{\bar{b}}:  \prod_{i \in [k+1]} V_i \to [0,1], (a_1, \ldots, a_{k+1}) \mapsto f(a_1, \ldots, a_{k+1}, \bar{b})$.
\begin{enumerate}
	\item $\VC_k \left( f_{\bar{b}} \right) = \infty$.
	\item There exist some $r < s \in \mathbb{Q} \cap [0,1]$ and elements $(a_g)_{g \in G_{k+1,p}}$ in $\mathcal{M}$ such that:
	\begin{enumerate}
	\item $g \in P_i \implies a_{g} \in V_{i}$;
	\item $(a_g)_{g \in G_{k+1,p}}$ is $G_{k+1,p}$-indiscernible over $\emptyset$ (in $\mathcal{M}$);
	\item  For all $(g_1, \ldots, g_{k+1}) 	\in \prod_{i \in [k+1]} P_i$ we have:
	\begin{itemize}
		\item $G_{k+1,p} \models R_{k+1}(g_1, \ldots, g_{k+1}) \Rightarrow f(a_{g_1}, \ldots, a_{g_{k+1}}, \bar{b}) \leq r$;
		\item $G_{k+1,p} \models \neg R_{k+1}(g_1, \ldots, g_{k+1}) \Rightarrow f(a_{g_1}, \ldots, a_{g_{k+1}}, \bar{b}) \geq s$.
	\end{itemize}
	\end{enumerate}
	\end{enumerate}	
\end{lemma}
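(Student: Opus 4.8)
The plan is to prove the equivalence $(1)\Leftrightarrow(2)$ by a standard compactness-plus-structural-Ramsey argument, in the spirit of \cite[Lemma 6.2]{chernikov2014n}, now adapted to real-valued functions via the $\mathcal{L}_0$-structure $\mathcal{M}_f$ and the relation $\propto$.

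\textbf{Direction $(2)\Rightarrow(1)$.} This is the easy direction. Suppose we have $r<s$ and a $G_{k+1,p}$-indiscernible array $(a_g)_{g\in G_{k+1,p}}$ satisfying (a)--(c). Fix an arbitrary $d\in\mathbb{N}$. Using the axioms of $T^{k+1}_{\opg}$ (genericity, Definition \ref{def: generic hypergraph}(5)), we can find inside $G_{k+1,p}$, for each $i\in[k]$, a $d$-element subset $A_i\subseteq P_i$, together with, for every $S\subseteq A_1\times\cdots\times A_k$, an element $b_S\in P_{k+1}$ such that $G_{k+1,p}\models R_{k+1}(\bar a, b_S)$ iff $\bar a\in S$, for all $\bar a\in A_1\times\cdots\times A_k$ --- this is exactly the statement that $G_{k+1,p}$ itself shatters a $k$-dimensional $d$-box. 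Transporting along (c), the box $\prod_{i\in[k]}\{a_g: g\in A_i\}$ is $(r,s)$-shattered by $f_{\bar b}$. Since $d$ was arbitrary, $\VC_k^{r,s}(f_{\bar b})=\infty$, hence $\VC_k(f_{\bar b})=\infty$.

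\textbf{Direction $(1)\Rightarrow(2)$.} Assume $\VC_k(f_{\bar b})=\infty$. By Remark \ref{rem: VCk for f iff omits}, there is a fixed pair $r_0<s_0$ in $\mathbb{Q}\cap[0,1]$ with $\VC_k^{r_0,s_0}(f_{\bar b})=\infty$, so for every $d$ there is a $k$-dimensional $d$-box $(r_0,s_0)$-shattered by $f_{\bar b}$. For each such $d$, list the box as $(c^d_{i,t})_{i\in[k],\,t\in[d]}$ with $c^d_{i,t}\in V_i$, and for each $S\subseteq[d]^k$ choose a witness $e^d_S\in V_{k+1}$ with $f(c^d_{1,t_1},\ldots,c^d_{k,t_k},e^d_S,\bar b)\le r_0$ if $(t_1,\ldots,t_k)\in S$ and $\ge s_0$ otherwise. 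I will assemble these witnesses, together with their order structure and their partition data, into a single $G_{k+1,p}$-like configuration. Concretely: take the countable generic ordered partite hypergraph $G_{k+1,p}$; its vertices in $P_i$ ($i\le k$) index the ``$c$-coordinates'' and its vertices in $P_{k+1}$ index the ``$e$-coordinates''. For each finite induced subhypergraph $H\subseteq G_{k+1,p}$ one can --- by choosing $d$ large and matching $H$ into the shattering pattern of the $d$-box --- realize $H$ by an assignment $g\mapsto a_g\in\mathcal{M}$ obeying (a) and the two implications of (c) for edges/non-edges of $H$, and moreover respecting the quantifier-free $\mathcal{L}_0$-type prescribed by $H$ (in particular the ordering and partition relations, which only need to be interpreted compatibly; note $\mathcal{M}_f$ does not literally carry an order, so the order relations $<$ here are auxiliary symbols we add to the index language $\mathcal{L}_0^{k+1}=\mathcal{L}^{k+1}_{\opg}$, not to $\mathcal{L}$). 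By $\aleph_1$-saturation of $\mathcal{M}$ and compactness, there is a genuine assignment $(a_g)_{g\in G_{k+1,p}}$ in $\mathcal{M}$ satisfying (a) and (c) with $r:=r_0$, $s:=s_0$, where we use $\mathcal{M}\restriction_{\mathcal{L}_0}\propto\mathcal{M}_f$ together with Fact \ref{fac: ultraproduct props}(5)-style comparison to pass between ``$f(\cdots)\le r$'' and the relation symbols $F^{<q}$ (choosing an intermediate rational threshold handles the $\propto$-slack, which is why the statement allows $r<r'<s'<s$ in applications). Finally, apply Fact \ref{fac: existence of indiscernibles}(2): there is $(a'_g)_{g\in G_{k+1,p}}$ in $\mathcal{M}$ which is $G_{k+1,p}$-indiscernible over $\emptyset$ and based on $(a_g)_{g\in G_{k+1,p}}$. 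Being based on it over the finite set $\Delta$ of $\mathcal{L}(\bar b)$-formulas of the form $F^{<q}(x_{g_1},\ldots,x_{g_{k+1}})$ for the relevant thresholds $q$ preserves properties (a) and (c) (again using $\propto$ and slightly separated rational thresholds). This $(a'_g)$ is the desired array.

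\textbf{Main obstacle.} The delicate point is the bookkeeping around the relation $\propto$: the statement ``$f(\bar a,\bar b)\le r$'' is not literally an $\mathcal{L}_0$-formula, only ``$F^{<q}(\bar a,\bar b)$'' is, and the two differ by the $\propto$-gap. The clean fix is to carry everything with slightly separated rational thresholds $r<r'<s'<s$, formulate the shattering and indiscernibility conditions in terms of the $F^{<q}$ symbols at the inner thresholds $r',s'$, and only at the very end translate back to $f$-values at $r,s$; this is precisely why the displayed conclusion is stated with the extra room, and why the proof of \cite[Lemma 6.2]{chernikov2014n} goes through essentially verbatim modulo this real-valued adjustment. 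A secondary, purely notational, obstacle is that the index structure $G_{k+1,p}$ is ordered while $\mathcal{M}_f$ is not, so one must be careful that the order symbols live only in the index language $\mathcal{L}_0^{k+1}$ used for quantifier-free types of the $g$'s, never in $\mathcal{L}$; the genericity axioms of $T^{k+1}_{\opg}$ are used only to \emph{produce} shattered boxes, not to constrain $\mathcal{M}$.
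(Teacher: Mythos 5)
Your proposal is correct and takes essentially the same route as the paper: the easy direction via genericity of $G_{k+1,p}$ transporting the shattering pattern, and the hard direction by fixing a rational pair with infinite $\VC_k^{r,s}$, realizing (by $\aleph_1$-saturation) a partial type written in the $F^{<q}$ predicates at intermediate thresholds $r<r'<s'<s$ to absorb the $\propto$-slack, and then extracting a $G_{k+1,p}$-indiscernible array via Fact \ref{fac: existence of indiscernibles}(2). The only adjustments needed are cosmetic: take the array indiscernible and based over $\bar{b}$ (so the parameter formulas $F^{<q}(x_{g_1},\ldots,x_{g_{k+1}},\bar{b})$ are preserved, which still gives indiscernibility over $\emptyset$), and note that the conclusion (c) is obtained at the inner thresholds $r',s'$ rather than the original pair, which suffices since (2) is existential in $r<s$.
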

\begin{proof}
	
	(2) $\Rightarrow$ (1). Assume that (2) holds, and let $Q_i \subseteq P_i, i \in [k]$ be arbitrary finite sets and $Q := \prod_{i \in [k]} Q_i$. By the definition of $G_{k+1,p}$ (Definition \ref{def: generic hypergraph}), for every subset $S \subseteq Q$ there exists some $g_{S} \in P_{i+1}$ so that for every $(g_1, \ldots, g_k)\in Q$ we have $G_{k+1,p} \models R_{k+1}(g_1, \ldots, g_{k}, g_S) \iff (g_1, \ldots, g_k) \in S$. By (c) this implies that, taking $A_i := \{a_g : g \in Q_i\} \subseteq V_i$, the box $A := \prod_{i \in [k]} A_i$ is $(r,s)$-shattered by $f_{\bar{b}}$.
	
%
	
	(1) $\Rightarrow$ (2). Assume that $r<s \in \mathbb{Q} \cap [0,1]$ are such that there for every $d \in \mathbb{N}$ there exists a finite box $A = \prod_{i \in [k]} A_i \subseteq \prod_{i \in [k]} V_i$ with $|A_i| \geq d$ for each $i \in [k]$  which is  $(r,s)$-shattered by $f_{\bar{b}}$. 
	In particular, for any finite $(k+1)$-partite hypergraph $(R; D_1, \ldots, D_{k+1})$ with $R \subseteq \prod_{i \in [k+1]}D_i$ we can choose some sets $A_{i} \subseteq V_{i}$ and bijections $\alpha_i: D_i \to A_i$ so that for every $(b_1, \ldots, b_{k+1}) \in \prod_{i \in [k+1]}D_i$, 
	\begin{gather}\label{eq: perm of vars pres VCk dim1}
		(b_1, \ldots, b_{k+1}) \in R \implies f \left(\alpha_1(b_1), \ldots, \alpha_{k+1}(b_{k+1}), \bar{b} \right) \leq r;\\
		(b_1, \ldots, b_{k+1}) \notin R \implies f \left(\alpha_1(b_1), \ldots, \alpha_{k+1}(b_{k+1}), \bar{b} \right) \geq s.\nonumber
	\end{gather}
	Fix arbitrary $r',s' \in \mathbb{Q}^{[0,1]}$ with $r < r' < s' < s$ and consider the countable partial $\mathcal{L}_0$-type $\pi \left( (x_g)_{g \in G_{k+1,p}} \right)$ with a finite tuple of parameters $\bar{b}$ given by 
		\begin{gather*}
		\bigwedge_{(g_1, \ldots, g_{k+1}) \in R_{k+1}} F^{<r'}(x_{g_1}, \ldots, x_{g_k}, \bar{b}) \land  \\
		\bigwedge_{(g_1, \ldots, g_{k}) \in \prod_{i \in [k+1] P_i \setminus R_{k+1}}} F^{\geq s'}(x_{g_1}, \ldots, x_{g_k}, \bar{b}).
	\end{gather*}
	By \eqref{eq: perm of vars pres VCk dim1} and using $\mathcal{M}\restriction_{\mathcal{L}_0} \propto \mathcal{M}_{f}$, every finite set of formulas from $\pi$ is realized in $\mathcal{M}$.
Then, by $\aleph_1$-saturation of $\mathcal{M}$, we can find some tuples $(a_g)_{g \in G_{k+1,p}}$ (with $ a_g \in V_i$ for $g \in P_i$) so that $\mathcal{M} \models \pi \left((a_g)_{g \in G_{k+1,p}} \right)$.

By Fact \ref{fac: existence of indiscernibles}(2), let $(a'_g )_{g \in G_{k+1,p}}$ be \emph{$G_{k+1,p}$-indiscernible over $\bar{b}$} in $\mathcal{M}$ based on $(a_g )_{g \in G_{k+1,p}}$. Then we still have $\mathcal{M} \models \pi \left((a'_g)_{g \in G_{k+1,p}} \right)$. In particular, using $\mathcal{M}\restriction_{\mathcal{L}_0} \propto \mathcal{M}_{f}$ again, we get that $(a'_g )_{g \in G_{k+1,p}}$ satisfies (c) with respect to $r', s'$.
\end{proof}

	        Next we show an analog of Fact \ref{fac: basic prop of k-dep} for real valued functions (generalizing \cite[Proposition 3.7]{yaacov2009continuous} in the case $k=1$). We will use the following variant of the Stone-Weierstrass theorem.

\begin{fac}\cite[Proposition 1.14]{ben2010continuous}\label{fac: dense system of conn} Let $X$ be a compact Hausdorff space. Assume that $\mathfrak{B} \subseteq C(X, [0,1])$ satisfies the following:
\begin{enumerate}
	\item if $f \in \mathfrak{B}$, then $1-f \in \mathfrak{B}$;
	\item if $f,q \in \mathfrak{B}$, then $f \monus g \in \mathfrak{B}$ (where for any $x,y \in [0,1]$, $x \monus y := \max \{x-y, 0 \}$;
	\item if $f \in \mathfrak{B}$, then $\frac{f}{2} \in \mathfrak{B}$;
	\item if $x \neq y \in X$, then $f(x) \neq f(y)$ for some $f \in \mathfrak{B}$.
\end{enumerate}
Then $\mathfrak{B}$ is dense in $C(X, [0,1])$ with respect to the uniform convergence topology.
\end{fac}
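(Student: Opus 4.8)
The plan is to deduce this Stone--Weierstrass--type statement from the classical Kakutani--Krein sublattice argument, after first extracting enough arithmetic from the meagre list of operations (1)--(3). Write $\mathfrak{A}$ for the closure of $\mathfrak{B}$ in $C(X,[0,1])$ under uniform convergence; since $\monus$, $x\mapsto 1-x$ and $x\mapsto \frac{x}{2}$ are continuous on $[0,1]$, the closure $\mathfrak{A}$ still satisfies (1)--(3), and trivially (4). First I would record that $\mathfrak{A}$ contains the constants $0$ and $1$: picking any $f\in\mathfrak{B}$ we have $f\monus f\equiv 0$ by (2), hence $1\equiv 1-0$ by (1). Next, since $x\dot{+}y = 1-\bigl((1-x)\monus y\bigr)$, the set $\mathfrak{A}$ is closed under truncated addition, and together with closure under $\monus$ and halving this forces every dyadic constant in $[0,1]$, hence (passing to uniform limits of constant sequences) every constant in $[0,1]$, to lie in $\mathfrak{A}$. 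Finally $\mathfrak{A}$ is a sublattice of $C(X,[0,1])$ via $f\wedge g = f\monus(f\monus g)$ and $f\vee g = 1-\bigl((1-f)\wedge(1-g)\bigr)$.

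The second and only slightly delicate step is \emph{two-point interpolation}: for all distinct $x_0,x_1\in X$ and all $a_0,a_1\in[0,1]$ there is $h\in\mathfrak{A}$ with $h(x_i)=a_i$. I would start from some $f\in\mathfrak{B}$ with $f(x_0)\neq f(x_1)$ supplied by (4), replace $f$ by $1-f$ if necessary so that $\alpha:=f(x_0)<\beta:=f(x_1)$, and pick a dyadic constant $c$ with $\alpha\le c<\beta$; then $f\monus c$ vanishes at $x_0$ and is positive at $x_1$, so a large enough truncated multiple $e:=m\dot{\times}(f\monus c)\in\mathfrak{A}$ has $e(x_0)=0$ and $e(x_1)=1$. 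Setting $u:=(1-e)\monus(1-a_0)$ and $v:=e\monus(1-a_1)$, one checks $u(x_0)=a_0$, $u(x_1)=0$, $v(x_0)=0$, $v(x_1)=a_1$, so $h:=u\dot{+}v\in\mathfrak{A}$ is the desired function; every intermediate function visibly stays in $[0,1]$, so all of them genuinely belong to $\mathfrak{A}$.

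The third step is the textbook lattice finish. Given $g\in C(X,[0,1])$ and $\varepsilon>0$, for each pair $x,y$ pick $h_{x,y}\in\mathfrak{A}$ with $h_{x,y}(x)=g(x)$ and $h_{x,y}(y)=g(y)$ (two-point interpolation, or a constant when $x=y$); fixing $x$ and covering the compact $X$ by finitely many of the open sets $\{z:h_{x,y}(z)<g(z)+\varepsilon\}$ and taking the corresponding finite $\wedge$ produces $h_x\in\mathfrak{A}$ with $h_x\le g+\varepsilon$ everywhere and $h_x(x)=g(x)$; then covering $X$ by finitely many of the open sets $\{z:h_x(z)>g(z)-\varepsilon\}$ and taking the corresponding finite $\vee$ produces $h\in\mathfrak{A}$ with $\norm{h-g}_{\infty}\le\varepsilon$. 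Hence $\mathfrak{A}=C(X,[0,1])$. I expect the real work to lie not in any single idea but in the bookkeeping of Steps 1 and 2: because the only scaling at our disposal is halving, one can only generate dyadic constants and \emph{truncated} linear combinations, so at each stage one must verify that the functions built stay $[0,1]$-valued, which is exactly what is needed for them to lie in $\mathfrak{A}$ (a set merely closed under the three listed operations, not a genuine algebra of all real-valued functions); the sublattice manipulations and the Kakutani--Krein argument are then routine.
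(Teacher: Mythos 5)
This statement is quoted in the paper as a Fact from \cite{ben2010continuous} and is not proved there, so there is no internal proof to compare against; judged on its own, your argument is correct and is essentially the standard lattice-version Stone--Weierstrass (Kakutani--Krein) argument that the cited source itself relies on. The identities you use all check out: $x \dot{+} y = 1-\bigl((1-x)\monus y\bigr)$ gives truncated addition, halving plus truncated sums give all dyadic (hence, after uniform limits, all) constants, $f\wedge g = f\monus(f\monus g)$ and the De Morgan dual give the lattice structure, $e=m\dot{\times}(f\monus c)$ correctly pins the values $0$ and $1$ at the two points, and the uniform closure $\mathfrak{A}$ inherits (1)--(3) because the three operations are uniformly continuous; the final compactness/covering step is the textbook finish. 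The only pedantic remark: you implicitly assume $\mathfrak{B}\neq\emptyset$ when producing the constants $0$ and $1$; this is automatic from (4) as soon as $X$ has at least two points, and in the degenerate one-point case the statement itself requires a nonempty $\mathfrak{B}$, so nothing essential is lost.
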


\begin{lemma}\label{lem: monus pres VCk}
	\begin{enumerate}
		\item Assume that a sequence of functions $f_i: \prod_{i \in [k+1]}V_i \to [0,1], i \in \mathbb{N}$ converges uniformly to $g:  \prod_{i \in [k+1]}V_i  \to [0,1]$, and $\VC_{k}(f_i) < \infty$ for every $i \in \mathbb{N}$. Then also $\VC_{k}(g) < \infty$.
		\item For every $\bar{d}$ there exists some $\bar{d}'$ such that if $f,g: \prod_{i \in [k+1]}V_i \to [0,1]$ and $\VC_{k}(f), \VC_{k}(g) \leq \bar{d}$, then $\VC_{k}\left(\frac{f}{2} \right), \VC_{k}(1 - f), \VC_{k}\left(f \monus g \right) \leq \bar{d}'$.
\end{enumerate}
\end{lemma}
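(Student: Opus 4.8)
Part (1) will follow quickly from Part (2) together with an ultraproduct argument, so I would prove (2) first, and then deduce (1). For (2), the key observation is that each of the three operations $f \mapsto \frac{f}{2}$, $f \mapsto 1-f$, and $(f,g) \mapsto f \monus g$ is a composition with a fixed continuous function on $[0,1]$ (respectively $[0,1]^2$), so the statement is a special case of a general fact: if $F : [0,1]^m \to [0,1]$ is continuous and $f_1, \ldots, f_m : \prod_{i \in [k+1]} V_i \to [0,1]$ all have $\VC_k$-dimension $\leq \bar d$, then $F(f_1, \ldots, f_m)$ has $\VC_k$-dimension $\leq \bar d'$ for some $\bar d' = \bar d'(\bar d, F)$ (and, more carefully, $\bar d'$ depending only on a modulus of uniform continuity for $F$, not on $F$ itself). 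I would isolate and prove this general statement, since it is presumably also what is needed for \emph{Proposition \ref{prop: VCk comp with cont funct}} invoked elsewhere; the three claimed cases are then immediate.

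For the general composition statement, I would fix $r < s$ in $[0,1]$ and show $\VC_k^{r,s}(F(f_1,\ldots,f_m))$ is bounded. Suppose a box $A = \prod_{i \in [k]} A_i$ with $|A_i| = d$ is $(r,s)$-shattered by $F(f_1,\ldots,f_m)$. Pick $r < r_0 < s_0 < s$; by uniform continuity of $F$ there is $\eta > 0$ such that whenever two points of $[0,1]^m$ differ by $< \eta$ in each coordinate, their $F$-values differ by $< \min\{r_0 - r, s - s_0\}$. Partition $[0,1]$ into finitely many intervals $I_1, \ldots, I_p$ each of length $< \eta$. For each shattering witness $c_S$ and each $j \in [m]$ and each $\bar a \in A$, record which interval $f_j(\bar a, c_S)$ lies in; this gives, for each $S$, a coloring of $A$ by $[p]^m$. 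There are $\leq p^{m d^k}$ such colorings, and by Fact~\ref{fac: n-dep Sauer-Shelah} (applied to each $f_j$ after replacing it with the $\{0,1\}$-relation ``$f_j(\bar x, y) \in I_t$'' for each $t$, which has bounded $\VC_k$-dimension by Remark~\ref{rem: VCk for f iff omits} and Fact~\ref{fac: basic prop of k-dep}), the number of distinct colorings realized is at most $2^{d^{k-\varepsilon}}$-ish — in particular subexponential in $d^k$ — while shattering requires $2^{d^k}$ distinct subsets $S$; for $d$ large enough relative to $\bar d$ and the number of intervals, two distinct $S \neq S'$ must induce the same $[p]^m$-coloring, hence for every $\bar a \in A$, $|f_j(\bar a, c_S) - f_j(\bar a, c_{S'})| < \eta$ for all $j$, so $|F(\ldots)(\bar a, c_S) - F(\ldots)(\bar a, c_{S'})| < \min\{r_0-r, s-s_0\}$. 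Taking $\bar a$ in the symmetric difference of $S$ and $S'$ gives a point where one value is $\leq r$ and the other $\geq s$ but they differ by less than $s - r$, a contradiction. This bounds $d$ and hence $\bar d'_{r,s}$.

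The main obstacle is bookkeeping the Sauer--Shelah counting cleanly enough to get the strict inequality $(\text{number of colorings realized}) < 2^{d^k}$: one must combine the $\VC_k$-bounds for the several $\{0,1\}$-relations $\{f_j(\bar x, y) \in I_t\}$, using that each has $\VC_k$-dimension bounded in terms of $\bar d$ (via Remark~\ref{rem: VCk for f iff omits} and Fact~\ref{fac: basic prop of k-dep}), so that the product over $j$ and $t$ of the Sauer--Shelah bounds is still $2^{o(d^k)}$. This is routine but is where the quantitative content lives. Alternatively — and this is cleaner if one does not care about explicit bounds — one can prove the composition statement by the indiscernibility method: assume $\VC_k(F(f_1,\ldots,f_m)) = \infty$, take a sequence of shattered boxes of growing size, pass to a $G_{k+1,p}$-indiscernible array in an $\aleph_1$-saturated structure via Fact~\ref{fac: existence of indiscernibles}(2) and Lemma~\ref{lem: full arity indisc witness to IPn}, and use the indiscernibility to pin down the $F$-values, then ``pull back'' through $F$ (using that $F$ takes each of the finitely many relevant value-configurations) to produce an infinite shattered box for some single $f_j$, contradicting $\VC_k(f_j) < \infty$; this is the style already used in Lemma~\ref{lem: full arity indisc witness to IPn}.

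\textbf{Deducing (1) from (2).} Suppose $f_i \to g$ uniformly with $\VC_k(f_i) \leq \bar d_i < \infty$. Passing to a subsequence we may assume $\norm{f_i - g}_\infty < 2^{-i}$. Consider the ultraproduct construction of Section~\ref{sec: ultraproducts of k-GPS} applied to the constant sequence of structures carrying all the $f_i$ (together with $g$) as relations, over a non-principal ultrafilter on $\mathbb{N}$; by Lemma~\ref{lem: shattering is definable}, $\VC_k(\tilde g)$ is controlled by the $\VC_k$-dimensions witnessed cofinally, but since $\norm{f_i - g}_\infty \to 0$, $\tilde g$ agrees with the ultralimit of the $f_i$, and any fixed shattering configuration of $\tilde g$ at some $r < s$ would, by transferring back via \L os' theorem and $\tilde{\mathcal M} \propto \mathcal M_{\tilde g}$ and the uniform closeness, yield a shattering configuration of $f_i$ at $r' < s'$ (with $r < r' < s' < s$) for $\cU$-almost all $i$ — contradicting $\VC_k(f_i) < \infty$ once the configuration is large. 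A more direct route avoiding ultraproducts: fix $r < s$, pick $r < r' < s' < s$, choose $i$ with $\norm{f_i - g}_\infty < \frac{1}{2}\min\{r'-r, s-s'\}$; then any box $(r,s)$-shattered by $g$ is $(r',s')$-shattered by $f_i$, so $\VC_k^{r,s}(g) \leq \VC_k^{r',s'}(f_i) < \infty$, giving $\VC_k(g) < \infty$. This direct argument is shorter and I would use it.
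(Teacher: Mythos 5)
Your part (1) is correct and is exactly the paper's argument (shift the thresholds inward by less than half the uniform distance), so nothing to add there. The genuine gap is in your primary proof of part (2): the counting step rests on the claim that each relation $\{(\bar x,y): f_j(\bar x,y)\in I_t\}$ has $\VC_k$-dimension bounded in terms of $\VC_k(f_j)$, justified via Remark \ref{rem: VCk for f iff omits} and Fact \ref{fac: basic prop of k-dep}. That claim is false: finiteness of $\VC_k(f)$ in the sense of Definition \ref{def: VCk dimension of functions} only forbids shattering with a \emph{fixed gap} $r<s$, whereas shattering a level set $f^{<a}$ (or $f^{[a,b)}$) only needs values on the two sides of a single threshold with no gap at all. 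The paper itself flags precisely this in the proof of Theorem \ref{thm: the very main thm soft} (``if $\VC_k(f)<\infty$, then we might still have $\VC_k(\chi_{f^{<r}})=\infty$''). Concretely, already for $k=1$: split $V_1,V_2$ into blocks $U_n\times W_n$, put on block $n$ a bipartite graph shattering an $n$-set, and let $f$ take value $a-\frac1n$ on its edges and $a+\frac1n$ off them (and, say, $1$ across blocks). For every fixed pair $r<s$ only finitely many blocks are relevant, so $\VC_1(f)<\infty$, yet the level set $f^{<a}$ has infinite $\VC_1$, and on a $d$-box inside block $d$ the fibers realize all $2^{d}$ interval-colorings for any interval partition whose boundary meets (or comes close to) $a$. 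So the quantity you need to be subexponential in $d^k$ can equal $2^{d^k}$, and the pigeonhole collapses. What your scheme actually needs is a margin-sensitive analogue of Fact \ref{fac: n-dep Sauer-Shelah} for real-valued functions (a packing bound for witnesses that are pairwise $\eta$-separated in $\ell^\infty$ on the box, in the spirit of fat-shattering bounds at $k=1$), which is not in the paper and, for $k\ge 2$, is not routine bookkeeping.

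Your fallback via indiscernibles is essentially the paper's actual proof: the paper treats $f\monus g$ (the $\frac f2$ and $1-f$ cases being the trivial threshold-translations you also get for free) by taking a sequence of counterexample pairs, an ultraproduct, and a $G_{k+1,p}$-indiscernible array; indiscernibility forces each $f_j$ to be constant on $R_{k+1}$-tuples and constant on non-$R_{k+1}$-tuples, the $F$-values differ, so some $f_j$ takes two distinct constants and Lemma \ref{lem: full arity indisc witness to IPn} gives infinite $\VC_k$ for it. Your observation that this handles a general continuous $F$ directly is fine (the paper instead recovers general $F$ afterwards from $\monus$, $1-x$, $x/2$ via Fact \ref{fac: dense system of conn} in Proposition \ref{prop: VCk comp with cont funct}). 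Two repairs are needed to make your sketch complete: (i) ``assume $\VC_k(F(f_1,\dots,f_m))=\infty$'' only yields the qualitative statement; to get the uniform $\bar d\mapsto\bar d'$ of part (2) you must start from a sequence of pairs with $\VC_k\le\bar d$ shattering boxes of growing size at a fixed $(r,s)$, pass to the ultraproduct, and use Lemma \ref{lem: shattering is definable} so the bound $\bar d$ survives while the shattering becomes infinite; (ii) when the offending $f_j$ takes its larger constant on edges and smaller on non-edges, apply the conclusion to $1-f_j$, as the paper does. With those repairs the fallback route works; the primary counting argument, as justified, does not.
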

\begin{proof}
	(1) Let $r<s \in [0,1]$ be arbitrary, and assume that some box $A = \prod_{i \in [k]}A_i$ with each $A_i$ infinite is $(r,s)$-shattered by $g$. Let $\varepsilon := \frac{s-r}{3} > 0$. By assumption there exists some $n \in \mathbb{N}$ such that $|f_n(\bar{x}) - g(\bar{x})| < \varepsilon$ for every $\bar{x} \in \prod_{i \in [k+1]}V_i$. But then $A$ is $(r+\varepsilon, s - \varepsilon)$-shattered by $f_n$.
	
	(2) It is clear that if a box $A \subseteq \prod_{i \in [k]} V_i$ is $(r,s)$-shattered by $\frac{f}{2}$ then it is $(2r,2s)$-shattered by $f$, and if $A$ is $(r,s)$-shattered by $1-f$ then it is $(1-s,1-r)$-shattered by $f$.

        Suppose $A$ is $(r,s)$-shattered by $f\monus g$ where $A=\prod_{i\in[k]}A_i$ with $|A_i|$ sufficiently large (as determined later).  Let $\epsilon=s-r$.  By Ramsey's Theorem, we may choose a box $A'=\prod_{i\in[k]}A'_i$ with

	Assume towards a contradiction that there exist some $\bar{d}$ and $r < s \in \mathbb{Q}^{[0,1]}$ such that for any $j \in \mathbb{N}$ there exist some functions $f^j_1,f^j_2: \prod_{i \in [k+1]} V_i \to [0,1]$ such that $\VC_{k}(f^j_1), \VC_{k}(f^j_2) < \bar{d}$ but $f^j_3 := f^j_1 \monus f^j_2$ $(r,s)$-shatters some box $\prod_{i \in [k]} A^j_i$ with $|A^j_i| \geq j$. Let $\tilde{\mathcal{M}} := \prod_{j \in \mathbb{N}} \mathcal{M}_{f^j_1, f^j_2, f^j_3} / \mathcal{U}$ and $\tilde{A}_i := \prod_{j \in \mathcal{U}} A^j_i$ for $i \in [k]$. Then we have:
	\begin{itemize}
	\item $\tilde{\mathcal{M}} \propto \mathcal{M}_{\tilde{f}_1, \tilde{f_2}, \tilde{f}_3}$ (by Fact \ref{fac: ultraproduct props});
	\item $\tilde{f_3} = \tilde{f_1} \monus \tilde{f_2}$ (easy as  $\tilde{\mathcal{M}} \propto \mathcal{M}_{\tilde{f}_1, \tilde{f_2}, \tilde{f}_3}$);	
	\item $\VC_{k}(\tilde{f}_1), \VC_k(\tilde{f}_2) \leq \bar{d}'$ for some $\bar{d}' < \infty$ (by Lemma \ref{lem: shattering is definable});
	\item for any $r',s' \in \mathbb{Q}^{[0,1]}$ with $r < r' < s' < s$, $\tilde{f_3}$ $(r',s')$-shatters the box $\prod_{i \in [k]} \tilde{A}_i$ and each $\tilde{A}_i$ is infinite (by Lemma \ref{lem: shattering is definable}).
	\end{itemize}

By Lemma \ref{lem: full arity indisc witness to IPn}, (1)$\Rightarrow$(2) there exist some $r < r' < s' < s \in \mathbb{Q}^{[0,1]}$ and $(a_g)_{g \in G_{k+1,p}}$ (with $g \in P_i \Rightarrow a_g \in \tilde{V}_{i}$ for $i \in [k+1]$) so that $(a_g)_{g \in G_{k+1,p}}$ is $G_{k+1,p}$-indiscernible (in $\tilde{\mathcal{M}}$),  and for all $(g_1, \ldots, g_{k+1}) 	\in \prod_{i \in [k+1]} P_i$ we have:
	\begin{itemize}
		\item if $G_{k+1,p} \models R_{k+1}(g_1, \ldots, g_{k+1})$ then $\tilde{f_1} \monus \tilde{f_2} (a_{g_1}, \ldots, a_{g_{k+1}}) \leq r'$;
		\item if $G_{k+1,p} \models \neg R_{k+1}(g_1, \ldots, g_{k+1})$ then $\tilde{f_1} \monus \tilde{f_2}(a_{g_1}, \ldots, a_{g_{k+1}}) \geq s'$.
	\end{itemize}
	
	Fix some $(g_1, \ldots, g_{k+1}) \in R_{k+1}$ and $(h_1, \ldots, h_k) \in \prod_{i \in [k+1]} P_i \setminus R_{k+1}$. By definition of $\monus$, one of the following two cases must occur:
	\begin{itemize}
		\item $\tilde{f}_1(a_{h_1}, \ldots, a_{h_{k+1}}) - \tilde{f}_1(a_{g_1}, \ldots, a_{g_{k+1}})  \geq \frac{s' - r'}{2}$;
		\item $\tilde{f}_2(a_{g_1}, \ldots, a_{g_{k+1}}) - \tilde{f}_2(a_{h_1}, \ldots, a_{h_{k+1}}) \geq \frac{s' - r'}{2}$.
	\end{itemize}
	
	In the first case, let $r'' := \tilde{f}_1(a_{g_1}, \ldots, a_{g_{k+1}})$ and $s'' := \tilde{f}_1(a_{h_1}, \ldots, a_{h_{k+1}})$, then $r'' < s'' \in [0,1]$ and we have 
		\begin{itemize}
		\item if $G_{k+1,p} \models R_{k+1}(g_1, \ldots, g_{k+1})$ then $\tilde{f_1} (a_{g_1}, \ldots, a_{g_{k+1}}) = r''$;
		\item if $G_{k+1,p} \models \neg R_{k+1}(g_1, \ldots, g_{k+1})$ then $\tilde{f_1} (a_{g_1}, \ldots, a_{g_{k+1}}) = s''$.
	\end{itemize}
	Indeed, for any $(g'_1, \ldots, g'_{k+1}) \in R_{k+1}$ we obviously have 
	$$\qftp_{\mathcal{L}^{k+1}_{\opg}}(g'_1, \ldots, g'_{k+1}) = \qftp_{\mathcal{L}^{k+1}_{\opg}}(g_1, \ldots, g_{k+1}).$$
	 Hence by $G_{k+1,p}$-indiscernibility of $(a_g)_{g \in G_{k+1,p}}$, for any $q \in \mathbb{Q}^{[0,1]}$ we have $$\tilde{\mathcal{M}} \models F_1^{<q}\left(a_{g_1}, \ldots, a_{g_{k+1}} \right) \iff \tilde{\mathcal{M}} \models F_1^{<q} \left(a_{g'_1}, \ldots, a_{g'_{k+1}} \right),$$
which using $\tilde{\mathcal{M}} \propto \mathcal{M}_{\tilde{f}_1}$ implies $\tilde{f_1} \left(a_{g_1}, \ldots, a_{g_{k+1}} \right) = \tilde{f_1} \left( a_{g'_1}, \ldots, a_{g'_{k+1}} \right)$ (and the second bullet is similar). By Lemma \ref{lem: full arity indisc witness to IPn}, (2)$\Rightarrow$(1) this implies $\VC_k(1-\tilde{f}_1) = \infty$, hence $\VC_k(\tilde{f}_1) = \infty$ --- a contradiction.

In the second case, a similar argument shows that $\VC_k(\tilde{f}_2) = \infty$.
	\end{proof}

\begin{prop}\label{prop: VCk comp with cont funct}
	Assume that $n \in \mathbb{N}$ and $g: [0,1]^n \to [0,1]$ is an arbitrary continuous function. Then for any $\bar{d} < \infty$ there exists some $\bar{D} = \bar{D} \left(\bar{d},g\right) < \infty$ satisfying the following. Let $f_1, \ldots, f_n : V^{\bar{1}^{k+1}} \to [0,1]$ satisfy $\VC_k(f_i) < \bar{d}$ for $i \in [n]$. Then $h := g(f_1, \ldots, f_n) : V^{\bar{1}^{k+1}} \to [0,1]$ satisfies $\VC_{k}(h) < \bar{D}$. 
\end{prop}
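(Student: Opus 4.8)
The plan is to reduce the statement to the two-variable case already handled by Lemma~\ref{lem: monus pres VCk}, using the Stone--Weierstrass-type approximation in Fact~\ref{fac: dense system of conn}. First I would fix $\bar{d}$ and the continuous function $g:[0,1]^n\to[0,1]$, and observe that since $[0,1]^n$ is compact, $g$ can be approximated uniformly to within any $\varepsilon>0$ by a function $\tilde{g}$ built from the coordinate projections $\pi_1,\dots,\pi_n$ using finitely many applications of the three operations $x\mapsto 1-x$, $x\mapsto \frac{x}{2}$, and $(x,y)\mapsto x\monus y$. To justify this I would check that the collection $\mathfrak{B}$ of all functions on $[0,1]^n$ obtained from the projections by these operations satisfies hypotheses (1)--(4) of Fact~\ref{fac: dense system of conn}: closure under the three operations is by definition, and (4) holds because for $x\neq y\in[0,1]^n$ some coordinate projection already separates them. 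Hence $\mathfrak{B}$ is uniformly dense in $C([0,1]^n,[0,1])$.

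Next, given $f_1,\dots,f_n:V^{\bar{1}^{k+1}}\to[0,1]$ with $\VC_k(f_i)\leq\bar{d}$, note that each composite $\pi_i\circ(f_1,\dots,f_n)=f_i$ has $\VC_k$-dimension $\leq\bar{d}$, and by iterated application of Lemma~\ref{lem: monus pres VCk}(2), every function of the form $b(f_1,\dots,f_n)$ for $b\in\mathfrak{B}$ has $\VC_k$-dimension bounded by some $\bar{d}_b<\infty$ depending only on $\bar{d}$ and the (finite) number of operations used to build $b$ from the projections. The key subtlety is that this bound $\bar{d}_b$ depends on $b$, i.e.\ on the complexity of the approximating expression, not just on $\bar{d}$; this is fine as long as we fix a single approximant.

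So I would fix once and for all a sequence $b_m\in\mathfrak{B}$ with $b_m\to g$ uniformly on $[0,1]^n$; then $h_m:=b_m(f_1,\dots,f_n)\to h=g(f_1,\dots,f_n)$ uniformly on $V^{\bar{1}^{k+1}}$, and each $\VC_k(h_m)\leq\bar{d}_{b_m}<\infty$. Applying Lemma~\ref{lem: monus pres VCk}(1) to the uniformly convergent sequence $(h_m)$ gives $\VC_k(h)<\infty$. For the \emph{uniform} bound $\bar{D}=\bar{D}(\bar{d},g)$ claimed in the statement, I would not rely on the sequence argument (which only gives finiteness) but instead choose a single approximant: fix $\varepsilon$, say $\varepsilon=\tfrac{1}{10}$ (any fixed positive value works, with $\bar{D}$ scaling accordingly), and pick one $b\in\mathfrak{B}$ with $\|b-g\|_\infty<\varepsilon$; then $b$ depends only on $g$, so $\bar{d}_b$ depends only on $\bar{d}$ and $g$. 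If a box $A=\prod_{i\in[k]}A_i$ is $(r,s)$-shattered by $h=g(f_1,\dots,f_n)$ with $s-r>2\varepsilon$, then it is $(r+\varepsilon,s-\varepsilon)$-shattered by $h_b:=b(f_1,\dots,f_n)$, which is controlled by $\bar{d}_b$; and for pairs $(r,s)$ with $s-r\leq 2\varepsilon$ one handles this by a standard Ramsey-style argument as in the proof of Lemma~\ref{lem: monus pres VCk}, or more simply by noting that only countably many values $(r,s)$ with $r,s\in\mathbb{Q}^{[0,1]}$ matter and subdividing, so the final $\bar{D}=\bar{D}(\bar{d},g)$ is obtained by taking a maximum over finitely many cases.

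The main obstacle, and the place to be careful, is precisely the dependence of the bound on the complexity of the Stone--Weierstrass approximant: one must commit to a \emph{single} fixed $b$ (determined by $g$ alone, not by the particular $f_i$'s or graded probability space) before invoking Lemma~\ref{lem: monus pres VCk}(2), so that $\bar{d}_b$ is a function of $\bar{d}$ and $g$ only; writing the argument via the convergent sequence $b_m\to g$ is cleaner for the mere-finiteness conclusion but does not by itself yield the uniform $\bar{D}$, so the quantitative version needs the single-approximant formulation together with the $(r,s)$-perturbation bookkeeping sketched above.
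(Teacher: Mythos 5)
Your proposal follows essentially the same route as the paper: approximate $g$ uniformly by compositions of $\tfrac{x}{2}$, $1-x$, $x\monus y$ via Fact \ref{fac: dense system of conn}, bound the $\VC_k$-dimension of each such composition of the $f_i$ by iterating Lemma \ref{lem: monus pres VCk}(2), and transfer the bound across the uniform approximation as in (the quantitative content of) Lemma \ref{lem: monus pres VCk}(1). The only simplification worth noting is that $\bar{D}$ is a tuple $(D_{r,s})_{r<s}$ rather than a single number, so for each pair $(r,s)$ you may choose an approximant of accuracy $(s-r)/3$ depending on $(r,s)$; no Ramsey-style argument or maximum over cases is needed for the small-gap pairs.
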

\begin{proof}
	By Fact \ref{fac: dense system of conn}, $g$ can be uniformly approximated by finite compositions of the functions $\frac{x}{2}, 1-x, x \monus y$. Plugging $f_1, \ldots, f_n$ into the arguments of such a composition gives a function of finite $\VC_{k}$-dimension by Lemma \ref{lem: monus pres VCk}(2), hence $g$ has finite $\VC_k$-dimension by Lemma \ref{lem: monus pres VCk}(1).
\end{proof}

Permutation of the variables of function also preserves finiteness of the $\VC_k$-dimension.
\begin{prop}\label{prop: perm of vars fin VCk dim}
	Given $\bar{d}=(d)_{r<s \in \mathbb{Q}^{[0,1]}} < \infty$, let $\bar{D} = (D_{r,s})_{r<s \in \mathbb{Q}^{[0,1]}} < \infty$ be given by $D_{r,s} := 2^{d^k_{r,s}}$. Assume $f: \prod_{i \in [k+1]} V_i \to [0,1]$ satisfies $\VC_{k}(f) \leq \bar{d}$ and $\sigma: [k+1] \to [k+1]$ is an arbitrary permutation. Let $f^{\sigma}: \prod_{i \in [k+1]} V_{\sigma(i)} \to [0,1]$ be given by $f^{\sigma}(x_1, \ldots, x_{k+1}) := f \left(x_{\sigma(1)}, \ldots, x_{\sigma(k+1)} \right)$. Then $\VC_k(f^{\sigma}) \leq \bar{D}$.
\end{prop}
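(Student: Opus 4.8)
The statement asserts that permuting the input variables of a $[0,1]$-valued function changes its $\VC_k$-dimension at worst from $\bar d$ to $\bar D$ with $D_{r,s} = 2^{d^k_{r,s}}$. The strategy is to reduce the real-valued statement to the combinatorial statement for relations, namely Fact \ref{fac: basic prop of k-dep}(2) together with the explicit bound recorded in Remark \ref{rem: VC_k iff omits H} / Remark \ref{rem: VCk for f iff omits}: if $\VC^{r,s}_k(f) \leq d$ then $f$ ``omits'' a fixed $(k+1)$-partite $(k+1)$-uniform hypergraph with parts of size at most $2^{d^k}$, in the sense of taking values $\leq r$ on edges and $\geq s$ on non-edges. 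Permuting coordinates of $f$ just permutes the roles of the parts, so $f^\sigma$ omits the corresponding permuted hypergraph, and by the second half of Remark \ref{rem: VCk for f iff omits} this gives $\VC^{r,s}_k(f^\sigma) \leq 2^{d^k_{r,s}} = D_{r,s}$. Doing this for every $r < s$ yields $\VC_k(f^\sigma) \leq \bar D$.

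\textbf{Key steps, in order.} First I would fix $r < s \in \mathbb{Q}^{[0,1]}$ and note that by hypothesis $\VC^{r,s}_k(f) \leq d_{r,s}$, so by Remark \ref{rem: VCk for f iff omits} there is no way to embed the complete $(k+1)$-partite $(k+1)$-uniform hypergraph on parts of size $d' := 2^{d^k_{r,s}}$ into $f$ with the ``$\leq r$ on edges, $\geq s$ on non-edges'' pattern — equivalently, $f$ omits every $(k+1)$-partite hypergraph on parts of that size in this sense. Second, I would observe the transfer under $\sigma$: a box $A = \prod_{i \in [k]} A_i$ with $A_i \subseteq V_{\sigma(i)}$ being $(r,s)$-shattered by $f^\sigma$ (with the shattering witnesses drawn from $V_{\sigma(k+1)}$) is literally the same data as the box $\sigma^{-1}(A)$ being $(r,s)$-shattered by $f$, after relabelling which coordinate plays the role of the ``test'' coordinate. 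So if $f^\sigma$ $(r,s)$-shattered a $d'$-box, $f$ would $(r,s)$-shatter a $d'$-box, contradicting the first step — hence $\VC^{r,s}_k(f^\sigma) \leq d' = D_{r,s}$. Third, since this holds for all $r < s$, conclude $\VC_k(f^\sigma) \leq \bar D$.

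\textbf{Main obstacle.} The only genuine subtlety is that Definition \ref{def: VCk dimension of functions} treats the last coordinate $V_{k+1}$ as distinguished — the shattering set $S \subseteq A = A_1 \times \cdots \times A_k$ varies a parameter $c_S$ over $V_{k+1}$, with $A_i \subseteq V_i$ for $i \leq k$. A permutation $\sigma$ can move the distinguished coordinate around, so the ``relation'' picture has to be invoked in the form that is genuinely symmetric in the parts. This is exactly what Remark \ref{rem: VC_k iff omits H} (for relations) and its real-valued analog Remark \ref{rem: VCk for f iff omits} provide: the property of omitting a finite $(k+1)$-partite hypergraph as an ``induced'' partite hypergraph does not single out any one of the $k+1$ parts. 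So the careful point to get right is to state the hypothesis $\VC^{r,s}_k(f) \leq d_{r,s}$ in the symmetric ``omitting'' form, apply $\sigma$ to the parts of the omitted hypergraph (noting that the class of finite $(k+1)$-partite $(k+1)$-uniform hypergraphs is closed under relabelling of parts), and then read the conclusion back off. The quantitative bound $D_{r,s} = 2^{d^k_{r,s}}$ is precisely the round-trip loss in Remark \ref{rem: VCk for f iff omits} (shattering a $d$-box $\Rightarrow$ omitting at parts of size $\leq 2^{d^k}$ $\Rightarrow$ if no such omission then $\VC^{r,s}_k \leq 2^{d^k}$), so no separate estimate is needed. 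Alternatively — and perhaps more cleanly — one can avoid Remark \ref{rem: VCk for f iff omits} entirely and argue via Lemma \ref{lem: full arity indisc witness to IPn}: $\VC_k(f^\sigma) = \infty$ would give a $G_{k+1,p}$-indiscernible array witnessing shattering for $f^\sigma$ at some $r<s$, which by the symmetry of $G_{k+1,p}$ under permuting the parts $P_1,\ldots,P_{k+1}$ (an automorphism of the ambient structure after relabelling) is simultaneously a witness for $f$, contradicting $\VC_k(f) < \infty$; but this only gives qualitative finiteness, not the stated bound $\bar D$, so for the explicit constant the first route is the one to follow.
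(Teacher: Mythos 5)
Your intended route---fix $r<s$, pass from $\VC^{r,s}_k(f)\leq d_{r,s}$ to the permutation-symmetric ``omits a finite $(k+1)$-partite hypergraph with parts of size at most $2^{d_{r,s}^k}$'' formulation of Remark \ref{rem: VCk for f iff omits}, permute the parts of that hypergraph by $\sigma$, and read the bound back off from the converse half of the remark---is correct and is essentially the paper's argument: the paper simply inlines the proof of the two directions of that remark, by observing that a function which $(r,s)$-shatters a $2^{d^k}$-box realizes every partite pattern with parts of that size, and then choosing the specific ``universal'' pattern that encodes, in the coordinate $\sigma(k+1)$, all $2^{d^k}$ subsets of a $d\times\cdots\times d$ grid in the remaining coordinates, which hands back a shattered $d$-box for the permuted function.

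However, two of your ``key steps'' are wrong as literally written and must be replaced by the version in your ``Main obstacle'' paragraph. First, $\VC^{r,s}_k(f)\leq d$ does not say that $f$ omits the \emph{complete} $(k+1)$-partite hypergraph on parts of size $2^{d^k}$, nor that it omits \emph{every} hypergraph of that size; it says $f$ omits \emph{some} specific hypergraph, namely the shattering-universal one with $k$ parts of size $d$ and one part of size $2^{d^k}$ enumerating all subsets of the $d^k$-grid. (For instance $f\equiv 0$ has $\VC^{r,s}_k(f)\leq 1$ whenever $s>0$, yet it realizes the complete pattern of every size.) Second, the claim that a $d'$-box $(r,s)$-shattered by $f^{\sigma}$ is ``literally the same data'' as a $d'$-box $(r,s)$-shattered by $f$ is exactly the symmetry that fails when $\sigma(k+1)\neq k+1$: unwinding gives shattering of $f$ with test coordinate $\sigma(k+1)$, which is the quantity you are trying to bound, not a violation of $\VC^{r,s}_k(f)\leq d_{r,s}$; and if that transfer did hold at equal box size, the proposition would be trivial with $\bar D=\bar d$ and the blow-up $D_{r,s}=2^{d_{r,s}^k}$ would be unnecessary. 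The correct execution, which you do state in the obstacle paragraph, is: $f$ omits one hypergraph $H$ with parts $\leq 2^{d_{r,s}^k}$, hence $f^{\sigma}$ omits $H^{\sigma}$ (omission does not distinguish any part), hence $\VC^{r,s}_k(f^{\sigma})\leq 2^{d_{r,s}^k}=D_{r,s}$ by the second half of Remark \ref{rem: VCk for f iff omits}; with that substitution the proof is complete and matches the paper's.
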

\begin{proof}
	If some box $A_1 \times \ldots \times A_k$ with $A_i \subseteq V_i, |A_i| = D$ is $(r,s)$-shattered by a function $f: \prod_{i \in [k+1]} V_i \to [0,1]$, then for any $(k+1)$-partite hypergraph $(R; [D], \ldots, [D])$ with $R \subseteq [D]^{k+1}$ we can choose some set $A_{k+1} \subseteq V_{k+1}$ and bijections $\alpha_i: [D] \to A_i$ so that for every $(b_1, \ldots, b_{k+1}) \in [D]^{k+1}$, 
	\begin{gather}\label{eq: perm of vars pres VCk dim}
		(b_1, \ldots, b_{k+1}) \in R \implies f \left(\alpha_1(b_1), \ldots, \alpha_{k+1}(b_{k+1}) \right) \leq r;\\
		(b_1, \ldots, b_{k+1}) \notin R \implies f \left(\alpha_1(b_1), \ldots, \alpha_{k+1}(b_{k+1}) \right) \geq s.\nonumber
	\end{gather}

Assume $D = 2^{d^{k}}$. Given a permutation $\sigma: [k+1] \to [k+1]$ with $\sigma(k+1) = i^*$, consider the $(k+1)$-partite hypergraph $(R; [D], \ldots, [D])$ so that for every $B \subseteq [d]^{k}$ there exists some $b_{B} \in [D]$ satisfying: for every $(b_{1}, \ldots, b_{i^* -1},  b_{i^* +1}, b_{k+1}) \in [d]^k$,
\begin{gather*}
	(b_{1}, \ldots, b_{i^* -1}, b_B, b_{i^* +1}, \ldots, b_{k+1}) \in R \iff (b_{1}, \ldots, b_{i^* -1},  b_{i^* +1}, b_{k+1}) \in B.
\end{gather*}

Combined with \eqref{eq: perm of vars pres VCk dim} and taking $A'_i := \{ \alpha_{\sigma(i)}(j) : j \in [d] \}$, this implies that the box $A'_1 \times \ldots \times A'_k$ with $|A'_i| = d, A'_i \subseteq V_{\sigma(i)}$  is  $(r,s)$-shattered by $f_{\sigma}$.

	\end{proof}
\subsection{Integration preserves finite $\VC_{k}$-dimension}

The aim of this subsection is to prove the following theorem, after developing some tools for it.

\begin{theorem}\label{prop: gen fib fin VCk dim}
For every $k \in \mathbb{N}_{\geq 1}$ and $\bar{d} = (d_{r,s})_{r < s \in \mathbb{Q}^{[0,1]}}$ with $d_{r,s} \in \mathbb{N}$ there exists some $\bar{D}= (D_{r,s})_{r < s \in \mathbb{Q}^{[0,1]}}$ with $D_{r,s} \in \mathbb{N}$ satisfying the following.

Assume that $(V_{[k+2]}, \B_{\bar{n}}, \mu_{\bar{n}})_{n \in \mathbb{N}^{k+2}}$ is a $(k+2)$-partite graded probability space, $f: V^{\bar{1}^{k+2}} \to [0,1]$ is a $\B_{\bar{1}^{k+2}}$-measurable function and $\VC_k(f) \leq \bar{d}$. Then the $(k+1)$-ary ``average'' function $f': V^{\bar{1}^{k+1}} \to [0,1]$ defined by
$$f'(x_1, \ldots, x_{k+1}) := \int  f(x_1, \ldots, x_{k+2}) d \mu_{\bar{\delta}_{k+2}}(x_{k+2})$$
satisfies $\VC_k(f') \leq \bar{D}$.
\end{theorem}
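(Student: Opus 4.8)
The strategy is the same model-theoretic compactness argument used for the other preservation results (Lemma \ref{lem: shattering is definable}, Lemma \ref{lem: monus pres VCk}): assume the bound fails, take an ultraproduct to land in an $\aleph_1$-saturated setting, and extract a generalized indiscernible witnessing infinite $\VC_k$-dimension of $f'$, then manufacture from it a witness to infinite $\VC_k$-dimension of $f$, contradicting $\VC_k(f)\le\bar d$. So suppose no $\bar D$ works for a given $k,\bar d$. Then there exist $r<s\in\mathbb{Q}^{[0,1]}$ so that for every $j\in\mathbb{N}$ there is a $(k+2)$-partite graded probability space $\mathfrak{P}_j$, a function $f^j$ with $\VC_k(f^j)\le\bar d$, and a box $\prod_{i\in[k]}A_i^j$ with $|A_i^j|\ge j$ which is $(r,s)$-shattered by $f'^{\,j}$, where $f'^{\,j}(x_1,\dots,x_{k+1})=\int f^j(x_1,\dots,x_{k+2})\,d\mu^j_{\bar\delta_{k+2}}(x_{k+2})$. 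Form the ultraproduct $\tilde{\mathfrak P}$ of the $\mathfrak P_j$ with the associated $\mathcal{L}_\infty$-structure $\tilde{\mathcal M}=\prod_j\mathcal{M}_{\mathfrak P_j,f^j}/\mathcal U$ as in Section \ref{sec: ultraproducts of k-GPS}, obtaining $\tilde f$ with $\VC_k(\tilde f)\le\bar d'$ (Lemma \ref{lem: shattering is definable}). The crucial observation is that the condition ``$\int f(\bar x,y)\,d\mu(y)<q$'' is type-definable in $\mathcal{L}_\infty$ — indeed $\mu_{\bar\delta_{k+2}}(f^{<q'}_{\bar x})$ for rational $q'$ is expressible via the $m_{\bar x}<r$ predicates together with the level-set relations $F^{<q'}$, and one approximates the integral by a finite sum of measures of level sets exactly as in the proof of Lemma \ref{lem: type-def of norm}(2) (the computation of $A^{a,\psi}_\varepsilon$ there). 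Hence for each $r<r'<s'<s$, the ultraproduct $\tilde f':=\int\tilde f(\bar x,y)\,d\tilde\mu_{\bar\delta_{k+2}}(y)$ still $(r',s')$-shatters an infinite box $\prod_{i\in[k]}\tilde A_i$ — this is again a \L o\'s-theorem argument using $\tilde{\mathcal M}\propto\mathcal{M}_{\tilde{\mathfrak P},\tilde f}$.

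Now apply Lemma \ref{lem: full arity indisc witness to IPn}, (1)$\Rightarrow$(2), to $\tilde f'$ (viewing it as a $(k+1)$-ary function; formally one may need to pass to a further $\aleph_1$-saturated $\mathcal{L}$-structure expanding $\tilde{\mathcal M}$ by a fresh $(k+1)$-ary predicate for a level set of $\tilde f'$, which is type-definable over $\tilde{\mathcal M}$, so the hypotheses of that lemma are met). This produces $r''<s''$ and a $G_{k+1,p}$-indiscernible family $(a_g)_{g\in G_{k+1,p}}$ in $\tilde V_1,\dots,\tilde V_{k+1}$ such that $R_{k+1}(g_1,\dots,g_{k+1})$ forces $\tilde f'(a_{g_1},\dots,a_{g_{k+1}})\le r''$ and $\neg R_{k+1}$ forces $\tilde f'(a_{g_1},\dots,a_{g_{k+1}})\ge s''$. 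Since $\tilde f'$ is an average over the last coordinate, for each tuple $(g_1,\dots,g_{k+1})$ with $R_{k+1}$ holding we have $\tilde\mu_{\bar\delta_{k+2}}(\{y:\tilde f(a_{g_1},\dots,a_{g_{k+1}},y)\le r'''\})$ is bounded below, for suitable $r'''\in(r'',s'')$ — and dually for $\neg R_{k+1}$, $\tilde\mu_{\bar\delta_{k+2}}(\{y:\tilde f(a_{g_1},\dots,a_{g_{k+1}},y)\ge s'''\})$ is bounded below (a Markov-inequality style computation from $\int\le r''$ resp. $\int\ge s''$). The key point to exploit is $G_{k+1,p}$-indiscernibility combined with the disjoint-amalgamation property of $G'_{k,p}$ (Proposition \ref{prop: Gkp has nDAP}), in the spirit of the Aldous--Hoover--Kallenberg machinery alluded to in the introduction.

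\textbf{The main step, and the obstacle.} The heart of the proof is to produce, from the $(k+1)$-indiscernible array $(a_g)$ together with the measure-positivity of the sets of ``good'' last coordinates $y$, a \emph{single} choice of new elements $(b_h)$ in $\tilde V_{k+2}$ — indexed by a $k$-dimensional box — realizing, uniformly in the box variables, a shattering pattern for $\tilde f$ at $(r''',s''')$. Concretely: one wants a $(k+1)$-dimensional generic configuration in sorts $V_1,\dots,V_{k+1}$ and then, for each subset $S$ of a finite $k$-box in sorts $V_2,\dots,V_{k+1}$ (say), a single $y_S\in\tilde V_{k+2}$ so that $\tilde f(a,y_S)\le r'''$ exactly when the index is in $S$; this would shatter a $k$-box for $\tilde f$ in sorts $V_2,\dots,V_{k+2}$, contradicting $\VC_k(\tilde f)<\infty$. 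The obstruction is that the positivity of each individual ``good set'' $G(a_{g_1},\dots,a_{g_{k+1}})\subseteq\tilde V_{k+2}$ does not automatically give positivity of intersections over many tuples, let alone a common element with a prescribed membership pattern. This is exactly the difficulty handled by Lemma \ref{lem: indisc Bergelson} (the indiscernible Bergelson-type statement): using $\mathcal{L}_\infty$-indiscernibility plus Fact \ref{fac: Bergelson}, positivity of a single such set propagates to positivity of arbitrary finite intersections along the indiscernible sequence. The plan is therefore to first homogenize further — pass to a sequence of the $a$-tuples that is moreover $\mathcal{L}_\infty$-indiscernible (Fact \ref{fac: existence of indiscernibles}(1) and (2)) — apply Lemma \ref{lem: indisc Bergelson}, or rather an analogue of its proof with $B_{\bar 1^k,k-1}$ replaced by the trivial algebra, to get that the intersection $\bigcap G(a^{(1)},\dots)$ over any finitely many indiscernible-translates has positive measure, then take an element $y$ in a suitable Boolean combination of these (again positive measure by the same propagation argument applied to both $f\le r'''$ and $f\ge s'''$ versions), and use $G_{k+1,p}$-indiscernibility together with $n$-DAP of $G'_{k,p}$ (Proposition \ref{prop: Gkp has nDAP}) to amalgamate the finitely many witnesses into a single generic $(k+2)$-partite configuration that $(r''',s''')$-shatters a $k$-box for $\tilde f$. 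Getting the bookkeeping of this amalgamation right — ensuring the new sort-$(k+2)$ elements sit generically over the whole $(k+1)$-indiscernible array simultaneously — is where the real work and the main risk of error lies, and it is the reason the disjoint amalgamation property (rather than mere indiscernibility) is needed. Once the shattering of an arbitrarily large $k$-box for $\tilde f$ is obtained, Lemma \ref{lem: full arity indisc witness to IPn}, (2)$\Rightarrow$(1) applied in reverse, or directly Definition \ref{def: VCk dimension of functions}, contradicts $\VC_k(\tilde f)<\infty$, completing the proof.
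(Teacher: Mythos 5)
Your overall skeleton matches the paper's: ultraproduct plus type-definability of the averaged condition, extraction of a $G_{k+1,p}$-indiscernible array realizing the shattering pattern for the average $\tilde f'$, and a final contradiction via Lemma \ref{lem: full arity indisc witness to IPn}. But the step you yourself flag as ``the real work'' --- producing a single $b\in\tilde V_{k+2}$ (equivalently, a positive-measure set of such $b$) realizing, simultaneously for all tuples $\bar g$ in a large finite sub-box, the \emph{mixed} pattern $\tilde f(\bar a_{\bar g},b)\le r'$ for $\bar g\in R_{k+1}$ and $\tilde f(\bar a_{\bar g},b)\ge s'$ for $\bar g\notin R_{k+1}$ --- is not supplied, and the mechanism you propose for it would not work. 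Lemma \ref{lem: indisc Bergelson} (and Fact \ref{fac: Bergelson} underlying it) concerns a linearly ordered $\mathcal{L}_{\infty}$-indiscernible \emph{sequence} and only propagates positivity of a common intersection along that sequence; it gives no way to impose a prescribed pattern of memberships and complements indexed by the edge relation of the $(k+1)$-partite generic hypergraph. Bergelson's lemma finds \emph{some} subfamily with positive-measure intersection, with no control over which sets are taken and none taken complemented, so an ``analogue of its proof with $\B_{\bar 1^k,k-1}$ replaced by the trivial algebra'' does not touch the actual difficulty. Likewise, invoking $n$-DAP to ``amalgamate the finitely many witnesses'' is not an argument: the separate witnesses $y_S$ live in a probability space, not in a free amalgamation problem, and no amalgamation step merges them into one element lying in a positive-measure mixed intersection.

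What the paper actually does at this point is different and is the real content of the theorem. First, from $\int\tilde f(\bar a_{\bar g^0},y)\,d\tilde\mu\le r$ versus $\ge s$ it uses Lemma \ref{lem: comparing integrals} to get a strict inequality of measures $\tilde\mu\bigl(F^{<r'}_{\bar a_{\bar g^0}}\bigr)>\tilde\mu\bigl(F^{<s'}_{\bar a_{\bar g^1}}\bigr)$ between one edge-tuple and one non-edge-tuple; your Markov-type lower bounds on each good set separately are not the input the argument needs. Second, it upgrades the \emph{ordered} $G_{k+1,p}$-indiscernibility of the array to exchangeability with respect to the unordered generic partite hypergraph (Lemma \ref{lem: exchang w order implies wo}), a genuinely nontrivial step proved via stability of probability algebras (Fact \ref{fac: HrushTao}); this is forced because the ordered structure fails $3$-DAP, so the Aldous--Hoover--Kallenberg machinery cannot be applied to it directly. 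Third, with unordered exchangeability in hand it applies Lemma \ref{lem: random graph Bergelson}: the AHK representation (Fact \ref{fac: Ald-Hoov}, via Proposition \ref{prop: Gkp has nDAP}) rewrites the indicator array in terms of i.i.d.\ uniform variables indexed by subsets of coordinates, the measure inequality becomes $\nu(S_+\setminus S_-)>0$, and a Lebesgue box argument yields positive measure for the full mixed intersection over any finite sub-box; saturation then produces the single $b$ and the contradiction. None of these three ingredients appears in your proposal, so as written it has a genuine gap precisely at the step you identified.
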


\begin{remark}
Theorem \ref{prop: gen fib fin VCk dim} generalizes \cite[Corollary 4.2]{yaacov2009continuous} in the case $k=1$.	
\end{remark}

\begin{cor}
	For every $k \in \mathbb{N}_{\geq 1}$ there exists some $\bar{D}= (D_{r,s})_{r < s \in \mathbb{Q}^{[0,1]}} < \infty$ satisfying the following.
	
Assume that $(V_{[k+2]}, \B_{\bar{n}}, \mu_{\bar{n}})_{n \in \mathbb{N}^{k+2}}$ is a $(k+2)$-partite graded probability space, and for each $I \in \binom{[k+1]}{\leq k}$ let $\bar{n}_I := \sum_{i \in I} \bar{\delta}_i + \bar{\delta}_{k+2}$ and $E^I \in \B_{\bar{n}_I}$ arbitrary.

Then the $(k+1)$-ary function $f': V^{\bar{1}^{k+1}} \to [0,1]$ defined by
$$f'(\bar{x}) \mapsto \mu_{\bar{\delta}_{k+2}} \left( \bigcap_{I \in \binom{[k+1]}{\leq k}} E^I_{\bar{x}_{I}} \right)$$
satisfies $\VC_k(f') \leq \bar{D}$.
\end{cor}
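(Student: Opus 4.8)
The plan is to reduce the corollary to Theorem \ref{prop: gen fib fin VCk dim} by recognizing the displayed function as an instance of integration against $\mu_{\bar{\delta}_{k+2}}$ applied to a function of finite $\VC_k$-dimension. First I would set $g := \chi_{E}$, where $E := \bigcap_{I \in \binom{[k+1]}{\leq k}} \left( E^I \times V_{[k+1] \setminus I} \right)$, viewed as a subset of $V^{\bar{1}^{k+2}}$ (that is, $E^I$ is reinterpreted as a subset of $V^{\bar{1}^{k+2}}$ by adding dummy coordinates for the indices in $[k+1] \setminus I$, keeping the $(k+2)$-nd coordinate). Then $E \in \B_{\bar{1}^{k+2}}$ and, for every $\bar{x} \in V^{\bar{1}^{k+1}}$ and $x_{k+2} \in V_{k+2}$, we have $(\bar{x}, x_{k+2}) \in E$ if and only if $\bar{x}_I \ ^{\frown} (x_{k+2}) \in E^I$ for every $I$, i.e.\ $x_{k+2} \in \bigcap_I E^I_{\bar{x}_I}$. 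Hence $f'(\bar{x}) = \int \chi_E(\bar{x}, x_{k+2}) \, d\mu_{\bar{\delta}_{k+2}}(x_{k+2})$ is precisely the ``average function'' of $g$ in the sense of Theorem \ref{prop: gen fib fin VCk dim}.

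The next step is to bound $\VC_k(\chi_E)$ in terms of a quantity depending only on $k$. Each factor $\chi_{E^I \times V_{[k+1] \setminus I}}$ has arity $\leq k$ in the relevant coordinates (it involves at most the $|I| \leq k$ coordinates of $I$ together with the coordinate $k+2$; since we are computing $\VC_k$-dimension and this is a relation of arity $k+2$, Definition \ref{def: VCd for higher arity} instructs us to examine all $(k+1)$-ary fibers), so by the Example following Fact \ref{fac: n-dep Sauer-Shelah} (a Boolean combination of relations each of which is a cylinder over a set of arity $\leq k$ has finite $\VC_k$-dimension), together with Fact \ref{fac: basic prop of k-dep}(1) and Remark \ref{rem: props of VCk higher arity}, we get $\VC_k(\chi_E) \leq \bar{d}_0$ for some absolute $\bar{d}_0 = \bar{d}_0(k)$. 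More carefully: fix $I \subseteq [k+2]$ with $|I| = (k+2) - (k+1) = 1$ and $b \in V_I$; the fiber $\chi_{E_b}$ is a finite intersection of relations on $V_{[k+2] \setminus I}$ each of which is a cylinder over a relation involving at most $k$ of the coordinates, hence trivially has $\VC_k$-dimension $0$ in each such factor, and then Fact \ref{fac: basic prop of k-dep}(1) bounds the $\VC_k$-dimension of the intersection by some $D(0)$ depending only on $k$ (note there are at most $2^{k+1}$ factors, so iterating Fact \ref{fac: basic prop of k-dep}(1) a bounded number of times suffices). This yields the bound uniformly over all choices of $E^I$.

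Finally, applying Theorem \ref{prop: gen fib fin VCk dim} to $g = \chi_E$ with the bound $\bar{d}_0(k)$ gives $\VC_k(f') \leq \bar{D}$ for $\bar{D} = \bar{D}(k) := \bar{D}_{\ref{prop: gen fib fin VCk dim}}(\bar{d}_0(k))$, which depends only on $k$, as required. I do not expect any of these steps to be a genuine obstacle: the reinterpretation of the fibered intersection as a cylinder intersection is bookkeeping, the finite $\VC_k$-dimension of such cylinder Boolean combinations is essentially the Example after Fact \ref{fac: n-dep Sauer-Shelah} combined with the closure properties in Fact \ref{fac: basic prop of k-dep}, and the passage to $f'$ is a direct citation of Theorem \ref{prop: gen fib fin VCk dim}. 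The only point requiring a little care is matching indices: one must check that in the definition of $E$ the distinguished coordinate $k+2$ is the one integrated out, and that Definition \ref{def: VCk dimension of functions}(4) / Definition \ref{def: VCd for higher arity} for the arity-$(k+2)$ relation $\chi_E$ reduces to controlling the arity-$(k+1)$ fibers, which is exactly what the cylinder structure makes transparent.
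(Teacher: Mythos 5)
Your overall route is the paper's: form the relation $E=\bigcap_{I}\left(E^I\times V_{[k+1]\setminus I}\right)\in\B_{\bar 1^{k+2}}$, observe that $f'(\bar x)=\int\chi_E(\bar x, x_{k+2})\,d\mu_{\bar\delta_{k+2}}(x_{k+2})$, bound the $\VC_k$-dimension, and invoke Theorem \ref{prop: gen fib fin VCk dim}. The gap is in your verification of the hypothesis. You claim that for \emph{every} singleton $I\subseteq[k+2]$ and $b\in V_I$ the fiber $\chi_{E_b}$ is an intersection of cylinders over relations involving at most $k$ of the remaining coordinates. That fails when $I=\{j\}$ with $j\in[k+1]$ and some factor $E^{I'}$ has $|I'|=k$ and $j\notin I'$: after fixing $x_j$, that factor still involves the $k$ coordinates of $I'$ together with the coordinate $k+2$, i.e.\ all $k+1$ coordinates of the fiber, and it is an arbitrary measurable relation there. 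In fact the statement you are trying to verify is simply false in general: take $E^{[k]}\in\B_{\bar n_{[k]}}$ to be an arbitrary $(k+1)$-ary relation of infinite $\VC_k$-dimension and all other $E^I$ equal to the full set; then the fiber of $E$ over any $b\in V_{k+1}$ is exactly $E^{[k]}$, so $\VC_k(\chi_E)=\infty$ in the sense of Definition \ref{def: VCd for higher arity} / Definition \ref{def: VCk dimension of functions}(4), and no bound $\bar d_0(k)$ of the kind you assert can exist.

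What saves the argument --- and what the paper does --- is that one only needs to control the fibers over the \emph{integrated} coordinate: for every $b\in V_{k+2}$, $E_b=\bigcap_I\left(E^I_b\times V_{[k+1]\setminus I}\right)$ is a conjunction of $(\leq k)$-ary relations, hence has $\VC_k$-dimension at most $1$, uniformly in the choice of the $E^I$. This suffices because the proof of Theorem \ref{prop: gen fib fin VCk dim} only ever uses the bounded $\VC_k$-dimension of the fibers $f_b$ with $b$ in the last coordinate (the final contradiction there is with $\VC_k(\tilde f_b)$ for $b\in\tilde V_{k+2}$); equivalently, the theorem holds with its hypothesis weakened to a bound on those fibers, and that is how the corollary is derived in the paper (compare also the fiberwise hypothesis in Corollary \ref{prop: large proj k k plus 1 uniform}). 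So replace your ``for all $I$ with $|I|=1$'' check by the check for $b\in V_{k+2}$ only, and justify the application of the theorem via this weaker fiberwise hypothesis; the rest of your reduction (identifying $f'$ as the average of $\chi_E$ over $x_{k+2}$) is exactly the paper's argument.
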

\begin{proof}
Consider the relation $F \in \B_{\bar{1}^{k+2}}$ defined by 
$$(x_1, \ldots, x_{k+2}) \in F :\iff \bigwedge_{I \in \binom{[k+1]}{\leq k}} \left( \bar{x}_I^{\frown}(x_2) \in  E^I \right).$$

	Then for any fixed $b \in V_{k+2}$, the $(k+1)$-ary relation $F_b$ is a conjunction of the $\leq k$-ary relations $E^I_{b}, I \in \binom{[k+1]}{\leq k}$, hence trivially $\VC_{k}\left(F_b \right) \leq \bar{d}$ with $d_{r,s} := 1$ for all $r,s \in \mathbb{Q}^{[0,1]}$. Applying Proposition \ref{prop: gen fib fin VCk dim} to $F$ and noting that $\mu_{\bar{\delta}_{k+2}} \left( \bigcap_{I \in \binom{[k+1]}{\leq k}} E^I_{\bar{x}_{I}} \right) = \int \chi_{F}(x_1, \ldots, x_{k+2})d \mu_{\bar{\delta}_{k+2}}(x_{k+2}) $, we can conclude.
\end{proof}
The same holds with any fixed Boolean combination instead of a conjunction.

\subsubsection{Intersections of measurable sets indexed by generic hypergraphs and exchangeability}\label{sec: Aldous-Hoover}

In this section we let $\nu_n$ denote the Lebesgue probability measure on $[0,1]^n$.

Given two collections of random variables $(\xi_i : i \in I)$ on a probability space $(V,\B,\mu)$ and $(\xi'_i : i \in I)$ on a probability space $(V',\B',\mu')$ indexed by the same ordered set $I$ and taking values in $[0,1]$, we write $(\xi_i : i \in I) =^{\dist} (\xi'_i : i \in I)$ to denote that they have the same joint distribution (that is, for every finite set $J \subseteq I$ and any $p_i \in [0,1]$ for $i \in J$, $\mu \left(\{x \in V : \bigwedge_{i \in J} \xi_i(x) < p_i\} \right) = \mu' \left( \{x \in V' : \bigwedge_{i \in J} \xi'_i(x) < p_i\}\right)$).

We will need a generalization of the Aldous-Hoover-Kallenberg theorem on exchangeable arrays of random variables \cite{aldous1981representations, hoover1979relations, kallenberg2006probabilistic} for a restricted form of exchangeability with respect to $k$-partite generic hypergraphs. We will rely on the setting of \cite{crane2018relatively}.

\begin{definition}
\begin{enumerate}
	\item Let $\mathcal{L}' = \{R'_1, \ldots, R'_{k'}\}$ be a finite relational language, with each $R'_i$ a relation symbol of arity $r'_i$. By a \emph{random $\mathcal{L}'$-structure} we mean a collection of random variables 
	$$ \left( \xi^{i}_{\bar{n}} : i \in [k'], \bar{n} \in \mathbb{N}^{r'_i} \right)$$
	 on some probability space $\left(V, \B, \mu \right)$ with $\xi^i_{\bar{n}}: V \to \{0,1\}$. (Equivalently, we can think of this as equipping the space of all countable $\mathcal{L}'$-structures with a measure, and picking a random $\mathcal{L}'$-structure according to it.)
	\item Let now $\mathcal{L} = \{R_1, \ldots, R_k\}$ be another relational language, with $R_i$ a relation symbol of arity $r_i$, and let $\mathcal{M} = (\mathbb{N}, \ldots)$ be a countable $\mathcal{L}$-structure with domain $\mathbb{N}$. We say that a random $\mathcal{L}'$-structure $ \left( \xi^{i}_{\bar{n}} : i \in [k'], \bar{n} \in \mathbb{N}^{r'_i} \right)$ is \emph{$\mathcal{M}$-exchangeable} if for any two finite subsets $A =\{a_1, \ldots, a_\ell\}, A' = \{a'_1, \ldots, a'_\ell\} \subseteq \mathbb{N}$ 
	\begin{gather*}
	\qftp_{\mathcal{L}}\left(a_1, \ldots, a_{\ell} \right) = \qftp_{\mathcal{L}}\left(a'_1, \ldots, a'_{\ell} \right) \implies \\
		\left( \xi^i_{\bar{n}} : i \in [k'], \bar{n} \in A^{r'_i} \right) =^{\dist} \left( \xi^i_{\bar{n}} : i \in [k'], \bar{n} \in (A')^{r'_i} \right).
	\end{gather*}
\end{enumerate} 
\end{definition}

Given a tuple $\bar{n} = (n_1, \ldots, n_r)$ we let $\range \bar{n}$ denote the set of distinct elements in $\bar{n}$, and write $\bar{m} \subseteq \bar{n}$ if $\bar{m} = (n_{p_1}, \ldots, n_{p_r'})$ for an increasing sequence $p_1 < \ldots < p_{r'} \in [r]$.

\begin{fac}\label{fac: Ald-Hoov}\cite[Theorem 3.2]{crane2018relatively} Let $\mathcal{L}' = \{R'_i : i \in [k']\}, \mathcal{L} = \{R_i : i \in [k]\}$ be finite relational languages with all $R'_i$ of arity at most $r'$, and $\mathcal{M} = (\mathbb{N}, \ldots)$ a countable ultrahomogeneous $\mathcal{L}$-structure that has $n$-DAP for all $n \geq 1$ (see Definition \ref{def: nDAP}). Suppose that $ \left( \xi^{i}_{\bar{n}} : i \in [k'], \bar{n} \in \mathbb{N}^{r'_i} \right)$ is a random $\mathcal{L}'$-structure that is $\mathcal{M}$-exchangeable such that the relations $R'_i$ are symmetric with probability $1$.

Then there exists a probability space  $(V',\B',\mu')$, $\{0,1\}$-valued Borel functions $f_1, \ldots, f_{r'}$ and a collection of $\Uniform$ i.i.d.~random variables $\left( \zeta_s : s \subseteq \mathbb{N}, |s| \leq r' \right)$ on $V'$ so that
\begin{gather*}
	\left( \xi^i_{\bar{n}} : i \in [k'], \bar{n} \in \mathbb{N}^{r'_i} \right)=^{\dist} \\
	\left( f_i \left( \mathcal{M}|_{\range \bar{n}}, \left( \zeta_s \right)_{s \subseteq \range \bar{n}}\right) : i \in [k'], \bar{n} \in \mathbb{N}^{r'_i} \right).
\end{gather*}
\end{fac}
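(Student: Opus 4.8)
The plan is to follow the architecture of the classical Aldous--Hoover--Kallenberg theorem, replacing the role of the full symmetric group by $\operatorname{Aut}(\mathcal{M})$ and using $n$-DAP together with ultrahomogeneity of $\mathcal{M}$ as the structural substitute for the classical fact that one may always find a fresh, independent coordinate. Since each $R'_i$ is symmetric with probability $1$, we regard the given random $\mathcal{L}'$-structure as a random point of the compact metrizable space $\Omega$ of all symmetric $\mathcal{L}'$-structures on $\mathbb{N}$, carrying a Borel probability measure $\mu$ invariant under the coordinatewise action of $\operatorname{Aut}(\mathcal{M})$. Applying the ergodic decomposition theorem to this action writes $\mu = \int \mu_\omega \, d\rho(\omega)$ with each $\mu_\omega$ ergodic $\operatorname{Aut}(\mathcal{M})$-invariant; a choice of component according to $\rho$ (together with any residual global randomness) can be absorbed into the single variable $\zeta_\emptyset$, so it suffices to produce the representation when $\mu$ is ergodic.

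The crucial step is to establish a \emph{relative dissociation} property for ergodic $\mu$: whenever $A, B \subseteq \mathbb{N}$ are finite with $A \cap B = C$ and $\mathcal{M}|_{A \cup B}$ is the \emph{disjoint} amalgam of $\mathcal{M}|_A$ and $\mathcal{M}|_B$ over $\mathcal{M}|_C$, the restrictions of the random array to $A$ and to $B$ are conditionally independent given its restriction to $C$. This is where $n$-DAP and ultrahomogeneity enter: given such $A$, $B$, and any further prescribed finite set, a back-and-forth/compactness argument produces automorphisms of $\mathcal{M}$ fixing $A$ pointwise and carrying $B$ to copies that are again disjointly amalgamated with $A$ over $C$ and that lie ``far away''. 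Feeding a sequence of such automorphisms into the $\operatorname{Aut}(\mathcal{M})$-invariance of $\mu$ and invoking ergodicity (a mixing-type argument) yields the asserted conditional independence.

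With relative dissociation in hand I would construct the functions $f_1, \ldots, f_{r'}$ by recursion on $|s|$ for $s = \range \bar{n}$, exactly as in the Aldous--Hoover proof. Supposing the values of the array on all proper substructures $\mathcal{M}|_t$, $t \subsetneq s$, have already been written as Borel functions of $\mathcal{M}|_t$ and $(\zeta_{t'})_{t' \subseteq t}$, the conditional law of the ``top-level'' coordinates indexed exactly by $s$ depends, by $\operatorname{Aut}(\mathcal{M})$-invariance together with ultrahomogeneity (every isomorphism between finite substructures of $\mathcal{M}$ is the restriction of an automorphism), only on the isomorphism type of $\mathcal{M}|_s$ and on the already-determined lower values. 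Realizing this conditional distribution as a measurable function of an independent $\Uniform$ variable $\zeta_s$, and assembling the pieces via Kallenberg's transfer lemma, produces the required $\{0,1\}$-valued Borel $f_i$ and i.i.d.\ family $(\zeta_s)_{|s| \leq r'}$ with the stated equality in distribution.

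The main obstacle will be the relative dissociation step: extracting genuine conditional independence from mere $\operatorname{Aut}(\mathcal{M})$-invariance plus ergodicity, when one can no longer simply permute one part past another. The disjoint amalgamation property is precisely what rescues the argument, so the technical heart is the combinatorial construction of automorphisms realizing disjoint amalgams arbitrarily far out, followed by the ergodic/mixing argument converting this into dissociation; the remaining steps (ergodic decomposition, the recursive conditioning, and the measurable selection) are then routine adaptations of the classical proof.
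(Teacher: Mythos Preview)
The paper does not prove this statement: it is recorded as a \texttt{fac} environment and is simply quoted from \cite[Theorem 3.2]{crane2018relatively}, with no argument given. So there is nothing in the paper to compare your proof against.

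That said, your sketch is a faithful outline of the argument in the cited reference. The reduction to the ergodic case via ergodic decomposition (with the component-choice absorbed into $\zeta_\emptyset$), the use of $n$-DAP plus ultrahomogeneity to produce automorphisms that push copies of $B$ far away while fixing $A$, the resulting relative dissociation (conditional independence over the overlap for disjoint amalgams), and finally the Aldous--Hoover style recursion on $|s|$ using the transfer/coding lemma to manufacture the i.i.d.\ $\zeta_s$ --- this is exactly the architecture of Crane--Towsner's proof. The only point to be careful about when writing it out in full is the dissociation step: the argument does not go through with a bare mixing/ergodicity statement, but rather via the observation that for any fixed finite event one can realize arbitrarily many jointly disjointly-amalgamated copies (this is where one genuinely needs $n$-DAP for all $n$, not just $2$-DAP), and then a law-of-large-numbers/reverse-martingale argument as in Kallenberg's treatment of the classical case. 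With that caveat, your plan is correct.
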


\begin{remark}\label{rem: measure comparison}
Given $n \in \mathbb{N}$, let $(\zeta_i : i < n)$ be uniformly distributed $[0,1]$-valued independent random variables on a probability space $(V, \mathcal{B}, \mu)$. Let $A \subseteq [0,1]^n$ be a Borel set. Then 
$$\nu_n(A) = \mu \left(\left\{x \in V : \left(\zeta_1(x), \ldots, \zeta_n(x) \right) \in A \right\} \right).$$
\end{remark}

\begin{proof}
Assume $\nu_n(A) = r$, and let $\varepsilon > 0$ be arbitrary. As $A$ is measurable with respect to $\nu_n$, we can find some Borel sets $A_{1,j}, \ldots, A_{n,j} \subseteq [0,1]$ for $j \in \mathbb{N}$ such that $A \subseteq A' := \bigsqcup_{j \in \mathbb{N}} \prod_{1 \leq i \leq n}A_{i,j}$ and $\nu_n(A') \leq r + \varepsilon$. Then we have:
\begin{gather*}
	\mu \left(\left\{x \in V : (\zeta_1(x), \ldots, \zeta_n(x)) \in A \right\} \right) \leq \\
	\mu(\left\{x \in V : (\zeta_1(x), \ldots, \zeta_n(x)) \in A' \right\}) =\\
	\sum_{j \in \mathbb{N}}\mu\left( \left\{ x \in V : (\zeta_1(x), \ldots, \zeta_n(x)) \in \prod_{1 \leq i \leq n}A_{i,j} \right\}\right)\\
	\textrm{(by countable additivity and disjointness of the boxes)}\\
	= \sum_{j \in \mathbb{N}} \mu \left(\left\{ x \in V :\zeta_1 (x) \in A_{1,j} \right\} \right) \cdot \ldots \cdot \mu \left(\left\{ x \in V :\zeta_n (x) \in A_{n,j} \right\} \right)\\
	\textrm{(as the random variables $\zeta_1, \ldots, \zeta_n$ are independent)}\\
	= \sum_{j \in \mathbb{N}} \nu_1 \left(A_{1,j}\right) \cdot \ldots \cdot \nu_1 \left( A_{n,j}\right)\\
	\textrm{(as each of the random variables $\zeta_1, \ldots, \zeta_n$ is uniformly distributed on $[0,1]$)}\\
	= \nu_n(A') \leq r + \varepsilon.
\end{gather*}
Applying the same argument to the complement of $A$ we get that also 
$$\mu(\left\{x \in V : (\zeta_1(x), \ldots, \zeta_n(x)) \in A \right\}) \geq r - \varepsilon,$$
and, since $\varepsilon > 0$ was arbitrary, the claim follows.
\end{proof}

The following can be viewed as an analog of Lemmas \ref{lem: indisc Bergelson} (which in turn is an ``indiscernible'' version of Fact \ref{fac: Bergelson}), where instead of indexing by a sequence we are indexing by a generic partite hypergraph.

\begin{lemma} \label{lem: random graph Bergelson}
Let $(V,\mathcal{B}, \mu)$ be a probability space, and $k \in \mathbb{N}$ and $r \in [0,1]$ arbitrary.
	Let $G'_{k,p} = (P_1, \ldots, P_k, R_k)$ be the generic $k$-partite hypergraph (see Definition \ref{def: reducts of Gkp}). Assume that for each tuple $\bar{a}=(a_i)_{i \in [k]} \in \prod_{i \in [k]} P_i$ we have some sets $E^0_{\bar{a}}, E^1_{\bar{a}}  \in \mathcal{B}$ satisfying the following:
	\begin{enumerate}
		\item $\mu(E^0_{\bar{a}}) > \mu(E^1_{\bar{a}'})$ for some $\bar{a} \in R_k, \bar{a}' \notin R_k$;
		\item for any for any $m \in \mathbb{N}$, $\bar{a}^i = \left(a^i_1, \ldots, a^i_m \right) \in P_i$ and $\bar{b}^i = \left(b^i_1, \ldots, b^i_m \right) \in P_i$ for $i \in \{1, \ldots, k\}$ 
	\begin{gather*}
		\qftp_{\mathcal{L}^k_{\opg}}\left(\bar{a}^1, \ldots, \bar{a}^k \right) = \qftp_{\mathcal{L}^k_{\opg}}\left(\bar{b}^1, \ldots, \bar{b}^k \right) \implies \\
		\left( \chi_{E^t_{(a^1_{\ell_1}, \ldots, a^k_{\ell_k})}} : t \in \{0,1\},  (l_1, \ldots, l_k) \in [m]^k  \right) =^{\dist} \\
		\left( \chi_{E^t_{(b^1_{\ell_1}, \ldots, b^k_{\ell_k})}} : t \in \{0,1\}, (l_1, \ldots, l_k) \in [m]^k  \right).
	\end{gather*}

	\end{enumerate}  
	 Then for any finite $Q_i \subseteq P_i, i \in [k]$, taking $Q := \prod_{i \in [k]} Q_i$, we have
	$$\mu \Bigg( \bigcap_{\bar{a} \in Q \cap R_k} E^0_{\bar{a}}  \cap  \bigcap_{\bar{a} \in Q \setminus R_k} V \setminus E^1_{\bar{a}}\Bigg) > 0.$$
\end{lemma}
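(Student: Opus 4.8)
The plan is to apply the Aldous–Hoover–Kallenberg-type representation of Fact~\ref{fac: Ald-Hoov} to the random $\mathcal{L}'$-structure naturally associated to the family $\left(E^0_{\bar a}, E^1_{\bar a}\right)_{\bar a}$, and then to reduce the assertion to a statement about positivity of a Lebesgue measure that we verify by a direct counting argument using a sufficiently generic copy of $G'_{k,p}$ inside a larger generic hypergraph.

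First I would set up the random structure. Take the language $\mathcal{L}' = \{S_0, S_1\}$ with $S_0, S_1$ each a $k$-ary relation symbol, and take $\mathcal{M} := G'_{k+1,p}$ as the indexing structure (it is ultrahomogeneous and satisfies $n$-DAP for all $n$ by Fact~\ref{fac: Ramsey classes}(4) and Proposition~\ref{prop: Gkp has nDAP}). For $\bar n = (n_1, \ldots, n_k) \in \mathbb{N}^k$ whose entries lie in distinct parts $P_1, \ldots, P_k$ of $G'_{k+1,p}$, set $\xi^t_{\bar n} := \chi_{E^t_{\bar n}}$ for $t \in \{0,1\}$ (and $\xi^t_{\bar n} := 0$, say, when the entries do not form such a tuple; after symmetrizing, this degenerate choice is harmless). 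Hypothesis~(2) of the lemma is precisely the statement that this random $\mathcal{L}'$-structure is $G'_{k+1,p}$-exchangeable: a quantifier-free $\mathcal{L}^{k+1}_{\opg}$-type of a tuple records which entries lie in which part $P_i$ and which sub-tuples lie in $R_{k+1}$, which is exactly what is needed to match the relevant sub-tuples of $\bar a^1, \ldots, \bar a^k$. (If the $S_t$ are not literally symmetric, replace them by their symmetrizations, which does not affect the conclusion since each $\bar a$ has entries in fixed distinct parts.) Applying Fact~\ref{fac: Ald-Hoov} with $r' := k$ yields a probability space $(V', \mathcal{B}', \mu')$, Borel functions $f_0, f_1 \colon \{0,1\}\text{-structures} \times [0,1]^{2^k} \to \{0,1\}$, and i.i.d.\ $\Uniform$ random variables $\left(\zeta_s : s \subseteq \mathbb{N}, |s| \leq k\right)$ such that
\[
\left(\xi^t_{\bar n} : t \in \{0,1\}, \bar n \in \mathbb{N}^k\right) =^{\dist} \left(f_t\left(\mathcal{M}|_{\range \bar n}, (\zeta_s)_{s \subseteq \range \bar n}\right) : t \in \{0,1\}, \bar n \in \mathbb{N}^k\right).
\]

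Next I would exploit hypothesis~(1). Since $\mu(E^0_{\bar a}) > \mu(E^1_{\bar a'})$ for some $\bar a \in R_{k+1}$ (viewed inside $P_1 \times \cdots \times P_k$ of $G'_{k+1,p}$, i.e.\ in the generic $k$-partite hypergraph obtained by forgetting the last part) and some $\bar a' \notin R_{k+1}$, exchangeability implies that $\mu(E^0_{\bar b}) = p_0$ is a constant over all $\bar b$ of the former type and $\mu(E^1_{\bar b}) = p_1$ is a constant over all $\bar b$ of the latter type, with $p_0 > p_1$; in particular $p_0 > 0$. Now fix finite $Q_i \subseteq P_i$ and $Q := \prod_i Q_i$. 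For each $\bar a \in Q$, the event $E^0_{\bar a}$ (if $\bar a \in R_{k+1}$) or $V \setminus E^1_{\bar a}$ (if $\bar a \notin R_{k+1}$) depends, through the representation, only on $\mathcal{M}|_{\range \bar a}$ together with the i.i.d.\ variables $\zeta_s$ for $s \subseteq \range \bar a$. The key combinatorial observation is that one can realize $Q$ inside a \emph{generic enough} position so that: for every $\bar a \in Q$ the top-dimensional variable $\zeta_{\range \bar a}$ is distinct from all others appearing, and moreover the induced structure $\mathcal{M}|_{\range \bar a}$ together with the chosen sub-tuple types can be prescribed freely — this is where disjoint amalgamation / genericity of $G'_{k+1,p}$ is used. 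Because $p_0 > 0$ and $1 - p_1 > 1 - p_0 \geq 0$ (hence $1 - p_1 > 0$ as well, using $p_1 < p_0 \le 1$ so $1-p_1 > 0$), each individual conditional probability, conditioned on all lower-order $\zeta_s$'s, is bounded below by a positive constant $c := \min\{p_0, 1 - p_1\} > 0$ on a positive-measure set of configurations of the lower-order variables; and conditionally on those lower-order variables the top-dimensional variables $\left(\zeta_{\range \bar a}\right)_{\bar a \in Q}$ are independent, so the conditional probability of the intersection is at least $c^{|Q|} > 0$. Integrating out the lower-order variables gives
\[
\mu\!\left(\bigcap_{\bar a \in Q \cap R_{k+1}} E^0_{\bar a} \cap \bigcap_{\bar a \in Q \setminus R_{k+1}} V \setminus E^1_{\bar a}\right) \geq c^{|Q|} \cdot \theta > 0
\]
for an appropriate positive $\theta$ coming from the positive-measure set of admissible lower-order configurations, where Remark~\ref{rem: measure comparison} is used to translate the relevant probabilities into Lebesgue measures $\nu_n$ and back.

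\textbf{Main obstacle.} The delicate point is the conditional-independence step: I must argue that, after conditioning on all the ``shared'' low-dimensional randomness $(\zeta_s)_{|s| < k}$, the top-dimensional variables indexed by the distinct sets $\range \bar a$, $\bar a \in Q$, are genuinely independent, and that the conditional probability of each good event stays bounded below by a positive constant on a set of positive measure of the conditioning variables. This requires carefully tracking which subsets $s$ are shared between different $\bar a \in Q$ — two tuples $\bar a, \bar a'$ in $Q$ can share a proper subset of coordinates — and organizing the integration so that the top-level variables are peeled off last. A clean way to handle this is to first pick the $Q_i$'s inside a copy of the generic hypergraph where the edge relation among the tuples in $Q$ has been prescribed to match $R_{k+1}$, then condition on $(\zeta_s)_{|s|<k}$, observe that the random variables $\chi_{E^t_{\bar a}}$ become independent Bernoulli variables (functions of the fresh top-level $\zeta_{\range \bar a}$), with success probabilities that average to $p_0$ resp.\ $p_1$ over the conditioning, and conclude by Fatou/Fubini together with the strict inequality $p_0 > p_1 \ge 0$. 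This mirrors exactly the role played by Fact~\ref{fac: Bergelson} and Lemma~\ref{lem: indisc Bergelson} in the sequential setting, with Fact~\ref{fac: Ald-Hoov} providing the hypergraph analogue of de Finetti's theorem.
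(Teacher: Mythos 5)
Your overall strategy is the one the paper uses: encode the family $\left(\chi_{E^0_{\bar a}},\chi_{E^1_{\bar a}}\right)$ as an exchangeable random structure indexed by the generic partite hypergraph, apply Fact~\ref{fac: Ald-Hoov} to get a representation in i.i.d.\ uniform variables $\zeta_s$, and translate the conclusion into a Lebesgue-measure positivity statement via Remark~\ref{rem: measure comparison}. But there is a structural slip in your setup and, more importantly, a genuine gap at the positivity step. The slip: the indexing structure must be $G'_{k,p}$ with its $k$-ary relation $R_k$, not $G'_{k+1,p}$ with $R_{k+1}$. A $k$-tuple meeting only the first $k$ parts of $G'_{k+1,p}$ induces a substructure carrying no edge information at all (an $R_{k+1}$-edge needs a vertex in each of the $k+1$ parts), so with your choice of $\mathcal{M}$ the representation functions cannot distinguish edge-tuples from non-edge-tuples, which is exactly the distinction the rest of the argument relies on; with $G'_{k,p}$ (ultrahomogeneous and with $n$-DAP by Proposition~\ref{prop: Gkp has nDAP}) the induced structure on $\range\bar a$ has precisely two isomorphism types, producing the two functions $f^0_{+}$ and $f^1_{-}$. (Also, the exchangeability you need is with respect to the unordered partite type, cf.\ Lemma~\ref{lem: exchang w order implies wo}.)

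The serious gap is your final estimate. After conditioning on the lower-order variables, you assert that each individual event has conditional probability at least $c=\min\{p_0,1-p_1\}$ on a positive-measure set of lower-order configurations, and conclude that the intersection has probability at least $c^{|Q|}\cdot\theta>0$. The inference fails: the good sets of lower-order configurations depend on the tuple $\bar a$ and on whether its constraint is an edge- or a non-edge-constraint, the lower-order $\zeta_s$ are shared among tuples, and nothing in your argument shows these good sets have a common intersection of positive measure --- producing that common configuration is precisely the content of the lemma. Indeed your estimate never uses $p_0>p_1$: writing $S_+:=(f^0_+)^{-1}(1)$ and $S_-:=(f^1_-)^{-1}(1)$, if $S_+=S_-$ is a set of measure $\frac12$ depending only on one singleton coordinate, then $p_0=p_1=\frac12$, your bound is still positive, yet the intersection is empty as soon as some vertex of $Q$ lies in both an edge and a non-edge; so the argument of your first paragraph proves a false statement, and the ``main obstacle'' paragraph (``success probabilities average to $p_0$ resp.\ $p_1$; Fatou/Fubini; $p_0>p_1$'') leaves exactly the needed point unproved. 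What is required, and what the paper does, is a localization: from hypothesis (1) deduce $\nu_{2^k}(S_+\setminus S_-)>0$, use regularity of Lebesgue measure to find a box $A=\prod_{I\subseteq[k]}A_I$ on which $S_+\setminus S_-$ has relative density greater than $1-\varepsilon$ with $\varepsilon<|Q|^{-1}$, constrain every $\zeta_s$ to lie in $A_I$ where $I$ is the pattern of $s$ (the same constraint for all tuples containing $s$, which is what neutralizes the sharing of low-order variables), and then a union bound over the tuples of $Q$ shows that with positive probability every tuple's coordinate vector lies in $S_+\setminus S_-$, which simultaneously forces membership in $E^0_{\bar a}$ for edges and avoidance of $E^1_{\bar a}$ for non-edges.
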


\begin{proof}
Without loss of generality the domain of $G'_{k,p}$ is $\mathbb{N}$, i.e.~$(\bigcup_{i \in [k]} P_i)^k = \mathbb{N}$.
For each $\bar{a} \in \prod_{i \in [k]}P_i$ and $t \in \{0,1\}$, let $\xi^t_{\bar{a}} := \chi_{E^t_{\bar{a}}}$. For any $\bar{a} \in \mathbb{N}^k \setminus \prod_{i \in [k]}P_i$, let $\xi^t_{\bar{a}}$ be the constant zero map for $t \in \{0,1\}$. By assumption (2) it follows that $\left( \xi^t_{\bar{a}} : t \in [2], \bar{a} \in \mathbb{N}^k \right)$ is a $G'_{k,p}$-exchangeable random $\mathcal{L}'$-structure for $\mathcal{L}'$ containing two $k$-ary relational symbols.  Since the relations are partite, they may be extended to symmetric relations containing only tuples with exactly one element from each part.  Besides, $G'_{k,p}$ is ultrahomogeneous by Fact \ref{fac: Ramsey classes}(4) and satisfies $n$-DAP for all $n \in \mathbb{N}_{\geq 1}$ by Proposition \ref{prop: Gkp has nDAP}.
Moreover, for any tuple $(g_1, \ldots, g_k) \in \prod_{i \in [k]}P_i$, there only two possible isomorphism types for the induced substructure $G'_{k,p}|_{\range (g_1, \ldots, g_k)}$ (see Definition \ref{def: induced substr}) --- one for $(g_1, \ldots, g_k) \in R_k$ and one for $(g_1, \ldots, g_k) \notin R_k$.
Hence, applying Fact \ref{fac: Ald-Hoov}, there exist
 a probability space $\left( V', \B', \mu' \right)$, a collection of $\Uniform$ i.i.d.~random variables $\zeta_{\bar{a}} : V' \to [0,1]$
	indexed by the tuples $ \bar{a} \in \bigcup_{I \subseteq [k]} \prod_{ i \in I} P_i$, and
	 Borel measurable functions $f^t_{s} : [0,1]^{2^k} \to \{0,1 \}$ for $t \in \{0,1\}, s \in \{+,-\}$, such that we have
\begin{gather}\label{eq: Ald Hoov pres}
	\left( \chi_{E^t_{\bar{a}}} : t \in \{0,1\}, \bar{a} \in \prod_{i \in [k]} P_i \right) =^{\dist}\\
	 \left( f^t_{\rho(\bar{a})} \left( \left( \zeta_{\bar{a}_I} : I \subseteq [k]\right) \right) : t \in \{0,1\}, \bar{a} \in \prod_{i \in [k]} P_i  \right),\nonumber
\end{gather}
where $\rho(\bar{a}) = +$ if $\bar{a} \in R_k$ and $\rho(\bar{a}) = -$ if $\bar{a} \notin R_k$.

Let $S_{+} := (f^0_+)^{-1} \left( \left\{1 \right\} \right)$ and $S_{-} := (f^1_{-})^{-1} \left( \left\{1 \right\} \right)$, both are Borel subsets of $[0,1]^{2^k}$. Let $\bar{a} \in R_k, \bar{a}' \in \prod_{i \in [k]} P_i \setminus R_k$ be as given by assumption (1). Then, using  Remark \ref{rem: measure comparison}, we have
\begin{gather*}
\mu(E^0_{\bar{a}}) = \mu'\left( \left\{x \in V' : f^0_+ \left( \zeta_{\bar{a}_I}(x) : I \subseteq [k] \right) = 1  \right\} \right) = \\
 \mu'\left( \left\{x \in V' :  \left( \zeta_{\bar{a}_I}(x) : I \subseteq [k] \right) \in S_{+}  \right\} \right) = \nu_{2^k} \left( S_+ \right).
\end{gather*}
Similarly, $\mu(E^1_{\bar{a}'}) = \nu_{2^k} \left( S_- \right)$. As $\mu \left(E^0_{\bar{a}} \right) > \left(E^1_{\bar{a}'} \right)$ by assumption, it follows that $\nu_{2^k}\left( S_+ \setminus S_- \right) > 0$.

Fix any $\varepsilon \in \mathbb{R}_{>0}$. Then, by the basic properties of Lebesgue measure, we can choose some $\left(A_I : I \subseteq [k] \right)$ with each $A_I$ a Borel subset of $[0,1]$ with $\nu_1(A_I) > 0$, so that, taking $A := \prod_{I \subseteq [k]} A_I$, we have
\begin{gather}\label{eq: box almost cont}
	\nu_{2^k} \left( A \cap \left( S_+ \setminus S_- \right) \right) \geq (1 -\varepsilon ) \cdot \nu_{2^k} \left( A \right).
\end{gather}

Let $Q_i \subseteq P_i$ be arbitrary finite subsets. It is enough to prove the lemma assuming that for some $n \in \mathbb{N}$, $|Q_i| = n$ for all $i \in [k]$. Let $K := \sum_{l = 0}^{k} \binom{k}{l} n^l$.

We let
\begin{gather*}
	W := \Bigg\{\bigg(x_{\bar{a}} : \bar{a} \in \bigcup_{I \subseteq [k]} \prod_{i \in I} Q_i \bigg) \in [0,1]^{K} : \\
	\bigwedge_{\bar{a} \in \prod_{i \in [k]} Q_i \cap R_k} \left( x_{\bar{a}_I} :  I \subseteq [k] \right) \in S_{+} \land \bigwedge_{\bar{a} \in \prod_{i \in [k]} Q_i \setminus R_k} \left( x_{\bar{a}_I} : I \subseteq [k] \right) \notin S_{-} \Bigg\}.
	\end{gather*}
	
Let 
\begin{gather*}
	B := \prod_{\bar{a} \in \bigcup_{I \subseteq [k]} \prod_{i \in I} Q_i} A_I,
\end{gather*}
then $B$ is a box in $[0,1]^K$ with $\nu_{K}(B) > 0$. For every $\bar{b} \in \prod_{i \in [k]} Q_i$ let
\begin{gather*}
B_{\bar{b}} := \left\{\bigg(x_{\bar{a}} : \bar{a} \in \bigcup_{I \subseteq [k]} \prod_{i \in I} Q_i \bigg) \in B :  \left( x_{\bar{b}_I} :  I \subseteq [k] \right) \in A \setminus \left( S_+ \setminus S_- \right) \right\}.
\end{gather*}

 We have 
\begin{gather*}
B \setminus W \subseteq \bigcup_{\bar{b} \in \prod_{i \in [k]} Q_i} 	B_{\bar{b}},
\end{gather*}
which by \eqref{eq: box almost cont} and definition of $B_{\bar{b}}$'s implies
\begin{gather*}
	\nu_{K} \left( B \setminus W \right) \leq \sum_{\bar{b} \in \prod_{i \in [k]} Q_i} \nu_{K}\left( B_{\bar{b}} \right) \leq n^k \cdot \varepsilon \cdot \nu_K(B).
\end{gather*}
So, if we take $\varepsilon < \frac{1}{n^k}$ , we get $\nu_{K} \left(B \cap W \right) > 0$, in particular $\nu_K \left( W \right) > 0$.

Then, using \eqref{eq: Ald Hoov pres} and Remark \ref{rem: measure comparison}, we get 
\begin{gather*}
	0 < \nu_{K}(W) = \\
	\mu' \Bigg( \bigg\{ x \in V' : \bigwedge_{\bar{a} \in \prod_{i \in [k]} Q_i \cap R_k} \left( \zeta_{\bar{a}_I}(x) :  I \subseteq [k] \right) \in S_{+} \land \\
	\bigwedge_{\bar{a} \in \prod_{i \in [k]} Q_i \setminus R_k} \left( \zeta_{\bar{a}_I}(x) : I \subseteq [k] \right) \notin S_{-}  \bigg\}\Bigg) =\\
	\mu \Bigg(  \bigcap_{\bar{a} \in \prod_{i \in [k]} Q_i \cap R_k} E^0_{\bar{a}}  \cap  \bigcap_{\bar{a} \in \prod_{i \in [k]} Q_i \setminus R_k} V \setminus E^1_{\bar{a}} \Bigg).
\end{gather*}

\end{proof}

The next fact follows from  model-theoretic stability of probability algebras in continuous logic \cite[Section 16]{yaacov2008model}, or a more general \cite[Proposition 2.25]{hrushovski2012stable}. See \cite{TaoHrush} for a short elementary proof.
\begin{fac}\label{fac: HrushTao}
	For any real numbers $0 \leq p < q \leq 1$ there exists some $N = N(p,q)$ satisfying the following. If  $(V,\B,\mu)$ is a probability space, and $A_1, \ldots, A_n, B_1, \ldots, B_n \in \B$ satisfy $\mu(A_i \cap B_j) \geq q$ and $\mu(A_j \cap B_i) \leq p$ for all $1 \leq i < j \leq n$, then $n \leq N$.
\end{fac}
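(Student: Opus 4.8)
The plan is to read this as the statement that a probability algebra, regarded as a metric structure, is \emph{stable}: the pairing $(A,B)\mapsto\mu(A\cap B)$ on $L^2(\mu)$ admits no long ``order sequence'' with a gap. One can obtain $N$ cheaply and non-effectively by a compactness argument (balls in a Hilbert space are weakly compact, so $L^2$ is stable), but I would instead give an elementary argument with an explicit bound, along the following lines.

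\emph{Ramsey homogenization.} Suppose toward a contradiction that $n$ is enormous. Fix a small $\varepsilon'>0$ (to be chosen of order $q-p$), and colour each pair $\{i<j\}\subseteq[n]$ by the pair of $\varepsilon'$-buckets of $\mu(A_i\cap A_j)$ and of $\mu(B_i\cap B_j)$ (both symmetric in $i,j$, so one value per pair suffices). By Ramsey's theorem, if $n$ is large enough we may pass to a subsequence of length $2K$, with $K$ as large as we wish, that is monochromatic for this colouring; relabel it $A_1,\dots,A_{2K},B_1,\dots,B_{2K}$, keeping the original order, so that the hypotheses $\mu(A_i\cap B_j)\ge q$ for $i<j$ and $\mu(A_i\cap B_j)\le p$ for $i>j$ persist, while in addition $\mu(A_i\cap A_j)$ and $\mu(B_i\cap B_j)$ each vary by less than $\varepsilon'$ over all $i\ne j$, say $\mu(A_i\cap A_j)\in[\alpha,\alpha+\varepsilon')$ and $\mu(B_i\cap B_j)\in[\beta,\beta+\varepsilon')$.

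\emph{Smoothing by block averages} --- this is the crux. Split $\{1,\dots,2K\}$ into the consecutive blocks $I_1=\{1,\dots,K\}$ and $I_2=\{K+1,\dots,2K\}$, and set $f_t=\frac1K\sum_{i\in I_t}\chi_{A_i}$ and $g_t=\frac1K\sum_{j\in I_t}\chi_{B_j}$, functions with values in $[0,1]$. A direct computation from the homogenised values gives $\int f_t^2\,d\mu=\alpha+O(1/K+\varepsilon')$ (the diagonal contributes $O(1/K)$, the off-diagonal pairs all lie in $[\alpha,\alpha+\varepsilon')$), whereas $\int f_1f_2\,d\mu=\alpha+O(\varepsilon')$ since \emph{every} cross pair is off-diagonal; hence $\norm{f_1-f_2}_{L^2}^2=O(1/K+\varepsilon')$, and likewise $\norm{g_1-g_2}_{L^2}^2=O(1/K+\varepsilon')$. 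On the other hand $\int f_1g_2\,d\mu\ge q$ (every $A_i$ in $f_1$ has smaller index than every $B_j$ in $g_2$) while $\int f_2g_1\,d\mu\le p$ (the indices are reversed). Since $\norm{f_t}_{L^2},\norm{g_t}_{L^2}\le1$, Cauchy--Schwarz gives
\[ q-p\le\int f_1g_2\,d\mu-\int f_2g_1\,d\mu=\int(f_1-f_2)g_2\,d\mu+\int f_2(g_2-g_1)\,d\mu\le\norm{f_1-f_2}_{L^2}+\norm{g_1-g_2}_{L^2}=O\big(\sqrt{1/K+\varepsilon'}\,\big), \]
which is a contradiction once $\varepsilon'$ is small and $K$ is large, both depending only on $q-p$. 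Unwinding, $N(p,q)$ is the Ramsey number for $O((1/\varepsilon')^2)$ colours and homogeneous size $2K$, an explicit (if tower-type) function of $1/(q-p)$.

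The argument is self-contained modulo Ramsey's theorem; the one point that needs care is the block-averaging step, where one must check that homogenising the $A$--$A$ and $B$--$B$ intersection measures really does pull the averaged sets $f_1,f_2$ (resp.\ $g_1,g_2$) $L^2$-close together --- this ``smoothing destroys the order'' phenomenon is precisely what stability of measure algebras amounts to, and it carries the whole proof. If effective bounds are not wanted, the same step can be phrased abstractly: pass to an ultraproduct to obtain an infinite order sequence, average the $\chi_{A_i}$ and $\chi_{B_j}$ within arbitrarily long blocks to get honest $L^2$-limits $f_\ast,g_\ast$ (pairwise block-differences tend to $0$, so the limits are independent of the block), and observe that $\int f_\ast g_\ast\,d\mu$ is then forced to be simultaneously $\ge q$ and $\le p$.
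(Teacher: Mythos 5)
Your argument is correct, but note that the paper itself contains no proof of this statement: it is stated as a \emph{Fact} and attributed to the literature (stability of probability algebras in continuous logic, Hrushovski's Proposition~2.25, and a short elementary proof of Tao), so there is no in-paper argument to match. The soft route behind those citations is the double-limit/weak-compactness proof of stability of the Hilbert-space pairing: extract weakly convergent subsequences $\chi_{A_i}\rightharpoonup f_*$, $\chi_{B_j}\rightharpoonup g_*$, and observe that both iterated limits of $\mu(A_i\cap B_j)=\langle\chi_{A_i},\chi_{B_j}\rangle$ equal $\langle f_*,g_*\rangle$, which cannot be simultaneously $\geq q$ and $\leq p$; this is quick but gives no bound. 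Your Ramsey-plus-block-averaging proof is a correct finitization of exactly that phenomenon: homogenizing the $A$--$A$ and $B$--$B$ correlations forces the two block averages to be $L^2$-close (this is the finitary surrogate for weak convergence), the order hypothesis keeps $\int f_1g_2\,d\mu\geq q$ and $\int f_2g_1\,d\mu\leq p$, and the identity $\int f_1g_2-\int f_2g_1=\int(f_1-f_2)g_2+\int f_2(g_2-g_1)$ together with Cauchy--Schwarz and $\norm{f_t}_{L^2},\norm{g_t}_{L^2}\leq 1$ closes the gap; I checked the estimates ($\norm{f_1-f_2}_{L^2}^2\leq 2\varepsilon'+2/K$, etc.) and they are sound, so taking $\varepsilon'\sim(q-p)^2$ and $K\sim(q-p)^{-2}$ yields an explicit $N(p,q)$. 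Two minor quibbles, neither affecting correctness: the resulting bound is not tower-type --- Ramsey's theorem for pairs with $O((1/\varepsilon')^2)$ colours is single-exponential, so $N(p,q)$ is exponential in a polynomial of $1/(q-p)$; and your closing ultraproduct sketch still needs the Ramsey homogenization (or weak compactness) to justify that block differences tend to $0$, so it is not genuinely simpler than the finitary argument you already gave.
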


Using this we show that the generic $k$-partite ordered hypergraph $G_{k,p}$-exchangeability of a collection of random variables implies its exchangeability with respect to the reduct $G'_{k,p}$ \emph{without the ordering} (this can be viewed as an analog of Ryll-Nardziewski's classical result that for a sequence of random variables, spreadability implies exchangeability for our more complicated notion of exchangeability, see e.g.~\cite{kallenberg1988spreading}).

\begin{lemma}\label{lem: exchang w order implies wo}
Let $(V, \mathcal{B}, \mu)$ be a probability space, and assume that for each $\bar{a} = (a_1, \ldots, a_k) \in \prod_{i \in [k]} P_i$ we have some sets $E^0_{\bar{a}}, E^1_{\bar{a}}  \in \mathcal{B}$ such that the following holds: for any $m \in \mathbb{N}$, $\bar{a}^i = \left(a^i_1, \ldots, a^i_m \right) \in P_i$ and $\bar{b}^i = \left(b^i_1, \ldots, b^i_m \right) \in P_i$ for $i \in \{1, \ldots, k\}$ such that $\qftp_{\mathcal{L}^k_{\opg}}\left(\bar{a}^1, \ldots, \bar{a}^k \right) = \qftp_{\mathcal{L}^k_{\opg}}\left(\bar{b}^1, \ldots, \bar{b}^k \right)$, we have that 
$$\mu \left(\bigwedge_{(\bar{l},v) \in [m]^k \times \{0,1\}}E^{v,t^v_{\bar{l}}}_{(a^1_{l_1}, \ldots, a^k_{l_k})} \right) = \mu \left(\bigwedge_{(\bar{l},v) \in [m]^k \times \{0,1\}}E^{v,t^v_{\bar{l}}}_{(b^1_{l_1},  \ldots, b^k_{l_k})} \right)$$
 for every tuple $\left(t^v_{\bar{l}} \in \{0,1\} : v \in \{0,1\}, \bar{l} \in [m]^k \right)$ (where $E^{t,1}$ denotes $E^t$ and $E^{t,0}$ denotes $\neg E^t$). 
 Then the same holds for any pair of tuples satisfying the weaker assumption 
 $\qftp_{\mathcal{L}^k_{\pg}}\left(\bar{a}^1, \ldots, \bar{a}^k \right) = \qftp_{\mathcal{L}^k_{\pg}} \left( \bar{b}^1, \ldots, \bar{b}^k \right)$, i.e.~Assumption (2) in Lemma \ref{lem: random graph Bergelson} is satisfied.
\end{lemma}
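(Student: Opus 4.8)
The plan is to deduce $G'_{k,p}$-exchangeability from $G_{k,p}$-exchangeability by a stability argument, much in the spirit of how spreadability implies exchangeability for ordinary sequences. Fix two tuples $\bar a^1,\ldots,\bar a^k$ and $\bar b^1,\ldots,\bar b^k$ with the same quantifier-free $\mathcal{L}^k_{\pg}$-type; the two tuples then differ only in the ordering within each part $P_i$. It is enough to treat the case where $\bar b$ is obtained from $\bar a$ by a single transposition of two consecutive elements inside one part $P_{i_0}$, say swapping positions $l$ and $l+1$ of $\bar a^{i_0}$ — a general order-isomorphism is a composition of such moves, and both tuples already agree on all edge relations by the $\mathcal{L}^k_{\pg}$-type assumption. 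So I would reduce to: the joint distribution of the $E^{v}_{\bar c}$, as $\bar c$ ranges over the grid formed by the two tuples, is unchanged under this transposition.

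First I would fix an arbitrary target measure $\mu(\bigwedge E^{v,t^v_{\bar l}}_{\cdots})$ and, using $G_{k,p}$-exchangeability, observe that this value depends on the chosen positions only through their $\mathcal{L}^k_{\opg}$-order-type. Then I would set up the stability input: build two sequences of measurable sets $(A_j)$ and $(B_j)$ living in the product probability space, indexed by $j\in\mathbb{N}$, where $A_j$ (resp.\ $B_j$) is the event obtained by placing many fresh, ordered copies of the ``swap'' configuration in $P_{i_0}$ so that in $A_j$ position $l$ precedes $l+1$ and in $B_j$ it is the reverse, with all the data in the other parts fixed in the same way. By $G_{k,p}$-indiscernibility, $\mu(A_i\cap B_j)$ and $\mu(A_j\cap B_i)$ each take a constant value for $i<j$ (a value determined by an order-type on $O_{k,p}$), and these two constants are precisely the two numbers I am trying to show coincide. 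If they were distinct, say one is $\leq p$ and the other $\geq q$ with $p<q$, then Fact~\ref{fac: HrushTao} gives a bound $N(p,q)$ on the length of such a configuration, contradicting the fact that $G_{k,p}$ contains arbitrarily long ordered chains inside each part (density of the orders $(P_i,<)$), which lets me produce configurations of length $>N(p,q)$. Hence the two constants are equal, and unwinding the definitions of $A_j,B_j$ this is exactly the equality of the two measures under the transposition.

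Iterating over transpositions and then over the finitely many boolean combinations appearing in the statement yields full $G'_{k,p}$-exchangeability, i.e.\ Assumption~(2) of Lemma~\ref{lem: random graph Bergelson}. The main obstacle I anticipate is bookkeeping: correctly encoding the ``swap'' as a pair of sequences $(A_j),(B_j)$ of events in a single probability space so that (i) $G_{k,p}$-indiscernibility genuinely forces $\mu(A_i\cap B_j)$ to be constant in $i<j$, and (ii) the two relevant constants really are the ``before'' and ``after'' measures of the target boolean combination rather than some coarser quantity. One must be careful that the elements filling the other parts $P_{i}$, $i\neq i_0$, and the elements of $P_{i_0}$ outside the two swapped positions, are held at a fixed $\mathcal{L}^k_{\opg}$-configuration across all the $A_j,B_j$, so that the only thing varying is the order relation between the two distinguished vertices — this is what makes the pair $(A_j),(B_j)$ ``monotone'' in the sense required by Fact~\ref{fac: HrushTao}. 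Once that encoding is pinned down, the rest is a direct application of the stability bound together with density of the orderings in $G_{k,p}$.
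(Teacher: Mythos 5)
Your overall strategy is the paper's: reduce to a transposition of two consecutive elements inside one part, then contradict Fact \ref{fac: HrushTao} by using the genericity of $G_{k,p}$ to produce arbitrarily long sequences of events $(A_j),(B_j)$ whose pairwise intersection measures are forced by the $G_{k,p}$-exchangeability hypothesis to equal the ``unswapped'' value on one side of the diagonal and the ``swapped'' value on the other.

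The one point where your sketch, taken literally, would fail is the encoding of $A_j$ and $B_j$. If each $A_j$ contains a whole ``swap configuration'' (both distinguished positions of $P_{i_0}$, in one order) and each $B_j$ the reversed configuration, then $A_i\cap B_j$ involves four fresh vertices, its measure is not either of the two numbers you are comparing, and its quantifier-free $\mathcal{L}^k_{\opg}$-type does not flip across the diagonal in the way Fact \ref{fac: HrushTao} requires. The correct encoding, which is what the paper does, is that each $A_l$ carries a \emph{single} fresh witness $a'_{2l}$ substituted into the slot $l$ of the fixed grid and each $B_l$ a single fresh witness $a'_{2l+1}$ substituted into the slot $l+1$, where $(a'_i)_{i\in\mathbb{N}}$ is one strictly increasing sequence in $P_{i_0}$; then in $A_i\cap B_j$ the relative order of the two witnesses is the original one when $i<j$ and the reversed one when $i>j$, so the exchangeability hypothesis pins the two measures down and the stability bound applies. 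Your closing remark that ``the only thing varying is the order relation between the two distinguished vertices'' is exactly the right criterion, but it is realized \emph{across} the two families, not inside each event. Relatedly, producing the witnesses needs more than density of $(P_{i_0},<)$: each $a'_i$ must also realize the same edge pattern over the fixed elements of the other parts as the element it replaces (and sit in the correct order interval), since otherwise the quantifier-free $\mathcal{L}^k_{\opg}$-types do not match and exchangeability gives you nothing; this is precisely what axiom (5) of Definition \ref{def: generic hypergraph} supplies. With that encoding fixed, the rest of your argument goes through as in the paper.
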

\begin{proof}
 It suffices to show the following (under the given assumption of $G_{k,p}$-exchangeability). Let $a_1^i < \ldots <a_m^i$ in $P
 _i$ be arbitrary, for $i \in [k]$, let a tuple $\left(t^v_{\bar{l}} \in \{0,1\} : v \in \{0,1\}, \bar{l} \in [m]^k\right)$ be fixed, and let $\sigma$ be a permutation of $[m]$ such that $\left( a^1_{l_1}, \ldots, a^k_{l_k} \right) \in R_k \iff \left( a^1_{\sigma(l_1)}, a^2_{l_2}, \ldots, a^k_{l_k} \right) \in R_k$ for all $\bar{l} = \left( l_1, \ldots, l_k \right) \in [m]^k$ (i.e.~$\sigma$ preserves the quantifier-free $\mathcal{L}^k_{\pg}$-type  of the tuple); then
 $$\mu \left(\bigwedge_{(\bar{l},v) \in [m]^k \times \{0,1\}} E^{v,t^v_{\bar{l}}}_{(a^1_{l_1}, \ldots, a^k_{l_k})} \right) = \mu \left(\bigwedge_{(\bar{l},v) \in [m]^k \times \{0,1\}}E^{v,t^v_{\bar{l}}}_{(a^1_{\sigma(l_1)}, a^2_{l_2}, \ldots, a^k_{l_k})} \right)$$
(the case of a permutation $\sigma$ acting on the elements in $P_i$ for $i \neq 1$ is symmetric, and they can be performed separately one by one). As every permutation is a composition of transpositions of consecutive elements, it suffices to show this assuming that $\sigma$ is a transposition of two consecutive elements. That is, towards a contradiction we assume that there is some $i^* \in [m]$, $1 \leq i^* <  i^*+1 \leq m$ such that $\sigma(i^*)=i^*+1, \sigma(i^*+1) = i^*$ and $\sigma$ is constant on all  $i \in [m] \setminus \{ i^*, i^* +1 \}$, and 
 \begin{gather}
  p :=\mu \left(\bigwedge_{((i, l_2,\ldots, l_k),v) \in [m]^k \times \{0,1\}}E^{v,t^v_{(i, l_2,\ldots, l_k)}}_{(a^1_{i}, a^2_{l_2}\ldots, a^k_{l_k})} \right) \label{eq: diff measure} \\
  < q := \mu \left(\bigwedge_{((i, l_2,\ldots, l_k),v) \in [m]^k \times \{0,1\}}E^{v,t^v_{(i, l_2,\ldots, l_k)}}_{(a^1_{\sigma(i)}, a^2_{l_2}, \ldots, a^k_{l_k})} \right) \nonumber
 \end{gather}
(the case with ``$>$'' is symmetric). By the genericity of the hypergraph $G_{k,p}$ (Definition \ref{def: generic hypergraph}) we can find a \emph{strictly  $<$-increasing} infinite sequence of elements $(a'_{i} : i \in \mathbb{N})$ in $P_1$ such that:
 \begin{itemize}
 	\item for $i = 2j$ we have 
 	\begin{gather*}
 	\qftp_{\mathcal{L}^k_{\opg}}\left(a^1_1, \ldots, a^1_{i^*-1}, a'_i,  a^1_{i^*+2}, \ldots, a^1_{m} ; \bar{a}^2, \ldots, \bar{a}^k \right)\\
 	=\qftp_{\mathcal{L}^k_{\opg}} \left( a^1_1, \ldots, a^1_{i^*-1}, a^1_{i^*},  a^1_{i^*+2}, \ldots, a^1_{m} ; \bar{a}^2, \ldots, \bar{a}^k \right);
 	\end{gather*}
 	\item for $i = 2j+1$ we have 
 	\begin{gather*}
 	\qftp_{\mathcal{L}^k_{\opg}} \left( a^1_1, \ldots, a^1_{i^*-1}, a'_i,  a^1_{i^*+2}, \ldots, a^1_{m} ; \bar{a}^2, \ldots, \bar{a}^k \right)\\
 	=\qftp_{\mathcal{L}^k_{\opg}} \left( a^1_1, \ldots, a^1_{i^*-1}, a^1_{i^*+1},  a^1_{i^*+2}, \ldots, a^1_{m} ; \bar{a}^2, \ldots, \bar{a}^k \right).
 	\end{gather*}
 \end{itemize}
 In particular, for any $j < j' \in \mathbb{N}$ we then have
 \begin{gather}
 	\qftp_{\mathcal{L}^k_{\opg}}\left(a^1_1, \ldots, a^1_{i^*-1}, a'_{2j}, a'_{2j'+1},  a^1_{i^*+2}, \ldots, a^1_{m} ; \bar{a}^2, \ldots, \bar{a}^k \right)  \label{eq: even measure}\\
  = \qftp_{\mathcal{L}^k_{\opg}} \left( a^1_1, \ldots, a^1_{i^*-1}, a^1_{i^*}, a^1_{i^*+1},  a^1_{i^*+2}, \ldots, a^1_{m} ; \bar{a}^2, \ldots, \bar{a}^k \right); \nonumber\\
 \qftp_{\mathcal{L}^k_{\opg}} \left( a^1_1, \ldots, a^1_{i^*-1}, a'_{2j+1}, a'_{2j'},  a^1_{i^*+2}, \ldots, a^1_{m} ; \bar{a}^2, \ldots, \bar{a}^k \right)  \label{eq: odd measure}\\
  = \qftp_{\mathcal{L}^k_{\opg}} \left( a^1_1, \ldots, a^1_{i^*-1}, a^1_{i^*+1}, a^1_{i^*}, a^1_{i^*+2}, \ldots, a^1_{m} ; \bar{a}^2, \ldots, \bar{a}^k \right). \nonumber
 \end{gather}

 For $l \in \mathbb{N}$ we define
 \begin{align*}
 &A_l := \bigcap_{\substack{i\in [m] \setminus \{i^*, i^*+1\}, l_2, \ldots, l_k \in [m],\\ v \in \{0,1\}}}E^{v,t^v_{(i, l_2, \ldots, l_k)}}_{(a^1_i, a^2_{l_2}, \ldots, a^k_{l_k})} \cap \bigcap_{\substack{l_2, \ldots, l_k \in [m] \\ v \in \{0,1\}}} E^{v,t^v_{(i^*,l_2, \ldots, l_k)}}_{(a'_{2l}, a^2_{l_2}, \ldots, a^k_{l_k})};\\
 &B_l :=  \bigcap_{\substack{i \in [m] \setminus \{i^*, i^*+1\}, l_2, \ldots, l_k \in [m],\\ v \in \{0,1\} }}E^{v,t^v_{(i,l_2, \ldots, l_k)}}_{(a^1_i, a^2_{l_2}, \ldots, a^k_{l_k})} \cap \bigcap_{\substack{l_2, \ldots, l_k \in [m],\\ v \in \{0,1\}}} E^{v,t^v_{(i^* +1 , l_2, \ldots, l_k)}}_{(a'_{2l +1}, a^2_{l_2}, \ldots, a^k_{l_k})}.
 \end{align*}
 Then by (\ref {eq: diff measure}), (\ref{eq: even measure}), (\ref{eq: odd measure}) and the assumption of $G_{k,p}$-exchangeability, we have $\mu(A_i \cap B_j) = p$ for all $i < j$, and $\mu(A_i \cap B_j) = q$ for all $i > j$ --- contradicting Fact \ref{fac: HrushTao}.
 \end{proof}
  
\subsubsection{Proof of Theorem \ref{prop: gen fib fin VCk dim}}

Assume towards a contradiction that there exist some $k$, $\bar{d}$ and $r<s$ in $\mathbb{Q}^{[0,1]}$ such that: for every $j \in \mathbb{N}$ we have a $(k+2)$-partite graded probability space $(V^j_{[k+2]}, \B^j_{\bar{n}}, \mu^j_{\bar{n}})_{n \in \mathbb{N}^{k+2}}$ and a $(k+2)$-ary $\B^j_{\bar{1}^{k+2}}$-measurable function $f^j: (V^j)^{\bar{1}^{k+2}} \to [0,1]$ such that $\VC_k(f^j) \leq \bar{d}$, but such that the function $(f^j)': (V^j)^{\bar{1}^{k+1}} \to [0,1], (f^j)'(x_1, \ldots, x_{k+1}) := \int  f(x_1, \ldots, x_{k+2}) d \mu^j_{\bar{0}^{k+1 \frown} (1)}(x_{k+2})$
$(r,s)$-shatters some $k$-box 
$$B^j = \left\{ a^{j,1}_{1}, \ldots, a^{j,1}_{j}\right\} \times \ldots \times \left\{ a^{j,k}_{1}, \ldots, a^{j,k}_{j}\right\}.$$

As in the proof of Lemma \ref{lem: type-def of norm}, for any $r \in [0,1]$ there exist countable partial $\mathcal{L}_{\infty}$-types $\rho_{\leq r}(x_1, \ldots, x_{k+1})$ and $\rho_{\geq r}(x_1, \ldots, x_{k+1})$ satisfying the following: for any $(k+2)$-partite graded probability space $\mathfrak{P} = (V_{[k+2]}, \B_{\bar{n}}, \mu_{\bar{n}})_{n \in \mathbb{N}^{k+2}}$, $\B_{\bar{1}^{k+2}}$-measurable function $f$, an $\mathcal{L}_{\infty}$-structure $\mathcal{M}' \propto \mathcal{M}_{\mathfrak{P},f}$ and a tuple $(a_1, \ldots, a_{k+1}) \in V^{\bar{1}^{k+1}}$ we have
\begin{gather}\label{eq: int preserves VCk1}
	\mathcal{M}' \models \rho_{\leq r}(a_1, \ldots, a_{k+1}) \\
	\iff \int f(a_1, \ldots, a_{k+1}, x_{k+2}) d\mu_{\bar{0}^{k+1 \frown} (1)}(x_{k+2}) \leq r, \textrm{and similarly for ``$\geq r$''.} \nonumber
\end{gather} Consider the countable partial $\mathcal{L}_{\infty}$-type 
\begin{gather*}
	\tau\left( (x_g : g \in G_{k+1,p}) \right) :=
	\bigwedge_{(g_1, \ldots, g_{k+1}) \in R_{k+1}} \rho_{\leq r} (x_{g_1}, \ldots,  x_{g_{k+1}}) \land \\
	\bigwedge_{(g_1, \ldots, g_{k+1}) \in \prod_{i \in [k+1]}P_i \setminus R_{k+1}} \rho_{\geq s} (g_{g_1}, \ldots, x_{g_{k+1}}).
\end{gather*}
Let $\tau_0$ be a finite set of formulas from $\tau$ only involving $\ell$ variables from  $(x_g : g \in G_{k+1,p})$. As in the proof of Lemma \ref{lem: full arity indisc witness to IPn} (1)$\Rightarrow$(2), using that trivially $\mathcal{M}_{\mathfrak{P}^j, f^j} \propto \mathcal{M}_{\mathfrak{P}^j, f^j}$, by assumption and \eqref{eq: int preserves VCk1}, for every $j \geq \ell$ we have that $\tau_0$ is realized in $\mathcal{M}_{\mathfrak{P}^j, f^j}$. By \L os' theorem this implies that $\tau_0$ is also realized in $\tilde{\mathcal{M}}$. Hence, by $\aleph_1$-saturation of $\tilde{\mathcal{M}}$, we have $\tilde{\mathcal{M}} \models \tau \left( (a_g : g \in G_{k+1,p}) \right)$ for some $(a_g : g \in G_{k+1,p})$ with $g \in P_i \Rightarrow a_g \in \tilde{V}_i$ for $i \in [k+1]$.

By $\aleph_1$-saturation of $\tilde{\mathcal{M}}$ and Fact \ref{fac: existence of indiscernibles}(2), let $(a'_g)_{g \in G_{k+1,p}}$ be $G_{k+1,p}$-indiscernible over $\emptyset$ in $\tilde{\mathcal{M}}$ based on $(a_g)_{g \in G_{k+1,p}}$. Then we still have $\tilde{\mathcal{M}} \models \tau \left( (a'_g : g \in G_{k+1,p}) \right)$.

For $\bar{g} = (g_1, \ldots, g_{k+1}) \in \prod_{i \in [k+1]} P_i$, we write $\bar{a}_{\bar{g}} := \left(a'_{g_1}, \ldots, a'_{g_{k+1}} \right)$; and let $\tilde{\mu} := \tilde{\mu}_{\bar{\delta}^{k+2}}$. Then, as $\tilde{\mathcal{M}} \propto \mathcal{M}_{\tilde{\mathfrak{P}}, \tilde{f}}$, by definition of $\tau$ and \eqref{eq: int preserves VCk1} we have, 
\begin{gather}\label{eq: int preserves VCk3}
	G_{k+1,p} \models R_{k+1}(g_1, \ldots, g_{k+1}) \Rightarrow \int \tilde{f}(\bar{a}_{\bar{g}}, x_{k+2}) d\tilde{\mu}(x_{k+2}) \leq r, \\
	G_{k+1,p} \models \neg R_{k+1}(g_1, \ldots, g_{k+1}) \Rightarrow \int \tilde{f}(\bar{a}_{\bar{g}}, x_{k+2}) d\tilde{\mu}(x_{k+2}) \geq s.\nonumber
\end{gather}

Fix arbitrary $\bar{g}^0 = (g^0_1, \ldots, g^0_{k+1}) \in R_{k+1}, \bar{g}^1 = (g^1_1, \ldots, g^1_{k+1}) \in \prod_{i \in [k+1]}P_i \setminus R_{k+1}$. We let $F^{\bowtie q}_{\bar{a}_{\bar{g}}} = \left\{ \bar{x}_{k+2} \in \tilde{V}_{k+2} : \tilde{\mathcal{M}} \models F^{\bowtie q} \left(\bar{a}_{\bar{g}},x_{k+2} \right) \right\}$ for $q \in \mathbb{Q}^{[0,1]}$, $\bowtie \in \{ <, \geq \}$ and $\bar{g} \in \prod_{i \in [k+1]}P_i$.

By \eqref{eq: int preserves VCk3}, Lemma \ref{lem: comparing integrals} and $\tilde{\mathcal{M}} \propto \mathcal{M}_{\tilde{\mathfrak{P}}, \tilde{f}}$, there exist some $r' < s' \in \mathbb{Q}^{[0,1]}$ so that 
\begin{gather*}
	\tilde{\mu} \left( F^{< r'}_{\bar{a}_{\bar{g}^0}} \right) > \tilde{\mu} \left( F^{< s'}_{\bar{a}_{\bar{g}^1}} \right).
\end{gather*}

For $\bar{g} \in \prod_{i \in [k+1]}P_i$, let $E^0_{\bar{g}} := F^{< r'}_{\bar{a}_{\bar{g}}}, E^1_{\bar{g}} := F^{< s'}_{\bar{a}_{\bar{g}}}$.
As $(a'_g)_{g \in G_{k+1,p}}$ is $G_{k+1,p}$-indiscernible, this implies that the assumption of Lemma \ref{lem: exchang w order implies wo} is satisfied (using that the $F^{<q}$ and $m<q$ predicates are in $\mathcal{L}_{\infty}$ for all $q \in \mathbb{Q}^{[0,1]}$). Hence the assumption of Lemma \ref{lem: random graph Bergelson} is also satisfied, and it follows that for any finite $Q_i \subseteq P_i$ and $Q := \prod_{i \in [k+1]} Q_i$, we have
\begin{gather*}
	\tilde{\mu} \left( \bigcap_{\bar{g} \in Q \cap R_{k+1}} F^{< r'}_{\bar{a}_{\bar{g}}}   \cap  \bigcap_{\bar{g} \in Q \setminus R_{k+1}} F^{\geq s'}_{\bar{a}_{\bar{g}}} \right)> 0.
\end{gather*}

In  particular, this intersection is non-empty. Hence, by $\aleph_1$-saturation of $\tilde{\mathcal{M}}$, there exists some $b \in V_{k+2}$ so that for all $(g_1, \ldots, g_{k+1}) \in \prod_{i \in [k+1]} P_i$ we have
\begin{gather}
	G_{k+1,p} \models R_{k+1}(g_1, \ldots, g_{k+1}) \Rightarrow \tilde{\mathcal{M}} \models F^{< r'}(a'_{g_1}, \ldots, a'_{g_{k+1}}, b) \textrm{ and}\\
	G_{k+1,p} \models \neg R_{k+1}(g_1, \ldots, g_{k+1}) \Rightarrow \tilde{\mathcal{M}} \models F_{\geq s'}(a'_{g_1}, \ldots, a'_{g_{k+1}}, b).\nonumber
\end{gather}
By Lemma \ref{lem: full arity indisc witness to IPn}(2)$\Rightarrow$(1), this implies that the $(k+1)$-ary function $\tilde{f}_c$ has infinite $\VC_k$-dimension --- a contradiction to the assumption by Lemma \ref{lem: shattering is definable}.

Theorem \ref{prop: gen fib fin VCk dim} implies the following slightly more general version.

\begin{cor}\label{thm: gen fib fin VCk dim}
	For every $t \in \mathbb{N}, \bar{d} < \infty$ there exists some $\bar{D} = \bar{D}(t,\bar{d}) < \infty$ satisfying the following.
	
	Assume that $k \in \mathbb{N}$, $(V_{[k]}, \B_{\bar{n}}, \mu_{\bar{n}})_{n \in \mathbb{N}^k}$ is a $k$-partite graded probability space, $f: V^{\bar{n}} \to [0,1]$ is $ \B_{\bar{m}}$-measurable for some $\bar{m} = \bar{m}' + \bar{m}'' \in \mathbb{N}^{k}$ and $\VC_{t}(f) \leq \bar{d}$ (in the sense of Definition \ref{def: VCk dimension of functions}(4), i.e.~with respect to any partition of the variables of $f$  into $(t+1)$ groups). 
	Then the function $g: V^{\bar{m}'} \to [0,1]$ defined by
	$$g(\bar{x}') := \int f(\bar{x}' \oplus \bar{x}'') d \mu_{\bar{m}''} \left( \bar{x}'' \right)$$
	(so $g$ is $\B_{\bar{m}'}$-measurable by Fubini) satisfies $\VC_{t}(g) \leq \bar{D}$.
\end{cor}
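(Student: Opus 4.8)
\textbf{Proof plan for Corollary \ref{thm: gen fib fin VCk dim}.}

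The plan is to reduce this statement to Theorem \ref{prop: gen fib fin VCk dim} by an induction on the number $\sum_{i \in [k]} m''_i$ of variables being integrated out. If this number is $0$ then $g = f$ and there is nothing to prove. For the inductive step, suppose we are integrating out at least one variable; after permuting the coordinates (which preserves finiteness of $\VC_t$-dimension by Proposition \ref{prop: perm of vars fin VCk dim}, at the cost of changing $\bar d$ by a controlled amount), we may assume that one of the variables being integrated lies in the last coordinate group. Writing $\bar m'' = \bar m''_0 + \bar\delta_{i_0}$ where $i_0$ is that last group, we split the integral as
\[
g(\bar x') = \int \Big( \int f(\bar x' \oplus \bar x''_0 \oplus x_{i_0})\, d\mu_{\bar\delta_{i_0}}(x_{i_0}) \Big) d\mu_{\bar m''_0}(\bar x''_0),
\]
so it suffices to handle a single integration over one variable and then invoke the inductive hypothesis for the remaining $\bar m''_0$ variables.

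For the base case of a single integration over one variable, the idea is to match the setup of Theorem \ref{prop: gen fib fin VCk dim} by gluing coordinates. Suppose $f : V^{\bar m} \to [0,1]$ is $\B_{\bar m}$-measurable with $\VC_t(f) \le \bar d$, and we integrate out the variable in group $i_0$. Using Remark \ref{rem: power graded prob space} (``gluing coordinates''), we regard the $\sum_{i} m_i$ individual coordinates of $V^{\bar m}$, grouped so that exactly $t+1$ of the groups each consist of a single coordinate and the remaining coordinates are lumped together — specifically, we choose $t$ groups to serve as the ``box'' coordinates and one group to be the integrated coordinate $x_{i_0}$, with all other coordinates frozen into additional parameter sorts. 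This turns $f$ into a function on a $(t+2)$-partite graded probability space (in the sense of Definition \ref{def: VCk dimension of functions}(4), fixing all the parameter coordinates yields a $(t+2)$-ary function of finite $\VC_t$-dimension, since $\VC_t(f) \le \bar d$ by hypothesis covers every way of partitioning the variables into $t+1$ groups, and fixing further coordinates preserves this by Remark \ref{rem: props of VCk higher arity} / Proposition \ref{prop: perm of vars fin VCk dim}). Applying Theorem \ref{prop: gen fib fin VCk dim} with its $k$ equal to our $t$, we conclude that integrating over $x_{i_0}$ yields a function whose fibers (over the frozen parameter coordinates) all have $\VC_t$-dimension bounded by $\bar D(\bar d)$; but this is exactly the statement that $g$ has $\VC_t(g) \le \bar D$, since the box used in any shattering configuration for $g$ involves only $t$ of the groups and the rest are among the frozen parameters.

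The main technical obstacle I expect is bookkeeping: keeping track of exactly which coordinates are being used as the ``box'' coordinates, which single coordinate is integrated, and which coordinates are frozen as parameters, while verifying that the finite-$\VC_t$-dimension hypothesis in the hypothesis (which by Definition \ref{def: VCk dimension of functions}(4) quantifies over \emph{all} choices of which $t+1$ of the original variable-groups form the distinguished groups) correctly supplies the finite-$\VC_t$-dimension input required by Theorem \ref{prop: gen fib fin VCk dim} after the coordinates have been glued and permuted. Once that combinatorial correspondence is set up cleanly — using Proposition \ref{prop: perm of vars fin VCk dim} to handle permutations and Remark \ref{rem: power graded prob space} to handle the regrouping — the conclusion follows and the final bound $\bar D(t, \bar d)$ is obtained by composing the bound from Theorem \ref{prop: gen fib fin VCk dim} with the bounds from Proposition \ref{prop: perm of vars fin VCk dim}, iterated $\sum_i m''_i$ many times (which is harmless since the bound depends only on $t, \bar d$ and the number of integrated variables, and the latter can be absorbed once we note that after each step the dimension bound depends only on the previous bound).
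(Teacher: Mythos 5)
There is a genuine gap, and it is quantitative: your induction integrates out one variable at a time, and at each step the new dimension bound is $\bar D$ of the previous one, so after the induction the bound you obtain is the $\left(\sum_i m''_i\right)$-fold iterate of the function $\bar d \mapsto \bar D(t,\bar d)$ from Theorem \ref{prop: gen fib fin VCk dim} (composed with the losses from Proposition \ref{prop: perm of vars fin VCk dim}). That bound depends on the number of integrated variables, i.e.\ on $\bar m''$, whereas the corollary asserts a single $\bar D = \bar D(t,\bar d)$ uniform in $k$ and $\bar m$. Your closing remark that the dependence on the number of integrated variables ``can be absorbed once we note that after each step the dimension bound depends only on the previous bound'' is exactly the problem restated, not a solution: there is no reason the iterates of $\bar d \mapsto \bar D(t,\bar d)$ stabilize, so your argument only proves a weaker statement in which $\bar D$ is allowed to depend on $\sum_i m''_i$. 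The paper avoids the induction altogether: it glues the \emph{entire} block $\bar x''$ of integrated coordinates into a single new sort $V'_{t+2} := V^{\bar m''}$ (with measure $\mu'_{\bar\delta_{t+2}} = \mu_{\bar m''}$, via Remark \ref{rem: power graded prob space}), glues the kept variables into $t+1$ sorts according to an arbitrary partition $\bar m' = \bar m_1 + \ldots + \bar m_{t+1}$, checks that the resulting $(t+2)$-ary function still has $\VC_t \leq \bar d$, and applies Theorem \ref{prop: gen fib fin VCk dim} \emph{once}; the integral over $\bar x''$ is then literally integration over the last coordinate of the glued space, and the bound is the theorem's $\bar D(t,\bar d)$ with no iteration.

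A secondary, fixable defect is your bookkeeping of the shattering configuration. You set up ``$t+1$ groups each consisting of a single coordinate, namely $t$ box coordinates and the integrated coordinate,'' with everything else frozen as parameters; this leaves no witness coordinate, and in any case it misreads what $\VC_t(g)\leq\bar D$ requires. In the sense of Definition \ref{def: VCk dimension of functions}(4) as used here, the conclusion must hold for every partition of the variables of $g$ into $t+1$ \emph{groups}, where both the $t$ box groups and the $(t+1)$-st witness group may each consist of many variables; the witness group is quantified over its whole product sort in the shattering condition and cannot be demoted to a frozen parameter. The correct gluing is therefore $V'_i := V^{\bar m_i}$ for $i\in[t+1]$ (box groups and witness group of arbitrary sizes) together with $V'_{t+2} := V^{\bar m''}$ for the integrated block, which is precisely the paper's setup.
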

\begin{proof}
	Since permuting the variables preserves finiteness of $\VC_{t}$-dimension by Proposition \ref{prop: perm of vars fin VCk dim}, we only have to show that if $\bar{m}' = \bar{m}_1 + \ldots + \bar{m}_{t+1}$ for some $\bar{m}_i \in \mathbb{N}^{k}$ and the $(t+2)$-ary function 
	
	$$f': (\bar{x}_1, \ldots, \bar{x}_{t+1}, \bar{x}'') \in \left(\prod_{i \in [t+1]} V^{\bar{m}_i} \right) \times V^{\bar{m}''}  \to f(\bar{x}_1 \oplus \ldots \oplus \bar{x}_{t+1} \oplus \bar{x}'')$$
	satisfies $\VC_{t} \leq \bar{d}$, then the $(t+1)$-ary function
	$$g': (\bar{x}_1, \ldots, \bar{x}_{t+1}) \in \prod_{i \in [t+1]} V^{\bar{m}_i} \to \int f(\bar{x}_1 \oplus \ldots \oplus \bar{x}_{t+1} \oplus \bar{x}'') d \mu_{\bar{m}''}(\bar{x}'')$$
	satisfies $\VC_{t} \leq \bar{D}$.

 We let $V'_i := V^{\bar{m}_i}$ for $i \in [t+1]$, $V'_{t+2} := V^{\bar{m}''}$ and for $\bar{n} = (n_1, \ldots, n_{t+2}) \in \mathbb{N}^{t+2}$, we let $\bar{n}' := n_1 \bar{m}_1 + \ldots + n_{t+1}\bar{m}_{t+1} + n_{t+2}\bar{m}''$ and $\B'_{\bar{n}} := \B_{\bar{n}'}, \mu'_{\bar{n}} := \mu_{\bar{n}'}$. By ``gluing coordinates'' (Remark \ref{rem: power graded prob space}), $\left(V'_{[t+2]}, \bar{B}'_{\bar{n}}, \mu'_{\bar{n}} \right)_{\bar{n} \in \mathbb{N}^{t+2}}$ is a $(t+2)$-partite graded probability space and the $(t+2)$-ary function
 	$$f'': (\bar{x}_1, \ldots, \bar{x}_{t+1}, \bar{x}'') \in \prod_{i \in [t+2]} V'_{i} \to f(\bar{x}_1 \oplus \ldots \oplus \bar{x}_{t+1} \oplus \bar{x}'')$$
is $\B'_{\bar{1}^{k+2}}$-measurable and satisfies $\VC_t(f'') \leq \bar{d}$. Then, applying Theorem \ref{prop: gen fib fin VCk dim},  there exists some $\bar{D} = \bar{D}(t, \bar{d})$ so that the function
$$g'': (\bar{x}_1, \ldots, \bar{x}_{t+1}) \in \prod_{i \in [t+1]} V'_i \to \int f(\bar{x}_1 \oplus \ldots \oplus \bar{x}_{t+1} \oplus \bar{x}'') d \mu'_{\bar{\delta}_{k+2}}(\bar{x}'')$$
satisfies $\VC_t(g'') \leq \bar{D}$. Unwinding, this  gives $\VC_t(g) \leq \bar{D}$.
\end{proof}

\section{Final remarks}\label{sec: final remarks}

\subsection{Directions for future work}


It would be interesting to obtain explicit bounds and investigate their optimality for the main results of the paper (Proposition \ref{prop: finite VCk-dim implies approx bounded} and Corollary \ref{thm: the very main thm}).

\begin{problem}
	It is possible to finitize our proof of Proposition \ref{prop: finite VCk-dim implies approx bounded}, replacing the use of ultraproducts and indiscernible sequences by multiple applications of Ramsey's theorem and complicated $\varepsilon-\delta$ bookkeeping. We expect that the bound on $N_0$ should be as bad as in the regularity lemma for general hypergraphs (i.e.~an exponential tower of hight depending on $\frac{1}{\varepsilon}$), while we expect $N$ to be bounded by an exponential tower of height bounded in terms of $d$. We leave the investigation of these bounds for future work.
\end{problem}   

In Proposition \ref{prop: finite VCk-dim implies approx} we show that every $k$-ary fiber of a $(k
+1)$-ary function of finite $\VC_k$-dimension can be approximated in $L^2$ in terms of a fixed finite set of its $k$-ary fibers along with smaller arity data. And in Lemma \ref{lem: pos meas approx} we strengthen its conclusion from ``there exists an approximation'' to ``there exists a positive measure set of approximations''. We ask if this can further be strengthened to ``there exists a measure $1$ set of approximations'':

\begin{problem}
	Is it possible to strengthen the conclusion of Lemma \ref{lem: pos meas approx} to ``the set of tuples $\bar{w} \in V^{\bar{m}}$ with $||f_x-f^t_{\bar{ w},x}||_{L^2}\leq\delta$ has $\mu_{\bar{m}}$-measure converging to $1$ when $l,t \to \infty$''? 
	\end{problem}
	This problem has a positive answer in the case of bounded $\VC$-dimension (i.e.~the case $k=1$) using that a sufficiently long tuple almost surely gives an $\varepsilon$-net for differences (see the discussion in the introduction), but for $k>2$, we only know that we get a good choice with positive measure.

%
%
%

\subsection{Some model-theoretic consequences}
We record a couple of model theoretic corollaries of our results.

As we already mentioned, Theorem \ref{prop: gen fib fin VCk dim} generalizes \cite[Corollary 4.2]{yaacov2009continuous} in the case $k=1$. 
Using it (and recalling that a first-order theory $T$ is $k$-dependent if every $(k+1)$-ary relation definable on tuples in a model of $T$ has finite $\VC_k$-dimension), one immediately obtains the following model-theoretic corollary generalizing the main Theorem 5.3 there.
\begin{cor}\label{cor: Keils rand is n-dep}
	Let $T$ be a $k$-dependent first-order theory (classical or continuous). Then its Keisler randomization $T^{R}$ is also $k$-dependent.
\end{cor}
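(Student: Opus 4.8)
The plan is to deduce Corollary~\ref{cor: Keils rand is n-dep} from Theorem~\ref{prop: gen fib fin VCk dim} (and its slightly more general form Corollary~\ref{thm: gen fib fin VCk dim}) by unwinding what formulas in the Keisler randomization $T^R$ look like and checking that the $\VC_k$-dimension of each is controlled by the $\VC_k$-dimension of the corresponding formulas in $T$. Recall that a model of $T^R$ consists of a ``random variable sort'' together with an ``event algebra'' sort modeled on a probability algebra, and that an element of the random variable sort is interpreted as a (class of a) measurable function from some probability space into a model of $T$. The formulas of $T^R$ (in the continuous setting of Ben Yaacov's randomization) are built, roughly, from expressions of the form $\mathbb{E}[\varphi(\bar f(\omega))]$ where $\varphi$ is an $\mathcal{L}(T)$-formula and $\bar f$ is a tuple of random variables, together with the continuous connectives applied to such expressions. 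So the first step is to reduce, via the description of definable predicates in $T^R$, to showing: if $\varphi(\bar x_1,\ldots,\bar x_{k+1})$ is an $\mathcal{L}(T)$-formula of finite $\VC_k$-dimension (which holds for every formula since $T$ is $k$-dependent, using Remark~\ref{rem: props of VCk higher arity} to pass between grouping the variables into $k+1$ blocks), then the $(k+1)$-ary predicate $\psi(\bar{\mathfrak f}_1,\ldots,\bar{\mathfrak f}_{k+1}):=\mathbb{E}[\varphi(\bar{\mathfrak f}_1(\omega),\ldots,\bar{\mathfrak f}_{k+1}(\omega))]$ on tuples of random variables has finite $\VC_k$-dimension.

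The heart of the matter is exactly this integration step, and it is precisely what Corollary~\ref{thm: gen fib fin VCk dim} provides. Concretely: working inside a monster model of $T^R$, realized (by the Loeb-space / ultraproduct picture of Section~\ref{sec: ultraproducts of k-GPS}) on a graded probability space, the function $\varphi(\bar{\mathfrak f}_1(\omega),\ldots,\bar{\mathfrak f}_{k+1}(\omega))$ viewed as a function of $(\bar{\mathfrak f}_1,\ldots,\bar{\mathfrak f}_{k+1},\omega)$ has $\VC_k$-dimension bounded by that of $\varphi$ in $T$ — here one uses that the value at a fixed $\omega$ is just $\varphi$ evaluated at a tuple from the model of $T$, so no shattering configuration on the random-variable side that is larger than the one allowed by $\VC_k(\varphi)$ can occur, for all $\omega$ simultaneously; this is the same ``uniform over a parameter'' argument as in Lemma~\ref{lem: shattering is definable}. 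Then averaging out the $\omega$ variable is exactly the operation $g(\bar x') = \int f(\bar x'\oplus\bar x'')\,d\mu_{\bar m''}(\bar x'')$ of Corollary~\ref{thm: gen fib fin VCk dim}, with $t=k$, so $\psi$ has $\VC_k$-dimension bounded by $\bar D(k,\bar d)$ where $\bar d$ bounds $\VC_k(\varphi)$. Finally, the continuous connectives are handled by Proposition~\ref{prop: VCk comp with cont funct} (finite $\VC_k$-dimension is preserved under composition with continuous functions), and permutations of the variable blocks by Proposition~\ref{prop: perm of vars fin VCk dim}, so every $(k+1)$-ary definable predicate of $T^R$ has finite $\VC_k$-dimension, which is the definition of $T^R$ being $k$-dependent.

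The main obstacle I anticipate is not any single computation but the bookkeeping needed to align the two languages and semantics: one must be careful that ``$\VC_k$-dimension of a formula'' in the $T^R$ setting (where predicates are $[0,1]$-valued) matches the real-valued notion of Definition~\ref{def: VCk dimension of functions}, and that the reduction to a single $\mathbb{E}[\varphi(\cdots)]$ expression really does capture all atomic definable predicates of $T^R$ — in particular that the event-algebra sort contributes nothing beyond what is already visible through the random-variable sort, and that quantifiers in $T^R$ (which range over random variables and involve suprema of such expectations) are absorbed by the usual $k$-dependence-is-preserved-under-quantification fact together with the already-established base case. Once the translation is set up correctly, the substantive content is entirely carried by Corollary~\ref{thm: gen fib fin VCk dim}, and for $k=1$ this recovers Ben Yaacov's Theorem~5.3 of \cite{yaacov2009continuous}. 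I would also remark that the same argument shows more: it is enough for the formulas of $T$ used to be of finite $\VC_k$-dimension \emph{formula by formula} (which is automatic under $k$-dependence), so no uniformity in the theory is needed beyond what $k$-dependence already gives.
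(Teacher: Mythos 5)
Your proposal follows essentially the same route as the paper: the paper deduces the corollary directly from Theorem \ref{prop: gen fib fin VCk dim} (integration preserves finite $\VC_k$-dimension), following the scheme of Ben Yaacov's Theorem 5.3 for $k=1$, with connectives and variable permutations handled by Propositions \ref{prop: VCk comp with cont funct} and \ref{prop: perm of vars fin VCk dim}, exactly as you describe. The only caveat is that the reduction to expectation-atoms should be justified by the Ben Yaacov--Keisler quantifier-elimination/density description of definable predicates in $T^R$ (uniform limits, handled by Lemma \ref{lem: monus pres VCk}(1)) rather than a generic ``quantification preserves $k$-dependence'' fact, but this is the bookkeeping you already flag and does not change the argument.
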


We also have the following application to \emph{Keisler measures}, i.e.~finitely additive probability measures on the space of types of a first-order theory. We refer to e.g.~\cite{starchenko2016nip} for a detailed discussion.

\begin{cor}
	Assume that $T$ is $k$-dependent, $k' \geq k+1$, $\mathbb{M} \models T$ and let $\mu_1, \ldots, \mu_{k'}$ be global Keisler measures on the definable subsets of the sorts $\mathbb{M}^{x_1}, \ldots, \mathbb{M}^{x_{k'}}$ respectively, such that each $\mu_i$ is Borel-definable and all these measures commute, i.e.~$\mu_i \otimes \mu_j$ for all $i,j \in [k']$. 
Then for every formula $\varphi(x_1, \ldots, x_{k'}) \in \mathcal{L}(\mathbb{M})$  and $\varepsilon \in \mathbb{R}_{>0}$ there exist some formula $\psi(x_1, \ldots, x_{k'})$ which is a Boolean combination of finitely many $(\leq k)$-ary formulas each given by an instances of $\varphi$ with some parameters placed in all but at most $k$ variables, so that taking $\mu := \mu_1 \otimes \ldots \otimes \mu_{k'}$ we have $\mu \left(\varphi \triangle \psi \right) < \varepsilon$.
\end{cor}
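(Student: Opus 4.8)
The plan is to deduce this corollary from Theorem~\ref{thm: the very main thm soft} (or rather Corollary~\ref{cor: main thm for hypergraphs}) by building a suitable $k'$-partite graded probability space from the Keisler measures $\mu_1, \ldots, \mu_{k'}$. The key observation is that the commuting, Borel-definable measures $\mu_i$ on the sorts $\mathbb{M}^{x_i}$ give rise, via the product construction $\mu = \mu_1 \otimes \cdots \otimes \mu_{k'}$ and its iterated variants, to a family of measures on the products $\prod_{i \in [k']} (\mathbb{M}^{x_i})^{n_i}$ indexed by $\bar n \in \mathbb{N}^{k'}$. First I would set $V_i := S_{x_i}(\mathbb{M})$ (or, more concretely, a sufficiently saturated model, with points identified up to having the same type) and let $\mathcal{B}_{\bar n}$ be the $\sigma$-algebra generated by the definable subsets of $\prod_{i} (\mathbb{M}^{x_i})^{n_i}$; the measure $\mu_{\bar n}$ is the appropriate product of copies of the $\mu_i$'s. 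Borel-definability of each $\mu_i$ is exactly what is needed to make the Fubini property of Definition~\ref{def: kGPS}(3) hold: the map sending a parameter to the measure of a definable fiber is Borel, hence $\mathcal{B}_{\bar m}$-measurable, and the commutativity $\mu_i \otimes \mu_j = \mu_j \otimes \mu_i$ ensures the order of integration does not matter, so that $\mu_{\bar n + \bar m}$ is well-defined and satisfies Fubini. Symmetry and closure under products are immediate from the product construction (the $\mu_i^{\otimes n_i}$ is symmetric in its $n_i$ coordinates). Thus $\mathfrak{P} = (V_{[k']}, \mathcal{B}_{\bar n}, \mu_{\bar n})_{\bar n \in \mathbb{N}^{k'}}$ is a genuine $k'$-partite graded probability space.

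Next I would take $E := \varphi(\mathbb{M}^{x_1}, \ldots, \mathbb{M}^{x_{k'}})$, the definable set defined by $\varphi$; it lies in $\mathcal{B}_{\bar 1^{k'}}$. Since $T$ is $k$-dependent, every $(k+1)$-ary definable relation has finite $\VC_k$-dimension, and hence (using Remark~\ref{rem: props of VCk higher arity} and Definition~\ref{def: VCd for higher arity}) the $k'$-ary relation $E$ has $\VC_k(E) \leq d$ for some $d \in \mathbb{N}$ depending only on $\varphi$ and $T$. Now I apply Corollary~\ref{cor: main thm for hypergraphs} with this $k$, $k'$, $d$, and $\varepsilon$: it produces $N = N(k,k',d,\varepsilon)$, together with $(\leq k)$-ary fibers $F_1, \ldots, F_N$ of $E$ and a Boolean combination $F$ of their ``cylindrifications'' $F'_1, \ldots, F'_N$ with $\mu_{\bar 1^{k'}}(E \triangle F) < \varepsilon$. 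The point is that a $(\leq k)$-ary fiber of $E$ is precisely an instance $\varphi(\mathbb{M}, \ldots, \mathbb{M})$ with parameters substituted into all but at most $k$ of the variables $x_1, \ldots, x_{k'}$ — i.e., it is a $(\leq k)$-ary formula of the kind allowed in the statement — and a Boolean combination of the cylindrifications $F'_i$ corresponds exactly to a Boolean combination $\psi(x_1, \ldots, x_{k'})$ of such formulas, viewed as a $k'$-ary formula. Translating $\mu_{\bar 1^{k'}}(E \triangle F) < \varepsilon$ back into the language of Keisler measures gives $\mu(\varphi \triangle \psi) < \varepsilon$ with $\mu = \mu_1 \otimes \cdots \otimes \mu_{k'}$, which is the desired conclusion.

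The main obstacle I anticipate is verifying carefully that the product Keisler measure construction satisfies \emph{all} the axioms of a $k'$-partite graded probability space — in particular, pinning down the Fubini property in the setting of finitely additive measures on Boolean algebras of definable sets rather than genuine $\sigma$-algebras. The standard remedy is that a Borel-definable Keisler measure on the type space of a (monster) model is countably additive on the (compact, hence Baire) $\sigma$-algebra of Borel subsets of the type space via the Riesz representation / Loeb-style construction, and commutativity of the $\mu_i$ lifts to the needed Fubini identity for definable functions by a simple-function approximation argument (exactly as in Remark~\ref{rem: gen Fub}). A secondary, more bookkeeping-level point is confirming that ``$(\leq k)$-ary fiber of $E$'' in the sense of Corollary~\ref{cor: main thm for hypergraphs} matches ``instance of $\varphi$ with parameters in all but at most $k$ variables'' in the sense required here, and that passing from the graded-probability-space statement $\mu_{\bar 1^{k'}}(E \triangle F) < \varepsilon$ to the Keisler-measure statement introduces no loss; both are routine once the translation dictionary between $\mathfrak{P}$ and $(\mathbb{M}, \mu_1, \ldots, \mu_{k'})$ is set up. I would also remark that $\varepsilon$, $k$, $k'$, and the complexity of $\varphi$ determine $N$ uniformly, so the resulting $\psi$ has bounded complexity independent of the particular measures.
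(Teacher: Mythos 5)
Your proposal follows essentially the same route as the paper: build a $k'$-partite graded probability space on the type spaces $S_{\bar{x}_{\bar{n}}}(\mathbb{M})$ by extending the product Keisler measures via Carath\'eodory, using Borel-definability and commutativity to verify the Fubini property, then apply Corollary \ref{cor: main thm for hypergraphs} to the clopen set defined by $\varphi$ (which has finite $\VC_k$-dimension by $k$-dependence) and translate back, approximating Borel sets from the decomposition by clopen ones. This matches the paper's argument, including the bookkeeping points you flag.
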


Indeed,  for $\bar{n} = (n_1, \ldots, n_{k'}) \in \mathbb{N}^{k'}$ we let $\mathbb{M}_{\bar{x}_{\bar{n}}}$ be the sort corresponding to $\prod_{i \in [k']} \left( \mathbb{M}_{x_i} \right)^{n_i}$, $\B^0_{\bar{n}}$ the Boolean algebra of all definable subsets of $\mathbb{M}_{\bar{x}_{\bar{n}}}$ and 
 $\mu_{n_1, \ldots, n_{k'}} := \mu_1^{\otimes n_1} \otimes \ldots \otimes \mu_{k'}^{\otimes n_{k'}}$. Each Boolean algebra $\B^0_{\bar{n}}$ can be viewed as a Boolean algebra of the clopen subsets of the corresponding space of types  $V^{\bar{n}} := S_{\bar{x}_{\bar{n}}}(\mathbb{M})$, and $\mu_{\bar{n}}$ as a finitely additive probability measure on it. By Carath\'eodory's theorem, it extends uniquely to a regular countably additive probability measure $\mu'_{\bar{n}}$ on the $\sigma$-algebra $\B_{\bar{n}}$ of all Borel subsets of this space. Then we have that $\left(V_{[k']}, \B_{\bar{n}}, \mu'_{\bar{n}} \right)_{\bar{n} \in \mathbb{N}^{k'}}$ is a $k'$-partite graded probability space. Indeed, the assumption of pairwise commuting on the $\mu_i$'s implies
 \begin{gather*}
\left(\mu_1^{\otimes(n_1 + m_1)} \otimes \ldots \otimes  \mu_{k'}^{\otimes(n_{k'} + m_{k'})}\right) = \\
\left( \mu_1^{\otimes n_1} \otimes \ldots \otimes  \mu_{k'}^{\otimes n_{k'} } \right) \otimes \left( \mu_1^{\otimes m_1} \otimes \ldots \otimes  \mu_{k'}^{\otimes m_{k'} }  \right),
\end{gather*}
which together with Borel definability imply the Fubini property in Definition \ref{def: kGPS}, and the other conditions in the definition are clearly satisfied. Now we apply Corollary \ref{cor: main thm for hypergraphs} to $\varphi$ viewed as a clopen subset in $\B^0_{\bar{1}^{k'}}$, and approximating Borel sets in the resulting decomposition by the clopen ones from the generating set, we obtain the corollary.

\bibliographystyle{alpha}
\bibliography{refs}

\begin{thebibliography}{BYBHU08}

\bibitem[AFN07]{alon2007efficient}
Noga Alon, Eldar Fischer, and Ilan Newman.
\newblock Efficient testing of bipartite graphs for forbidden induced
  subgraphs.
\newblock {\em SIAM Journal on Computing}, 37(3):959--976, 2007.

\bibitem[AH78]{abramson1978models}
Fred~G Abramson and Leo~A Harrington.
\newblock Models without indiscernibles.
\newblock {\em Journal of Symbolic Logic}, pages 572--600, 1978.

\bibitem[Ald81]{aldous1981representations}
David~J Aldous.
\newblock Representations for partially exchangeable arrays of random
  variables.
\newblock {\em Journal of Multivariate Analysis}, 11(4):581--598, 1981.

\bibitem[Ber85]{bergelson1985sets}
Vitaly Bergelson.
\newblock Sets of recurrence of {$\mathbb{Z}^m$}-actions and properties of sets
  of differences in $\mathbb{Z}^m$.
\newblock {\em Journal of the London Mathematical Society}, 2(2):295--304,
  1985.

\bibitem[BY09]{yaacov2009continuous}
Ita{\"\i} Ben~Yaacov.
\newblock Continuous and random {V}apnik-{C}hervonenkis classes.
\newblock {\em Israel Journal of Mathematics}, 173(1):309, 2009.

\bibitem[BYBHU08]{yaacov2008model}
Ita\"i Ben~Yaacov, Alexander Berenstein, C~Ward Henson, and Alexander
  Usvyatsov.
\newblock Model theory for metric structures.
\newblock {\em London Mathematical Society Lecture Note Series}, 350:315, 2008.

\bibitem[BYU10]{ben2010continuous}
Ita{\"\i} Ben~Yaacov and Alexander Usvyatsov.
\newblock Continuous first order logic and local stability.
\newblock {\em Transactions of the American Mathematical Society},
  362(10):5213--5259, 2010.

\bibitem[CH19a]{chernikov2017mekler}
Artem Chernikov and Nadja Hempel.
\newblock Mekler's construction and generalized stability.
\newblock {\em Israel Journal of Mathematics}, 230(2):745--769, 2019.

\bibitem[CH19b]{chernikov2019n}
Artem Chernikov and Nadja Hempel.
\newblock On $n$-dependent groups and fields {II}.
\newblock {\em Preprint, arXiv:1912.02385}, 2019.

\bibitem[CPT19]{chernikov2014n}
Artem Chernikov, Daniel Palacin, and Kota Takeuchi.
\newblock On $n$-dependence.
\newblock {\em Notre Dame Journal of Formal Logic}, 60(2):195--214, 2019.

\bibitem[CS16]{2016arXiv160707701C}
Artem {Chernikov} and Sergei {Starchenko}.
\newblock Definable regularity lemmas for {NIP} hypergraphs.
\newblock {\em Preprint, arXiv:1607.07701}, 2016.

\bibitem[CT18]{crane2018relatively}
Harry Crane and Henry Towsner.
\newblock Relatively exchangeable structures.
\newblock {\em The Journal of Symbolic Logic}, 83(2):416--442, 2018.

\bibitem[DGZ91]{MR1115159}
R.~M. Dudley, E.~Gin\'{e}, and J.~Zinn.
\newblock Uniform and universal {G}livenko-{C}antelli classes.
\newblock {\em J. Theoret. Probab.}, 4(3):485--510, 1991.

\bibitem[Gow01]{gowers2001new}
William~T Gowers.
\newblock A new proof of {S}zemer{\'e}di's theorem.
\newblock {\em Geometric \& Functional Analysis GAFA}, 11(3):465--588, 2001.

\bibitem[Gow07]{MR2373376}
W.~T. Gowers.
\newblock Hypergraph regularity and the multidimensional {S}zemer\'edi theorem.
\newblock {\em Ann. of Math. (2)}, 166(3):897--946, 2007.

\bibitem[GT14]{goldbring2014approximate}
Isaac Goldbring and Henry Towsner.
\newblock An approximate logic for measures.
\newblock {\em Israel Journal of Mathematics}, 199(2):867--913, 2014.

\bibitem[Hem16]{MR3509704}
Nadja Hempel.
\newblock On {$n$}-dependent groups and fields.
\newblock {\em MLQ Math.Log.Q.}, 62(3):215--224, 2016.

\bibitem[Hod93]{hodges1993model}
Wilfrid Hodges.
\newblock {\em Model theory}.
\newblock Cambridge University Press, 1993.

\bibitem[Hoo79]{hoover1979relations}
Douglas~N Hoover.
\newblock Relations on probability spaces and arrays of random variables.
\newblock {\em Preprint, Institute for Advanced Study, Princeton, NJ}, 2, 1979.

\bibitem[Hru12]{hrushovski2012stable}
Ehud Hrushovski.
\newblock Stable group theory and approximate subgroups.
\newblock {\em Journal of the American Mathematical Society}, 25(1):189--243,
  2012.

\bibitem[HW87]{haussler1987}
David Haussler and Emo Welzl.
\newblock $\varepsilon$-nets and simplex range queries.
\newblock {\em Discrete \& Computational Geometry}, 2(2):127--151, 1987.

\bibitem[Kal88]{kallenberg1988spreading}
Olav Kallenberg.
\newblock Spreading and predictable sampling in exchangeable sequences and
  processes.
\newblock {\em The Annals of Probability}, pages 508--534, 1988.

\bibitem[Kal06]{kallenberg2006probabilistic}
Olav Kallenberg.
\newblock {\em Probabilistic symmetries and invariance principles}.
\newblock Springer Science \& Business Media, 2006.

\bibitem[Kei85]{keisler1985chapter}
H~Jerome Keisler.
\newblock Probability quantifiers.
\newblock In {\em Model-theoretic logics}, pages 507--556. Association for
  Symbolic Logic, 1985.

\bibitem[LS10]{MR2815610}
L\'{a}szl\'{o} Lov\'{a}sz and Bal\'{a}zs Szegedy.
\newblock Regularity partitions and the topology of graphons.
\newblock In {\em An irregular mind}, volume~21 of {\em Bolyai Soc. Math.
  Stud.}, pages 415--446. J\'{a}nos Bolyai Math. Soc., Budapest, 2010.

\bibitem[NR77]{nevsetvril1977partitions}
Jaroslav Ne{\v{s}}et{\v{r}}il and Vojt{\v{e}}ch R{\"o}dl.
\newblock Partitions of finite relational and set systems.
\newblock {\em Journal of Combinatorial Theory, Series A}, 22(3):289--312,
  1977.

\bibitem[NR83]{nevsetvril1983ramsey}
Jaroslav Ne{\v{s}}et{\v{r}}il and Vojt{\v{e}}ch R{\"o}dl.
\newblock Ramsey classes of set systems.
\newblock {\em Journal of Combinatorial Theory, Series A}, 34(2):183--201,
  1983.

\bibitem[NRS06]{nagle:MR2198495}
Brendan Nagle, Vojt{\v{e}}ch R{\"o}dl, and Mathias Schacht.
\newblock The counting lemma for regular {$k$}-uniform hypergraphs.
\newblock {\em Random Structures Algorithms}, 28(2):113--179, 2006.

\bibitem[RS04]{rodl:MR2069663}
Vojt{\v{e}}ch R{\"o}dl and Jozef Skokan.
\newblock Regularity lemma for {$k$}-uniform hypergraphs.
\newblock {\em Random Structures Algorithms}, 25(1):1--42, 2004.

\bibitem[She14]{MR3273451}
Saharon Shelah.
\newblock Strongly dependent theories.
\newblock {\em Israel J. Math.}, 204(1):1--83, 2014.

\bibitem[She17]{MR3666349}
Saharon Shelah.
\newblock Definable groups for dependent and 2-dependent theories.
\newblock {\em Sarajevo J. Math.}, 13(25)(1):3--25, 2017.

\bibitem[Sta16]{starchenko2016nip}
Sergei Starchenko.
\newblock {NIP}, {K}eisler measures and combinatorics.
\newblock {\em S{\'e}minaire Bourbaki}, page~68, 2016.

\bibitem[Tal87]{MR893902}
Michel Talagrand.
\newblock The {G}livenko-{C}antelli problem.
\newblock {\em Ann. Probab.}, 15(3):837--870, 1987.

\bibitem[Tal96]{MR1385403}
Michel Talagrand.
\newblock The {G}livenko-{C}antelli problem, ten years later.
\newblock {\em J. Theoret. Probab.}, 9(2):371--384, 1996.

\bibitem[Tao13]{TaoHrush}
Terence Tao.
\newblock A spectral theory proof of the algebraic regularity lemma.
\newblock Blog post,
  \url{https://terrytao.wordpress.com/2013/10/29/a-spectral-theory-proof-of-the-algebraic-regularity-lemma/},
  2013.

\bibitem[Ter18]{MR3787371}
C.~Terry.
\newblock {$VC_{\ell}$}-dimension and the jump to the fastest speed of a
  hereditary {$\mathcal{L}$}-property.
\newblock {\em Proc. Amer. Math. Soc.}, 146(7):3111--3126, 2018.

\bibitem[Tow17]{MR3583029}
Henry Towsner.
\newblock {$\sigma$}-algebras for quasirandom hypergraphs.
\newblock {\em Random Structures Algorithms}, 50(1):114--139, 2017.

\bibitem[Tow18]{Towsner2018}
Henry Towsner.
\newblock An analytic approach to sparse hypergraphs: hypergraph removal.
\newblock {\em Discrete Analysis}, 2018.

\bibitem[TZ12]{tent2012course}
Katrin Tent and Martin Ziegler.
\newblock {\em A course in model theory}, volume~40.
\newblock Cambridge University Press, 2012.

\end{thebibliography}
\end{document}